\documentclass[11pt]{amsart}
\usepackage{geometry}
\usepackage[dvips]{graphicx}

\usepackage{amscd,amsfonts,amssymb,amsmath,amsthm,latexsym}

\usepackage{graphics}
\usepackage[all]{xy}
\xyoption{curve}
\xyoption{import}
\xyoption{arc}
\xyoption{ps}

\numberwithin{equation}{section}

\theoremstyle{plain}
    \newtheorem{thm}{Theorem}[section]
    \newtheorem{lemma}[thm]{Lemma}
    \newtheorem{coro}[thm]{Corollary}
    \newtheorem{prop}[thm]{Proposition}

    \newtheorem{question}[thm]{Question}
\theoremstyle{definition}
    \newtheorem{defi}[thm]{Definition}
    \newtheorem{ex}[thm]{Example}
    \newtheorem{remark}[thm]{Remark}
\theoremstyle{remark}

\newcommand{\suchthat}{\ | \ }

\newcommand{\RA}[1]{R\langle\hspace{-0.05cm}\langle #1\rangle\hspace{-0.05cm}\rangle}
\newcommand{\RAextension}[2]{R_{#2}\langle\hspace{-0.05cm}\langle #1_{#2}\rangle\hspace{-0.05cm}\rangle}
\newcommand{\comp}[3]{[#1\hspace{0.032cm}#2#3]}
\newcommand{\compbwa}{\comp{b}{\omega}{a}}

\newcommand{\Gal}{\operatorname{Gal}}

\newcommand{\field}{F}

\newcommand{\lcm}{\operatorname{lcm}}
\newcommand{\Hom}{\operatorname{Hom}}

\newcommand{\Z}{\mathbb{Z}}
\newcommand{\diag}{\operatorname{diag}}
\newcommand{\dtuple}{\mathbf{d}}
\newcommand{\triv}{\operatorname{triv}}
\newcommand{\red}{\operatorname{red}}
\newcommand{\maxid}{\mathfrak{m}}
\newcommand{\Min}{\operatorname{in}}
\newcommand{\Mout}{\operatorname{out}}
\newcommand{\B}{\mathcal{B}}

\newcommand{\image}{\operatorname{im}}

\newcommand{\Q}{\mathbb{Q}}
\newcommand{\pathalgQd}{R\langle Q\rangle}
\newcommand{\completeQdcoprime}{\RA{A}}
\newcommand{\completeQd}{\completeQdcoprime}
\newcommand{\pathalgQdcoprime}{R\langle A\rangle}
\newcommand{\completeQpdcoprime}{\RA{A'}}

\newcommand{\mult}{\operatorname{mult}}
\newcommand{\Mat}{\operatorname{Mat}}

\newcommand{\coker}{\operatorname{coker}}
\newcommand{\DR}{\rho}


\setlength{\textheight}{595pt}
\addtolength{\voffset}{-10pt}
\addtolength{\textheight}{55pt}
\addtolength{\textwidth}{35pt}
\addtolength{\evensidemargin}{-40pt}
\addtolength{\headsep}{10pt}




\begin{document}

\title{Strongly primitive species with potentials I: Mutations}
\author[Daniel Labardini-Fragoso and Andrei Zelevinsky]{Daniel Labardini-Fragoso\\ Andrei Zelevinsky $\dag$}
\address{Mathematisches Institut, Universit\"{a}t Bonn, Germany 53115}
\email{labardini@math.uni-bonn.de}
\address{Department of Mathematics, Northeastern University, Boston, MA 02115}
\subjclass[2010]{Primary 16G10, Secondary 16G20,13F60}
\keywords{Species, potential, mutation, decorated representation, unfolding}
\thanks{$\dag$ The second author passed away on April 10, 2013.}
\maketitle

\begin{abstract} Motivated by the mutation theory of quivers with potentials developed by Derksen-Weyman-Zelevinsky, and the representation-theoretic approach to cluster algebras it provides, we propose a mutation theory of species with potentials for species that arise from skew-symmetrizable matrices that admit a skew-symmetrizer with pairwise coprime diagonal entries. The class of skew-symmetrizable matrices covered by the mutation theory proposed here contains a class of matrices that do not admit global unfoldings, that is,
unfoldings compatible with all possible sequences of mutations.
\end{abstract}

\tableofcontents

\section{Introduction}\label{sec:intro}

The mutation theory of quivers with potentials developed by H. Derksen, J. Weyman and the second author of this note (cf. \cite{DWZ1,DWZ2}), has proven useful not only in cluster algebra theory, but in other areas of mathematics as well. On the cluster algebra side, one can mention the proofs of several conjectures of S. Fomin and the second author's ``\emph{Cluster algebras IV: Coefficients}" \cite{FZ4}, using representation-theoretic machineries based on QPs and their mutations (e.g. \cite{CKLP,DWZ2,Nagao,Plamondon}). Outside cluster algebras, one can mention B. Keller's proof \cite{Keller} of Zamolodchikov's periodicity conjecture using cluster categories associated to quivers with potentials, or the use of the QPs associated to triangulated surfaces \cite{Labardini1,Labardini4} in T. Bridgeland and I. Smith's realization \cite{BS} of spaces of stability conditions as spaces of quadratic differentials on Riemann surfaces.

However, quivers with potentials are not sufficient to treat all cluster algebras. The reason is that not all cluster algebras are skew-symmetric. In fact, most cluster algebras are not skew-symmetric, and even the standard procedure of folding/unfolding does not allow to obtain all cluster algebras from skew-symmetric ones. Keeping this in mind, in this paper we propose a mutation theory of species with potentials that covers the species that arise from what we call \emph{strongly primitive} skew-symmetrizable matrices, that is, matrices that admit a skew-symmetrizer with pairwise coprime diagonal entries.

Strongly motivated by 
\cite{DWZ1}, the mutation theory proposed here generalizes the mutation theory of quivers with potentials, and although it is not general enough to cover species arising from all possible skew-symmetrizable matrices, it does cover several cases where it is not possible to perform unfolding from the cluster algebra point of view.

Recall that an $n\times n$ integer matrix $B$ is called \emph{skew-symmetrizable} if there exist positive integers $d_1,\ldots,d_n$, such that $DB$ is skew-symmetric, where $D=\diag(d_1,\ldots,d_n)$. The matrix $D$ is then called a \emph{skew-symmetrizer} of $B$. It is well-known that every skew-symmetrizable integer matrix matrix $B$ gives rise to a \emph{valued quiver}, that is, a finite directed graph whose arrows have been labeled by pairs of integers satisfying certain conditions.  
However, in this paper we shall not work directly with valued quivers, but rather with some closely related objects which we call \emph{weighted quivers}. A \emph{weighted quiver} is a pair $(Q,\dtuple)$, where $Q=(Q_0,Q_1,t,h)$ is a quiver (possibly with 2-cycles or multiple arrows), and $\dtuple=(d_{i})_{i\in Q_0}$ is a tuple that attaches a positive integer $d_i$ to each vertex $i$ of $Q$. If $B$ is an $n\times n$ skew-symmetrizable integer matrix and $D$ is a skew-symmetrizer for $B$, one can define a weighted quiver $(Q,\dtuple)$ on the vertex set $Q_0=\{1,\ldots,n\}$ as follows. For every pair of vertices $i,j\in Q_0$ such that $b_{ij}\geq 0$, the quiver $Q$ has
$$
\frac{b_{ij}\gcd{(d_i,d_j)}}{d_j} \ \ \ \text{arrows from $j$ to $i$.}
$$
The tuple $\dtuple$ is defined in the obvious way, namely, by attaching the $i^{\operatorname{th}}$ diagonal entry of $D$ to each vertex $i\in Q_0$. It can be easily seen that for a fixed integer matrix $D=\diag(d_1,\ldots,d_n)$ with positive diagonal entries, the assignment $B\mapsto(Q,\dtuple)$ is a bijection between the set of skew-symmetrizable matrices that can be skew-symmetrized by $D$, and the set of 2-acyclic weighted quivers on the vertex set $Q_0=\{1,\ldots,n\}$ and with weight tuple $\dtuple=(d_1,\ldots,d_n)$. 
This implies in particular that the operation of \emph{matrix mutation} can be translated from the language of skew-symmetrizable matrices to the language of weighted quivers. The mutation rule for 2-acyclic weighted quivers takes a rather neat form that can be described as a three-step procedure as follows. Given a 2-acyclic weighted quiver $(Q,\dtuple)$ and a vertex $k\in Q_0$, define the \emph{mutation} of $(Q,\dtuple)$ with respect to $k$ as the the weighted quiver $\mu_k(Q,\dtuple)$ obtained from $(Q,\dtuple)$ by the performance of the following three steps:
\begin{itemize}
\item[(Step 1)] For each pair of arrows $j\rightarrow k$, $k\rightarrow i$, of $Q$, add $\frac{\gcd(d_i,d_j)d_k}{\gcd(d_i,d_k)\gcd(d_k,d_j)}$ ``composite'' arrows from $j$ to $i$;
\item[(Step 2)] reverse all arrows incident to $k$;
\item[(Step 3)] choose a maximal collection of disjoint 2-cycles and delete it.
\end{itemize}
The weighted quiver $\mu_k(Q,\dtuple)$ has vertex set $Q_0$ and weight tuple $\dtuple$. The underlying quiver of $\mu_k(Q,\dtuple)$ will be denoted by $\mu_k(Q)$. If $\dtuple=(1,\ldots,1)$, then $\mu_k(Q)$ coincides with the quiver obtained from $Q$ by ordinary quiver mutation, but this is not the case for arbitrary values of the $d_i$s.

Every weighted quiver 
has \emph{species realizations} over finite or $p$-adic fields. Over finite fields, such realizations\footnote{which also go under the name of \emph{modulations of valued quivers}} date back at least to the works of Dlab-Ringel \cite{DR} and Gabriel \cite{Gabriel}.
Species realizations of weighted quivers give rise to (complete) path algebras, a natural algebraic framework for potentials and related algebraic and representation-theoretic objects. One of the main aims of the present paper is to lift the three mutation steps of the previous paragraph from the combinatorial level 
to the algebraic and representation-theoretic levels by defining and studying \emph{mutations of species with potentials and their representations}.

Let us describe the contents of the paper in more detail. In Section \ref{sec:weighted-quivers} we show how to associate a weighted quiver to a skew-symmetrizable matrix, and translate the operation of matrix mutation to the language of weighted quivers.

In Section \ref{sec:assumptions} we explicitly state the assumptions under which we will work throughout the paper. The first of these assumptions is at the level of weighted quivers, and the second and third assumptions are at the level of the fields underlying the construction of path algebras from weighted quivers. To be more precise, we restrict our attention to \emph{strongly primitive} weighted quivers, that is, weighted quivers $(Q,\dtuple)$ whose weight tuple $\dtuple=(d_i)_{i\in Q_0}$ satisfies $\gcd(d_i,d_j)=1$ for all vertices $i,j\in Q_0$ with $i\neq j$.

At the level of fields, our assumptions are as follows. Let $E/F$ be a degree-$d$ field extension, where $d$ is the least common multiple of the integers $d_i$ conforming the tuple $\dtuple=(d_i)_{i\in Q_0}$. We assume that $F$ contains a primitive $d^{\operatorname{th}}$ root of unity, and that $E$ is a cyclic Galois extension of $F$ (that is, a Galois extension 
with cyclic Galois group). 
These assumptions on $F$ and $E$ guarantee the existence of an \emph{eigenbasis} $\B_E$ of $E/F$, that is, an $F$-vector space basis of $E$ consisting of eigenvectors of all elements of the Galois group $\Gal(E/F)$. The eigenbasis $\B_E$ possesses a very useful multiplicative property, namely, the product of any two elements of $\B_E$ is an $F$-multiple of some element of $\B_E$. Moreover, for every $i\in Q_0$, the intersection of $\B_E$ with the unique degree-$d_i$ extension $F_i$ of $F$ is an eigenbasis $\B_i$ of $F_i/F$, with the same multiplicative property as $\B_E$.

With the data $(Q,\dtuple)$, $F$ and $E$ at hand, in Section \ref{sec:path-algs} we introduce
an algebraic setup for constructing path algebras of weighted quivers. Let
$$
R=\bigoplus_{i\in Q_0}F_i \ \ \ \text{and} \ \ \ A=\bigoplus_{(i,j)\in Q_0\times Q_0}\bigoplus_{a:j\to i}F_i\otimes_F F_j,
$$
where, for each $i\in Q_0$, $F_i/F$ is the unique degree-$d_i$ field subextension of $E/F$. Then $R$ is a commutative semisimple $F$-algebra under the usual addition an multiplication defined componentwise, and $A$ is endowed with a natural structure of $R$-$R$-bimodule. The \emph{complete path algebra of $(Q,\dtuple)$ over the extension $E/F$} is defined to be the complete tensor algebra of $A$ over $R$, that is,
$$
\RA{A}=\prod_{\ell\geq0}A^{\ell},
$$
where $A^\ell$ denotes the $\ell$-fold tensor product of $A$ with itself as an $R$-$R$-bimodule.
Given our assumptions on $\dtuple$, $F$ and $E$, it is easy to see that $A^\ell$ has an $F$-vector space basis given by all elements of the form $\omega_0 a_1\omega_1\ldots\omega_{\ell-1}a_\ell\omega_\ell$, where the $a_k$ are arrows of $Q$, and each $\omega_k$ is an element of the corresponding eigenbasis $\B_{t(a_k)}$. We call such elements \emph{paths} on $(Q,\dtuple)$. Thus, every element of the complete path algebra $\RA{A}$ can be uniquely expressed as a possibly infinite $F$-linear combination of paths on $(Q,\dtuple)$ (the uniqueness of such an expression is directly due to the assumption $\gcd(d_i,d_j)=1$ for $i\neq j$).

In Section \ref{sec:potentials-and-Jacobians} we introduce some of our main objects of study: potentials and
their cyclic derivatives. A \emph{potential on $(Q,\dtuple)$ over the extension $E/F$} is an element of $\RA{A}$ which is a possibly infinite $F$-linear combination of cyclic paths. We refer to the pair $(A,S)$ as a \emph{strongly primitive species with potential over $E/F$}, or \emph{SP} for short. Two SPs over $E/F$, $(A,S)$ and $(A',S')$, are \emph{right-equivalent} if there exists an algebra isomorphism $\varphi:\RA{A}\to \RA{A'}$ fixing pointwise the underlying semisimple ring $R$, such that $\varphi(S)$ is \emph{cyclically-equivalent} to $S'$. The \emph{cyclic derivatives} of a potential $S$ are labeled by the arrows of the quiver $Q$. The topological closure of the two-sided ideal they generate in $\RA{A}$ is called the \emph{Jacobian ideal} $J(S)$. If $\varphi:(A,S)\rightarrow(A',S')$ is a right-equivalence, then $J(\varphi(S))=\varphi(J(S))$. To show this, we adapt to our setup the ``differential calculus" for cyclic derivatives developed in \cite{DWZ1} in the case of ordinary quivers.

In Section \ref{sec:change-of-base-field} we study the behavior of (complete) path algebras and Jacobian algebras when the ground field $F$ is replaced by an extension $K$. We show that if $K$ is a finite-degree extension of $F$ which is \emph{linearly disjoint} from the extension $E/F$, then there is a canonical inclusion of the complete path algebra over $E/F$ into the complete path algebra of over $KE/K$, where $KE$ is the \emph{compositum} of $K$ and $E$ inside an \emph{a priori} fixed algebraic closure of $F$. Furthermore, an $F$-basis of paths is also a $K$-basis of paths. We also show that replacing $F$ with $K$ corresponds to tensoring with $K$ over $F$, both at the level of (complete) path algebras and at the level of Jacobian algebras.

Section \ref{sec:reduction} deals with a crucial technical issue, namely, the reduction of a non-necessarily 2-acyclic SP. An SP $(A,S)$ is \emph{reduced} (resp. \emph{trivial}) if $S$ involves only cycles of length at least 3 (resp. only cycles of length 2 and, moreover, the cyclic derivatives of $S$ span $A$ as an $R$-$R$-bimodule). Note that the underlying quiver of a reduced SP is not necessarily 2-acyclic. For a general potential $S$ (not necessarily reduced), the arrows that appear in the degree-2 component of $S$ may appear in its higher-degree components, and this constitutes an obstacle to delete 2-cycles algebraically from an SP. The Splitting Theorem \ref{thm:trivial-reduced-splitting} states that every SP $(A,S)$ is right-equivalent to the direct sum of a reduced SP $(A_{\red},S_{\red})$ and a trivial SP $(A_{\triv},S_{\triv})$, and that the right-equivalence classes of such reduced and trivial SPs are uniquely determined by the right-equivalence class of $(A,S)$. Fortunately for us, the proof of \cite[Theorem 4.6]{DWZ1} can be taken, practically word by word, to show Theorem \ref{thm:trivial-reduced-splitting}, the crucial point being that the complete path algebra $\RA{A}$ is a \emph{$D$-algebra} (see \cite[Section 13]{DWZ1} for the definition of $D$-algebra). We should remark that the Splitting Theorem \ref{thm:trivial-reduced-splitting} is the reason why we work with complete path algebras instead of the more commonly used ``incomplete" path algebras. Indeed, the proof of the existence of a desired right-equivalence $(A,S)\rightarrow(A_{\red},S_{\red})\oplus(A_{\triv},S_{\triv})$ relies on a limit process, and the only way to ensure that such limit process converges is to work with complete path algebras. We close Section \ref{sec:reduction} with Theorem \ref{thm:reduction-is-regular-function}, which gives a determinantal criterion for the reduced part of an SP to be reduced. More specifically, for each maximal collection of disjoint 2-cycles of the underlying quiver $Q$ of $A$, we produce a polynomial $f$, given by a determinant, with the property that for any potential $S\in \RA{A}$ such that $f(S)\neq 0$, the reduced part $(A_{\red},S_{\red})$ is 2-acyclic. The main feature of this criterion is that, due to the considerations in Section \ref{sec:change-of-base-field}, the polynomial $f$ remains unchanged if we replace $F$ with a finite-degree extension $K$ of $F$ linearly disjoint from $E/F$. 

In Section \ref{sec:mutations} we define \emph{mutations of strongly primitive species with potentials} and show that they have the basic properties one should expect from a mutation: they take right-equivalent SPs to right-equivalent SPs, and they are involutive up to right-equivalence. It is in order to be able to define SP-mutations that we need the existence of reduced parts of SPs and that reduced parts be well-defined up to right-equivalence.

The delicate issue of existence of nondegenerate SPs is dealt with in Section \ref{sec:nondegeneracy}, where we focus in the situation where the arrow span $A$ of a strongly primitive weighted quiver $(Q,\dtuple)$ is defined over an extension $E/F$ that is either an extension of finite fields or an unramified extension of non-archimedian completions of finite-degree extensions of $\mathbb{Q}$. In the latter case, we show that for every 2-acyclic strongly primitive weighted quiver $(Q,\dtuple)$, the arrow span of $(Q,\dtuple)$ over $E/F$ admits a non-degenerate potential. In the case when $E/F$ is an extension of finite fields, we show that for any finite sequence $(k_1,\ldots,k_\ell)$ of vertices of a 2-acyclic strongly primitive weighted quiver $(Q,\dtuple)$, there exists a finite-degree extension $K$ of $F$, linearly disjoint from $E/F$, such that the arrow span of $(Q,\dtuple)$ over the extension $KE/K$, call it $A_{KE/K}$, admits a potential $S$ such that all the SPs $\mu_{k_1}(A_{KE/K},S)$, $\mu_{k_2}\mu_{k_1}(A_{KE/K},S)$, $\ldots$, $\mu_\ell\ldots\mu_{k_1}(A_{KE/K},S)$, are 2-acyclic.

In Section \ref{sec:mutation-invariants} we show that finite-dimensionality of Jacobian algebras is invariant under SP-mutations.

Decorated representations and their mutations are introduced and studied in Section \ref{sec:reps-and-repmutations}. We show that mutations of decorated representations have the basic properties one should expect: they send right-equivalent decorated representations to right-equivalent ones, and they are involutive up to right-equivalence.

In Section \ref{sec:Dlab-Ringel} we show that mutations of decorated representations at sinks or sources constitute a decorated version of the reflection functors defined and studied by Dlab-Ringel in \cite{DR}. Dlab-Ringel reflection functors are defined at sinks and sources of a species, whereas mutations of decorated representations are defined here with respect to arbitrary vertices of an SP. Hence, the mutations of species with potentials we have defined here constitute a generalization of Dlab-Ringel reflection functors which is analogous to the generalization of Bernstein-Gelfand-Ponomarev reflection functors provided by the mutations of quivers with potentials in the simply laced case. Furthermore, the way we generalize the QP-mutation theory of \cite{DWZ1} is analogous to the way that Dlab-Ringel reflection functors generalize Bernstein-Gelfand-Ponomarev reflection functors; in this sense, we Dlab-Ringel's work \cite{DR} has served as a motivating source to our approach.

In Section \ref{sec:unfoldings} we show that the class of strongly primitive skew-symmetrizable matrices contains infinitely many instances of skew-symmetrizable matrices that do not admit global unfoldings and hence cannot be unfolded from the cluster algebra perspective. Roughly speaking, saying that a skew-symmetrizable matrix $B$ does not admit a global unfolding means that such that any unfolding of $B$ will be incompatible with mutations inside cluster algebras.

Finally, in Section \ref{sec:other-developments}, we mention previous approaches by other authors to non--skew-symmetric cluster algebras via representations of quivers or species.

A note to the reader familiar with the mutation theory of quivers with potentials. The influence of \cite{DWZ1} on this paper should be evident. Indeed, we have made a conscious effort to follow as much as possible the guidelines provided by \cite{DWZ1}, to the extent that for several statements, rather than transcribing arguments without change, we refer the reader to \cite{DWZ1} to find proofs that need either minor or no modifications whatsoever.
Nevertheless, many of our results require either constructions that are plainly different from the corresponding ones in \cite{DWZ1}, or non-trivial refinements of arguments from \cite{DWZ1}.
None of these two facts should surprise the reader: on the one hand, several arguments and techniques from \cite{DWZ1} can be applied in our setup with minor or no further modifications. On the other hand, not all the tools available in the skew-symmetric case can be applied in the skew-symmetrizable case as are or with only slight modifications; indeed, the passage from simply laced to non-simply laced situations often becomes rather subtle and requires non-trivial refinements of arguments and constructs, or independent observations.


\section*{Acknowledgements}

The collaborations \cite{DWZ1,DWZ2} of Harm Derksen and Jerzy Weyman with the second author have served as an inspiring and guiding
source throughout the development of the present manuscript. We have made a conscious effort to follow \cite{DWZ1} as much as possible, both in the structure of the paper, and in the form and proofs of the results.

We thank Laurent Demonet, Claus Michael Ringel and Dylan Rupel for helpful discussions. We are particularly grateful to David Speyer for his collaboration \cite{SZ} with the second author that showed the existence of skew-symmetrizable matrices without global unfoldings.

The first author thanks the Representation theory group of the National Autonomous University of Mexico, especially Michael Barot and Christof Geiss, for their hospitality and financial support during his several visits to UNAM's Institute of Mathematics in Mexico City. Parts of the work presented here were completed in these visits.

This project started several years ago, when the first author was a Ph.D. student of the second author at Northeastern University's Department of Mathematics (Boston, MA, USA). Unfortunately, Professor Andrei Zelevinsky passed away during the last stage of the preparation of this paper. The first author is sincerely grateful to him for his enthusiasm and deep insights and contributions throughout the collaboration that led to the present paper. It is needless to say that any possible inaccuracies or mistakes should be attributed to the first author alone.



\section{The weighted quiver of a skew-symmetrizable matrix}\label{sec:weighted-quivers}

\begin{defi}
An $n\times n$ matrix $B$ with integer entries is called \emph{skew-symmetrizable} if there exist positive integers $d_1,\ldots,d_n$, such that the matrix $DB$ is skew-symmetric, where $D=\diag(d_1,\ldots,d_n)$. The diagonal matrix $D$ is then called a \emph{skew-symmetrizer} of $B$.
If $B$ admits a skew-symmetrizer $D=\diag(d_1,\ldots,d_n)$ such that $\gcd(d_i,d_j)=1$ for all $i\neq j$, we say that $B$ is a \emph{strongly primitive skew-symmetrizable matrix}.
\end{defi}

\begin{defi} A 
\emph{weighted quiver} is a pair $(Q,\dtuple)$, where $Q$ is a loop-free quiver and
$\dtuple=(d_i)_{i\in Q_0}$ is a tuple that assigns a positive integer $d_i$ to each vertex $i$ of $Q$. We will often refer to $\dtuple$ as a \emph{weight tuple}.
If the weight tuple $\dtuple$ consists entirely of pairwise coprime integers, we say that $(Q,\dtuple)$ is a \emph{strongly primitive weighted quiver}.
\end{defi}

Let $B$ be an $n\times n$ skew-symmetrizable integer matrix, and fix a skew-symmetrizer $D$ of $B$. We associate a weighted quiver $(Q,\dtuple)=(Q_B,\dtuple)$ to $B$ as follows. First, define a matrix $C$ given by the entries
\begin{equation}
c_{ij}=\frac{\gcd(d_i,d_j)b_{ij}}{d_j}.
\end{equation}
A straightforward check shows that $C$ is a skew-symmetric integer matrix. We set $Q=Q_B$ to be the quiver whose vertex is
$Q_0=\{1,\ldots,n\}$, and whose arrow set is given by placing exactly $c_{ij}$ arrows from $j$ to $i$ whenever $c_{ij}\geq 0$. We set $\dtuple$ to be the tuple consisting of the diagonal entries of $D$, that is, the positive integer attached to a vertex $i$ of $Q$ is defined to be the $i^{\operatorname{th}}$ diagonal entry of $D$. Note that the quiver $Q=Q_B$ is uniquely determined by $B$, while the weight tuple $\dtuple$ is uniquely determined by $D$.

The following lemma is a straightforward consequence of the definitions.

\begin{lemma}\label{lemma:B<->Q} Fix an $n$-tuple $\dtuple=(d_1,\ldots,d_n)$ of positive integers, and let $D=\diag(d_1,\ldots,d_n)$. The assignment $B\mapsto (Q_B,\dtuple)$ we have just described
is a bijection between the set of $n\times n$ integer matrices that can be skew-symmetrized by $D$, and the set of 2-acyclic weighted quivers
on the vertex
set $Q_0=\{1,\ldots,n\}$, and with weight tuple $\dtuple$.
\end{lemma}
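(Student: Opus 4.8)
The plan is to verify that the two constructions $B \mapsto (Q_B, \dtuple)$ and $(Q,\dtuple) \mapsto B$ are mutually inverse, which amounts to a direct computation with the entries $c_{ij} = \gcd(d_i,d_j)b_{ij}/d_j$. First I would confirm that the map is well-defined: given that $D$ skew-symmetrizes $B$, so $d_i b_{ij} = -d_j b_{ji}$, one checks that $C = (c_{ij})$ is skew-symmetric with integer entries. For integrality, note $d_i b_{ij} = -d_j b_{ji}$ forces $d_j \mid d_i b_{ij}$, hence $(d_j/\gcd(d_i,d_j)) \mid b_{ij}$ since $d_i/\gcd(d_i,d_j)$ and $d_j/\gcd(d_i,d_j)$ are coprime; thus $c_{ij} \in \Z$. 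For skew-symmetry, compute $c_{ij} + c_{ji} = \gcd(d_i,d_j)\big(b_{ij}/d_j + b_{ji}/d_i\big) = \gcd(d_i,d_j)\big(d_i b_{ij} + d_j b_{ji}\big)/(d_i d_j) = 0$. Since $c_{ij}$ and $c_{ji}$ have opposite signs, the rule ``place $c_{ij}$ arrows $j \to i$ whenever $c_{ij} \geq 0$'' is unambiguous, and the resulting quiver $Q_B$ has no $2$-cycles. Together with the tuple $\dtuple$ read off from $D$, this produces a $2$-acyclic weighted quiver with weight tuple $\dtuple$.

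Next I would construct the inverse. Given a $2$-acyclic weighted quiver $(Q,\dtuple)$ on $\{1,\dots,n\}$ with weight tuple $\dtuple = (d_1,\dots,d_n)$, define $c_{ij}$ to be the number of arrows $j \to i$ if $i \to j$ does not occur, and $-(\text{number of arrows } i\to j)$ otherwise; $2$-acyclicity guarantees this is a well-defined skew-symmetric integer matrix. Then set $b_{ij} = c_{ij} d_j / \gcd(d_i,d_j)$. One checks $b_{ij} \in \Z$ (the skew-symmetry $c_{ij} = -c_{ji}$ is not needed for this; rather $\gcd(d_i,d_j) \mid c_{ij}d_j$ trivially since $\gcd(d_i,d_j)\mid d_j$) and that $D = \diag(d_1,\dots,d_n)$ skew-symmetrizes $B = (b_{ij})$: indeed $d_i b_{ij} + d_j b_{ji} = \frac{d_i d_j}{\gcd(d_i,d_j)}(c_{ij} + c_{ji}) = 0$.

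Finally I would check the two composites are the identity. Starting from $B$ skew-symmetrizable by $D$, passing to $(Q_B,\dtuple)$ and back recovers the matrix with entries $c_{ij} d_j/\gcd(d_i,d_j) = \big(\gcd(d_i,d_j) b_{ij}/d_j\big)\cdot d_j/\gcd(d_i,d_j) = b_{ij}$, using that the sign conventions match (if $b_{ij} \geq 0$ then $c_{ij} \geq 0$, so arrows go $j \to i$ and the recovered sign is positive; similarly for $b_{ij} < 0$). Starting from $(Q,\dtuple)$, the reverse composite recovers $\gcd(d_i,d_j) b_{ij}/d_j = c_{ij}$, hence the same arrow counts. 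I expect no genuine obstacle here; the only point requiring a little care is bookkeeping the sign conventions so that the ``whenever $c_{ij} \geq 0$'' clauses in the two directions are consistent, and making explicit the coprimality argument $\frac{d_j}{\gcd(d_i,d_j)} \mid b_{ij}$ that underlies integrality of $C$. This is exactly why the lemma is asserted to be ``a straightforward consequence of the definitions.''
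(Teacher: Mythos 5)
Your proposal is correct and complete. The paper gives no proof of this lemma (it is declared ``a straightforward consequence of the definitions''), and your write-up is precisely the expected verification: the coprimality argument showing $\tfrac{d_j}{\gcd(d_i,d_j)} \mid b_{ij}$ gives integrality of $C$, the short computation gives its skew-symmetry, $2$-acyclicity of $Q_B$ follows since $c_{ij}$ and $c_{ji}$ have opposite signs, and the inverse map $(Q,\dtuple)\mapsto B$ with $b_{ij}=c_{ij}d_j/\gcd(d_i,d_j)$ visibly undoes the construction in both directions with consistent sign conventions.
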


In particular, strongly primitive skew-symmetrizable matrices are in one-to-one correspondence with strongly primitive 2-acyclic weighted quivers. 

Let us recall the operation of \emph{matrix mutation}, the main ingredient
in the definition of \emph{cluster algebras} (cf. \cite{FZ1},
\cite{FZ2}, \cite{BFZ3}, \cite{FZ4}).

\begin{defi}\label{def:matrixmutation} Let $B$ be an
$n\times n$ skew-symmetrizable matrix. For $k\in\{1,\ldots,n\}$, the
\emph{mutation of $B$ with respect to $k$} is the matrix $B'=\mu_k(B)$ defined
by
$$
b'_{ij}=\begin{cases}
  -b_{ij} & \text{if} \ i=k \ \text{or} \ k=j,\\
  b_{ij}+\frac{b_{ik}|b_{kj}|+|b_{ik}|b_{kj}}{2} & \text{if} \ i\neq k\neq j.
\end{cases}
$$
\end{defi}

By Lemma \ref{lemma:B<->Q}, it is possible to define mutations of
2-acyclic weighted quivers by simply translating Definition
\ref{def:matrixmutation}. 

\begin{defi}\label{def:weightedquivermutation} Let $(Q,\dtuple)$ be a
weighted quiver and $k$ a vertex of $Q$. Define the \emph{mutation of $(Q,\dtuple)$ with respect to $k$} as the weighted quiver $\mu_k(Q,\dtuple)$ on the vertex set
$Q_0$, and with the same weight tuple $\dtuple$, as the result of performing the following 3-step procedure:
\begin{itemize}
\item[(Step 1)] For each pair $a,b\in Q_1$ such that $h(b)=k=t(a)$, introduce
$\frac{\gcd(d_i,d_j)d_k}{\gcd(d_i,d_k)\gcd(d_k,d_j)}$ ``composite'' arrows from $t(b)$ to
$h(a)$;
\item[(Step 2)] replace each $a\in Q_1$ incident to $k$ with an arrow $a^*$ in the
direction oposite to that of $a$;
\item[(Step 3)] choose a maximal collection of disjoint 2-cycles and remove them.
\end{itemize}
The underlying quiver of $\mu_k(Q,\dtuple)$ will be denoted by $\mu_k(Q)$.
\end{defi}

\begin{remark} \begin{itemize}\item The integer $\gcd(d_i,d_k,d_j)=\gcd(\gcd(d_i,d_k),\gcd(d_k,d_j))$ certainly divides $\gcd(d_i,d_j)$, whereas the least common multiple of the integers $\gcd(d_i,d_k)$ and $\gcd(d_k,d_j)$ obviously divides $d_k$. Since $\gcd(d_i,d_k)\gcd(d_k,d_j)=\gcd(d_i,d_k,d_j)\lcm[\gcd(d_i,d_k),\gcd(d_k,d_j)]$, this means that the rational number $\frac{\gcd(d_i,d_j)d_k}{\gcd(d_i,d_k)\gcd(d_k,d_j)}$ is indeed an integer.
\item The underlying quiver $\mu_k(Q)$ of the weighted quiver $\mu_k(Q,\dtuple)$ depends both on $Q$ and $\dtuple$. Since the weight tuple $\dtuple$ will always be fixed throughout the paper, the fact that the dependence on $\dtuple$ is not apparent in the notation $\mu_k(Q)$ should not be a cause of confusion.
\item If $\dtuple=(1,\ldots,1)$, the bijection from Lemma \ref{lemma:B<->Q} turns out to be the well-known bijection between the set of skew-symmetric matrices and the set of 2-acyclic quivers. Moreover, in this case the underlying quiver of $\mu_k(Q,\dtuple)$ coincides with the quiver obtained from $Q$ by \emph{ordinary quiver mutation}.
\item It is not true in general that the underlying quiver of $\mu_k(Q,\dtuple)$ always coincides with the quiver obtained from $Q$ by ordinary quiver mutation.
\item Up to isomorphisms of weighted quivers, $\mu_k$ is an involution on the class of 2-acyclic weighted quivers, that is,
$\mu_k^2(Q,\dtuple)\cong (Q,\dtuple)$, in spite of the fact that the choice of a maximal collection of
disjoint 2-cycles is not canonical.
\end{itemize}
\end{remark}

The next lemma is an easy consequence of the definitions.

\begin{lemma}\label{lemma:muk(B)<->muk(Q,d)} Let $B$ be a skew-symmetrizable matrix, and let $D$ be a skew-symmetrizer of $B$. Then  $\mu_k(Q_B,\dtuple)$ and $(Q_{\mu_k(B)},\dtuple)$ are isomorphic as weighted quivers.
\end{lemma}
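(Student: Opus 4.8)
The plan is to verify the claimed isomorphism entirely at the level of the skew-symmetric matrices attached to the two weighted quivers, using Lemma \ref{lemma:B<->Q} to reduce the statement to a matrix identity. Recall that $(Q_B,\dtuple)$ is encoded by the skew-symmetric matrix $C$ with entries $c_{ij} = \gcd(d_i,d_j)b_{ij}/d_j$, and that the weighted-quiver mutation $\mu_k(Q_B,\dtuple)$ of Definition \ref{def:weightedquivermutation} is again 2-acyclic with the same weight tuple $\dtuple$, hence by the bijection of Lemma \ref{lemma:B<->Q} corresponds to a unique skew-symmetrizable matrix, say $\widetilde{B}$. On the other side, $(Q_{\mu_k(B)},\dtuple)$ corresponds to $\mu_k(B)$ itself. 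Since the bijection $B \mapsto (Q_B,\dtuple)$ is injective for fixed $D$, the lemma is equivalent to the single identity $\widetilde{B} = \mu_k(B)$, and it suffices to check this entry by entry.

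First I would record how the three steps of Definition \ref{def:weightedquivermutation} act on the skew-symmetric matrix $C = C(B)$. For $i = k$ or $j = k$, Steps 2 and 3 just reverse the arrows at $k$, so the new skew-symmetric matrix $C'$ satisfies $c'_{ij} = -c_{ij}$ there. For $i \neq k \neq j$, Step 1 adds, for each pair of arrows $j \to k$ and $k \to i$, exactly $\gcd(d_i,d_j)d_k / \big(\gcd(d_i,d_k)\gcd(d_k,d_j)\big)$ composite arrows from $j$ to $i$; the number of such pairs is (the relevant orientation of) $c_{ik}$ times $c_{kj}$. After also accounting for the disjoint 2-cycle cancellation in Step 3, a short bookkeeping argument — identical in spirit to the skew-symmetric case in \cite{FZ1} — gives
\begin{equation}\label{eq:Cprime-mutation}
c'_{ij} = c_{ij} + \frac{c_{ik}|c_{kj}| + |c_{ik}|c_{kj}}{2}
\qquad (i \neq k \neq j),
\end{equation}
the point being that $c_{ik}c_{kj} > 0$ exactly when there genuinely is a path $j \to k \to i$ creating composites, and the absolute-value combination selects that case uniformly. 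The only subtlety here is that Step 1 literally multiplies the out-degree at $k$ toward $i$ by the in-degree at $k$ from $j$, while the composite-arrow count per pair carries the factor $d_k/(\gcd(d_i,d_k)\gcd(d_k,d_j))$; one must check this interacts correctly with the normalization $c_{ij} = \gcd(d_i,d_j)b_{ij}/d_j$ — that is the one genuine computation in the proof, though it is just clearing denominators.

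It then remains to translate \eqref{eq:Cprime-mutation} back through $c_{ij} = \gcd(d_i,d_j)b_{ij}/d_j$ and compare with Definition \ref{def:matrixmutation}. Writing $c'_{ij} = \gcd(d_i,d_j)\widetilde{b}_{ij}/d_j$ and substituting, the relation $c'_{ij} = -c_{ij}$ for $i=k$ or $j=k$ gives $\widetilde{b}_{ij} = -b_{ij}$, matching the first case of Definition \ref{def:matrixmutation}. For $i \neq k \neq j$, substituting $c_{ik} = \gcd(d_i,d_k)b_{ik}/d_k$ and $c_{kj} = \gcd(d_k,d_j)b_{kj}/d_j$ into \eqref{eq:Cprime-mutation} and multiplying through by $d_j/\gcd(d_i,d_j)$, the factor $\gcd(d_i,d_k)\gcd(d_k,d_j)/d_k$ coming from the $c_{ik}c_{kj}$-term is exactly cancelled by the composite-arrow multiplicity $d_k/(\gcd(d_i,d_k)\gcd(d_k,d_j))$ built into Step 1, leaving
$$
\widetilde{b}_{ij} = b_{ij} + \frac{b_{ik}|b_{kj}| + |b_{ik}|b_{kj}}{2},
$$
which is the second case of Definition \ref{def:matrixmutation}; here one uses that $\gcd(d_i,d_j) \mid d_j$ and similar coprimality-free divisibilities are not even needed since everything is being checked as a rational identity and the integrality is already guaranteed by the Remark following Definition \ref{def:weightedquivermutation}. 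Hence $\widetilde{B} = \mu_k(B)$, and applying the bijection of Lemma \ref{lemma:B<->Q} to both sides yields $\mu_k(Q_B,\dtuple) \cong (Q_{\mu_k(B)},\dtuple)$. The main obstacle, such as it is, is purely organizational: keeping the orientation conventions for $c_{ik}$ versus $c_{ki}$ consistent so that the sign in the absolute-value term comes out right, and confirming that Step 3's 2-cycle cancellation corresponds precisely to the cancellation already implicit in the skew-symmetric combination $(c_{ik}|c_{kj}| + |c_{ik}|c_{kj})/2$ rather than introducing an independent adjustment.
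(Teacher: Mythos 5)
Your strategy is exactly the natural one and the only one available: since Lemma~\ref{lemma:B<->Q} is a bijection for fixed $\dtuple$, it suffices to compare the skew-symmetric integer matrices $C$ attached to the two weighted quivers entry by entry, and the paper itself gives no proof (it records the lemma as ``an easy consequence of the definitions''). Your reduction and your endgame are correct, and the cancellation you describe in the final paragraph, namely that $\dfrac{\gcd(d_i,d_j)\,d_k}{\gcd(d_i,d_k)\gcd(d_k,d_j)}\cdot\dfrac{\gcd(d_i,d_k)\gcd(d_k,d_j)}{d_k\,d_j}=\dfrac{\gcd(d_i,d_j)}{d_j}$, is exactly what makes the translation between $c_{ij}$ and $b_{ij}$ work out to the standard matrix mutation rule.

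There is, however, a genuine inconsistency in the write-up that you should fix: your displayed equation~\eqref{eq:Cprime-mutation} is stated without the composite-arrow multiplicity, i.e.\ it reads as ordinary matrix mutation of $C$, which contradicts the paper's own remark after Definition~\ref{def:weightedquivermutation} that $\mu_k(Q,\dtuple)$ is in general \emph{not} the ordinary quiver mutation of $Q$. The correct identity coming out of Steps 1--3 is
$$
c'_{ij} \;=\; c_{ij} \;+\; \frac{\gcd(d_i,d_j)\,d_k}{\gcd(d_i,d_k)\gcd(d_k,d_j)}\cdot\frac{c_{ik}\lvert c_{kj}\rvert + \lvert c_{ik}\rvert c_{kj}}{2}
\qquad (i\neq k\neq j),
$$
and it is this extra rational factor that you then cancel against the denominators coming from $c_{ik}c_{kj}$. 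Your prose in the subsequent paragraph makes it clear you know this factor is present and must cancel, so the slip is only in the labeled display; but since you quote~\eqref{eq:Cprime-mutation} again when substituting, the formula as written would propagate an error of a factor $\gcd(d_i,d_k)\gcd(d_k,d_j)/(d_k\gcd(d_i,d_j))$ into $\widetilde b_{ij}$. Insert the multiplicity into~\eqref{eq:Cprime-mutation} and the proof is complete and correct.
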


\begin{ex}\label{ex:1213-cycle} The matrix
$$
B=\left[\begin{array}{cccc}0 & -2 & 0  & 3 \\
                                                                       1 & 0  & -1 & 0\\
                                                                       0 & 2  & 0  & -3\\
                                                                        -1& 0 & 1 & 0\end{array}\right]
$$
is skew-symmetrizable, and $D=\diag(1,2,1,3)$ is a skew-symmetrizer of $B$. Hence, $B$ is strongly primitive. The (strongly primitive) weighted quiver associated to $B$ is
\begin{center}
$(Q,\dtuple)=$\begin{tabular}{cc}
$\xymatrix{1  \ar[rr]
& & 2 \ar[dd]
\\
 & & \\
4 \ar[uu]
& & 3 \ar[ll]
}$ &
$\xymatrix{1  & & 2 \\
 & & \\
3 & & 1 }$
\end{tabular}
\end{center}
If we respectively perform the matrix mutation $\mu_4$, and the weighted quiver mutation $\mu_4$, we obtain
$$
\mu_4(B)=\left[\begin{array}{cccc}0 & -2 & 3  & -3 \\
                                  1 & 0  & -1 & 0\\
                                  -3 & 2  & 0  & 3\\
                                  1& 0 & -1 & 0
\end{array}\right] \ \ \ \text{and} \ \ \
\text{$\mu_k(Q,\dtuple)=$\begin{tabular}{cc}
$$\xymatrix{1 \ar[rr] \ar[dd] & & 2 \ar[dd]\\
 & & \\
4 \ar[rr]& & 3 \ar[uull] \ar@<-1.0ex>[uull] \ar@<1.0ex>[uull]}$$
 & $\xymatrix{1  & & 2 \\
 & & \\
3 & & 1 }$
\end{tabular}
}
$$
It is readily seen that, as stated in Lemma \ref{lemma:muk(B)<->muk(Q,d)}, one can read $\mu_4(B)$ and $\mu_4(Q,\dtuple)$ off from each other via the bijection of Lemma \ref{lemma:B<->Q}.
\end{ex}



\section{Assumptions}\label{sec:assumptions}


Let $(Q,\dtuple)$ be a weighted quiver. From this point to the end Section \ref{sec:Dlab-Ringel}, we will permanently suppose, even without stating it explicitly, that $(Q,\dtuple)$ is a strongly primitive weighted quiver. In other words, that the tuple $\dtuple=(d_i)_{i\in Q_0}$ satisfies
\begin{equation}\label{eq:coprime-assumption}
\gcd(d_i,d_j)=1 \ \text{for all} \ i\neq j.
\end{equation}

Denote by $d$ the least common multiple of the integers conforming the tuple $\dtuple$. Throughout the whole paper we will suppose that
\begin{equation}\label{eq:d-root-of-unity}
\text{$F$ is a field containing a primitive $d^{\operatorname{th}}$ root of unity,}
\end{equation}
and that
\begin{equation}\label{eq:E/F-Galois-simple-spectrum}
\text{$E$ is a degree-$d$ cyclic Galois field extension of $F$.}
\end{equation}

The assumptions \eqref{eq:d-root-of-unity} and \eqref{eq:E/F-Galois-simple-spectrum} imply that
\begin{equation}\label{eq:Fi/F=degree-di-extension}
\text{for each $i\in Q_0$ there exists a unique degree-$d_i$ cyclic Galois subextension $F_i/F$ of $E/F$,}
\end{equation}
and that
\begin{equation}\label{eq:eigenbasis}
\text{there exists an element $v\in E$ such that $\B_E=\{1,v,\ldots,v^{d-1}\}$ is an eigenbasis of $E/F$,}
\end{equation}
that is, an $F$-vector space basis of $E$ consisting of eigenvectors of all elements of the Galois group $\Gal(E/F)$ (note that the eigenvalues are the $d^{\operatorname{th}}$ roots of unity, which lie inside $F$ by assumption).

Setting $v_i=v^{\frac{d}{d_i}}$ for $i\in Q_0$, we have that
\begin{equation}\label{eq:compatible-eigenbases-Bi}
\text{$\B_i=\{1,v_i,\ldots,v_i^{d_i-1}\}=\B_E\cap F_i$ is an eigenbasis of $F_i/F$.}
\end{equation}

Note that there are functions $f:\B_E\times\B_E\rightarrow F^\times$ and $m:\B_E\times\B_E\rightarrow\B_E$ such that
\begin{equation}\label{eq:mult-property-of-eigenbases}
uu'=f(u,u')m(u,u') \ \ \text{for all} \ u,u'\in\B_E.
\end{equation}
Moreover, $m(u,u')\in\B_i$ for $u,u'\in\B_i$.

\begin{remark}\label{rem:specific-choice-of-eigenbases} Althought it will be assumed throughout the paper that the eigenbases $\B_E$ and $\B_i$ ($i\in Q_0$) are given by \eqref{eq:eigenbasis} and \eqref{eq:compatible-eigenbases-Bi} for a specific $v\in E$ (so that, in particular, they all contain the element $1\in F$), our statements and constructions will be made in more invariant terms whenever possible. So, only when needed will we make reference to the specific way the eigenbases have been chosen.
\end{remark}

\begin{ex}\label{ex:finite-fields-satisfy} Let $p$ be a positive prime number congruent to $1$ modulo $d$ (by the famous theorem of Dirichlet on primes in arithmetic progressions, there are infinitely many such positive primes). If $F$ is a finite field of characteristic $p$, then $F$ contains a primitive $d^{\operatorname{th}}$ root of unity, and the unique degree-$d$ extension $E$ of $F$ (inside any \emph{a priori} fixed algebraic closure of $F$) is a cyclic Galois extension of $F$.
\end{ex}

\begin{ex}\label{ex:p-adic-fields-satisfy} Let $p$ be a positive prime number congruent to $1$ modulo $d$.
\begin{enumerate}
\item If $F=\mathbb{Q}_p$, the field of $p$-adic numbers, then $F$ contains a primitive $d^{\operatorname{th}}$ root of unity, and the unique degree-$d$ unramified extension $E$ of $F$ (inside any \emph{a priori} fixed algebraic closure of $F$) is a cyclic Galois extension of $F$. Note that $\mathbb{Q}_p$ is uncountable.
\item More generally, if $L$ is any finite-degree extension of the field $\mathbb{Q}$ of rational numbers, $\mathbb{O}_L$ is the ring of algebraic integers of $L$, $\mathfrak{p}$ is a prime ideal of $\mathbb{O}_L$ such that $p\in\mathfrak{p}$, and $F$ is the $\mathfrak{p}$-adic completion of $L$, then $F$ contains a primitive $d^{\operatorname{th}}$ root of unity, and the unique degree-$d$ unramified extension $E$ of $F$ (inside any \emph{a priori} fixed algebraic closure of $F$) is a cyclic Galois extension of $F$. 
    Note that, in this case, the field $F$ is uncountable.
\end{enumerate}
\end{ex}

\begin{ex}\label{ex:linealy-disjoint} If $E/F$ is a degree-$d$ cyclic Galois extension, with $F$ containing a primitive $d^{\operatorname{th}}$ root of unity, and $K/F$ is a finite-degree extension which is linearly disjoint from $E/F$, then $KE/K$ is again a degree-$d$ cyclic Galois extension, and $K$ certainly contains a primitive $d^{\operatorname{th}}$ root of unity. Furthermore, if $\B_E$ is an eigenbasis of $E/F$, then it is an eigenbasis of $KE/K$ as well. (In this example, $E$ and $K$ are assumed to be contained in an \emph{a priori} fixed algebraic closure $\overline{F}$ of $F$; $KE$ then denotes the \emph{compositum} of $K$ and $E$ inside $\overline{F}$, that is, the minimal subfield of $\overline{F}$ that simultaneously contains $K$ and $E$. Finally, we remind the reader that $K/F$ being \emph{linearly disjoint} from $E/F$ means, by definition, that the multiplication map $K\otimes_FE\to KE$ is injective or, equivalently, that the intersection $K\cap E$ is equal to $F$.)
\end{ex}



\section{Path algebras and complete path algebras}\label{sec:path-algs}


Let $(Q,\dtuple)$ be a weighted quiver satisfying \eqref{eq:coprime-assumption}, and let $F$, $E$, $v\in E$, $F_i$ and $v_i\in F_i$ ($i\in Q_0$) be as in \eqref{eq:E/F-Galois-simple-spectrum}, \eqref{eq:Fi/F=degree-di-extension}, \eqref{eq:eigenbasis} and \eqref{eq:compatible-eigenbases-Bi}. Define
\begin{equation}\label{eq:R=semisimple-ring}
R=\bigoplus_{i\in Q_0}F_i,
\end{equation}
which is a semisimple $F$-algebra, we call it the \emph{vertex span} of $(Q,\dtuple)$ over the extension $E/F$. For $i,j\in Q_0$ let
\begin{equation}\label{eq:Aij}
A_{ij}=\bigoplus_{a:j\to i}F_i\otimes_FF_j,
\end{equation}
which is obviously an $F_i$-$F_j$-bimodule. Set also
\begin{equation}
A=\bigoplus_{i,j\in Q_0}A_{ij}.
\end{equation}
Then $A$ is an $R$-$R$-bimodule, which we will call \emph{arrow span} of $(Q,\dtuple)$ over the extension $E/F$.
Following a standard convention, we identify each arrow $a\in Q_1$ with the element $1\otimes 1$ of the component corresponding to $a$ in \eqref{eq:Aij}.

\begin{defi}\label{def:path-algebra}\begin{itemize}\item The \emph{path algebra} of $(Q,\dtuple)$ over $E/F$, to be denoted by $R\langle A\rangle$, is the tensor algebra of $A$ over $R$. Thus, as an $R$-$R$-bimodule we have
\begin{equation}
R\langle A\rangle=\bigoplus_{\ell\geq 0} A^\ell,
\end{equation}
where $A^\ell$ denotes the $\ell$-fold tensor product $A\otimes_R \ldots\otimes_RA$ (as it is customary, $A^0=R$ and $A^1=A$).
\item The \emph{complete path algebra} of $(Q,\dtuple)$ over $E/F$, to be denoted by $\RA{A}$, is the complete tensor algebra of $A$ over $R$. Thus, as an $R$-$R$-bimodule we have
\begin{equation}
\RA{A}=\prod_{\ell\geq 0} A^\ell.
\end{equation}
\end{itemize}
If the dependence on the extension $E/F$ needs to be emphasized, we shall write $R_{E/F}=R$ and $A_{E/F}=A$.
\end{defi}


\begin{remark} Even though the action of
$R$ on $\completeQdcoprime$ is not central, it is compatible with
the multiplication of $\RA{A}$, in the sense that if $a$ and $b$ are paths in Q, then $e_{h(a)}ab =
ae_{t(a)}b = abe_{t(b)}$, where, for $i\in Q_0$, $e_i$ is the idempotent sitting in the $i^{\operatorname{th}}$ component of \eqref{eq:R=semisimple-ring}. We will thus say that $\pathalgQdcoprime$
and $\completeQdcoprime$ are \emph{$R$-algebras}. Accordingly, any $F$-algebra homomorphism $\varphi$ between (complete) path algebras will be said to be an \emph{$R$-algebra homomorphism} if the underlying quivers have the same set of vertices and the same weight tuple $\dtuple$, and $\varphi(r)=r$ for every $r\in R$.
\end{remark}

\begin{defi}\label{def:path} A \emph{path of length $\ell$} on $(Q,\dtuple)$ over $E/F$ is an element $\omega_0a_1\omega_1a_2\ldots \omega_{\ell-1}a_\ell\omega_\ell\in\completeQdcoprime$, where
\begin{itemize}
\item $a_1,\ldots,a_\ell$, are arrows of $Q$ such that $h(a_{r+1})=t(a_r)$ for $r=1,\ldots,\ell-1$;
\item $\omega_0\in\B_{h(a_1)}$ and $\omega_r\in\B_{t(a_r)}$ for $r=1,\ldots,\ell$.
\end{itemize}
Here we are assuming that the eigenbasis $\B_E$, and hence the eigenbases $\B_i$ for $i\in Q_0$ (see \eqref{eq:compatible-eigenbases-Bi}), have been \emph{a priori} fixed.
\end{defi}

Note that a path of length $0$ is just an element $\omega$ of an eigenbasis $\B_i$ (hence there are $d_i$ paths of length $0$ sitting at each vertex $i$ of $Q$). Note also that while every arrow is a path of length $1$, not every path of length $1$ is an arrow. (That every arrow is a path of length $1$ follows from the fact that the element $1\in F$ belongs to each one of the eigenbases $\B_i$).

\begin{remark}\label{rem:path-is-welldef-notion} The notion of path on $(Q,\dtuple)$ introduced in Definition \ref{def:path} does not depend on $(Q,\dtuple)$ alone, but also on the extension $E/F$ and the choice of the eigenbasis $\B_E$ (from which the eigenbases $\B_i$ are obtained). Two observations are worth making:
\begin{enumerate}
\item Given $E/F$ and $\B_E$, Definition \ref{def:path} gives us a notion of path on $(Q,\dtuple)$. If $K/F$ is a finite-degree field extension which is linearly disjoint from $E/F$, then $\B_E$ is an eigenbasis of $KE/K$ as well, and hence the notion of path on $(Q,\dtuple)$ over $E/F$ coincides with the notion of path on $(Q,\dtuple)$ over $KE/K$, provided we use the same eigenbasis $\B_E$ for both field extensions. In other words, once $\B_E$ is fixed, the notion of path on $(Q,\dtuple)$ is independent of $E/F$, in the sense that it does not change if we replace $E/F$ with $KE/K$ given any extension $K/F$ linearly disjoint from $E/F$.
\item For a fixed extension $E/F$, if $\B_E$ and $\B_E'$ are eigenbases of $E/F$, and $\B_i=\B_E\cap F_i$ and $\B_i'=\B_E'\cap F_i$ are the corresponding eigenbases of $F_i/F$ for $i\in Q_0$, then there exists a bijection $\pi:\B_E\rightarrow\B_E'$ with the following two properties: its restriction to each $\B_i$ is a bijection $\B_i\rightarrow\B_i'$, and $\pi(\omega)$ is an $F$-multiple of $\omega$ for every $\omega\in\B_E$. Hence, for $E/F$ fixed, the notion of path on $(Q,\dtuple)$ is independent of the choice of eigenbasis $\B_E$ up to multiplication by non-zero elements of $F$.
\end{enumerate}
\end{remark}

The proof of the following lemma is a straightforward exercise.

\begin{lemma}\label{lemma:paths-form-a-basis} The set of all length-$\ell$ paths on $(Q,\dtuple)$ constitutes a basis of $A^\ell$ as an $F$-vector space. Consequently, every element of the complete path algebra $\RA{A}$ can be expressed in a unique way as a possibly infinite $F$-linear combination of paths.
\end{lemma}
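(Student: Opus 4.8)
The plan is to show that the length-$\ell$ paths span $A^\ell$ over $F$ and are $F$-linearly independent; the statement about the complete path algebra then follows immediately by taking (possibly infinite) products over $\ell$, since $\RA{A}=\prod_{\ell\geq 0}A^\ell$ and the degree-$\ell$ component of any element is a finite $F$-combination of length-$\ell$ paths.

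First I would establish the spanning property by induction on $\ell$. For $\ell=0$ this is the statement that $\B_i$ is an $F$-basis of $F_i$, applied componentwise to $R=\bigoplus_{i\in Q_0}F_i$, which is exactly \eqref{eq:compatible-eigenbases-Bi}. For $\ell=1$, each summand $A_{ij}=\bigoplus_{a:j\to i}F_i\otimes_F F_j$ has $F$-basis $\{u\otimes u' : u\in\B_i,\ u'\in\B_j\}$ over the arrows $a:j\to i$; using the identification of $a$ with $1\otimes 1$ and the $R$-bimodule structure, $u\otimes u' = u\,a\,u'$, so the length-$1$ paths $u\,a\,u'$ form an $F$-basis of $A$. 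For the inductive step, $A^\ell = A\otimes_R A^{\ell-1}$; writing $A = \bigoplus A_{ij}$ and noting $F_j\otimes_{F_j}A^{\ell-1}_{j\,\bullet}$, one reduces a general tensor to a combination of $u\,a\,(\text{path of length }\ell-1)$ and then uses the multiplicative property \eqref{eq:mult-property-of-eigenbases}: whenever two eigenbasis elements of the same $\B_i$ get multiplied together (which happens when the tail field of $a$ agrees with the head field of the next arrow), their product $u u' = f(u,u')\,m(u,u')$ is an $F$-multiple of a single eigenbasis element, so the result collapses to a genuine path with a scalar coefficient. This shows the length-$\ell$ paths span $A^\ell$.

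For linear independence, the key point is the coprimality hypothesis \eqref{eq:coprime-assumption}. The cleanest approach is to exhibit a complementary spanning set of the right size: for fixed arrows $a_1,\dots,a_\ell$ forming a path in $Q$ (head/tail compatible), the subspace $F_{h(a_1)}\otimes_F F_{t(a_1)}\otimes_F\cdots\otimes_F F_{t(a_\ell)}$ of $A^\ell$ — here is where I must check that $F_{t(a_r)}\otimes_{F_{t(a_r)}}(\cdots)$ collapses the relevant tensor factors over $R$, and this collapsing is \emph{exactly} controlled because $\gcd(d_i,d_j)=1$ for $i\neq j$ forces $F_i\otimes_F F_j$ to be a field (the compositum $F_iF_j$ inside $E$) when $i\neq j$, while the same-vertex tensor factors $F_i\otimes_F F_i$ are what get identified via the $R$-balancing. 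A dimension count then shows that over each underlying $Q$-path $a_1\cdots a_\ell$ the contribution to $\dim_F A^\ell$ is $\prod_{r} d_{?}$, matching exactly the number of choices of the $\omega_r$ from the eigenbases; combined with the spanning statement this forces independence. Alternatively, and perhaps more transparently, one can invoke \eqref{eq:mult-property-of-eigenbases} together with the fact that $m$ restricted to each $\B_i\times\B_i$ lands in $\B_i$ and is, up to the $F^\times$-factors, the multiplication table of the cyclic group of $d_i$-th roots of unity, to set up an explicit $F$-linear isomorphism $A^\ell\to\bigoplus_{\text{paths}}F$.

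The main obstacle I expect is the bookkeeping in the inductive step: keeping track of which tensor factors over $R$ are actually collapsed versus which survive, and verifying that the eigenbasis multiplication \eqref{eq:mult-property-of-eigenbases} is precisely what is needed to rewrite an arbitrary element $\omega_0 a_1\omega_1'\omega_1'' a_2\cdots$ (where two consecutive eigenbasis elements sit at the same vertex) as a scalar times a single honest path. This is a routine but slightly fiddly computation — it is exactly the content the paper flags as "a straightforward exercise" — and the coprimality assumption \eqref{eq:coprime-assumption} is what guarantees the uniqueness of the resulting expression, as the paper notes parenthetically after Lemma~\ref{lemma:paths-form-a-basis}. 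I would therefore carry out the spanning argument carefully by induction, then deduce independence from a dimension count using that $\dim_F F_i\otimes_F F_j = d_id_j$ and that the $R$-tensor product merges precisely the adjacent same-vertex factors, and finally lift to $\RA{A}$ degree by degree.
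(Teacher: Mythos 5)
The paper offers no proof of this lemma, dismissing it as ``a straightforward exercise,'' so the only thing to judge is whether your argument actually works. It does, and it is the natural one. Your spanning argument by induction on $\ell$ is correct: balancing over $R$ places two elements of the same eigenbasis $\B_{t(a_r)}$ side by side, and either the multiplicative property \eqref{eq:mult-property-of-eigenbases} or simply the fact that $\B_{t(a_r)}$ is an $F$-basis of $F_{t(a_r)}$ rewrites the result as an $F$-linear combination of genuine paths. Your dimension count for independence is also correct: for a fixed underlying $Q$-path $a_1\cdots a_\ell$ with $i_0 = h(a_1)$ and $i_r = t(a_r)$, the corresponding summand of $A^\ell$ is $(F_{i_0}\otimes_F F_{i_1})\otimes_{F_{i_1}}\cdots\otimes_{F_{i_{\ell-1}}}(F_{i_{\ell-1}}\otimes_F F_{i_\ell})\cong F_{i_0}\otimes_F F_{i_1}\otimes_F\cdots\otimes_F F_{i_\ell}$, of $F$-dimension $d_{i_0}d_{i_1}\cdots d_{i_\ell}$, which is exactly the number of admissible tuples $(\omega_0,\ldots,\omega_\ell)$. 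Combined with spanning, this gives the basis statement; the assertion for $\RA{A}$ then follows degree by degree as you say.

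One caution about your second paragraph: the identification $(F_i\otimes_F F_j)\otimes_{F_j}(F_j\otimes_F F_k)\cong F_i\otimes_F F_j\otimes_F F_k$ is elementary tensor algebra valid for any field tower, and does not rely on $F_i\otimes_F F_j$ being a field. So, while it is true that $\gcd(d_i,d_j)=1$ makes $F_i\otimes_F F_j\cong F_iF_j$ a field (by the subfield lattice of the cyclic extension $E/F$), that fact is not what makes your dimension count close; the count goes through for any weight tuple once the compatible eigenbases $\B_i\subset\B_E$ of \eqref{eq:compatible-eigenbases-Bi} have been fixed. The paper's parenthetical attribution of the uniqueness to the coprimality hypothesis, which you echo, is therefore not really load-bearing in your argument, and you should not lean on ``$F_i\otimes_F F_j$ is a field'' as the reason the tensor factors collapse correctly. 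This is a blemish in the justification rather than a gap in the proof; the proof itself is sound.
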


\begin{remark}
The product of two (concatenable) paths is not necessarily a path, but an $F$-multiple of a path. More precisely, the product $(\omega_0a_1\ldots a_\ell\omega_\ell)\cdot(\varpi_0b_1\ldots b_l\varpi_l)$ is equal to
    $$
\begin{cases}
f(\omega_\ell,\varpi_0)\omega_0a_1\omega_1\ldots a_\ell m(\omega_\ell,\varpi_0)b_1\varpi_1\ldots b_l\varpi_l & \textrm{if $t(a_\ell)=h(b_1)$;}\\
0 & \textrm{otherwise};
\end{cases}
$$
where $f:\B_E\times\B_E\rightarrow F^\times$ and $m:\B_E\times\B_E\rightarrow\B_E$ are the functions satisfying \eqref{eq:mult-property-of-eigenbases}.
\end{remark}

\begin{ex}\label{ex:1213-cycle-species} Consider the weighted quiver $(Q,\dtuple)$ from Example \ref{ex:1213-cycle}. Since $\lcm(1,2,1,3)=6$, we have $[E:F]=6$, $F_1=F=F_3$, $[F_2:F]=2$ and $[F_4:F]=3$ (note that here, just as in \eqref{eq:Fi/F=degree-di-extension}, the subindex $i$ in $F_i$ does not refer to the degree of the extension $F_i/F$, but rather to the vertex $i$ of $Q$ to which $F_i$ is attached). Furthermore, the semisimple algebra $R$ and the bimodules $A_{ij}$ that comprise the arrow span $A$ can be visualized as follows
$$
\xymatrix{F  \ar[rr]^{F_2\otimes F}_{\delta}
& & F_2 \ar[dd]^{F\otimes F_2}_{\gamma}
\\
 & & \\
F_4 \ar[uu]^{F\otimes F_4}_{\alpha}
& & F \ar[ll]^{F_4\otimes F}_{\beta}
}
$$
where all tensor products are taken over $F$. Take an eigenbasis $\B_E=\{1,v,v^2,v^3,v^4,v^5\}$ of $E=F_2F_4$ over $F$, so that $\B_2=\{1,v^3\}$ and $\B_4=\{1,v^2,v^4\}$ are eigenbases of $F_2$ and $F_4$ over $F$, respectively. Then for each $\ell\geq 1$ the set
$$
\{\alpha v^{2m_1}\beta\gamma v^{3n_1}\delta\alpha v^{2m_2}\beta\gamma v^{3n_2}\delta\ldots\alpha v^{2m_{\ell}}\beta\gamma v^{3n_{\ell}}\delta\suchthat m_1,\ldots,m_\ell\in\{0,1,2\},n_1,\ldots,n_\ell\in\{0,1\}\}
$$
is an $F$-basis of the space of all length-$4\ell$ paths starting and ending at vertex $1\in Q_0=\{1,2,3,4\}$.
\end{ex}

Let ${\mathfrak m} = {\mathfrak m}(A)$ denote the (two-sided) ideal
of $\completeQdcoprime$ given by
\begin{equation}\label{eq:max-ideal}
{\mathfrak m}  = {\mathfrak m}(A) = \prod_{\ell=1}^\infty A^\ell.
\end{equation}
Thus the powers of~${\mathfrak m}$ are given by
$$
{\mathfrak m}^n = \prod_{\ell=n}^\infty A^\ell.
$$
We view $\completeQdcoprime$ as a topological $F$-algebra
via the \emph{${\mathfrak m}$-adic topology} having
the powers of ${\mathfrak m}$ as a basic system of open
neighborhoods of~$0$.
Thus, the closure of any subset $W \subseteq \completeQdcoprime$
is given by
\begin{equation}\label{eq:closure}
\overline W = \bigcap_{n=0}^\infty (W + {\mathfrak m}^n).
\end{equation}
It is clear that $\pathalgQdcoprime$ is a  dense subalgebra
of $\completeQdcoprime$.

The ideal $\maxid$ satisfies the basic properties one would expect (cf. \cite[Section 2]{DWZ1}). For instance, $\maxid$ is
maximal amongst the two-sided ideals of $\completeQdcoprime$
that have zero intersection with $R$.
Moreover, $\maxid$
is invariant under any $R$-algebra automorphism of $\completeQdcoprime$.
Thus, such an automorphism is continuous as a map between topological spaces.
More generally, if $(Q,\dtuple)$ and $(Q',\dtuple)$ are weighted quivers on the same vertex set and with the same weight function $\dtuple$, then any $R$-algebra homomorphism $\varphi: \completeQdcoprime\rightarrow \RA{A'}$ sends $\maxid=\maxid(A)$ into $\maxid'=\maxid(A)$, and is hence
continuous.
Furthermore, such a ~$\varphi$ is uniquely determined by its
restriction to $A$, which is an $R$-$R$-bimodule homomorphism
$A \to \maxid' = A' \oplus (\maxid')^2$.
We write $\varphi|_{A} = (\varphi^{(1)}, \varphi^{(2)})$, where
$\varphi^{(1)}:A \to A'$ and $\varphi^{(2)}:A \to (\maxid')^2$ are
$R$-$R$-bimodule homomorphisms.

\begin{prop} \label{prop:automorphisms}
Any pair $(\varphi^{(1)}, \varphi^{(2)})$
of $R$-$R$-bimodule homomorphisms
$\varphi^{(1)}:A \to A'$ and $\varphi^{(2)}:A \to (\maxid')^2$
gives rise to a unique continuous $R$-algebra homomorphism
$\varphi: \completeQdcoprime \to \RA {A'}$
such that $\varphi|_{A} = (\varphi^{(1)}, \varphi^{(2)})$.
Furthermore, $\varphi$ is an isomorphism
if and only if $\varphi^{(1)}$ is an $R$-$R$-bimodule isomorphism.
\end{prop}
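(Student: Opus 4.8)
The plan is to build $\varphi$ by multiplicative extension of the given bimodule data, and then to settle the isomorphism criterion by passing to associated graded pieces in one direction and by a geometric-series argument in the other. First I would set $\varphi|_A := \varphi^{(1)} + \varphi^{(2)}\colon A\to\maxid' = A'\oplus(\maxid')^2\subseteq\RA{A'}$, and then, for each $\ell\geq 1$, define $\varphi$ on $A^\ell$ by $a_1\otimes\cdots\otimes a_\ell\mapsto\varphi|_A(a_1)\cdots\varphi|_A(a_\ell)$, the product being taken in $\RA{A'}$. The point to check here is that this descends from the $\ell$-fold Cartesian power of $A$ to the $R$-tensor power $A^\ell$: the Cartesian map is visibly $F$-multilinear, and it is $R$-balanced precisely because $\varphi|_A$ is an $R$-$R$-bimodule homomorphism and $\RA{A'}$ is an $R$-algebra in the sense recalled above (so that elements of $R$ may be slid across a product). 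Since each $\varphi|_A(a_i)$ lies in $\maxid'$, one gets $\varphi(A^\ell)\subseteq(\maxid')^\ell$; setting $\varphi|_{A^0}=\operatorname{id}_R$ and $\varphi(x)=\sum_\ell\varphi(x_\ell)$ for $x=(x_\ell)_\ell\in\prod_\ell A^\ell$, this series converges $\maxid'$-adically because its $\ell$-th term lies in $(\maxid')^\ell$.

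Next I would check that $\varphi$ is a continuous $R$-algebra homomorphism: $F$-linearity is built into the construction, the inclusion $\varphi(\maxid^n)\subseteq(\maxid')^n$ gives continuity, $\varphi|_R=\operatorname{id}_R$ gives $R$-fixing, and multiplicativity is immediate on homogeneous components straight from the definition on the $A^\ell$, hence holds in general by $F$-bilinearity of the multiplication together with continuity. Uniqueness then costs nothing: as recalled just before the statement, any $R$-algebra homomorphism $\RA{A}\to\RA{A'}$ is determined by its restriction to $A$, and by construction $\varphi|_A=(\varphi^{(1)},\varphi^{(2)})$.

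For the final assertion, one direction is easy. Since $\varphi$ carries $\maxid$ into $\maxid'$ it induces $R$-$R$-bimodule maps $\maxid^n/\maxid^{n+1}\to(\maxid')^n/(\maxid')^{n+1}$, and under the canonical identifications $\maxid/\maxid^2\cong A$ and $\maxid'/(\maxid')^2\cong A'$ the degree-one such map is exactly $\varphi^{(1)}$, because $\varphi(a)\equiv\varphi^{(1)}(a)$ modulo $(\maxid')^2$; hence if $\varphi$ is an isomorphism then so is $\varphi^{(1)}$. For the converse, assume $\varphi^{(1)}$ is an $R$-$R$-bimodule isomorphism. I would first reduce to the case $A'=A$, $\varphi^{(1)}=\operatorname{id}_A$: letting $\psi_1\colon\RA{A'}\to\RA{A}$ be the homomorphism attached by the already-proved first part to the pair $\bigl((\varphi^{(1)})^{-1},0\bigr)$, the compositions $\psi_1\circ\varphi$ and $\varphi\circ\psi_1$ have degree-one parts $\operatorname{id}_A$ and $\operatorname{id}_{A'}$, so it suffices to show that any $R$-algebra endomorphism $\theta$ of a complete path algebra with $\theta^{(1)}=\operatorname{id}$ is an automorphism. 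For this I would set $h:=\operatorname{id}-\theta$; expanding $\theta(a_1\otimes\cdots\otimes a_\ell)=\prod_i\bigl(a_i+\theta^{(2)}(a_i)\bigr)$ and noting that every summand other than $a_1\cdots a_\ell$ has degree at least $\ell+1$ yields $h(A^\ell)\subseteq\maxid^{\ell+1}$, hence $h(\maxid^n)\subseteq\maxid^{n+1}$. Therefore $h^k(x)\to 0$ $\maxid$-adically for every $x$, the continuous $F$-linear map $\psi:=\sum_{k\geq 0}h^k$ is well defined, and the telescoping identity $\psi\circ\theta=\theta\circ\psi=\operatorname{id}$ holds; thus $\theta$ is a bijective $R$-algebra homomorphism, hence an $R$-algebra isomorphism. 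Applying this to $\psi_1\circ\varphi$ and to $\varphi\circ\psi_1$ furnishes a left and a right inverse for $\varphi$, so $\varphi$ is an isomorphism.

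The topological bookkeeping is routine; the one place that genuinely needs care is the converse just described, namely the successive-approximation (geometric-series) construction of the inverse together with the reduction to $\varphi^{(1)}=\operatorname{id}$. This runs parallel to the proof of \cite[Proposition 2.4]{DWZ1}; the only features special to the present non-skew-symmetric setting are that the multiplicative extension of $\varphi|_A$ must be checked to be well defined over the non-central base ring $R=\bigoplus_{i\in Q_0}F_i$ (which is exactly the $R$-algebra compatibility recalled above), and that $F$-linearity, rather than $R$-linearity, is the correct ambient notion throughout.
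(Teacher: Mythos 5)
Your proof is correct and follows essentially the same route as the paper's cited source (it is, in substance, the argument of \cite[Proposition~2.4]{DWZ1}, to which the paper itself defers with the remark that ``a suitable but minor modification'' applies). In particular, the multiplicative extension with the $R$-balancing check, the $\maxid'$-adic convergence argument, the uniqueness via the remark preceding the statement, and the Neumann-series construction of the inverse after reducing to $\varphi^{(1)}=\operatorname{id}$ are exactly what the paper has in mind.
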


\begin{proof}
A suitable but minor modification of the proof of Proposition 2.4 of \cite{DWZ1} applies here.
\end{proof}

\begin{defi}\label{def:automorphisms}
Let $\varphi$ be an automorphism of $\completeQdcoprime$, and let
$(\varphi^{(1)}, \varphi^{(2)})$ be the corresponding pair of $R$-$R$-bimodule homomorphisms.
If~$\varphi^{(2)} = 0$, then we call $\varphi$ a \emph{change of arrows}.
If $\varphi^{(1)}$ is the identity automorphism of~$A$, we say that~$\varphi$
is a \emph{unitriangular} automorphism; furthermore, we
say that $\varphi$ is of \emph{depth} $\delta \geq 1$, if $\varphi^{(2)}(A)
\subset \maxid^{\delta+1}$.
\end{defi}

The following property of unitriangular automorphisms is immediate
from the definitions:
\begin{align}
\label{eq:unitriangular-d}
&\text{If~$\varphi$ is an unitriangular automorphism of $\completeQdcoprime$ of depth~$\delta$,}\\
\nonumber
&\text{then
$\varphi(u) - u \in  \maxid^{n+\delta}$ for $u \in \maxid^{n}$.}
\end{align}


\section{Potentials and their cyclic derivatives}\label{sec:potentials-and-Jacobians}

In what follows we shall
introduce some of our main objects of study:
potentials and their cyclic derivatives in the corresponding
complete path algebras.

\begin{defi}[Potentials, cyclical equivalence, cyclic derivatives, Jacobian algebras]\
\label{def:cyclic-stuff}
\begin{itemize}
\item For each $\ell \geq 1$, we define the \emph{cyclic part} of $A^\ell$ to be
$A^\ell_{\rm cyc} = \bigoplus_{i \in Q_0} A^\ell_{i,i}$.
Thus, $A^\ell_{\rm cyc}$ is the $F$-span of all paths $\omega_0 a_1\omega_1 \cdots a_\ell\omega_\ell$
with $h(a_1) = t(a_d)$; we call such paths \emph{cyclic}.
\item We define a closed $F$-vector subspace
$\completeQdcoprime_{\rm cyc} \subseteq \completeQdcoprime$
by setting
$$
\completeQdcoprime_{\rm cyc} =
\prod_{\ell=1}^\infty A^\ell_{\rm cyc},
$$
and call the elements of $\completeQdcoprime_{\rm cyc}$
\emph{potentials}.
\item\label{item:cyclic-equivalence}
Two potentials $S$ and $S'$ are \emph{cyclically equivalent}
if $S - S'$ lies in the closure of the $F$-span of
all elements of the form $\omega_0 a_1\omega_1 a_2\omega_2  \cdots a_\ell\omega_\ell - \omega_1 a_2\omega_2  \cdots a_\ell\omega_\ell\omega_0a_1$,
where $\omega_0a_1\omega_1 a_2\omega_2  \cdots a_\ell\omega_\ell$ is a cyclic path on $(Q,\dtuple)$.
\item For each $a\in Q_1$, we define the \emph{cyclic
derivative} $\partial_a$ as the continuous $F$-linear map
$\completeQdcoprime_{\rm cyc} \to \completeQdcoprime$
acting on paths by
\begin{equation}
\label{eq:cyclic-derivative}
\partial_a (\omega_0 a_1\omega_1 a_2\cdots a_\ell\omega_\ell) =
\sum_{k=1}^\ell \delta_{a,a_k} \omega_k a_{k+1} \cdots a_\ell\omega_\ell\omega_0 a_1 \cdots a_{k-1}\omega_{k-1},
\end{equation}
where $\delta_{a,a_k}$ is the \emph{Kronecker delta} between $a$ and $a_k$.
\item For every potential~$S$, we
define its \emph{Jacobian ideal} $J(S)$ as the closure of
the (two-sided) ideal in $\completeQdcoprime$
generated by the elements $\partial_a(S)$ for all $a\in Q_1$
(see \eqref{eq:closure}); clearly, $J(S)$ is a two-sided
ideal in $\completeQdcoprime$.
\item We call the quotient $\completeQdcoprime/J(S)$
the \emph{Jacobian algebra} of~$S$, and denote
it by ${\mathcal P}(A,S)$.
\end{itemize}
\end{defi}

\begin{remark} Let $\xi=\omega_0a_1\omega_1\ldots a_\ell\omega_\ell$ be a cyclic path. It is easy to see that Definition \ref{def:cyclic-stuff} implies that $\xi$ is cyclically equivalent to $a_1\omega_1\ldots a_\ell\omega_\ell\omega_0$. Actually, $u\xi$ is cyclically equivalent to $\xi u$ for any $u\in F_{h(a_1)}$.
\end{remark}

\begin{ex}\label{ex:1213-cycle-potential} Let $(Q,\dtuple)$ and $A=A_{E/F}$ be as in Examples \ref{ex:1213-cycle} and \ref{ex:1213-cycle-species}.
Then $S=abcd-av^2bcv^3d$ is a potential on $A$, and its cyclic derivatives are
$$
\partial_a(S)=bcd-v^2bcv^3d, \ \ \partial_b(S)=cda-bcv^3dav^2, \ \ \partial_c(S)=dab-v^3dav^2b, \ \ \partial_d(S)=abc-av^2bcv^3.
$$
\end{ex}

\begin{prop}
\label{prop:cyclic-equivalence}
If two potentials $S$ and $S'$ are cyclically equivalent,
then $\partial_a (S) = \partial_a(S')$ for all $a\in Q_1$, hence $J(S) = J(S')$ and ${\mathcal P}(A,S) = {\mathcal P}(A,S')$.
\end{prop}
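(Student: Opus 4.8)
The plan is to reduce the statement to a single computation at the level of individual cyclic paths, and then to extend by continuity and $F$-linearity. Since cyclical equivalence is defined as membership of $S-S'$ in the closure of the $F$-span of the ``rotation differences''
$$
\omega_0 a_1\omega_1 a_2\omega_2\cdots a_\ell\omega_\ell - \omega_1 a_2\omega_2\cdots a_\ell\omega_\ell\omega_0 a_1,
$$
and since each cyclic derivative $\partial_a$ is, by definition, a \emph{continuous} $F$-linear map, it suffices to prove that $\partial_a$ annihilates every such rotation difference; the general case then follows by applying $\partial_a$ to $S-S'$, writing it as a limit of finite $F$-linear combinations of rotation differences, and passing the limit through the continuous map $\partial_a$. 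Once $\partial_a(S)=\partial_a(S')$ for every $a\in Q_1$, the ideals they generate coincide, hence so do their closures $J(S)=J(S')$, and therefore the quotients $\mathcal{P}(A,S)=\completeQdcoprime/J(S)$ and $\mathcal{P}(A,S')=\completeQdcoprime/J(S')$ are literally equal.

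First I would fix a single cyclic path $\xi=\omega_0 a_1\omega_1 a_2\omega_2\cdots a_\ell\omega_\ell$, set $\eta=\omega_1 a_2\omega_2\cdots a_\ell\omega_\ell\omega_0 a_1$ (the ``once-rotated'' path), and compute $\partial_a(\xi)$ and $\partial_a(\eta)$ directly from formula \eqref{eq:cyclic-derivative}. The right-hand side of \eqref{eq:cyclic-derivative} is a sum over the positions $k$ with $a_k=a$ of the ``cyclic rotation'' of $\xi$ that brings $a_k$ to the front and then strips it off. The key observation is that $\xi$ and $\eta$ visit exactly the same arrows in the same cyclic order — $\eta$ is just $\xi$ read starting one arrow later — so the multiset of cyclic rotations of $\xi$ at occurrences of $a$ is identical to that of $\eta$; only the bookkeeping index $k$ is shifted by one (mod $\ell$), together with the appropriate relabeling of the $\omega$'s. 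Hence the two sums have the same terms, just enumerated in a different order, and $\partial_a(\xi)=\partial_a(\eta)$. One should be slightly careful here because in our setting the $\omega_r$ are eigenbasis elements rather than idempotents, so the ``rotation'' genuinely moves the letters $\omega_0 a_1\omega_1$ around; but inspection of \eqref{eq:cyclic-derivative} shows the $\omega$'s are carried along rigidly with the arrows, so no scalar factors $f(\cdot,\cdot)$ from \eqref{eq:mult-property-of-eigenbases} are introduced, and the bijection between the summands is exact, with matching coefficients.

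The main obstacle — really the only place where any care is needed — is precisely this last point: verifying that the presence of the eigenbasis decorations $\omega_r$ does not spoil the term-by-term matching that is completely transparent in the classical quiver-with-potential case of \cite{DWZ1}, where the analogous paths carry only trivial idempotents. I expect this to amount to a careful but routine index chase confirming that each summand $\omega_k a_{k+1}\cdots a_\ell\omega_\ell\omega_0 a_1\cdots a_{k-1}\omega_{k-1}$ appearing in $\partial_a(\xi)$ equals exactly one summand appearing in $\partial_a(\eta)$ and vice versa, with indices read cyclically. After that, the continuity/density argument is immediate — $\partial_a$ is continuous by definition and the rotation differences span a dense subspace of the relevant closure — and the passage from $\partial_a(S)=\partial_a(S')$ to $J(S)=J(S')$ and $\mathcal{P}(A,S)=\mathcal{P}(A,S')$ is formal.
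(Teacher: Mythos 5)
Your argument is correct and mirrors the paper's own proof: both reduce by continuity and $F$-linearity of $\partial_a$ to the single rotation difference $\xi - \eta$, and then observe that the summands of $\partial_a(\xi)$ and of $\partial_a(\eta)$ match term by term under a cyclic shift of the index $k$. The only bookkeeping difference is that the paper first rewrites $\eta$ as $f(\omega_\ell,\omega_0)$ times the genuine basis path $\omega_1 a_2\cdots a_\ell\, m(\omega_\ell,\omega_0)\, a_1$, applies \eqref{eq:cyclic-derivative} to that path, and then pulls the scalar back out; you instead apply \eqref{eq:cyclic-derivative} ``lazily'' to the rotated expression without first normalizing the product $\omega_\ell\omega_0$, which is legitimate precisely because the right-hand side of \eqref{eq:cyclic-derivative} is $F$-multilinear in each $\omega_k$, so the scalar $f(\omega_\ell,\omega_0)$ never has to be extracted at all.
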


\begin{proof}
It suffices to show that $\partial_a(\omega_0 a_1 \cdots a_\ell\omega_\ell)=\partial_a(\omega_1a_2 \cdots a_\ell\omega_\ell\omega_0a_1)$. We have $\omega_1a_2 \cdots a_\ell\omega_\ell\omega_0a_1=f(\omega_\ell\omega_0)\omega_1a_2 \cdots a_\ell m(\omega_\ell,\omega_0)a_1$. Then, by definition,
$$
\partial_a (\omega_0 a_1\omega_1 a_2\cdots a_\ell\omega_\ell) =
\sum_{k=1}^\ell \delta_{a,a_k}\omega_k a_{k+1} \cdots a_\ell\omega_\ell\omega_0a_1\omega_1 \cdots \omega_{k-2}a_{k-1}\omega_{k-1}=
$$
$$
f(\omega_\ell,\omega_0)\sum_{k=1}^\ell \delta_{a,a_k}\omega_k a_{k+1} \cdots a_\ell m(\omega_\ell,\omega_0)a_1\omega_1 \cdots \omega_{k-2}a_{k-1}\omega_{k-1}=
$$
$$
f(\omega_\ell\omega_0)\partial_a(\omega_1a_2 \cdots a_\ell m(\omega_\ell,\omega_0)a_1)=\partial_a(\omega_1a_2 \cdots a_\ell\omega_\ell\omega_0a_1),
$$
where $f:\B_E\times\B_E\rightarrow F^\times$ and $m:\B_E\times\B_E\rightarrow\B_E$ are the functions satisfying \eqref{eq:mult-property-of-eigenbases}.
\end{proof}

Let $(Q,\dtuple)$ and $(Q',\dtuple)$ be weighted quivers with arrow spans $A$ and $A'$. Clearly, every $R$-algebra homomorphism
$\varphi: \completeQdcoprime \to \completeQpdcoprime$, sends potentials to potentials. Thus it makes sense to ask what
is the relation between $J(\varphi(S))$ and $\varphi(J(S))$ for a given potential $S$.

\begin{prop}
\label{prop:automorphism-respects-jacobian}
Every $R$-algebra isomorphism $\varphi:\completeQdcoprime\to \completeQpdcoprime$,
sends $J(S)$ onto $J(\varphi(S))$, thus inducing an isomorphism of Jacobian algebras
${\mathcal P}(A,S) \to {\mathcal P}(A',\varphi(S))$.
\end{prop}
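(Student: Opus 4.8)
The plan is to reduce Proposition \ref{prop:automorphism-respects-jacobian} to a ``chain rule'' for cyclic derivatives, exactly as in \cite[Section 3]{DWZ1}, adapted to the present setting where paths carry eigenbasis elements and products of paths are only $F$-multiples of paths. Concretely, I would first establish the following: for any $R$-algebra homomorphism $\varphi:\completeQdcoprime\to\completeQpdcoprime$, any potential $S\in\completeQdcoprime_{\rm cyc}$, and any arrow $a\in Q_1$, the element $\partial_a(\varphi(S))$ lies in the closed two-sided ideal of $\completeQpdcoprime$ generated by the elements $\varphi(\partial_b(S))$, $b\in Q_1$. Granting this, one immediately gets $J(\varphi(S))\subseteq \overline{\varphi(J(S))\cdot\completeQpdcoprime}$; since $\varphi$ is an isomorphism and (by the discussion preceding Proposition \ref{prop:automorphisms}) continuous with continuous inverse, $\varphi(J(S))$ is already a closed two-sided ideal, so $J(\varphi(S))\subseteq\varphi(J(S))$. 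Applying the same inclusion to $\varphi^{-1}$ and the potential $\varphi(S)$ yields $J(S)\subseteq\varphi^{-1}(J(\varphi(S)))$, i.e.\ $\varphi(J(S))\subseteq J(\varphi(S))$, and hence equality. The induced isomorphism ${\mathcal P}(A,S)\to{\mathcal P}(A',\varphi(S))$ then follows formally.

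To prove the chain-rule inclusion, I would set up the ``noncommutative differential'' bookkeeping of \cite{DWZ1}. For a path $p=\omega_0a_1\omega_1\cdots a_\ell\omega_\ell$ and an arrow $a$, besides the cyclic derivative one introduces the ordinary (left/right) derivative: the sum over occurrences $a_k=a$ of the pair of ``half-paths'' $(\omega_0a_1\cdots\omega_{k-1},\,\omega_k a_{k+1}\cdots\omega_\ell)$, recorded as an element of $\completeQdcoprime\otimes_R\completeQdcoprime$ (or, more concretely, as in \cite{DWZ1}, one tracks the pair and then multiplies back after applying $\varphi$). The key Leibniz-type identity is that for a product of two concatenable paths $p,q$ one has $\partial_a(pq) = (\text{second half of } q)\,\partial_a(p)\,(\text{first half of }q) + \cdots$, with the precise combinatorics being the one written out in the proof of Proposition \ref{prop:cyclic-equivalence}: the only new phenomenon relative to \cite{DWZ1} is that reassociating $\omega$'s past an arrow produces a scalar $f(\cdot,\cdot)\in F^\times$ and replaces the $\omega$'s by $m(\cdot,\cdot)\in\B_E$, so that every identity holds up to harmless nonzero scalars in $F$. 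Because the cyclic part of a potential is $R$-central-on-the-nose up to such scalars, these scalars never obstruct the ideal-membership conclusion. Writing $\varphi(a)=\sum$ (paths) for each arrow $a$ and expanding $\varphi(S)$, one then applies $\partial_a$, uses the Leibniz identity to push $\partial_a$ onto the innermost occurrences of the arrows of $Q$ appearing inside $\varphi(a')$, and recognizes the result as a (convergent, by $\maxid$-adic continuity) $\completeQpdcoprime$-bilinear combination of the elements $\varphi(\partial_{a'}(S))$.

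The main obstacle I anticipate is purely bookkeeping: keeping track of the eigenbasis elements $\omega_k$ and the scalars $f(\omega,\omega')$, $m(\omega,\omega')$ when one reassociates paths across the nonconcatenation points, and checking that the relevant infinite sums converge in the $\maxid$-adic topology so that ``closure of the ideal generated by'' behaves as expected. Both of these are handled by the same mechanism: $\varphi$ is continuous (it sends $\maxid$ into $\maxid'$), cyclic derivatives are continuous by definition, and the scalars are units in $F$ and so do not affect ideal membership. In fact this is precisely the point at which one invokes that $\completeQdcoprime$ is a $D$-algebra and mimics \cite[Section 3]{DWZ1} essentially verbatim, the single modification being the insertion of the functions $f$ and $m$ of \eqref{eq:mult-property-of-eigenbases} at each reassociation step, exactly as was done in the proof of Proposition \ref{prop:cyclic-equivalence} above. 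I would therefore present the argument by stating the chain-rule lemma, sketching its proof along the lines just described with explicit reference to \cite{DWZ1}, and then deducing the proposition from it by the two-sided inclusion argument above.
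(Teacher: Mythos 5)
Your proposal follows essentially the same route as the paper: set up the noncommutative differential calculus ($\Delta_a$ and the $\square$ operation of \cite[Section 3]{DWZ1}, adapted so that reassociating eigenbasis elements produces the scalars $f$ and $m$ of \eqref{eq:mult-property-of-eigenbases}), prove the cyclic Leibniz and chain rules, and then deduce the proposition by the two-sided inclusion argument using $\varphi$ and $\varphi^{-1}$. The only cosmetic remark is that the auxiliary tensor space should be $\completeQdcoprime \widehat{\otimes}\completeQdcoprime = \prod_{\ell_1,\ell_2}(A^{\ell_1}\otimes_F A^{\ell_2})$ rather than $\otimes_R$ (since $\square$ would not be well-defined over $R$), but you already hedge this point and it does not affect the argument.
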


The proof of Proposition \ref{prop:automorphism-respects-jacobian} is a consequence of the fact that the ``differential calculus" for cyclic derivatives developed in \cite{DWZ1} can be adapted to our setup. Let us be more explicit.
Set
$$
\completeQdcoprime \widehat{\otimes} \completeQdcoprime
= \prod_{\ell_1,\ell_2 \geq 0} \left(A^{\ell_1} \underset{F}{\otimes} A^{\ell_2}\right),
$$
Notice that $\pathalgQdcoprime \otimes_F \pathalgQdcoprime$ is canonically embedded as an $F$-vector subspace in
$\completeQdcoprime \widehat{\otimes} \completeQdcoprime$.
Now, for each $a\in Q_1$, we define an $F$-linear map
$$
\Delta_a: \completeQdcoprime\to
\completeQdcoprime \widehat{\otimes} \completeQdcoprime
$$
by setting $\Delta_a (e) = 0$ for $e \in R = A^0$, and
\begin{equation}
\label{eq:delta-xi}
\Delta_a(\omega_0a_1\omega_1 \cdots a_\ell\omega_\ell) = \sum_{k=1}^\ell \delta_{a,a_k} \omega_0a_1\omega_1 \cdots
a_{k-1}\omega_{k-1} \otimes \omega_{k}a_{k+1} \cdots a_\ell\omega_\ell
\end{equation}
for any path $\omega_0a_1\omega_1 \cdots a_\ell\omega_\ell$ of length $\ell \geq 1$ (in particular, a path of length one, say $\omega_0a_1\omega_1$, is mapped either to $0$ or to $\omega_0\otimes\omega_1\in R\otimes_FR$).

Next, denote by $(f,g) \mapsto f \square g$
the $F$-bilinear map
$(\completeQdcoprime \widehat{\otimes} \completeQdcoprime) \times \completeQdcoprime
\to \completeQdcoprime$
induced by the rule
\begin{equation}
\label{eq:square}
(u \otimes v) \square g= v g u
\end{equation}
for $u, v \in \pathalgQd$.
The Leibniz and chain rules for cyclic derivatives are then identical to those in \cite{DWZ1}. Explicitly, we have:

\begin{lemma}[Cyclic Leibniz rule]
Let $h \in \completeQdcoprime_{i,j}$ and
$g \in \completeQdcoprime_{j,i}$ for some
vertices $i$ and $j$.
Then for every $a\in Q_1$, we have
\begin{equation}
\label{eq:leibniz}
\partial_a (hg)=\Delta_a(h)\square g + \Delta_a(g)\square h.
\end{equation}
More generally, for any finite sequence of vertices
$i_1, \dots, i_d, i_{d+1} = i_1$ and for any $h_1, \dots h_\ell$ such that
$h_k \in \completeQdcoprime_{i_k,i_{k+1}}$,
we have
\begin{equation}
\label{eq:leibniz-several}
\partial_a (h_1 \cdots h_\ell)= \sum_{k=1}^\ell
\Delta_a(h_k)\square (h_{k+1}\cdots h_\ell h_1 \cdots h_{k-1}).
\end{equation}
\end{lemma}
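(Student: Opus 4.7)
The plan is to verify (\ref{eq:leibniz}) directly on paths, extend to arbitrary $h, g$ by continuity and $F$-bilinearity, and then deduce (\ref{eq:leibniz-several}) by induction on $\ell$. Both sides of (\ref{eq:leibniz}) are continuous and $F$-bilinear in the pair $(h, g)$ — continuity of $\partial_a$ and $\Delta_a$ is clear from their definitions, and continuity of $\square$ follows from continuity of multiplication in $\RA{A}$ — so by Lemma \ref{lemma:paths-form-a-basis} it suffices to treat the case
\[
h = \omega_0 a_1 \omega_1 \cdots a_p \omega_p \in A^p_{i,j}, \quad g = \varpi_0 b_1 \varpi_1 \cdots b_q \varpi_q \in A^q_{j,i}.
\]
Since $\omega_p, \varpi_0 \in \B_j$, the multiplicative property (\ref{eq:mult-property-of-eigenbases}) gives $\omega_p \varpi_0 = f(\omega_p, \varpi_0)\, m(\omega_p, \varpi_0)$, so $hg$ equals the scalar $f(\omega_p, \varpi_0)$ times the cyclic path of length $p+q$
\[
\omega_0 a_1 \omega_1 \cdots a_p\, m(\omega_p, \varpi_0)\, b_1 \varpi_1 \cdots b_q \varpi_q.
\]

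Next I would apply (\ref{eq:cyclic-derivative}) to this cyclic path and split the resulting sum according to whether the marked arrow position lies in the $h$-block ($k = 1, \ldots, p$) or the $g$-block ($k = 1, \ldots, q$). For a term of the first type, cyclic rotation combined with the identity $f(\omega_p, \varpi_0)\, m(\omega_p, \varpi_0) = \omega_p \varpi_0$ rewrites the summand as
\[
(\omega_k a_{k+1} \cdots a_p \omega_p)\, g\, (\omega_0 a_1 \cdots a_{k-1}\omega_{k-1}),
\]
and summing with $\delta_{a, a_k}$ yields exactly $\Delta_a(h) \square g$ by the definitions (\ref{eq:delta-xi}) and (\ref{eq:square}). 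Symmetrically, the $g$-block contributions total $\Delta_a(g) \square h$, giving (\ref{eq:leibniz}).

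The only substantive departure from the corresponding proof in \cite{DWZ1} is the bookkeeping of the scalar $f(\omega_p, \varpi_0)$ at the concatenation point; in the ordinary-quiver setting this factor is always $1$. I expect this mild bookkeeping to be the only non-routine issue. For the multi-factor version (\ref{eq:leibniz-several}) I would induct on $\ell$: writing $h_1 \cdots h_\ell = h_1 \cdot (h_2 \cdots h_\ell)$ and applying (\ref{eq:leibniz}), the term $\Delta_a(h_1) \square (h_2 \cdots h_\ell)$ is the $k = 1$ summand on the right-hand side of (\ref{eq:leibniz-several}); the remaining term $\Delta_a(h_2 \cdots h_\ell) \square h_1$ expands via the product rule $\Delta_a(uv) = \Delta_a(u)(1 \otimes v) + (u \otimes 1)\Delta_a(v)$ — itself immediate from (\ref{eq:delta-xi}) — together with the cyclic compatibility $((u_1 \otimes 1)(u \otimes v)(1 \otimes v_1)) \square h_1 = (u \otimes v) \square (v_1 h_1 u_1)$ of $\square$, recovering the remaining summands.
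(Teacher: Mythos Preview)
Your proof is correct and is precisely the argument the paper has in mind: the paper's own proof just says that \cite[Lemma~3.8]{DWZ1} applies \emph{mutatis mutandis}, and what you have written is exactly that adaptation, with the scalar $f(\omega_p,\varpi_0)$ at the concatenation point being the only ``mutandis''. Your inductive treatment of \eqref{eq:leibniz-several} via the (non-cyclic) product rule for $\Delta_a$ and the identity $((u_1\otimes 1)(u\otimes v)(1\otimes v_1))\,\square\,h_1 = (u\otimes v)\,\square\,(v_1 h_1 u_1)$ is likewise the standard reduction used in \cite{DWZ1}.
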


\begin{proof}
The proof of \cite[Lemma 3.8]{DWZ1} applies here \emph{mutatis mutandis}.
\end{proof}

\begin{lemma}[Cyclic chain rule]
\label{lem:chain-rule}
Suppose that $\varphi:\completeQdcoprime\to \completeQpdcoprime$ is an $R$-algebra homomorphism.
Then, for every potential $S \in\completeQdcoprime)_{\rm cyc}$
and $a\in Q_1'$, we have:
\begin{equation}
\label{eq:chain-rule}
\partial_{a}(\varphi(S)) =
\sum_{b \in Q_1} \Delta_a (\varphi(b)) \square
\varphi(\partial_{b}(S)).
\end{equation}
\end{lemma}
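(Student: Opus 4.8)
The plan is to mimic the proof of the chain rule in \cite{DWZ1}, the only genuinely new feature being the eigenbasis scalars that decorate our paths. Both sides of \eqref{eq:chain-rule} depend continuously and $F$-linearly on $S$: on the left, $\varphi$ is continuous (it sends $\maxid$ into $\maxid'$) and $\partial_a$ is continuous by definition; on the right, $\Delta_a$ is continuous (by \eqref{eq:delta-xi} it sends a length-$\ell$ path into $\bigoplus_{\ell_1+\ell_2=\ell-1}A^{\ell_1}\otimes_F A^{\ell_2}$), and so are the box operation and $\varphi$. Since finite $F$-linear combinations of cyclic paths are dense in $\completeQdcoprime_{\rm cyc}$, it is enough to prove \eqref{eq:chain-rule} when $S=\omega_0a_1\omega_1a_2\omega_2\cdots a_\ell\omega_\ell$ is a single cyclic path, that is, when $h(a_1)=t(a_\ell)$.

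For such an $S$, factor it as $S=h_1h_2\cdots h_\ell$ with $h_1=\omega_0a_1\omega_1$ and $h_k=a_k\omega_k$ for $2\le k\le\ell$; then $h_k\in\completeQdcoprime_{i_k,i_{k+1}}$ with $i_k=h(a_k)$ and $i_{\ell+1}=i_1$ (here we use that $S$ is cyclic). Since $\varphi$ is an $R$-algebra homomorphism, $\varphi(S)=\varphi(h_1)\cdots\varphi(h_\ell)$, with $\varphi(h_1)=\omega_0\varphi(a_1)\omega_1$ and $\varphi(h_k)=\varphi(a_k)\omega_k$. Applying the several-factor Cyclic Leibniz rule \eqref{eq:leibniz-several} to this product gives $\partial_a(\varphi(S))=\sum_{k=1}^\ell \Delta_a(\varphi(h_k))\,\square\,\varphi(h_{k+1}\cdots h_\ell h_1\cdots h_{k-1})$.

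It then remains to identify each summand. First I would record the elementary compatibility of $\Delta_a$ with the $R$-bimodule structure: if $\Delta_a(u)=\sum_j u_j\otimes v_j$, then $\Delta_a(rur')=\sum_j ru_j\otimes v_jr'$ for $r,r'\in R$. This is immediate from \eqref{eq:delta-xi} for paths and extends to arbitrary $u\in\completeQdcoprime$ by $F$-linearity and continuity. Combining it with the rule $(u\otimes v)\square g=vgu$ from \eqref{eq:square} and with the fact that $\varphi$ fixes $R$ pointwise, a direct computation rewrites the $k$-th summand above as $\Delta_a(\varphi(a_k))\,\square\,\varphi(\omega_ka_{k+1}\omega_{k+1}\cdots a_\ell\omega_\ell\omega_0a_1\omega_1\cdots a_{k-1}\omega_{k-1})$; the argument of the inner $\varphi$ is exactly the $k$-th term occurring in the defining expression \eqref{eq:cyclic-derivative} of $\partial_{a_k}(S)$. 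Grouping the sum over $k$ according to the arrow $b=a_k\in Q_1$, and using that $\partial_b(S)$ is precisely the sum of those terms with $a_k=b$, we obtain $\partial_a(\varphi(S))=\sum_{b\in Q_1}\Delta_a(\varphi(b))\,\square\,\varphi(\partial_b(S))$, as asserted.

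The one delicate point relative to \cite{DWZ1} is the bookkeeping in the penultimate step: one must track that every eigenbasis scalar $\omega_k$ (and each structure constant $f(\cdot,\cdot)$, $m(\cdot,\cdot)$ produced when two such scalars become adjacent, e.g.\ the pair $\omega_\ell\omega_0$) lands in the correct tensor slot of $\Delta_a$ and is then reabsorbed by the box operation using $\varphi|_R=\mathrm{id}_R$. Once the auxiliary identity for $\Delta_a$ is in place, everything else is the verbatim analogue of the argument in \cite{DWZ1}.
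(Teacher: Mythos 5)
Your argument is correct and is essentially the same as the one the paper invokes by citing \cite[Lemma 3.9]{DWZ1}: reduce by continuity and $F$-linearity to a single cyclic path, factor it, apply the several-factor cyclic Leibniz rule, and regroup the summands by arrow. The only additional bookkeeping in the species setting --- pushing the eigenbasis scalars $\omega_k$ through $\Delta_a$ and the $\square$-operation via $R$-bilinearity and $\varphi|_R=\mathrm{id}$ --- is precisely the wrinkle you flagged, and you handle it correctly.
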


\begin{proof}
The proof of \cite[Lemma 3.9]{DWZ1} applies here \emph{mutatis mutandis}.
\end{proof}

\begin{proof}[Proof of Proposition \ref{prop:automorphism-respects-jacobian}]
This is now identical to the proof of \cite[Proposition 3.7]{DWZ1}.
\end{proof}


\section{Change of base field}\label{sec:change-of-base-field}

Let $(Q,\dtuple)$ be a strongly primitive weighted quiver. Let us study now the behavior of (complete) path algebras and Jacobian algebras under certain extensions of the ground field $F$. Specifically, let $K/F$ be a finite-degree extension which is \emph{linearly disjoint} from $E/F$, that is, such that the multiplication map $K\otimes_FE\to KE$ is bijective. 
Then for every $i\in Q_0$ the extension $K/F$ is linearly disjoint from  $F_i/F$ as well. Consequently, for all $i,j\in Q_0$, the $F_i$-$F_j$-bimodule
$$
(A_{E/F})_{ij}=\bigoplus_{a:j\to i}F_i\otimes_FF_j
$$
is canonically embedded in the $KF_i$-$KF_j$-bimodule
$$
(A_{KE/K})_{ij}=\bigoplus_{a:j\to i}KF_i\otimes_KKF_j.
$$
Therefore, the $R_{E/F}$-$R_{E/F}$-bimodule $A_{E/F}=\bigoplus_{i,j\in Q_0}(A_{E/F})_{ij}$ is canonically embedded in the $R_{KE/K}$-$R_{KE/K}$-bimodule $A_{KE/K}=\bigoplus_{i,j\in Q_0}(A_{KE/K})_{ij}$, where
$R_{E/F}=\bigoplus_{i\in Q_0}F_i$ and $R_{KE/K}=\bigoplus_{i\in Q_0}KF_i$. All this implies that there exists a canonical embedding of $F$-algebras
$$
\iota:\RAextension{A}{E/F}\hookrightarrow \RAextension{A}{KE/K}.
$$
This embedding obviously sends paths to paths (see the part (1) of Remark \ref{rem:path-is-welldef-notion}).
Moreover, multiplication of paths inside $\RAextension{A}{E/F}$ coincides with multiplication of paths inside $\RAextension{A}{KE/K}$. 

Since $K/F$ is linearly disjoint from $E/F$, we can identify $R_{KE/K}$ with $K\otimes_FR_{E/F}$ by means of the bijective map $K\otimes_FR_{E/F}\rightarrow R_{KE/K}$ induced by the multiplication maps $K\otimes_FF_i\rightarrow KF_i$ (which are bijective by linear disjointness). Hence, $K\otimes_FA_{E/F}$ and $K\otimes_F\RAextension{A}{E/F}$ have a natural structure of $R_{KE/K}$-$R_{KE/K}$-bimodule. Under the identification $R_{KE/K}=K\otimes_FR_{E/F}$, the left and right actions of $R_{KE/K}$ on $K\otimes_FA_{E/F}$ and $K\otimes_F\RAextension{A}{E/F}$ can be written as
\begin{equation}\nonumber
(k\otimes r)(x\otimes y)=kx\otimes ry \ \ \
(x\otimes y)(k\otimes r)=kx\otimes yr.
\end{equation}
Note that $K\otimes_F\RAextension{A}{E/F}$ also has a natural $K$-algebra structure, given by the multiplication
$$
(k_1\otimes u_1)(k_2\otimes u_2)=k_1k_2\otimes u_1u_2.
$$

\begin{lemma}\label{lemma:extension-of-scalars} The arrow span $A_{KE/K}$ is isomorphic to $K\otimes_FA_{E/F}$ as an $R_{KE/K}$-$R_{KE/K}$-bimodule. The algebras $\RAextension{A}{KE/K}$ and $K\otimes_F\RAextension{A}{E/F}$ are isomorphic as $R_{KE/K}$-algebras.
\end{lemma}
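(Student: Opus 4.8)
The plan is to build both isomorphisms explicitly and then verify compatibility with the relevant algebraic structures. First I would treat the bimodule statement. For each pair $i,j\in Q_0$ and each arrow $a:j\to i$, the multiplication maps $K\otimes_F F_i\to KF_i$ and $K\otimes_F F_j\to KF_j$ are bijective by linear disjointness (as noted in the paragraph preceding the lemma, $K/F$ is linearly disjoint from each $F_i/F$). Tensoring, one gets a $K$-linear bijection $K\otimes_F(F_i\otimes_F F_j)\to KF_i\otimes_K KF_j$, which one checks is an $R_{KE/K}$-$R_{KE/K}$-bimodule map using the explicit formulas $(k\otimes r)(x\otimes y)=kx\otimes ry$ and $(x\otimes y)(k\otimes r)=kx\otimes yr$ for the action under the identification $R_{KE/K}=K\otimes_F R_{E/F}$. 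Taking the direct sum over all arrows and all pairs $(i,j)$ yields the desired $R_{KE/K}$-$R_{KE/K}$-bimodule isomorphism $K\otimes_F A_{E/F}\xrightarrow{\ \sim\ }A_{KE/K}$.

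Next I would pass to (complete) path algebras. Since tensoring with $K$ over $F$ commutes with $R$-$R$-bimodule tensor products in the strongly primitive setting — concretely, because $R_{KE/K}=K\otimes_F R_{E/F}$ and $K$ is a field, so that $K\otimes_F(M\otimes_{R_{E/F}}N)\cong (K\otimes_F M)\otimes_{R_{KE/K}}(K\otimes_F N)$ — one obtains for each $\ell$ a bimodule isomorphism $K\otimes_F A_{E/F}^{\ell}\cong (A_{KE/K})^{\ell}$. The cleanest way to see this concretely is via the path bases: by Lemma \ref{lemma:paths-form-a-basis}, the length-$\ell$ paths on $(Q,\dtuple)$ form an $F$-basis of $A_{E/F}^{\ell}$, and by part (1) of Remark \ref{rem:path-is-welldef-notion} the same set of paths (using the same eigenbasis $\B_E$, which by Example \ref{ex:linealy-disjoint} is also an eigenbasis of $KE/K$) forms a $K$-basis of $(A_{KE/K})^{\ell}$; hence the $F$-linear map sending a path to itself extends to a $K$-linear isomorphism $K\otimes_F A_{E/F}^{\ell}\to (A_{KE/K})^{\ell}$. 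Taking the product over $\ell\geq 0$, and using that the canonical embedding $\iota$ already identifies the completion of $A_{E/F}$ with a dense subalgebra compatible with the $\maxid$-adic topologies, one gets an isomorphism of $R_{KE/K}$-modules $K\otimes_F\RAextension{A}{E/F}\to \RAextension{A}{KE/K}$. (Here one should remark that $K\otimes_F\prod_{\ell}A^{\ell}$ must be interpreted as the completion $\prod_{\ell}(K\otimes_F A^{\ell})$, which is the standard convention; since $K/F$ is finite this causes no trouble.)

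Finally I would check that this isomorphism respects the multiplicative structure, i.e. is a $K$-algebra map, and intertwines the two $R_{KE/K}$-algebra structures. This reduces to the multiplication rule for paths displayed in the Remark following Lemma \ref{lemma:paths-form-a-basis}: the product of two concatenable paths is $f(\omega_\ell,\varpi_0)$ times a path, where $f:\B_E\times\B_E\to F^\times$ and $m:\B_E\times\B_E\to\B_E$ are the structure functions of \eqref{eq:mult-property-of-eigenbases}. Since $\B_E$ and these functions $f,m$ are the same whether we work over $E/F$ or over $KE/K$, the paragraph preceding the lemma already records that ``multiplication of paths inside $\RAextension{A}{E/F}$ coincides with multiplication of paths inside $\RAextension{A}{KE/K}$'', so the path-basis isomorphism is automatically multiplicative; comparing with the formula $(k_1\otimes u_1)(k_2\otimes u_2)=k_1k_2\otimes u_1u_2$ for the algebra structure on $K\otimes_F\RAextension{A}{E/F}$ then shows the map is a $K$-algebra isomorphism, and checking it fixes $R_{KE/K}$ pointwise (obvious from the construction) makes it an $R_{KE/K}$-algebra isomorphism. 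The main obstacle is not any single step but rather the bookkeeping: one must be careful that ``$K\otimes_F$'' applied to an infinite product is silently being replaced by the corresponding completion, and that the $\maxid$-adic topologies on the two sides match up under $\iota$ — but both points are routine given that $K/F$ is a finite extension and that $\iota$ sends paths to paths.
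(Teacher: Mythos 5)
Your argument is correct. The paper actually omits a proof of this lemma, treating it as an immediate consequence of the identifications set up in the surrounding text: the canonical embedding $\iota$ sending paths to paths, the identification $R_{KE/K}=K\otimes_F R_{E/F}$ coming from linear disjointness, and the observation in Remark \ref{rem:path-is-welldef-notion}(1) that the same eigenbasis $\B_E$ serves for both $E/F$ and $KE/K$, so that the path bases of Lemma \ref{lemma:paths-form-a-basis} coincide. Your proposal supplies precisely the verification the paper leaves implicit, and you correctly isolated the one genuinely delicate point: $K\otimes_F(-)$ commutes with the infinite product $\prod_{\ell\geq 0}A^{\ell}$ only because $[K:F]$ is finite (a finite direct sum commutes with products), so $K\otimes_F\RAextension{A}{E/F}$ really is $\prod_{\ell\geq 0}(K\otimes_F A_{E/F}^{\ell})$ rather than merely containing it as a dense subspace.
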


Now suppose that $S$ is a potential on the arrow span $A_{E/F}$ of $(Q,\dtuple)$. The $K$-vector subspace $K\otimes_FJ_{E/F}(S)$ of $K\otimes_F\RAextension{A}{E/F}$ is actually an ideal. Under the isomorphism $K\otimes_F\RAextension{A}{E/F}\rightarrow\RAextension{A}{KE/K}$ of Lemma \ref{lemma:extension-of-scalars}, $K\otimes_FJ_{E/F}(S)$ maps bijectively onto the Jacobian ideal $J_{KE/K}(S)\subseteq\RAextension{A}{KE/K}$. We thus have

\begin{coro}\label{coro:Jacobian-ext-of-scalars} The isomorphism $K\otimes_F\RAextension{A}{E/F}\rightarrow\RAextension{A}{KE/K}$ from Lemma \ref{lemma:extension-of-scalars} induces an isomorphism of $K$-algebras $K\otimes_F {\mathcal P}(A_{E/F},S)\rightarrow{\mathcal P}(A_{KE/K},S)$.
\end{coro}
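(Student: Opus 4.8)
The plan is to obtain the corollary by tensoring the defining short exact sequence of Jacobian algebras with $K$ over $F$ and then transporting the result along the algebra isomorphism of Lemma~\ref{lemma:extension-of-scalars}; the only input beyond that bookkeeping is the fact, already recorded in the paragraph preceding the statement, that this isomorphism carries $K\otimes_F J_{E/F}(S)$ precisely onto $J_{KE/K}(S)$. In detail, since $F$ is a field, $K$ is free of finite rank $r=[K:F]$ as an $F$-module, so a choice of $F$-basis of $K$ identifies the functor $K\otimes_F(-)$ with $(-)^{\oplus r}$; in particular $K\otimes_F(-)$ is exact. Applied to $0\to J_{E/F}(S)\to\RAextension{A}{E/F}\to\mathcal P(A_{E/F},S)\to 0$ this gives an exact sequence $0\to K\otimes_F J_{E/F}(S)\to K\otimes_F\RAextension{A}{E/F}\to K\otimes_F\mathcal P(A_{E/F},S)\to 0$ of $K$-vector spaces, and a direct check using the multiplication $(k_1\otimes u_1)(k_2\otimes u_2)=k_1k_2\otimes u_1u_2$ shows that $K\otimes_F J_{E/F}(S)$ is a two-sided ideal of $K\otimes_F\RAextension{A}{E/F}$ whose quotient is, as a $K$-algebra, the term $K\otimes_F\mathcal P(A_{E/F},S)$.

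Next I would feed this through the $R_{KE/K}$-algebra isomorphism $\Phi\colon K\otimes_F\RAextension{A}{E/F}\to\RAextension{A}{KE/K}$ of Lemma~\ref{lemma:extension-of-scalars}. Granting $\Phi(K\otimes_F J_{E/F}(S))=J_{KE/K}(S)$, a ring isomorphism carrying one two-sided ideal onto another descends to an isomorphism of quotient rings, yielding $K\otimes_F\mathcal P(A_{E/F},S)\to\RAextension{A}{KE/K}/J_{KE/K}(S)=\mathcal P(A_{KE/K},S)$. This induced map is automatically $K$-linear: the $K$-algebra structure on the source is the one coming from the multiplication displayed above, which under $\Phi$ corresponds to multiplication by $K=K\otimes 1\subseteq R_{KE/K}$, and quotienting by a $K$-stable ideal preserves $K$-linearity. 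Hence the corollary.

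The substantive point is therefore the equality $\Phi(K\otimes_F J_{E/F}(S))=J_{KE/K}(S)$, and I would argue it in three steps. First, under the canonical inclusion $\iota\colon\RAextension{A}{E/F}\hookrightarrow\RAextension{A}{KE/K}$ of Section~\ref{sec:change-of-base-field}, which sends paths to paths and respects their products, the potential $S$ and each cyclic derivative $\partial_a(S)$ ($a\in Q_1$) are left unchanged, because formula~\eqref{eq:cyclic-derivative} is expressed entirely in terms of paths and of the functions $f,m$ that govern products of eigenbasis elements, and these are the same for $E/F$ and $KE/K$; thus $\Phi(1\otimes\partial_a(S))=\partial_a(S)$ in $\RAextension{A}{KE/K}$. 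Second, if $I\subseteq\RAextension{A}{E/F}$ and $I'\subseteq\RAextension{A}{KE/K}$ denote the (not yet closed) two-sided ideals generated by the $\partial_a(S)$, then $K\otimes_F I$ is the two-sided ideal of $K\otimes_F\RAextension{A}{E/F}$ generated by the $1\otimes\partial_a(S)$, so the ring isomorphism $\Phi$ maps $K\otimes_F I$ onto $I'$. Third, one passes to $\maxid$-adic closures: $\Phi$ is a homeomorphism for the natural topologies, being a degree-preserving (hence filtration-preserving) algebra isomorphism, and under the identification $K\otimes_F(-)\cong(-)^{\oplus r}$ the operation $K\otimes_F(-)$ commutes with the intersections $\bigcap_n(W+\maxid^n)$ that compute closures; therefore $\Phi(K\otimes_F J_{E/F}(S))=\Phi(K\otimes_F\overline I)=\overline{\Phi(K\otimes_F I)}=\overline{I'}=J_{KE/K}(S)$.

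I expect the main (though modest) obstacle to be exactly this last compatibility of extension of scalars with topological closures; everything else is formal module theory. It is precisely here that the finiteness of $[K:F]$ is essential: only then does $K\otimes_F(-)\cong(-)^{\oplus r}$ commute with infinite products and with these intersections, so that $K\otimes_F\RAextension{A}{E/F}$ really is the complete path algebra $\RAextension{A}{KE/K}$ and its closed Jacobian ideal is recovered by a plain base change.
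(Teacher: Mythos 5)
Your proposal is correct and takes the same approach as the paper: the paper simply asserts, in the sentence preceding the corollary, that the isomorphism of Lemma~\ref{lemma:extension-of-scalars} carries $K\otimes_F J_{E/F}(S)$ bijectively onto $J_{KE/K}(S)$, and the corollary follows by passing to quotients. Your proof supplies the verification of that assertion in full, and correctly identifies the essential point as the compatibility of finite scalar extension with $\maxid$-adic closures, which is where the finiteness of $[K:F]$ is used.
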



\section{Reduction}\label{sec:reduction}


\begin{defi}
\label{def:SP}
Suppose
$S \in  \completeQdcoprime_{\rm cyc}$ is a potential.
We say that the pair $(A,S)$ is a \emph{strongly primitive species with potential}
(SP for short) if
\begin{equation}
\label{eq:no-loops}
\text{the quiver~$Q$ has no loops, i.e., $A_{i,i} = 0$ for all $i \in Q_0$; and}
\end{equation}
\begin{equation}\label{eq:no-cycequiv}
\text{no two different terms of $S$ are cyclically equivalent.}
\end{equation}
\end{defi}

\begin{remark}\label{rem:why-we-call-it-species} Our use of the term \emph{species} comes from the fact that, for a strongly primitive skew-symmetrizable matrix $B$, the data $((F_i)_{i\in Q_0},(A_{ij})_{i,j\in Q_0},(A_{ji}^\star)_{i,j\in Q_0})$ constitutes a \emph{species realization}, or \emph{modulation} in Dlab-Ringel's nomenclature, of the valued quiver defined by $B$, where  $A_{ji}^{\star}=\bigoplus_{a:j\to i}F_j\otimes_F F_i\cong\Hom_{F_i}(A_{ij},F_i)\cong\Hom_{F_j}(A_{ij},F_j)$.
\end{remark}

\begin{defi}
\label{def:AS-isomorphism} Let $(Q,\dtuple)$ and $(Q',\dtuple)$ be weighted quivers on the same vertex set $Q_0$, and with the same weight tuple $\dtuple$. Let $A$ and $A'$ be their respective arrow spans over $E/F$.
By a \emph{right-equivalence} between SPs $(A,S)$ and $(A',S')$ we mean
an $R$-algebra isomorphism
$\varphi:  \completeQdcoprime\to\completeQpdcoprime$ such that
$\varphi(S)$ is
cyclically equivalent to~$S'$.
\end{defi}

Any $R$-algebra homomorphism sends cyclically equivalent potentials to cyclically
equivalent ones.
It follows that right-equivalences of SPs
have the expected properties: the composition of two right-equivalences,
as well as the inverse of a right-equivalence, is again a right-equivalence.
Notice also that by Proposition \ref{prop:automorphisms}, if $(A,S)$ and
$(A',S')$ are right-equivalent, then there is a bijection $Q_1\to Q_1'$ giving an isomorphism of weighted quivers $(Q,\dtuple)\rightarrow(Q',\dtuple)$. So in dealing with
right-equivalent SPs
we may as well assume that $(Q,\dtuple)=(Q',\dtuple)$ and $A=A'$.

In view of Propositions~\ref{prop:cyclic-equivalence}
and \ref{prop:automorphism-respects-jacobian},
any right-equivalence of SPs
$(A,S) \cong (A,S')$ induces
an isomorphism of the Jacobian ideals $J(S) \cong J(S')$
and of the Jacobian algebras ${\mathcal P}(A,S) \cong {\mathcal P}(A,S')$.

For the following definition we note that if $(A,S)$ is an SP such that $S\in A^2$, then the $R$-$R$-subbimodule $\partial S$ of $\completeQdcoprime$
generated by the set $\{\partial_a(S)\suchthat a\in Q_1\}$ is contained in $A$.

\begin{defi}
\label{def:trivial-&-reduced}
We say that an SP
$(A,S)$ is
\begin{itemize}\item \emph{trivial} if $S \in A^2$, and $\partial S = A$
;
\item \emph{reduced} if $S^{(2)}=0$, that is, if no path of length $2$ appears in the expression of $S$ as possibly infinite-linear combination of cyclic paths.
\end{itemize}
\end{defi}

\begin{ex} Consider the weighted quiver
\begin{center}
$(Q,\dtuple)=$\begin{tabular}{ccc}
$\xymatrix{1  \ar@<0.5ex>[r]^{b}
& 2 \ar@<0.5ex>[l]^{a}},$ & &
$\xymatrix{2  &  3 }$%
\end{tabular}
\end{center}
Let $\B_E=\{1,v,v^2,v^3,v^4,v^5\}$ be an eigenbasis of $E/F$, where $[E:F]=6$. Then $\B_1=\{1,v^3\}$ and $\B_2=\{1,v^2,v^4\}$ are eigenbases of $F_1/F$ and $F_2/F$, respectively. Let $A$ be the arrow span of $(Q,\dtuple)$ over $E/F$. Consider the potential $S=ab+v^3av^2b$, which certainly belongs to $A^2$. We claim that $(A,S)$ is trivial and right-equivalent to $(A,ab)$. Notice that the multiplication map $\mult:F_2\otimes_F F_1\to E$ (resp. $\mult:F_1\otimes_F F_2\to E$) is an isomorphism of $F_2$-$F_1$-bimodules (resp. $F_1$-$F_2$-bimodules), and that the $F_2$-$F_1$-subbimodules (resp. $F_1$-$F_2$-subbimodules) of $E$ are precisely the $E$-vector subspaces of $E$. Since $\mult(b+v^2bv^3)=\mult(a+v^3av^2)=1+v^5\neq 0$, it follows that the cyclic derivative $\partial_a(S)=b+v^2bv^3$ generates $A_{21}=F_2\otimes_FF_1$ as an $F_2$-$F_1$-bimodule, and that the cyclic derivative  $\partial_b(S)=a+v^3av^2$ generates $A_{12}=F_1\otimes_FF_2$ as an $F_1$-$F_2$-bimodule. Furthermore, since the $F_1$-$F_2$-bimodule endomorphisms of $F_1\otimes_FF_2$ correspond to the $E$-linear maps $E\to E$ under the bimodule isomorphism $\mult$, we see that there exists a unique $F_1$-$F_2$-bimodule endomorphism of $F_1\otimes_FF_2$ sending $a$ to $a+v^3av^2$, and this endomorphism is actually an isomorphism. We extend it to a right-equivalence $(A,ab)\to(A,S)$ by sending $b$ to itself.
\end{ex}

\begin{ex} Let $(Q,\dtuple)$ be the weighted quiver from the previous example, with corresponding arrow span $A$. Then any degree-2 potential on $A$ is cyclically equivalent to a potential of the form $S=\alpha_0ab+\alpha_1vab+\alpha_2av^2b+\alpha_3v^3ab+\alpha_4av^4b+\alpha_5v^3av^2b$ for some $\alpha_0,\alpha_1,\alpha_2,\alpha_3,\alpha_4,\alpha_5\in F$. Thus, the space of degree-2 potentials taken up to cyclical equivalence is isomorphic to $F^6$ as an $F$-vector space. Furthermore, $(A,S)$ is trivial if and only if $(\alpha_0,\alpha_1,\alpha_2,\alpha_3,\alpha_4,\alpha_5)\in F^6\setminus\{0\}$, in which case, $(A,S)$ is right-equivalent to $(A,ab)$.
\end{ex}

\begin{prop}\label{prop:trivial-potential} An SP
$(A,S)$ with $S \in A^2$ is trivial
if and only if the set $Q_1$
consists of $2N$ distinct arrows
$a_1, b_1, \dots, a_N, b_N$ such that
$a_k b_k$ is a $2$-cycle and
there is a change of arrows~$\varphi$ of $\completeQdcoprime$
(see Definition~\ref{def:automorphisms}) such that
$\varphi(S)$ is cyclically equivalent to $a_1b_1+\ldots a_Nb_N$.
\end{prop}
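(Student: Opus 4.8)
The plan is to prove the two implications separately; the ``only if'' direction (triviality forces the normal form) carries essentially all of the content, while the ``if'' direction is a short verification together with a right-equivalence-invariance check.

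\emph{The ``if'' direction.} Suppose $Q_1=\{a_1,b_1,\dots,a_N,b_N\}$ consists of $2N$ distinct arrows forming $2$-cycles $a_kb_k$, and that there is a change of arrows $\varphi$ with $\varphi(S)$ cyclically equivalent to $W:=a_1b_1+\cdots+a_Nb_N$. By Definition~\ref{def:automorphisms}, $\varphi$ is an $R$-algebra automorphism of $\completeQdcoprime$ with $\varphi^{(2)}=0$, so $\varphi^{(1)}=\varphi|_A$ is an $R$-$R$-bimodule automorphism. First, $(A,W)$ is trivial: $W\in A^2$, and since the $2N$ arrows are distinct, \eqref{eq:cyclic-derivative} gives $\partial_{a_k}(W)=b_k$ and $\partial_{b_k}(W)=a_k$, so $\partial W$ contains every arrow; as the arrows generate $A$ as an $R$-$R$-bimodule (each summand $F_i\otimes_FF_j$ of $A_{ij}$ being generated by $1\otimes1$), we get $\partial W=A$. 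Second, triviality is invariant under passing to a cyclically equivalent potential (immediate from Proposition~\ref{prop:cyclic-equivalence}) and under a change of arrows: for a change of arrows $\varphi$ one has $\varphi(b)\in A$, so by the cyclic chain rule (Lemma~\ref{lem:chain-rule}) each $\Delta_a(\varphi(b))$ lies in $R\otimes_FR$, whence $\partial_a(\varphi(S))$ is an $R$-$R$-bilinear combination of the $\varphi(\partial_b(S))$; thus $\partial(\varphi(S))\subseteq\varphi(\partial S)$, and applying this to $\varphi^{-1}$ gives equality, so $\partial(\varphi(S))=A\iff\partial S=A$. Combining these two facts with the hypothesis $S\in A^2$, we conclude that $(A,S)$ is trivial.

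\emph{The ``only if'' direction: reduction to a perfect pairing.} Assume $(A,S)$ is trivial. Since $A_{i,i}=0$, every degree-$2$ cyclic path runs $i\to j\to i$ with $i\neq j$. Grouping the summands of $A^2_{\mathrm{cyc}}$ by unordered pairs and using cyclical equivalence to identify the $j$-contribution of $A^2_{i,i}$ with the $i$-contribution of $A^2_{j,j}$, we may write $S$, up to cyclical equivalence, as $\sum_{\{i,j\}}S_{ij}$ with $S_{ij}\in A_{ij}\otimes_{F_j}A_{ji}$. By \eqref{eq:cyclic-derivative}, for an arrow $a\colon j\to i$ one has $\partial_a(S)=\partial_a(S_{ij})\in A_{ji}$, and symmetrically $\partial_b(S)\in A_{ij}$ for $b\colon i\to j$; hence $\partial S=A$ is equivalent to requiring, for every unordered pair $\{i,j\}$, that $\{\partial_a(S):a\colon j\to i\}$ generate $A_{ji}$ and $\{\partial_b(S):b\colon i\to j\}$ generate $A_{ij}$ as $R$-$R$-bimodules. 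Fix $\{i,j\}$. Since $\gcd(d_i,d_j)=1$ and $F_i,F_j$ lie inside the cyclic extension $E/F$, we have $F_i\cap F_j=F$, so multiplication identifies $F_i\otimes_FF_j$ with the compositum field $F_iF_j$; consequently $A_{ij}$ and $A_{ji}$ are finite free $F_iF_j$-modules, $S_{ij}$ defines a pairing between them, and the two generation conditions say exactly that its two adjoint maps are surjective. A rank count then forces both adjoints to be bijective, so $S_{ij}$ is a perfect pairing; in particular the number of arrows $j\to i$ equals the number $N_{ij}$ of arrows $i\to j$.

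\emph{Normal form and conclusion.} It remains to put a perfect pairing into standard form. Choosing dual bases of the free $F_iF_j$-modules $A_{ij}$ and $A_{ji}$ — equivalently, applying suitable $R$-$R$-bimodule automorphisms of $A_{ij}$ and of $A_{ji}$, which together extend to a change of arrows of $\completeQdcoprime$ equal to the identity outside the $\{i,j\}$-block — transforms $S_{ij}$, modulo cyclical equivalence, into $a_1b_1+\cdots+a_{N_{ij}}b_{N_{ij}}$, where the new $a_k\colon j\to i$ and $b_k\colon i\to j$ are such that each $a_kb_k$ is a $2$-cycle. Composing these changes of arrows over all unordered pairs and relabeling all arrows exhibits $Q_1$ as a disjoint union of $2$-cycles $a_1b_1,\dots,a_Nb_N$ together with a change of arrows $\varphi$ with $\varphi(S)$ cyclically equivalent to $a_1b_1+\cdots+a_Nb_N$. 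Throughout, one has to keep track of the nonzero constants $f(\cdot,\cdot)$ from \eqref{eq:mult-property-of-eigenbases} that appear when multiplying paths and computing cyclic derivatives, but these affect none of the rank or generation statements.

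I expect the main obstacle to be this last step: producing the normal form $\sum a_kb_k$, i.e.\ a ``hyperbolic'' (dual) basis for a perfect pairing, in the bimodule setting rather than for an honest nondegenerate bilinear form over $F$ as in \cite{DWZ1}. It is precisely here that strong primitivity is indispensable: $\gcd(d_i,d_j)=1$ is what makes $F_i\otimes_FF_j$ a field, and hence $A_{ij}$ and $A_{ji}$ free modules over it to which the classical dual-basis construction applies; one also has to check that this construction can be realized by genuine changes of arrows and not merely by more general $R$-algebra automorphisms.
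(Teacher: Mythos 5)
Your proof is correct and follows essentially the same route as the paper's: the key step in both is that strong primitivity makes $F_i\otimes_FF_j\cong F_iF_j$ a field, so a dimension count over $F_iF_j$ forces the numbers of arrows in both directions between $i$ and $j$ to agree, and the cyclic derivatives then yield an $F_iF_j$-linear (hence $R$-$R$-bimodule) automorphism of $A$ realizing the normal form. The only difference is cosmetic: you phrase the crux as ``$S_{ij}$ is a perfect pairing with a dual basis'' where the paper directly defines the bimodule map $a_k\mapsto u_k$ and observes bijectivity, and you spell out the ``if'' direction (including invariance of triviality under change of arrows via the chain rule) that the paper dismisses as obvious; both additions are sound.
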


\begin{proof} Sufficiency is obvious, let us prove necessity. Suppose $(A,S)$ is trivial.  Fix a total ordering $<$ of the vertices of $Q$. Take two vertices $i,j\in Q_0$ with $i<j$. Up to cyclical equivalence, we can write the part of $S$ that involves
cycles passing through $i$ and $j$ as $S_{ij}=u_1b_1+\ldots+u_nb_{n}$, where $b_1,\ldots,b_n$, are all the arrows of $Q$ that go from $j$ to $i$, and $u_1,\ldots,u_{n}$, are $F$-linear combinations of paths of the form $\omega a\varpi$, with $a:i\rightarrow j$, $\omega\in\B_j$, and $\varpi\in\B_i$.

Let $a_1,\ldots,a_m$, be all arrows of $Q$ that go from $i$ to $j$. Since $A_{ij}$ can be generated by $\partial_{a_1}(S_{ij}),\ldots,\partial_{a_m}(S_{ij})$ as an $F_i$-$F_j$-bimodule, seeing $A_{ij}$ as an $F_iF_j$-vector space if necessary we deduce that $nd_id_j=\dim_F(A_{ij})\leq md_id_j$. Similarly, we have $md_jd_i=\dim_F(A_{ji})\leq nd_jd_i$. Hence $m=n$. 

Again seeing $A_{ji}$ as an $F_{j}F_i$-vector space if necessary, we deduce the existence of an $F_j$-$F_i$-bimodule homomorphism $\varphi_{(ij)}:A_{ji}\to A_{ji}$ sending $a_k$ to $u_k$ for each $k=1,\ldots,n$. This homomorphism is easily seen to be bijective. The proposition follows by assembling all homomorphisms $\varphi_{(ij)}$ for $i<j$.
\end{proof}

\begin{defi}\label{def:direct-sum-of-SPs} Let $A$ and $A'$ be as in Definition \ref{def:AS-isomorphism}. Given SPs
$(A,S)$ and $(A',S')$, we define their \emph{direct sum} to be the SP
$(A,S) \oplus (A,S')=(A\oplus A',S+S')$, where $A\oplus A'$ is the direct sum of $A$ and $A'$ as $R$-$R$-bimodules, and $S+S'$ is seen as an element of $\RA{A\oplus A'}$ through the embeddings of
$\completeQdcoprime$ and $\completeQpdcoprime$ as closed $R$-subalgebras of $\RA{A\oplus A'}$.
\end{defi}

Notice that the $R$-$R$-bimodule $A\oplus A'$ is the arrow span of the weighted quiver $(Q\oplus Q',\dtuple)$ where $Q\oplus Q'=(Q_0,Q_1\sqcup Q_1',h,t)$, with its head and tail functions defined in the obvious way in terms of the head and tail functions of $Q$ and $Q'$.

Taking direct sums with trivial ones
does not affect the Jacobian algebra. More precisely:

\begin{prop}
\label{prop:jacobian-algebra-invariant}
If $(A,S)$ is an arbitrary SP,
and $(A',T)$ is a trivial one, then the canonical embedding
$\completeQdcoprime\hookrightarrow \RA{A\oplus A'}$
induces an isomorphism of Jacobian algebras
${\mathcal P}(A, S) \to {\mathcal P}(A \oplus A',S + T)$.
\end{prop}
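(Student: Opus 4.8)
The plan is to argue as in \cite[Proposition 4.5]{DWZ1}: realize the claimed map as the one induced by the canonical embedding $\iota\colon\RA{A}\hookrightarrow\RA{A\oplus A'}$ and exhibit an explicit inverse. The second map in play is the $R$-algebra homomorphism $q\colon\RA{A\oplus A'}\to\RA{A}$ determined (via Proposition~\ref{prop:automorphisms}) by the $R$-$R$-bimodule projection $A\oplus A'\to A$ that kills $A'$. Both $\iota$ and $q$ are continuous, since they carry $\maxid^n$ into $\maxid^n$; moreover $q$ is surjective, $q\circ\iota=\mathrm{id}_{\RA{A}}$, and $\ker q$ is the closed two-sided ideal generated by the bimodule $A'$, that is, the closure of the $F$-span of all paths on $(Q\oplus Q',\dtuple)$ that involve at least one arrow of $Q'$.

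The heart of the argument is a comparison of the two Jacobian ideals. Since $S$ involves only arrows of $Q$ and $T$ only arrows of $Q'$, $F$-linearity of the cyclic derivatives yields $\partial_c(S+T)=\partial_c(S)$ for $c\in Q_1$ and $\partial_c(S+T)=\partial_c(T)$ for $c\in Q_1'$; the latter elements lie in $A'$ because $T\in(A')^2$, and they generate $A'$ as an $R$-$R$-bimodule because $(A',T)$ is trivial. From this I would extract three facts, each obtained by pushing a closed ideal through one of the continuous maps $\iota,q$: \emph{(i)} $q(J(S+T))\subseteq J(S)$, because $q$ annihilates the generators $\partial_c(T)$ of $J(S+T)$ and fixes its generators $\partial_c(S)$; \emph{(ii)} $\iota(J(S))\subseteq J(S+T)$, because $\iota(\partial_c(S))=\partial_c(S+T)$ for $c\in Q_1$; and \emph{(iii)} $\ker q\subseteq J(S+T)$, because every path through an arrow of $Q'$ factors as $u\,a'\,w$ with $a'\in A'$, and $A'$ is contained in the two-sided ideal generated by the $\partial_c(T)$, hence in the closed ideal $J(S+T)$.

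By \emph{(i)} the homomorphism $q$ descends to $\bar q\colon\mathcal{P}(A\oplus A',S+T)\to\mathcal{P}(A,S)$, by \emph{(ii)} the embedding $\iota$ descends to the canonical map $\bar\iota\colon\mathcal{P}(A,S)\to\mathcal{P}(A\oplus A',S+T)$, and $\bar q\circ\bar\iota=\mathrm{id}$ because $q\circ\iota=\mathrm{id}_{\RA{A}}$. In the other direction, for every $y\in\RA{A\oplus A'}$ one has $y-\iota(q(y))\in\ker q\subseteq J(S+T)$ by \emph{(iii)}, so $\bar\iota\circ\bar q=\mathrm{id}$ as well; hence $\bar\iota$ is an isomorphism, as claimed. (Alternatively, one may first reduce, via Proposition~\ref{prop:trivial-potential} together with Propositions~\ref{prop:cyclic-equivalence} and~\ref{prop:automorphism-respects-jacobian}, to the case $T=a_1b_1+\cdots+a_Nb_N$, where $\partial_{a_k}(S+T)=b_k$ and $\partial_{b_k}(S+T)=a_k$ make the computation even more transparent; this reduction respects $\bar\iota$ because the automorphism of $\RA{A\oplus A'}$ implementing the change of arrows fixes $\RA{A}$ pointwise.) I do not anticipate a serious obstacle; the only delicate point is the bookkeeping with the $\maxid$-adic closures — one must be sure that $\iota$ and $q$ are continuous, so that the image of a closed ideal lies inside the closure of the image of a generating set, and one must use the triviality of $(A',T)$ precisely to force $A'\subseteq J(S+T)$, which is what collapses $\ker q$ and turns $q$ into an isomorphism of Jacobian algebras.
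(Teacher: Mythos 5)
Your proof is correct and follows essentially the same route as the proof of \cite[Proposition 4.5]{DWZ1} to which the paper defers: compare the Jacobian ideals via the canonical inclusion $\iota$ and the canonical projection $q$, using triviality of $(A',T)$ to get $A'\subseteq J(S+T)$ and hence $\ker q\subseteq J(S+T)$. Your direct argument avoids first normalizing $T$ to the form $\sum a_kb_k$ (which you mention only as an alternative), but that is a cosmetic streamlining rather than a different method; all the load-bearing facts — $\partial_c(S+T)=\partial_c(S)$ for $c\in Q_1$, $\partial_c(S+T)=\partial_c(T)\in A'$ spanning $A'$ for $c\in Q_1'$, continuity of $\iota$ and $q$ so that closures behave — are the same ones the cited proof relies on.
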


\begin{proof} The proof of Proposition 4.5 of \cite{DWZ1} applies here \emph{mutatis mutandis}.
\end{proof}

\begin{defi}\label{def:triv-&-red-spans} We define the \emph{trivial} and \emph{reduced} arrow spans of $(A,S)$
as the $R$-$R$-bimodules given by
\begin{equation}
\label{eq:A-triv-A-red}
A_{\rm triv} = A_{\rm triv}(S) = \partial (S^{(2)}), \quad
A_{\rm red} = A_{\rm red}(S) = A/A_{\rm triv} \ .
\end{equation}
\end{defi}

The following statement will play a crucial role in later sections.

\begin{thm}[Splitting Theorem]
\label{thm:trivial-reduced-splitting}
For every SP $(A,S)$ there exist a trivial
SP $(A_{\triv},S_{\rm triv})$ and a reduced SP
$(A_{\red},S_{\rm red})$ such that $(A,S)$
is right-equivalent to the direct sum
$(A_{\triv},S_{\rm triv}) \oplus (A_{\red}, S_{\rm red})$.
Furthermore, the right-equivalence class of
each of the SPs $(A_{\triv}, S_{\rm triv})$ and $(A_{\red}, S_{\rm red})$
is determined by the right-equivalence class of $(A,S)$.
\end{thm}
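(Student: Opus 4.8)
The plan is to follow the proof of \cite[Theorem 4.6]{DWZ1} as closely as possible, since the hypotheses have been set up precisely so that $\completeQdcoprime$ is a $D$-algebra in the sense of \cite[Section 13]{DWZ1}. The argument splits into an \emph{existence} part and a \emph{uniqueness} part.

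For existence, I would construct the desired right-equivalence by a limit process. Write $S = S^{(2)} + S^{(\geq 3)}$, where $S^{(2)}\in A^2_{\mathrm{cyc}}$ and $S^{(\geq 3)}\in\maxid^3$. Applying Proposition~\ref{prop:trivial-potential} to $(A_{\mathrm{triv}},S^{(2)})$ (after discarding those arrows that do not occur in $S^{(2)}$, and noting $A_{\mathrm{triv}}=\partial(S^{(2)})$ by definition), there is a change of arrows making $S^{(2)}$ cyclically equivalent to a sum of $2$-cycles $\sum_k a_kb_k$; so after an initial right-equivalence we may assume $A = A_{\mathrm{triv}}\oplus A_{\mathrm{red}}$ as $R$-$R$-bimodules, with $A_{\mathrm{triv}}$ spanned by the $a_k,b_k$, and $S = \sum_k a_kb_k + (\text{higher terms})$. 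The key step is then: for any potential of the form $\sum_k a_kb_k + S'$ with $S'\in\maxid^{d}$ for some $d\geq 3$ and $S'$ having no linear or constant terms in the $a_k,b_k$ ``badly'', one builds a depth-$(d-2)$ unitriangular automorphism $\varphi$ (substituting $a_k\mapsto a_k - (\text{appropriate piece of }\partial_{b_k}S')$, $b_k\mapsto b_k - (\text{appropriate piece of }\partial_{a_k}S')$) such that $\varphi(S)$ equals $\sum_k a_kb_k$ plus terms of strictly higher degree, modulo cyclical equivalence, plus a potential supported entirely away from the trivial arrows. Here one uses the cyclic Leibniz and chain rules (equations~\eqref{eq:leibniz-several} and \eqref{eq:chain-rule}) exactly as in \cite{DWZ1}; the only thing to check is that the formal substitutions land in the correct bimodule components, which is guaranteed because the $F_i$-$F_j$-bimodule structure is compatible with the grading by $Q_0\times Q_0$ and because $\maxid$-adic completeness ensures the infinite composition $\cdots\circ\varphi_{d+1}\circ\varphi_d$ converges by \eqref{eq:unitriangular-d}. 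The limit automorphism $\varphi_\infty$ then carries $S$ to a cyclically equivalent potential of the form $S_{\mathrm{triv}} + S_{\mathrm{red}}$ with $S_{\mathrm{triv}}=\sum_k a_kb_k$ living in $\RA{A_{\mathrm{triv}}}$ and $S_{\mathrm{red}}\in\RA{A_{\mathrm{red}}}$ reduced, giving the asserted direct-sum decomposition.

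For uniqueness, suppose $(A,S)$ is right-equivalent both to $(A_{\mathrm{triv}},S_{\mathrm{triv}})\oplus(A_{\mathrm{red}},S_{\mathrm{red}})$ and to $(A_{\mathrm{triv}}',S_{\mathrm{triv}}')\oplus(A_{\mathrm{red}}',S_{\mathrm{red}}')$. Composing right-equivalences, one reduces to showing: if $\psi$ is a right-equivalence between two such direct sums, then it restricts (up to further right-equivalence) to right-equivalences of the trivial parts and of the reduced parts separately. One first notes that $A_{\mathrm{triv}}=\partial(S^{(2)})$ and $A_{\mathrm{red}}=A/A_{\mathrm{triv}}$ are intrinsic to $S$ only up to the choice of splitting, but that the degree-$2$ component $\psi^{(1)}$ and the induced map on $\maxid/\maxid^2$ are determined; then one argues that any right-equivalence of $(A_{\mathrm{triv}},\sum a_kb_k)\oplus(A_{\mathrm{red}},S_{\mathrm{red}})$ with itself that fixes the trivial part can be modified by a unitriangular automorphism to fix the reduced part too, again by a convergent sequence of substitutions. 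This is precisely the bookkeeping in the second half of the proof of \cite[Theorem 4.6]{DWZ1}, and it goes through because the only structural input used there is that one is working over a $D$-algebra with the $\maxid$-adic topology, together with Propositions~\ref{prop:cyclic-equivalence}, \ref{prop:automorphisms}, and \ref{prop:automorphism-respects-jacobian}, all of which are available here.

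The main obstacle, as in \cite{DWZ1}, is ensuring convergence of the inductive sequence of unitriangular automorphisms and checking that at each stage the substitution respects the $R$-$R$-bimodule structure — i.e.\ that the ``correction terms'' $\partial_{b_k}(S')$, etc., actually lie in the component $A_{t(a_k),h(a_k)}$ (equivalently, are $F_{t(a_k)}$-$F_{h(a_k)}$-sub-bimodule elements) so that $\varphi$ is a legitimate $R$-algebra automorphism. In the skew-symmetric case this is automatic; here it follows from the coprimality assumption \eqref{eq:coprime-assumption}, which makes the decomposition into paths unique (Lemma~\ref{lemma:paths-form-a-basis}) and makes the bimodule components $A_{ij}=\bigoplus_{a:j\to i}F_i\otimes_F F_j$ behave like free modules over the fields $F_iF_j$. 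Once this compatibility is in place, the rest is the word-for-word transcription promised in the introduction, so I would state it as such and refer the reader to \cite[Theorem 4.6]{DWZ1} for the routine details, highlighting only the bimodule-compatibility point as the place where strong primitivity is used.
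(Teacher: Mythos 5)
Your proposal is correct and matches the paper's approach: the paper proves the theorem by invoking the proof of Theorem~4.6 of \cite{DWZ1} \emph{mutatis mutandis}, noting that Lemmas~4.7--4.8, Propositions~4.9--4.10, and Lemmas~4.11--4.12 there transfer because $\RA{A}$ is a $D$-algebra. Your elaboration of the existence step (iterated unitriangular substitutions converging $\maxid$-adically) and the uniqueness step, together with the observation that strong primitivity makes the bimodule components $A_{ij}$ behave as free $F_iF_j$-modules so the correction terms land where they must, is exactly the bookkeeping the paper is gesturing at.
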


\begin{proof} The proof of Theorem 4.6 of \cite{DWZ1} applies here \emph{mutatis mutandis}. More precisely, Lemmas 4.7 and 4.8, Proposition 4.9 and 4.10 and Lemmas 4.11 and 4.12 from \cite{DWZ1} remain valid in our current context of SPs. (Crucial is the fact that the complete path algebra $\RA{A}$ is a \emph{D-algebra}).
\end{proof}

\begin{defi}
\label{def:reduced-part}
We call (the right-equivalence class of) the SP $(A_{\rm red}, S_{\rm red})$ Theorem~\ref{thm:trivial-reduced-splitting} the \emph{reduced part of} $(A,S)$. Likewise, we call (the right-equivalence class of) $(A_{\triv}, S_{\rm triv})$ the \emph{trivial part} of $(A,S)$.
\end{defi}

Reduced parts and trivial parts are stable under extensions $K/F$ linearly disjoint from $E/F$, in the following sense:

\begin{prop}\label{prop:red-commutes-with-ext-of-scalars} Let $(A_{E/F},S)$ be an SP, and let $K/F$ be a finite-degree extension linearly disjoint from $E/F$. If $((A_{E/F})_{\triv},S_{\rm triv})$ and
$((A_{E/F})_{\red},S_{\rm red})$ are a trivial and a reduced SP such that $(A_{E/F},S)$
is right-equivalent to the direct sum
$((A_{E/F})_{\triv},S_{\rm triv}) \oplus ((A_{E/F})_{\red}, S_{\rm red})$, then $((A_{KE/K})_{\triv},S_{\rm triv})$ is a trivial SP, $((A_{KE/K})_{\red}, S_{\rm red})$ is a reduced SP, and $(A_{KE/K},S)$
is right-equivalent to the direct sum
$((A_{KE/K})_{\triv},S_{\rm triv}) \oplus ((A_{KE/K})_{\red}, S_{\rm red})$.
\end{prop}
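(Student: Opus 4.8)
The plan is to transport the right-equivalence furnished by the Splitting Theorem~\ref{thm:trivial-reduced-splitting} along the base change from $F$ to $K$, using the extension-of-scalars results of Section~\ref{sec:change-of-base-field}, and then to observe that the properties of being trivial and of being reduced survive this base change for essentially formal reasons. As a first step I would invoke the remarks following Definition~\ref{def:AS-isomorphism} to identify $A_{E/F}$ with $(A_{E/F})_{\triv}\oplus(A_{E/F})_{\red}$ as $R_{E/F}$-$R_{E/F}$-bimodules, so that the hypothesis provides an $R_{E/F}$-algebra automorphism $\varphi$ of $\RAextension{A}{E/F}$ with $\varphi(S)$ cyclically equivalent to $S_{\triv}+S_{\red}$; correspondingly $A_{KE/K}=(A_{KE/K})_{\triv}\oplus(A_{KE/K})_{\red}$.

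Next I would apply $\operatorname{id}_K\otimes_F(-)$ to $\varphi$ and use Lemma~\ref{lemma:extension-of-scalars} to identify $K\otimes_F\RAextension{A}{E/F}$ with $\RAextension{A}{KE/K}$, and the summands $K\otimes_F(A_{E/F})_{\triv}$ and $K\otimes_F(A_{E/F})_{\red}$ with $(A_{KE/K})_{\triv}$ and $(A_{KE/K})_{\red}$. This produces a $K$-algebra automorphism $\varphi_K$ of $\RAextension{A}{KE/K}$ which fixes $R_{KE/K}=K\otimes_FR_{E/F}$ pointwise, hence is an $R_{KE/K}$-algebra automorphism (necessarily continuous), and which carries $S$, regarded inside $\RAextension{A}{KE/K}$ via the canonical embedding $\iota$ of Section~\ref{sec:change-of-base-field}, to $\varphi(S)$ regarded likewise. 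To conclude that $\varphi_K$ is a right-equivalence onto $((A_{KE/K})_{\triv},S_{\triv})\oplus((A_{KE/K})_{\red},S_{\red})$ I must check that cyclical equivalence is preserved, i.e.\ that $\varphi(S)$ and $S_{\triv}+S_{\red}$ remain cyclically equivalent over $KE/K$. Here I would use part~(1) of Remark~\ref{rem:path-is-welldef-notion}: for the fixed eigenbasis $\B_E$ the paths on $(Q,\dtuple)$ over $E/F$ and over $KE/K$ are literally the same, so the ``commutator'' elements spanning the defining subspace for cyclical equivalence over $E/F$ become, after tensoring with $K$, precisely the corresponding $K$-spanning set over $KE/K$; and since $[K:F]<\infty$, the functor $K\otimes_F(-)$ is flat and commutes with arbitrary intersections of $F$-subspaces, hence with the $\maxid$-adic closure \eqref{eq:closure} (using $K\otimes_F\maxid^n=\maxid(A_{KE/K})^n$ under the identification above).

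Finally I would verify the two ``type'' conditions. Reducedness of $((A_{KE/K})_{\red},S_{\red})$ is immediate: $S_{\red}$ has the same expansion as a possibly infinite linear combination of paths over either base, and by hypothesis that expansion has no length-$2$ term. For triviality of $((A_{KE/K})_{\triv},S_{\triv})$ one has $S_{\triv}\in\bigl((A_{KE/K})_{\triv}\bigr)^2$ for the same reason, and it remains to show that the $R_{KE/K}$-$R_{KE/K}$-subbimodule of $A_{KE/K}$ generated by $\{\partial_a(S_{\triv})\suchthat a\in Q_1\}$ equals $(A_{KE/K})_{\triv}$. For this I would note that the cyclic-derivative formula \eqref{eq:cyclic-derivative} is purely combinatorial in the path-expansion, so $\partial_a(S_{\triv})$ computed over $KE/K$ is the image under $\iota$ of $\partial_a(S_{\triv})$ computed over $E/F$; exactness of $K\otimes_F(-)$ then identifies the subbimodule they generate with $K\otimes_F\partial(S_{\triv})=K\otimes_F(A_{E/F})_{\triv}=(A_{KE/K})_{\triv}$, the last two equalities being Definition~\ref{def:triv-&-red-spans} and Lemma~\ref{lemma:extension-of-scalars}.

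I do not expect a genuine obstacle: the whole argument rests on two bookkeeping observations — that for a fixed eigenbasis neither ``being a path'' nor ``being a cyclic derivative'' distinguishes $E/F$ from $KE/K$, and that tensoring with the finite extension $K$ is exact and commutes with the infinite intersections implicit in the $\maxid$-adic topology. The one point deserving a little care, and where I would be most explicit, is the compatibility of $K\otimes_F(-)$ with topological closures in the definition of cyclical equivalence, which is exactly the flatness/intersection argument indicated above.
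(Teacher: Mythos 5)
Your proposal is correct and follows essentially the same route as the paper's proof, which also rests on the three observations that (i) triviality, (ii) reducedness, and (iii) right-equivalence are all stable under the base change $F\rightsquigarrow K$ together with the identification of the direct-sum arrow span. The paper simply declares these facts "obvious" and cites the remark after Definition~\ref{def:direct-sum-of-SPs}; what you have done is unpack them — in particular the preservation of cyclical equivalence via flatness of $K\otimes_F(-)$ and its compatibility with the $\maxid$-adic closure, which is the one point genuinely worth spelling out — so your write-up is a legitimate expansion of the paper's argument rather than a different proof.
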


\begin{proof} We have the following obvious facts:
\begin{itemize}
\item An SP $(A_{E/F},W)$ is trivial if and only if $(A_{KE/K},W)$ is trivial;
\item an SP $(A_{E/F},W)$ is reduced if and only if $(A_{KE/K},W)$ is reduced;
\item any $R_{E/F}$-algebra isomorphism $\varphi:\RAextension{A}{E/F}\rightarrow\RAextension{A'}{E/F}$ can be uniquely extended to an $R_{KE/K}$-algebra isomorphism $\widetilde{\varphi}:\RAextension{A}{KE/K}\rightarrow\RAextension{A'}{KE/K}$. Furthermore, if $\varphi$ is a right-equivalence $(A_{E/F},W_1)\rightarrow(A'_{E/F},W_2)$, then its extension $\widetilde{\varphi}$ is a right-equivalence $(A_{KE/K},W_1)\rightarrow(A'_{KE/K},W_2)$.
\end{itemize}
Proposition \ref{prop:red-commutes-with-ext-of-scalars} is an immediate consequence of these facts together with the observation made right after Definition \ref{def:direct-sum-of-SPs}.
\end{proof}

\begin{defi}
\label{def:2-acyclic}
We call a weighted quiver $(Q,\dtuple)$ \emph{$2$-acyclic}
if the underlying quiver $Q$ has no oriented $2$-cycles, that is, if the arrow span $A$ of $(Q,\dtuple)$ satisfies the following
condition:
\begin{equation}
\label{eq:no-2-cycles}
\text{For every pair of vertices $i \neq j$, either $A_{i,j} = \{0\}$ or $A_{j,i} = \{0\}$.}
\end{equation}
We shall also say that $A$ is 2-acyclic when \eqref{eq:no-2-cycles} is satisfied. Accordingly, any SP $(A,S)$ on $A$ will be called 2-acyclic if $A$ is 2-acyclic.
\end{defi}

In the rest of this section we study the conditions on an SP
$(A,S)$ guaranteeing that its reduced part is $2$-acyclic.
We need some preparation.

\begin{lemma}
Let $\mathcal C = {\mathcal C}(Q,\dtuple)$ be a set of cycles on $(Q,\dtuple)$ (see Remark \ref{rem:specific-choice-of-eigenbases} and the first observation in Remark \ref{rem:path-is-welldef-notion}), satisfying the following two conditions:
\begin{eqnarray}\label{eq:form-of-representative-cycle}
&&\text{Every element of $\mathcal{C}$ has the form $\omega_0a_1\omega_1a_2\omega_2\ldots a_\ell$;}\\
\label{eq:representative-cycle}
&&\text{for every cycle of the form $\omega_0a_1\omega_1a_2\omega_2\ldots a_\ell$ on $(Q,\dtuple)$, exactly one element of the set}\\
\nonumber
&&\text{$\{\omega_{k-1}a_k\omega_{k}\ldots a_\ell\omega_0a_1\omega_1\ldots \omega_{k-2}a_{k-1}\suchthat 1\leq k\leq \ell\}$ belongs to $\mathcal{C}$}.
\end{eqnarray}
For each finite-degree extension $K/F$ which is linearly disjoint from $E/F$, let $K^{\mathcal{C}}=\prod_{c\in\mathcal{C}}K$
and identify $K^{\mathcal{C}}$ with the topological closure of the $K$-vector subspace of $\RAextension{A}{KE/K}$ generated by $\mathcal{C}$ (this can be done because $\mathcal{C}$ is linearly independent over $K$). Then:
\begin{eqnarray}\label{eq:class-of-representative-cycles}
&&\text{Every potential on $A_{KE/K}$ is cyclically-equivalent to an element of $K^{\mathcal{C}}$;}\\
\label{eq:only-0-is-cyc-equiv-to-0}
&&\text{no two different elements of $K^{\mathcal{C}}$ are cyclically-equivalent.}
\end{eqnarray}
\end{lemma}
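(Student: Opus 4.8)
The plan is to reduce everything to the combinatorics of cyclic paths and the multiplicative structure of the eigenbases, working over the fixed extension $KE/K$ with eigenbasis $\B_E$ (which, by Example~\ref{ex:linealy-disjoint} and part~(1) of Remark~\ref{rem:path-is-welldef-notion}, is also an eigenbasis of $KE/K$, so the notion of path is unchanged). The key observation is that every cyclic path $\xi=\omega_0a_1\omega_1a_2\ldots a_\ell\omega_\ell$ is cyclically equivalent, by the Remark following Definition~\ref{def:cyclic-stuff}, to the path $a_1\omega_1a_2\ldots a_\ell\omega_\ell\omega_0$, and then, using the multiplicative property \eqref{eq:mult-property-of-eigenbases}, to $f(\omega_\ell,\omega_0)$ times the path $a_1\omega_1a_2\ldots a_\ell m(\omega_\ell,\omega_0)$, which now has the shape $a_1\omega_1\ldots a_\ell$ appearing in \eqref{eq:form-of-representative-cycle} after relabeling (with $\omega_0'=1$). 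More generally, iterating this, $\xi$ is an $F^\times$-multiple (hence a $K^\times$-multiple) of each of the $\ell$ ``rotations'' $\omega_{k-1}a_k\omega_k\ldots a_\ell\omega_0a_1\omega_1\ldots\omega_{k-2}a_{k-1}$ appearing in the set displayed in \eqref{eq:representative-cycle}, up to a scalar in $F^\times$ given by a product of values of $f$.

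For \eqref{eq:class-of-representative-cycles}: any potential $S$ on $A_{KE/K}$ is, by Lemma~\ref{lemma:paths-form-a-basis}, a (possibly infinite) $K$-linear combination of cyclic paths, and the sum is convergent in the $\maxid$-adic topology. By the previous paragraph, each cyclic path occurring in $S$ is cyclically equivalent to a $K$-multiple of the unique element of $\mathcal C$ in its rotation class (condition \eqref{eq:representative-cycle} guarantees existence and uniqueness of this representative). Replacing each path by its representative changes $S$ by a $K$-linear combination of elements $\omega_0a_1\ldots a_\ell\omega_\ell-\omega_1a_2\ldots a_\ell\omega_\ell\omega_0a_1$ — precisely the generators of the cyclic-equivalence subspace — so $S$ is cyclically equivalent to the resulting element of $K^{\mathcal C}$; one must check that the infinite sum of these correction terms still converges $\maxid$-adically, which is immediate since the degree-$n$ component of $S$ contributes only finitely many cyclic paths and the correction stays in the same degree. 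Here I would use the identification of $K^{\mathcal C}$ with the closure of the $K$-span of $\mathcal C$ inside $\RAextension{A}{KE/K}$, as set up in the statement.

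For \eqref{eq:only-0-is-cyc-equiv-to-0}: suppose $T\in K^{\mathcal C}$ is cyclically equivalent to $0$, i.e.\ $T$ lies in the closure of the $K$-span of all $\omega_0a_1\ldots a_\ell\omega_\ell-\omega_1a_2\ldots a_\ell\omega_\ell\omega_0a_1$. Fix a degree $n$ and look at the degree-$n$ component: it suffices to show that no nonzero finite $K$-linear combination of degree-$n$ elements of $\mathcal C$ lies in the $K$-span of the degree-$n$ cyclic-equivalence generators. Expand each generator $\omega_0a_1\ldots a_\ell\omega_\ell-\omega_1a_2\ldots a_\ell\omega_\ell\omega_0a_1$ in the path basis of $A^n$ (Lemma~\ref{lemma:paths-form-a-basis}): the second term is a single nonzero $F^\times$-multiple of another path. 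Group the path basis of $A^n_{\cyc}$ by rotation class; the cyclic-equivalence generators lie, within each rotation class, in the subspace of vectors whose coordinates (after rescaling by the appropriate $f$-factors) sum to zero — equivalently, the quotient of $A^n_{\cyc}$ by the cyclic-equivalence subspace has, in each rotation class, dimension exactly one, spanned by the image of the chosen representative in $\mathcal C$. Hence the images of the elements of $\mathcal C$ are $K$-linearly independent in this quotient, so a nonzero $T\in K^{\mathcal C}$ cannot map to zero; taking the closure does not change this, degree by degree. The main obstacle is bookkeeping the $f$-scalars correctly so that ``coordinates sum to zero within a rotation class'' is the precise description of the cyclic-equivalence subspace in each degree — but this is exactly the computation already carried out in the proof of Proposition~\ref{prop:cyclic-equivalence}, and it goes through verbatim here.
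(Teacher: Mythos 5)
Your proof is correct and, once unwound, rests on the same observations as the paper's: within each rotation class of cyclic paths the trailing-$\omega_\ell\neq 1$ paths rotate into the cycle of trailing-$\omega_\ell=1$ paths, on which the rotation scalars $f(1,\omega_0)=1$ all collapse, so the cyclic-equivalence generators span exactly a codimension-one subspace with the chosen $\mathcal{C}$-representative surviving in the quotient. The paper presents this as a two-stage coefficient elimination (first kill the coefficients of the generators whose first term has trailing $\omega_\ell\neq 1$, then telescope around the cycle of trailing-$1$ rotations); you present it as a rotation-class-by-rotation-class dimension count, an acceptable repackaging of the same linear algebra. One small inaccuracy: the closing appeal to Proposition~\ref{prop:cyclic-equivalence} is not quite apt — that proposition concerns cyclic derivatives respecting cyclic equivalence, not the shape of the cyclic-equivalence subspace within a rotation class; what you actually need is the direct observation that the unnormalized rotation of a trailing-$1$ path is already a path (so the scalar is $1$), whence the product of scalars around the cycle is $1$, and then the functional $\sum_i w_ix_i$ with $w_i=1$ on cycle paths and $w_i$ equal to the relevant $f$-value on each tail path witnesses the codimension-one claim.
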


\begin{proof} The statement \eqref{eq:class-of-representative-cycles} is obvious. To prove \eqref{eq:only-0-is-cyc-equiv-to-0}, it suffices to show that no non-zero element of $K^{\mathcal{C}}$ is cyclically-equivalent to $0$. Suppose that $S\in K^{C}$ is cyclically-equivalent to $0$. Then $S$ is a possibly infinite $K$-linear combination of elements of the form
$$
\omega_0a_1\omega_1a_2\ldots\omega_{\ell-1}a_\ell\omega_\ell-\omega_1a_2\ldots\omega_{\ell-1}a_\ell\omega_\ell\omega_0a_1,
$$
with $\omega_0a_1\omega_1a_2\ldots\omega_{\ell-1}a_\ell\omega_\ell$ a cyclic path on $(Q,\dtuple)$. That is, we can write
\begin{equation}\label{eq:S-cyc-equiv-to-0}
S=\sum_{\omega_0a_1\omega_1a_2\ldots\omega_{\ell-1}a_\ell\omega_\ell}x_{\omega_0a_1\omega_1a_2\ldots\omega_{\ell-1}a_\ell\omega_\ell}
\left(\omega_0a_1\omega_1a_2\ldots\omega_{\ell-1}a_\ell\omega_\ell-\omega_1a_2\ldots\omega_{\ell-1}a_\ell\omega_\ell\omega_0a_1\right),
\end{equation}
with the sum running over all possible cyclic paths on $(Q,\dtuple)$. When we collect similar terms on the right-hand-side of \eqref{eq:S-cyc-equiv-to-0} so as to express it as a non-redundant $K$-linear combination of paths, we see that the coefficient that appears with every cyclic path $\omega_0a_1\omega_1a_2\ldots\omega_{\ell-1}a_\ell\omega_\ell$ such that $\omega_\ell\neq 1$, is precisely $x_{\omega_0a_1\omega_1a_2\ldots\omega_{\ell-1}a_\ell\omega_\ell}$. From \eqref{eq:S-cyc-equiv-to-0}, \eqref{eq:form-of-representative-cycle} and Lemma \ref{lemma:paths-form-a-basis}, we deduce that every such $x_{\omega_0a_1\omega_1a_2\ldots\omega_{\ell-1}a_\ell\omega_\ell}$ is equal to $0$. Therefore, \eqref{eq:S-cyc-equiv-to-0} reduces to
\begin{equation}\label{eq:S-cyc-equiv-to-0-simplified}
S=\sum_{\omega_0a_1\omega_1a_2\ldots\omega_{\ell-1}a_\ell}x_{\omega_0a_1\omega_1a_2\ldots\omega_{\ell-1}a_\ell}
\left(\omega_0a_1\omega_1a_2\ldots\omega_{\ell-1}a_\ell-\omega_1a_2\ldots\omega_{\ell-1}a_\ell\omega_0a_1\right)
\end{equation}
(note the absence of $\omega_\ell$). For each $\xi=\in\omega_0a_1\omega_1a_2\ldots\omega_{\ell-1}a_\ell\in\mathcal{C}$, let $t_{\xi}$ be the smallest element of $\{2,\ldots,\ell+1\}$ such that $\xi=\omega_{t_\xi-1}a_{t_\xi}\omega_{t_\xi}\ldots \omega_{\ell-1}a_\ell\omega_0a_1\ldots\omega_{t_\xi-2}a_{t_\xi-1}$. From \eqref{eq:representative-cycle} and \eqref{eq:S-cyc-equiv-to-0-simplified} we deduce that it is possible to write
\begin{eqnarray}\nonumber
\nonumber
S & =&\sum_{\xi=\omega_0a_1\omega_1a_2\ldots\omega_{\ell-1}a_\ell\in\mathcal{C}}\Big(
y_{\xi,1}(\omega_0a_1\omega_1a_2\ldots\omega_{\ell-1}a_\ell-\omega_1a_2\ldots\omega_{\ell-1}a_\ell\omega_0a_1)\\
\nonumber
&&+  y_{\xi,2}(\omega_1a_2\omega_2a_3\ldots\omega_{\ell-1}a_\ell\omega_0a_1-\omega_2a_3\ldots\omega_{\ell-1}a_\ell\omega_0a_1\omega_1a_2)\\
\nonumber
&& + \ldots\\
\nonumber
&& + y_{\xi,t_\xi-2}(\omega_{t_\xi-3}a_{t_\xi-2}\omega_{t_\xi-2}a_{t_\xi-1}\ldots \omega_{t_\xi-4}a_{t_\xi-3}-
\omega_{t_\xi-2}a_{t_\xi-1}\ldots \omega_{t_\xi-4}a_{t_\xi-3}\omega_{t_\xi-3}a_{t_\xi-2})
\Big)
\end{eqnarray}
for some scalars $y_{\xi,k}\in K$. By \eqref{eq:representative-cycle} and Lemma \ref{lemma:paths-form-a-basis}, we deduce that $y_{\xi,1}=y_{\xi,2}=\ldots=y_{\xi,\ell-1}=0$. Therefore, $S=0$.
\end{proof}

Let $\mathcal{C}=\mathcal{C}(Q,\dtuple)$ be as above. Given any extension $L$ of $F$, we denote by $L[X_{c}\suchthat c\in\mathcal{C}]$ the commutative ring of polynomials with coefficients in $L$ on a set of indeterminates indexed by the elements of $\mathcal{C}$. By definition, the elements of $L[X_{c}\suchthat c\in\mathcal{C}]$ are precisely the finite $L$-linear combinations of monomials $\prod_{c\in\mathcal{C}}X_{c}^{a_c}$ with finite support. The field of fractions of $L[X_{c}\suchthat c\in\mathcal{C}]$ will be denoted $L(X_{c}\suchthat c\in\mathcal{C})$, it consists of rational functions that involve only finitely many variables. Clearly, every $f\in E[X_{c}\suchthat c\in\mathcal{C}]$ induces a function $f^K:K^{\mathcal{C}}\to KE$ for any finite-degree extension $K/F$.

\begin{defi}\label{def:polynomial-&-reg-functions} Let $K/F$ be a finite-degree extension linearly disjoint from $E/F$.
\begin{itemize}\item A function $g:K^{\mathcal{C}}\rightarrow KE$ is called \emph{polynomial map} if there exists a polynomial $f\in E[X_{c}\suchthat c\in\mathcal{C}]$ such that $g=f^K$;
\item given a polynomial $f\in E[X_{c}\suchthat c\in\mathcal{C}]$, we denote $U_K(f)=\{S\in K^{\mathcal{C}}\suchthat f(S)\neq 0\}$;
\item a function $H:U_K(f)\rightarrow K$ is a \emph{regular map} if there are polynomials $g,h\in F[X_{c}\suchthat c\in\mathcal{C}]$, such that $h$ vanishes nowhere in $U_K(f)$ and $H=g^K/h^K$ on $U_K(f)$;
\item if $(Q',\dtuple)$ is another weighted quiver, we will say that a function $H:U_K(f)\rightarrow K^{\mathcal{C}(Q',\dtuple)}$ is a \emph{regular map} if its every component is a regular map $U_K(f)\rightarrow K$.
\end{itemize}
\end{defi}

The following theorem gives a determinantal criterion for the 2-acyclicity of the reduced part of an SP. It can be seen as a (non-trivial) refinement of \cite[Proposition 4.15]{DWZ1}.

\begin{thm}\label{thm:reduction-is-regular-function} Let $(Q,\dtuple)$ be a weighted quiver satisfying \eqref{eq:coprime-assumption}, and let $E/F$ be a field extension satisfying \eqref{eq:d-root-of-unity} and \eqref{eq:E/F-Galois-simple-spectrum}.
\begin{itemize}
\item[(a)]
Let $a_1,b_1,\ldots,a_N,b_N\in Q_1$ be $2N$ distinct arrows such that each $a_kb_k$ is a 2-cycle on $(Q,\dtuple)$. Let $Q'$ denote the quiver obtained from $Q$ by deleting the arrows $a_1,b_1,\ldots,a_N,b_N$, and let $(Q',\dtuple)$ be the corresponding weighted quiver. Then:
\begin{enumerate}\item There exists a non-zero polynomial $f=f_{a_1,b_1,\ldots,a_N,b_N}\in E[X_{c}\suchthat c\in\mathcal{C}]$ such that for every finite-degree extension $K/F$ linearly disjoint from $E/F$, the polynomial map $f^K=f_{a_1,b_1,\ldots,a_N,b_N}^K:K^{\mathcal{C}}\rightarrow KE$ is non-zero and has the property that for any potential $S\in U_K(f)$, the reduced part of $(A_{KE/E},S)$ is 2-acyclic.
\item There exist rational functions $H_{\gamma}\in F(X_{c}\suchthat c\in\mathcal{C})$, $\gamma\in\mathcal{C}'$, such that for every finite-degree extension $K/F$ linearly disjoint from $E/F$, the map $H_\gamma^K:U_K(f)\to K$ is well-defined and hence regular, and the regular map $H^K=H^K_{a_1,b_1,\ldots,a_N,b_N}=(H^K_\gamma)_{\gamma\in\mathcal{C}'}:U_K(f)\to K^{\mathcal{C}'}$ has the property that the reduced part of of $(A_{KE/K},S)$ is right-equivalent to $(A'_{KE/K},H^K(S))$ for every $S\in U_K(f)$.
\end{enumerate}
\item[(b)] If $S\in K^{\mathcal{C}}$ is a potential such that the reduced part of $(A_{KE/K},S)$ is 2-acyclic, then $S\in U_K(f_{a_1,b_1,\ldots,a_N,b_N})$ for some set $\{a_1,b_1,\ldots,a_N,b_N\}$ of $2N$ distinct arrows such that each $a_kb_k$ is a 2-cycle on $(Q,\dtuple)$.
\end{itemize}
\end{thm}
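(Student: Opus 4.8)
The plan is to follow the proof of \cite[Proposition 4.15]{DWZ1}, but to make the reduction process underlying the Splitting Theorem \ref{thm:trivial-reduced-splitting} completely explicit and to keep track of rationality at every step, while handling the two features absent from \cite{DWZ1}: the eigenbasis structure constants $f,m$ of \eqref{eq:mult-property-of-eigenbases}, and the requirement that $f$ and the $H_\gamma$ be defined over $E$, respectively $F$, \emph{uniformly} in $K$. The key enabling observation is that, once $\B_E$ is fixed, the set $\mathcal{C}$, the eigenbases $\B_i$, and the functions $f,m$ do not change when $E/F$ is replaced by $KE/K$ (Remark \ref{rem:path-is-welldef-notion}(1), Example \ref{ex:linealy-disjoint}), so that all the symbolic computations below take place over $F$ once and for all. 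One may assume that $a_1,b_1,\dots,a_N,b_N$ form a maximal collection of pairwise disjoint $2$-cycles, since part (a)(2) asserts that the reduced part lives on $Q'$, which by a count of arrow multiplicities would otherwise fail.

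\textbf{The polynomial $f$.} Write $S^{(2)}=\sum_{c}X_c\,c$, the sum over the length-$2$ cycles $c\in\mathcal{C}$, with $X_c$ indeterminates. For each pair $i\neq j$ with $A_{ij}\neq 0\neq A_{ji}$, say $m_{ij}:=\dim_{F_iF_j}A_{ij}\le m_{ji}:=\dim_{F_iF_j}A_{ji}$, maximality forces the collection to contain all $m_{ij}$ arrows $j\to i$ together with $m_{ij}$ of the arrows $i\to j$; using that $A_{ij}=\bigoplus_{a:j\to i}F_i\otimes_F F_j$ is a free $F_iF_j$-module, let $f_{(i,j)}\in F_iF_j[X_c\suchthat c\in\mathcal{C}]$ be the determinant of the $m_{ij}\times m_{ij}$ matrix over $F_iF_j$ whose rows are the expansions in the arrow basis of $A_{ij}$ of the cyclic derivatives $\partial_b(S^{(2)})$, $b$ running over those chosen arrows $i\to j$. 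Set $f=f_{a_1,b_1,\dots,a_N,b_N}:=\prod_{(i,j)}f_{(i,j)}\in E[X_c\suchthat c\in\mathcal{C}]$. Evaluating each $f_{(i,j)}$ at the potential with all coefficients equal to $1$ shows it is a nonzero polynomial, hence $f\neq 0$; the same evaluation, available verbatim over $KE/K$, gives $f^K\neq 0$. Finally $f(S)\neq 0$ implies $\partial(S^{(2)})_{ij}=A_{ij}$ for every such pair; since $\partial(S^{(2)})=A_{\triv}(S)$ carries a trivial potential by Theorem \ref{thm:trivial-reduced-splitting}, and Proposition \ref{prop:trivial-potential} forces $\dim_{F_iF_j}(A_{\triv})_{ij}=\dim_{F_iF_j}(A_{\triv})_{ji}$, one concludes $\dim_{F_iF_j}(A_{\triv})_{ij}=\min(m_{ij},m_{ji})$ for all bad pairs, i.e.\ $A_{\red}(S)=A/\partial(S^{(2)})$ is $2$-acyclic.

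\textbf{The regular map $H^K$.} Given $S\in U_K(f)$, Cramer's rule produces, rationally in the $X_c$ with denominator a power of $f$, a complement $C$ of $\partial(S^{(2)})$ inside $A$ spanned by a suitable subset of the arrows of $Q$, together with a bimodule isomorphism $C\xrightarrow{\ \sim\ }A'$ (the multiplicities match by the previous paragraph). One then runs the reduction of \cite[Theorem 4.6]{DWZ1} (legitimate here by Theorem \ref{thm:trivial-reduced-splitting}) as an explicit infinite composition $\varphi=\cdots\varphi_2\varphi_1$ of unitriangular automorphisms (Definition \ref{def:automorphisms}) of strictly increasing depth, each $\varphi_n$ given by a polynomial formula in the coefficients of the potential obtained so far, converging $\maxid$-adically by \eqref{eq:unitriangular-d} and sending $S$ to $S_{\triv}+S_{\red}$ with $S_{\triv}$ supported on $\partial(S^{(2)})$ and $S_{\red}$ supported on $C$ and reduced. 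Because $\varphi_n$ has depth $\ge n$, only finitely many $\varphi_n$ affect the coefficient of a given cycle $\gamma\in\mathcal{C}'$; transporting $S_{\red}$ along $C\xrightarrow{\sim}A'$, that coefficient becomes a rational function $H_\gamma$ of the $X_c$ with denominator a power of $f$, and $(A'_{KE/K},H^K(S))$ represents the reduced part of $(A_{KE/K},S)$ by construction. That the $H_\gamma$ can be taken over $F$ rather than over $E$ is the point requiring care: one checks that the whole procedure is equivariant for the action of $\Gal(KE/K)\cong\Gal(E/F)$ fixing $K$ and scaling the eigenbasis (Remark \ref{rem:path-is-welldef-notion}(2)), so that the $K$-valued, Galois-stable coefficient functions are represented by rational functions over $F$. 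For part (b): if the reduced part of $(A_{KE/K},S)$ is $2$-acyclic then, arguing as above, $\partial(S^{(2)})_{ij}=A_{ij}$ for every bad pair $(i,j)$ with $m_{ij}\le m_{ji}$, so the $m_{ji}$ generators $\partial_b(S^{(2)})$ span the rank-$m_{ij}$ module $A_{ij}$; choose $m_{ij}$ of those $b$'s whose derivatives are $F_iF_j$-independent, take all $m_{ij}$ arrows $j\to i$, and pair them arbitrarily into $2$-cycles. Doing this over all bad pairs yields a maximal collection $a_1b_1,\dots,a_Nb_N$ for which every factor $f_{(i,j)}$ is nonzero at $S$, i.e.\ $S\in U_K(f_{a_1,b_1,\dots,a_N,b_N})$.

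\textbf{Main obstacle.} The genuinely new difficulties, beyond transcribing \cite{DWZ1}, are (i) securing the correct fields of definition and the $K$-independence of $f$ and the $H_\gamma$ — the promised non-trivial refinement — which relies systematically on the multiplicativity \eqref{eq:mult-property-of-eigenbases} and on the $K$-invariance of the notion of path (Remark \ref{rem:path-is-welldef-notion}); and (ii) the bookkeeping showing that each graded piece of $S_{\red}$ is affected by only finitely many of the $\varphi_n$, so that $S_{\red}$ is a genuinely regular, not merely limiting, function of $S$. I expect (i) to be the main one, since it is precisely where the passage from quivers to strongly primitive species is not formal.
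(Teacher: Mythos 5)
Your argument follows the paper's blueprint in every essential step: a determinantal polynomial $f$ with coefficients in $E$ built from the degree-$2$ block of cyclic derivatives; a rational change of arrows normalizing the trivial part; the $\maxid$-adic limit process of \cite[Lemmas~4.7, 4.8]{DWZ1} made into a genuine regular map via the depth bound \eqref{eq:unitriangular-d}; and the observation that none of this changes when $E/F$ is replaced by $KE/K$. The one genuine divergence is how you secure $H_\gamma\in F(X_c)$ rather than merely $E(X_c)$. You argue by Galois descent: the $E$-rational formula takes $K$-values at enough $K$-points as $K$ varies (linearly disjoint from $E/F$), so it must be $\Gal(E/F)$-invariant, hence over $F$. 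The paper instead never leaves $F$: it writes the change-of-arrows automorphism $\varphi_{i,j}^S$ as a matrix with respect to the $F$-basis $\{\omega_1\alpha_t\omega_2\}$ of $A_{ij}$ — whose entries are $F$-multiples of the coefficients $x_{t,\ell}^{\omega_1,\omega_2}$ — so the inverse is automatically a matrix of $F$-rational functions, with denominator $\operatorname{Norm}_{F_iF_j/F}\bigl(\det\Delta_{i,j}(S)\bigr)\in F[X_c]$, which is what Definition~\ref{def:polynomial-&-reg-functions} requires (the cut-out $h$ must lie in $F[X_c]$, even though $f$ is permitted to lie in $E[X_c]$). Your descent route is workable, but it leaves the denominator implicitly $E$-valued until the descent is complete and requires a density-of-rational-points argument that is delicate when $F$ is finite; the paper's direct $F$-basis computation avoids both issues and produces the required $F$-polynomial denominator explicitly. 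Everything else — the reduction to a maximal collection, the construction of $f$ as a product of block determinants, Cramer's rule for the complement, the finite-support observation for the limit, and part~(b) via full rank of the block matrix — is the same as in the paper up to notational choices.
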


\begin{proof} Assume, without loss of generality, that the arrows $a_1,b_1,\ldots,a_N,b_N$, have been ordered in such a way that
\begin{itemize}
\item for every pair of vertices $i,j\in Q_0$, all arrows in $\{a_1,b_1,\ldots,a_N,b_N,\}$ connecting $i$ and $j$ appear listed consecutively;
\item for every pair of vertices $i,j\in Q_0$, if there is an arrow $a_k:j\to i$ for some index $k\in\{1,\ldots,N\}$, then none of the arrows $b_1,b_2,\ldots,b_N$ goes from $j$ to $i$;
\item for every pair of vertices $i,j\in Q_0$, if there is an arrow $a_k:j\to i$ for some index $k\in\{1,\ldots,N\}$, then the number of arrows of $Q$ that go from $i$ to $j$ is not larger than the number of arrows of $Q$ that go from $j$ to $i$.
\end{itemize}

Take vertices $i,j\in Q_0$ and suppose that the number of arrows from $j$ to $i$ is greater or equal than the number of arrows from $i$ to $j$. 
Suppose that $\alpha_1,\ldots,\alpha_{n_{i,j}},\alpha_{n_{i,j}+1},\ldots,\alpha_{m_{i,j}}$ are all the arrows of $Q$ that go from $j$ to $i$, and that $\beta_1,\ldots,\beta_{n_{i,j}}$ are all arrows that go from $i$ to $j$. For each $S\in F^{\mathcal{C}}$ and each $\ell=1,\ldots,n_{i,j}$, we have
$$
\partial_{\beta_\ell}(S^{(2)})=
\sum_{t=1}^{m_{i,j}}\sum_{\omega_1\in\B_i,\omega_2\in\B_j}x_{t,\ell}^{\omega_1,\omega_2}\omega_1\alpha_{t}\omega_2
$$
where $x_{t,\ell}^{\omega_1,\omega_2}\in F$ is the coefficient of the 2-cycle $\omega_1\alpha_{t}\omega_2\beta_{\ell}$ in the expansion of $S\in F^{\mathcal{C}(A)}$. Let $\Lambda_{i,j}(S)$ be the $m_{i,j}\times n_{i,j}$ matrix whose $(t,\ell)$-entry is $\sum_{\omega_1\in\B_i,\omega_2\in\B_j}x_{t,\ell}^{\omega_1,\omega_2}\omega_1\omega_2$. Notice that the entries of $\Lambda_{i,j}(S)$ are defined by polynomial functions given by linear polynomials with coefficients in $F_iF_j\subseteq E$.

The intersection $\{\alpha_1,\ldots,\alpha_{n_{i,j}},\alpha_{n_{i,j}+1},\ldots,\alpha_{m_{i,j}}\}\cap\{a_1,a_2,\ldots,a_N\}$ is an $n_{i,j}$-element subset of $\{\alpha_1,\ldots,\alpha_{n_{i,j}},\alpha_{n_{i,j}+1},\ldots,\alpha_{m_{i,j}}\}$. Each such $n_{i,j}$-element subset gives rise to a $n_{i,j}\times n_{i,j}$ submatrix $\Delta_{i,j}^{a_1,b_1,\ldots,a_N,b_N}(S)$ of $\Lambda_{i,j}(S)$. We set
$$
f_{a_1,b_1,\ldots,a_N,b_N}^F(S)=\prod_{\{i,j\}} \det(\Delta_{i,j}^{a_1,b_1,\ldots,a_N,b_N}(S)),
$$
where the product is taken over the set of all unordered pairs of distinct vertices of $Q$. Note that $f_{a_1,b_1,\ldots,a_N,b_N}^F=f^F$ with $f\in E[X_{c}\suchthat c\in\mathcal{C}]$.

To show that for any potential $S\in U_F(f)$ the reduced part of $(A,S)$ is 2-acyclic, let us suppose, without loss of generality, that for every $i,j$, we have $\{\alpha_1,\ldots,\alpha_{n_{i,j}}\}\subseteq\{a_1,a_2,\ldots,a_N\}$, and that, moreover, this inclusion is an inclusion of ordered sets. Then $\Lambda_{i,j}(S)$ has the form
$$
\left[\begin{array}{c}
\Delta_{i,j}^{a_1,b_1,\ldots,a_N,b_N}(S)\\
X(S)\end{array}\right],
$$
where $X(S)$ is an $(m_{i,j}-n_{i,j})\times n_{i,j}$ matrix. Let $\Gamma_{i,j}(S)$ be the following $n_{i,j}\times n_{i,j}$ matrix
$$
\Gamma_{i,j}(S)=
\left[\begin{array}{cc}
\Delta_{i,j}^{a_1,b_1,\ldots,a_N,b_N}(S) & 0\\
X(S) & \operatorname{id}\end{array}\right],
$$
If $S\in U_F(f)$, then $\det(\Delta_{i,j}^{a_1,b_1,\ldots,a_N,b_N}(S))\neq 0$ for all $i,j$, hence $\Gamma_{i,j}(S)$ is invertible, and its inverse is
$$
\Gamma_{i,j}(S)^{-1}=
\left[\begin{array}{cc}
(\Delta_{i,j}^{a_1,b_1,\ldots,a_N,b_N}(S))^{-1} & 0\\
-X(S)(\Delta_{i,j}^{a_1,b_1,\ldots,a_N,b_N}(S))^{-1} & \operatorname{id}
\end{array}\right].
$$
Since $\Gamma_{i,j}(S)$ has entries in $F_iF_j$, so does $\Gamma_{i,j}(S)^{-1}$. This readily implies the existence of an $F_i$-$F_j$-bimodule automorphism $\varphi_{i,j}^S$ of $A_{ij}$ such that
$$
\varphi_{i,j}^S\left(\alpha_\ell\right)=
\sum_{t=1}^{m_{i,j}}\sum_{\omega_1\in\B_i,\omega_2\in\B_j}x_{t,\ell}^{\omega_1,\omega_2}\omega_1\alpha_{t}\omega_2
\ \ \ \text{for} 1\leq\ell\leq n_{i,j}
$$
and $\varphi_{i,j}^S(\alpha_\ell)=\alpha_\ell$ for $n_{i,j}+1\leq \ell\leq m_{i,j}$. We can certainly see $\varphi_{i,j}^S$ only as an $F$-linear map $A_{ij}\to A_{ij}$. The entries of the matrix
that represents $\varphi_{i,j}^S$ with respect to the basis $\{\omega_1\alpha_{t}\omega_2\suchthat 1\leq\ell\leq m_{i,j} \omega_1\in\B_i,\omega_2\in B_j\}$ of $A_{ij}$ over $F$, are precisely the scalars $x_{t,\ell}^{\omega_1,\omega_2}$. This readily implies that each of the entries of the matrix
representing $\psi_{i,j}^S=(\varphi_{i,j}^S)^{-1}$ as an $F$-linear map, is given by a regular map $U_F(f)\rightarrow F$.

Define $\psi_{j,i}^S$ to be the identity of $A_{j,i}$ (recall that the roles of $i$ and $j$ is not symmetric at the moment --the number of arrows from $j$ to $i$ has been assumed to be greater or equal than the number of arrows from $i$ to $j$).

Now we collect the $\psi_{i,j}$ over all pair of vertices $i,j$, letting $\psi^S$ be the $R$-algebra automorphism of $\completeQdcoprime$ such that $\psi^S|_{A_{i,j}}=\psi_{i,j}^S$ for all $i,j$. By construction, $\psi^S$ satisfies:
\begin{itemize}
\item the degree-2 component of the potential $\psi^S(S)$ is precisely $\sum_{k=1}^Na_kb_k$;
\item the assignment $S\mapsto\psi^S(S)$ is a regular map $U_F(f)\to F^{\mathcal{C}(A)}$.
\end{itemize}
The first of these two properties implies that the reduced part of $(A,S)$ is 2-acyclic (this can be seen by applying the limit process from the proofs of \cite[Lemmas 4.7 and 4.8]{DWZ1}).

The limit process from the proofs of Lemmas 4.7 and 4.8 of \cite{DWZ1} can be applied to any potential whose degree-2 component is precisely $\sum_{k=1}^Na_kb_k$. From the fact that the product of paths is an $F$-multiple of a path, it is not hard to see that this limit process yields a function $G$ from the set of all potentials with degree-2 component $\sum_{k=1}^Na_kb_k$, to $F^{\mathcal{C}(A')}$, such that:
\begin{itemize}
\item $(A',G(W))$ is the reduced part of $(A,W)$ for every potential $W$ with degree-2 component $\sum_{k=1}^Na_kb_k$;
\item each of the components of $G(W)$ depends polynomially on (finitely many of) the entries of $W$, and the polynomials defining the components of $G(W)$ have coefficients in $F$.
\end{itemize}
Consequently, the assignment $S\mapsto G(\psi^S(S))$ defines a regular map $H^F=H^F_{a_1,b_1,\ldots,a_N,b_N}:U_F(f)\to F^{\mathcal{C}'}$ such that the reduced part of $(A,S)$ is right-equivalent to $(A',H^F(S))$ for every $S\in U_F(f)$.

The considerations made in Section \ref{sec:change-of-base-field} and a second reading of the argument we have given so far, make it clear that the polynomial defining $f^F$ and the rational functions defining the components of $H^F_{a_1,b_1,\ldots,a_N,b_N}$ do not change if we replace $F$ with a finite-degree extension field $K$ linearly disjoint from $E/F$.

Part (a) of Theorem \ref{thm:reduction-is-regular-function} is now proved. Part (b) follows from the fact that if $S\in K^{\mathcal{C}}$ is such that the reduced part of $(A_{KE/K},S)$ is 2-acyclic, then for every $i,j$, the matrix $\Lambda_{i,j}(S)$ defined above has full rank.
\end{proof}


\section{Mutations of species with potentials}
\label{sec:mutations}

In this section we define mutations of SPs and establish two basic properties one should expect from such mutations: well-definedness up to right-equivalence and involutivity up to right-equivalence. We remind the reader that the species with potentials we working with are always assumed to be strongly primitive, that is, the underlying arrow spans are always supposed to be associated with strongly primitive weighted quivers.

Let $k\in Q_0$. Throughout this section we will suppose that
\begin{equation}
\label{eq:no-2-cycles-thru-k}
\text{no oriented 2-cycle of $Q$ is incident to $k$.}
\end{equation}

Replacing $S$ if necessary with a cyclically equivalent
potential, we shall assume that
\begin{equation}
\label{eq:no-start-in-k}
\text{No cyclic path occurring in the expansion of~$S$
starts (nor ends) at~$k$.}
\end{equation}
Under these conditions, we shall associate to $(A,S)$ another SP
$\widetilde{\mu}_k(A,S) = (\widetilde{\mu}_k(A),\widetilde{\mu}_k(S))$ on the
same set of vertices~$Q_0$ and with the same weight function $\dtuple$.
The weighted quiver $(\widetilde{\mu}_k(Q),\dtuple)$ is obtained from $Q$ by means of a two-step procedure:
    \begin{enumerate}\item for each incoming arrow $a:j\to k$ and each outgoing arrow $b:k\to i$,
add $d_k$ ``composite" arrows $\compbwa$, with the index $\omega$ running in the eigenbasis $\B_{k}$;
\item replace each arrow $c$ incident to $k$ by an arrow $c^*$ going in the direction opposite to that of $c$.
    \end{enumerate}
We denote by $\widetilde{\mu}_k(A)$ the arrow-span of $(\widetilde{\mu}_k(Q),\dtuple)$. As for the potential, we define \begin{equation}\label{eq:mu-k-S}
\widetilde{\mu}_k(S)=[S]+\triangle_k(A)\in \RA{\widetilde{\mu}_k(A)},
\end{equation}
\begin{equation}
\text{where $\triangle_k(A)=\sum_{a,b}\sum_{\omega\in\B_{k}}\omega^{-1}b^*\compbwa a^*$},
\end{equation}
the outer sum running over all pairs of arrows $a,b\in Q_1$ such that $h(a)=k$ and $t(b)=k$.

Let us say some words on how the composite arrows $\compbwa$ sit inside $\widetilde{\mu}_k(A)$ and how the potential $[S]$ is obtained from $S$. First of all, for every pair of arrows $a,b\in Q_1$ such that $h(a)=k$ and $t(b)=k$, the $d_k$ arrows $\compbwa$ of $(\widetilde{\mu}_k(Q),\dtuple)$ give rise to the $F_{h(b)}$-$F_{t(a)}$-bimodule
\begin{equation}\label{eq:bimod-of-composite-arrows}
\bigoplus_{\omega\in\B_k}F_{h(b)}\otimes_FF_{t(a)},
\end{equation}
which sits canonically as a direct summand of the component $\widetilde{\mu}_k(A)_{h(b)t(a)}$ of the arrow span $\widetilde{\mu}_k(A)$. Furthermore, for each $\omega\in\B_k$, the arrow $\compbwa$ of $(\widetilde{\mu}_k(Q),\dtuple)$ is identified with the element $1\otimes 1$ of the component corresponding to $\omega$ in the direct sum \eqref{eq:bimod-of-composite-arrows} (see the line preceding Definition \ref{def:path-algebra}). Given our coprimality assumption on the tuple $\dtuple$, for every pair of arrows $a,b\in Q_1$ such that $h(a)=k$ and $t(b)=k$, there is an $F_{h(b)}$-$F_{t(a)}$-bimodule isomorphism
$$
[\bullet]:F_{h(b)}\otimes_{F}F_k\otimes_{F_k}F_k\otimes_F F_{t(a)}\longrightarrow \bigoplus_{\omega\in\B_k}(F_{h(b)}\otimes_FF_{t(a)})
$$
sending each element $b\omega a\in F_{h(b)}\otimes_FF_k\otimes_{F_k}F_k\otimes_FF_{t(a)}$ to the element $\compbwa=1\otimes 1$ in the component corresponding to $\omega\in\B_k$. Assembling these isomorphisms over all pairs $a,b$ with $t(b)=k=h(a)$ we obtain a well-defined map from the closed subspace of $\RA{A}$ consisting of all (possibly infinite) linear combinations of paths not starting nor ending at $k$ to $\RA{\widetilde{\mu}_k(A)}$. The potential $[S]$ is the image of $S$ under this map.

\begin{remark}\label{rem:[bua]-always-defined} Let $a,b\in Q_1$ be arrows such that $h(a)=k=t(b)$. Any $u\in F_k$ yields a well-defined element $[bua]$ of $\oplus_{\omega\in\B_k}(F_{h(b)}\otimes_FF_{t(a)})$. That is, $u$ does not need to belong to $\B_k$ for $[bua]$ to be well-defined. Note that $[bua]$ is an $F$-linear combination of the composite arrows $\comp{b}{\omega}{a}$,
but not necessarily an $F$-multiple of a single one such.
\end{remark}

The following proposition is a direct consequence of the definitions.

\begin{prop}
\label{prop:mu-tilde-trivial-summand}
Suppose an SP $(A,S)$ satisfies \eqref{eq:no-2-cycles-thru-k}
and \eqref{eq:no-start-in-k}, and an SP $(A',S')$ is such that
$e_k A' = A' e_k = \{0\}$.
Then we have
\begin{equation}
\widetilde \mu_k(A \oplus A',S+S') = \widetilde \mu_k(A,S) \oplus
(A',S').
\end{equation}
\end{prop}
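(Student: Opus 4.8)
The statement is essentially a compatibility between the construction $\widetilde\mu_k$ and direct sums, so the plan is to check that every ingredient entering the definition of $\widetilde\mu_k$ is "local" to the part of the arrow span lying over $k$, and that this part is identical whether we feed in $(A,S)$ or $(A\oplus A', S+S')$. First I would record the hypothesis $e_kA' = A'e_k = \{0\}$ in its explicit form: no arrow of $Q'$ has head or tail at $k$, equivalently $A'_{k,j} = A'_{j,k} = 0$ for all $j\in Q_0$. In particular, $S'$ involves no cyclic path through $k$, so $(A\oplus A', S+S')$ still satisfies \eqref{eq:no-2-cycles-thru-k} and \eqref{eq:no-start-in-k}, and $S+S'$ is already in the normal form \eqref{eq:no-start-in-k} term by term.

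Next I would compare the two weighted quivers obtained from the two-step recipe. In Step 1, the composite arrows $\compbwa$ are indexed by pairs $(a,b)$ of arrows of the input quiver with $h(a) = k = t(b)$; since no arrow of $Q'$ touches $k$, every such pair for $Q\oplus Q'$ is a pair of arrows of $Q$, so Step 1 produces exactly the same composite arrows, sitting over the same bimodule \eqref{eq:bimod-of-composite-arrows}, in both cases. In Step 2, the arrows reversed are precisely those incident to $k$, again all in $Q_1$, so $c\mapsto c^*$ acts identically. The arrows of $Q'$ are untouched by both steps. Hence $\widetilde\mu_k(Q\oplus Q') = \widetilde\mu_k(Q)\oplus Q'$ as quivers with the common weight tuple $\dtuple$, and therefore $\widetilde\mu_k(A\oplus A') = \widetilde\mu_k(A)\oplus A'$ as $R$-$R$-bimodules, identifying the composite-arrow bimodules and the reversed-arrow bimodules on the nose and leaving the $A'$-summand alone.

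It then remains to check the potential. By \eqref{eq:mu-k-S}, $\widetilde\mu_k(A\oplus A', S+S') = [S+S'] + \triangle_k(A\oplus A')$. The correction term $\triangle_k = \sum_{a,b}\sum_{\omega\in\B_k}\omega^{-1}b^*\compbwa a^*$ is built only from arrows $a,b$ of the input quiver with $h(a) = k = t(b)$ and from the reversed arrows $a^*, b^*$; as argued above these all come from $Q$, so $\triangle_k(A\oplus A') = \triangle_k(A)$ verbatim. For the $[\,\cdot\,]$-operation: it is the map on the closed subspace of linear combinations of paths not starting nor ending at $k$ that replaces each occurrence of a factor $b\omega a$ with $h(a) = k = t(b)$ by $\compbwa$, and is the identity on all other path-factors (in particular on every path of $Q'$, since such paths never pass through $k$). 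Since $S$ and $S'$ have disjoint supports inside $\RA{A\oplus A'}$ — $S$ a combination of cyclic paths through vertices of $Q$ other than... well, paths in $Q$; $S'$ a combination of cyclic paths in $Q'$ — and $[\,\cdot\,]$ is $F$-linear and acts blockwise, we get $[S+S'] = [S] + S'$, with $[S]$ computed inside $\RA{\widetilde\mu_k(A)}$ and $S'$ unchanged inside $\RA{A'}$. Combining, $\widetilde\mu_k(A\oplus A', S+S') = ([S] + \triangle_k(A)) + S' = \widetilde\mu_k(S) + S'$, which under the identification $\RA{\widetilde\mu_k(A)\oplus A'}$ of Definition \ref{def:direct-sum-of-SPs} is exactly $\widetilde\mu_k(A,S)\oplus(A',S')$.

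The only step that needs a little care — and where I expect the "real" content of the proof to sit — is the blockwise behavior of the operation $[\,\cdot\,]$ and the verification that, after identifying $\widetilde\mu_k(A)\oplus A'$ with the arrow span of $\widetilde\mu_k(Q)\oplus Q'$, the embeddings of $\RA{\widetilde\mu_k(A)}$ and $\RA{A'}$ as closed $R$-subalgebras are compatible with the embeddings of $\RA{A}$ and $\RA{A'}$ used to form $S+S'$ in the first place; this is a bookkeeping check rather than a conceptual difficulty, and follows from the observation right after Definition \ref{def:direct-sum-of-SPs} that $A\oplus A'$ is the arrow span of $Q\oplus Q'$ together with the explicit description of paths from Lemma \ref{lemma:paths-form-a-basis}.
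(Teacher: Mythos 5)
Your proof is correct and spells out exactly the verification the paper leaves implicit: the paper states this proposition is "a direct consequence of the definitions," and your argument simply makes explicit why — all ingredients of $\widetilde\mu_k$ (composite arrows, reversed arrows, the bracket operation, $\triangle_k$) depend only on the part of the arrow span incident to $k$, which coincides for $A$ and $A\oplus A'$ under the hypothesis $e_kA'=A'e_k=0$.
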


\begin{thm}
\label{thm:mu-tilde-preserves-isomorphisms}
The right-equivalence class of the SP $(\widetilde A, \widetilde S) = \widetilde \mu_k(A,S)$
is determined by the right-equivalence class of $(A,S)$.
\end{thm}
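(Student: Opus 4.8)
The plan is to adapt the proof of \cite[Theorem 5.2]{DWZ1}. After replacing $S$ and $S'$ by cyclically equivalent potentials I may assume, as in \eqref{eq:no-start-in-k}, that no cyclic path occurring in $S$ or in $S'$ starts or ends at $k$, and, by Proposition \ref{prop:automorphisms} together with the discussion following Definition \ref{def:AS-isomorphism}, that the underlying weighted quivers coincide and that there is an $R$-algebra automorphism $\varphi$ of $\completeQdcoprime$ with $\varphi(S)$ cyclically equivalent to $S'$. Since $\varphi$ is $R$-linear it preserves the closed subspace of potentials not starting nor ending at $k$, so $\varphi(S)$ again satisfies \eqref{eq:no-start-in-k}, the map $T\mapsto[T]$ of this section is defined on it, and $\widetilde\mu_k(\varphi(S))=[\varphi(S)]+\triangle_k(A)$ (cf.\ \eqref{eq:mu-k-S}). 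A routine verification shows that $T\mapsto[T]$ sends cyclically equivalent potentials (not starting nor ending at $k$) to cyclically equivalent ones. Hence it suffices to produce an $R$-algebra automorphism $\widetilde\varphi$ of $\RA{\widetilde\mu_k(A)}$ with $\widetilde\varphi(\widetilde\mu_k(S))$ cyclically equivalent to $\widetilde\mu_k(\varphi(S))$: for then $\widetilde\varphi(\widetilde\mu_k(S))$ is cyclically equivalent to $[\varphi(S)]+\triangle_k(A)$ and, as $\varphi(S)$ is cyclically equivalent to $S'$, also to $[S']+\triangle_k(A)=\widetilde\mu_k(S')$, so that $\widetilde\varphi$ is a right-equivalence $\widetilde\mu_k(A,S)\to\widetilde\mu_k(A,S')$.

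To build $\widetilde\varphi$ I would factor $\varphi=\varphi''\circ\varphi'$, with $\varphi''$ a change of arrows (so $(\varphi'')^{(2)}=0$) and $\varphi'$ unitriangular (so $(\varphi')^{(1)}=\operatorname{id}_A$), as in Definition \ref{def:automorphisms}; since the construction $\varphi\mapsto\widetilde\varphi$ will be compatible with composition, it is enough to treat these two cases. For the change of arrows $\varphi''$, its degree-one part $(\varphi'')^{(1)}$ is an $R$-$R$-bimodule automorphism of $A$ preserving each slot $A_{ij}$, hence restricting to bimodule automorphisms of $A_{i,k}$, of $A_{k,j}$, and of $\bigoplus_{i,j\neq k}A_{ij}$. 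I would set $\widetilde\varphi''(c)=(\varphi'')^{(1)}(c)$ on an arrow $c$ not incident to $k$; $\widetilde\varphi''(\compbwa)=[\,(\varphi'')^{(1)}(b)\,\omega\,(\varphi'')^{(1)}(a)\,]$ on a composite arrow, which is meaningful by the extension of Remark \ref{rem:[bua]-always-defined} to elements of $F_{h(b)}\otimes_F F_k\otimes_{F_k}F_k\otimes_F F_{t(a)}$; and $\widetilde\varphi''(a^*)$, $\widetilde\varphi''(b^*)$ on the reversed arrows to be the unique bimodule automorphisms contragredient to $(\varphi'')^{(1)}$ under the natural pairing between the arrows incident to $k$ and their reversals, normalized so that $\triangle_k(A)=\sum_{a,b}\sum_{\omega\in\B_k}\omega^{-1}b^*\compbwa a^*$ is fixed; existence of these contragredients uses the semisimplicity of $R$ and the coprimality \eqref{eq:coprime-assumption}. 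Proposition \ref{prop:automorphisms} then promotes this data to an $R$-algebra automorphism $\widetilde\varphi''$ of $\RA{\widetilde\mu_k(A)}$, and a direct computation gives $\widetilde\varphi''([S])=[\varphi''(S)]$ and $\widetilde\varphi''(\triangle_k(A))=\triangle_k(A)$, whence $\widetilde\varphi''(\widetilde\mu_k(S))=\widetilde\mu_k(\varphi''(S))$.

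The unitriangular case is the main obstacle, and I would run the argument of \cite[Section 5]{DWZ1}: induct on the depth of $\varphi'$, stripping off one unitriangular layer at a time by means of the cyclic chain rule (Lemma \ref{lem:chain-rule}) and the Leibniz rule \eqref{eq:leibniz-several}, and then pass to the limit in the $\maxid$-adic topology of $\RA{\widetilde\mu_k(A)}$. Completeness of the complete path algebra --- the very feature underlying the Splitting Theorem \ref{thm:trivial-reduced-splitting} --- is what makes the resulting infinite composition converge to an $R$-algebra automorphism $\widetilde{\varphi'}$ with $\widetilde{\varphi'}(\widetilde\mu_k(S))$ cyclically equivalent to $\widetilde\mu_k(\varphi'(S))$. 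The only novelty compared with \cite{DWZ1} is bookkeeping: one must track the eigenbasis elements $\omega\in\B_k$ appearing inside the composite arrows and inside $\triangle_k(A)$, so that each re-expression of a path invokes the multiplicativity functions $f,m$ of \eqref{eq:mult-property-of-eigenbases} and the element $[\,bua\,]$ of Remark \ref{rem:[bua]-always-defined} for non-basis $u\in F_k$; since these operations respect the $\maxid$-adic degree filtration, the DWZ convergence estimates go through verbatim.

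Composing the two cases, $\widetilde\varphi=\widetilde\varphi''\circ\widetilde{\varphi'}$ is an $R$-algebra automorphism of $\RA{\widetilde\mu_k(A)}$ with $\widetilde\varphi(\widetilde\mu_k(S))$ cyclically equivalent to $\widetilde\mu_k(\varphi''\varphi'(S))=\widetilde\mu_k(\varphi(S))$, and the first paragraph then exhibits $\widetilde\varphi$ as a right-equivalence $\widetilde\mu_k(A,S)\to\widetilde\mu_k(A,S')$. This shows the right-equivalence class of $\widetilde\mu_k(A,S)$ depends only on that of $(A,S)$.
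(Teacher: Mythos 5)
Your overall reduction (pass to cyclically equivalent potentials satisfying \eqref{eq:no-start-in-k}, reduce to a single automorphism $\varphi$, observe that $[\,\cdot\,]$ preserves cyclic equivalence, and then seek a lift $\widetilde\varphi$ with $\widetilde\varphi(\widetilde\mu_k(S))$ cyclically equivalent to $\widetilde\mu_k(\varphi(S))$) is sound and agrees in spirit with the target. But the route you then take --- factoring $\varphi=\varphi''\circ\varphi'$ into a change of arrows and a unitriangular automorphism, lifting each separately --- is a genuine departure from the paper's proof, and it is exactly in the unitriangular case, which you rightly identify as the main obstacle, that a real gap appears.

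The paper (following the scheme of DWZ Theorem 5.2) does not factor $\varphi$; instead it introduces the auxiliary bimodule $\widehat A = A \oplus (e_k A)^\star \oplus (A e_k)^\star$, inside whose complete path algebra both $\RA{A}$ and $\RA{\widetilde\mu_k(A)}$ sit as closed subalgebras, and observes that in $\RA{\widehat A}$ the potential $\widetilde\mu_k(S)$ is cyclically equivalent to $S + \frac{1}{d_k}\bigl(\sum_b\sum_\omega\omega^{-1}b^\star b\omega\bigr)\bigl(\sum_a\sum_\omega\omega a a^\star\omega^{-1}\bigr)$. The whole theorem is then reduced to Lemma \ref{lemma:extension-of-varphi}: extend $\varphi$ to an automorphism $\widehat\varphi$ of $\RA{\widehat A}$ preserving $\RA{\widetilde\mu_k(A)}$ and the two quadratic pieces above. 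That extension is built in one shot by an explicit block-matrix inversion (the matrices $C=C_0+C_1$, $D=D_0+D_1$ and their inverses, whose block-circulant structure \eqref{eq:C-typical-block}, \eqref{eq:D-typical-block} encodes compatibility with the $F_k$-action via $\B_k$). There is no induction on depth and no limit process here; the geometric series $(C_0+C_1)^{-1}=C_0^{-1}+\sum_n(-1)^n(C_0^{-1}C_1)^nC_0^{-1}$ already does that work in closed form. The inductive, depth-stripping, pass-to-the-limit argument you invoke is the strategy of the Splitting Theorem \ref{thm:trivial-reduced-splitting} (DWZ Section 4), not of Theorem 5.2; you appear to be conflating the two.

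The concrete gap: for a unitriangular $\varphi'$ you never say how $\widetilde{\varphi'}$ is to act on the reversed arrows $a^\star$ (for $a\in Q_1\cap e_kA$) and $b^\star$ (for $b\in Q_1\cap Ae_k$). For the change-of-arrows case you sensibly define the action on $a^\star,b^\star$ by contragredience of the linear map $(\varphi'')^{(1)}$, which is possible because $(\varphi'')^{(1)}$ genuinely restricts to bimodule automorphisms of $e_kA$ and $Ae_k$. But for a unitriangular $\varphi'$, $(\varphi')^{(1)}=\mathrm{id}$ carries no information; the interesting part $(\varphi')^{(2)}(a)\in\maxid^2$ involves paths through $k$ and has no obvious counterpart inside $\RA{\widetilde\mu_k(A)}$, where the arrows $a,b$ do not exist and $a^\star,b^\star$ point the other way. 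Absent the embedding into $\RA{\widehat A}$ (which you never set up), the statement that one should ``strip off one unitriangular layer at a time by means of the cyclic chain rule and Leibniz rule, and then pass to the limit'' is not a construction, and the convergence you assert has no object to converge to. If you do pass through $\widehat A$, you are back to the paper's Lemma \ref{lemma:extension-of-varphi}, and once that lemma is available the factorization into $\varphi''\circ\varphi'$ buys you nothing --- the matrix formulas handle both types of automorphism uniformly. So to make your proposal complete you would need to either (i) carry out the $\widehat A$ construction and prove the extension lemma, at which point you have essentially reproduced the paper's argument and can drop the factorization, or (ii) give an honest definition of $\widetilde{\varphi'}(a^\star)$ and $\widetilde{\varphi'}(b^\star)$ inside $\RA{\widetilde\mu_k(A)}$ alone and verify the cyclic-equivalence claim at each depth, which you have not done and which is precisely the delicate part.
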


\begin{proof} 
Let $\widehat A$ be the finite-dimensional $R$-$R$-bimodule given by
\begin{equation}
\label{eq:hat-A}
\widehat A= A \oplus (e_k A)^\star \oplus (A e_k)^\star.
\end{equation}
The natural embedding $A \to \widehat A$ identifies
$\RA{A}$ with a closed subalgebra
in $\RA{\widehat A}$.
We also have a natural embedding $\widetilde{\mu}_k(A) \to
\RA{ \widehat A}$ (sending each arrow $\compbwa$ to the product $b\omega a$).
This allows us to identify
$\RA{\widetilde{\mu}_k(A)}$ with another closed
subalgebra in $\RA{\widehat A}$. 
Under this identifications, the equality
$$
\sum_{u\in\B_k}\sum_{\omega\in\B_k}u^{-1}\omega^{-1}b^\star bu\omega=d_k\sum_{\omega\in\B_k}\omega^{-1}b^\star b\omega,
$$
whose proof is left to the reader,
implies that the potential $\widetilde{\mu}_k(S)$ given by
\eqref{eq:mu-k-S} and viewed as an element of
$\RA{ \widehat A}$ is cyclically equivalent to
the potential
$$
S + \frac{1}{d_k}\left(\sum_{b \in Q_1 \cap A e_k}\sum_{\omega\in\B_k}  \omega^{-1}b^\star b\omega\right)
\left(\sum_{a \in Q_1 \cap  e_k A}\sum_{\omega\in\B_k}\omega a a^\star\omega^{-1}\right).
$$
Taking this into account, we see that Theorem~\ref{thm:mu-tilde-preserves-isomorphisms}
becomes a consequence of the following lemma.

\begin{lemma}
\label{lemma:extension-of-varphi}
Every automorphism $\varphi$ of $\RA{A}$
can be extended to an automorphism $\widehat\varphi$ of
$\RA{\widehat A}$
satisfying
\begin{equation}
\label{eq:preserving-tildeA}
{\widehat \varphi}(\RA{\widetilde{\mu}_k(A)}) =
\RA{\widetilde{\mu}_k(A)},
\end{equation}
\begin{equation}
\label{eq:preserving-quadratic-terms-a}
{\widehat \varphi}(\sum_{a \in Q_1 \cap  e_k A}\sum_{\omega\in\B_k}\omega a a^\star\omega^{-1}) =
\sum_{a \in Q_1 \cap  e_k A}\sum_{\omega\in\B_k}\omega a a^\star\omega^{-1},
\end{equation}
and
\begin{equation}\label{eq:preserving-quadratic-terms-b}
{\widehat \varphi}(\sum_{b \in Q_1 \cap A e_k}\sum_{\omega\in\B_k}  \omega^{-1}b^\star b\omega) =
\sum_{b \in Q_1 \cap A e_k}\sum_{\omega\in\B_k}\omega^{-1}  b^\star b\omega.
\end{equation}
\end{lemma}

In order to extend $\varphi$ to an automorphism $\widehat\varphi$ of
$\RA{\widehat A}$, we need only to define
the elements $\widehat \varphi(a^\star)$ and $\widehat \varphi(b^\star)$ for all
arrows $a \in Q_1 \cap  e_k A$ and $b \in Q_1 \cap A e_k$.

We first deal with $\widehat \varphi(a^\star)$.
Let $Q_1 \cap e_k A = \{a_1, \dots, a_p\}$. We assume that these arrows have been ordered in such a way that for each $i\in Q_0$, all the arrows from $i$ to $k$ appear consecutively. For each $s=1,\ldots,p$, let $\B_ka_s$ denote the row matrix
$$
\B_ka_s=\left[\begin{array}{cccccc}a_s & v_ka_s & v_k^2a_s & \ldots & v_k^{d_k-2}a_s & v_k^{d_k-1}a_s\end{array}\right].
$$
We set $\B_k\boldsymbol{a}$ to be the row matrix
$$
\B_k\boldsymbol{a}=\left[\begin{array}{ccccc}\B_ka_1 & \B_ka_2 & \ldots & \B_ka_{p-1} & \B_ka_p\end{array}\right].
$$
The entries of $\B_k\boldsymbol{a}$ are elements of $\RA{A}$, whence we can apply $\varphi$ to each of them. Let $\B_k\varphi(\boldsymbol{a})$ be the row matrix whose entries are obtained by applying $\varphi$ componentwise to $\B_k\boldsymbol{a}$. The entries of $\B_k\varphi(\boldsymbol{a})$ are elements of $\RA{A}$ as well.
In view of Proposition~\ref{prop:automorphisms},
\begin{equation}
\label{eq:phi-on-a}
\B_k\varphi(\boldsymbol{a})  =  (\B_k\boldsymbol{a})(C_0+C_1),
\end{equation}
where:
\begin{enumerate}
\item $C_0$ is a $d_kp \times d_kp$ matrix with the following properties:
 \begin{itemize}
 \item it is block-diagonal with square blocks, the blocks given by grouping together the arrows $a_1,\ldots,a_p$ according to their tails;
 \item a block of $C_0$ corresponding to (the arrows whose tail equal) a given vertex $i$ has entries in $F_i$ and, moreover, such a block is an invertible matrix.
 \end{itemize}
\item $C_1$ is a $d_kp \times d_kp$ matrix with the following properties:
\begin{itemize}
\item it can be divided into $p^2$ blocks of size $d_k\times d_k$, each block being determined by an ordered pair $(s,r)$ with $s,r\in\{1,\ldots,p\}$;
\item for $s,r\in\{1,\ldots,p\}$, all entries of the block of $C_1$ determined by the ordered pair $(s,r)$ belong
to ${\mathfrak m}(A)_{t(a_s),t(a_r)}$.
\end{itemize}
\end{enumerate}

Since $\varphi(v^{\ell}a_r)=v^{\ell}\varphi(a_r)$ for all $\ell=0,\ldots,d_k-1$ and all $r=1,\ldots,p$, we can divide $C=C_0+C_1$ into $p^2$ blocks of size $d_k\times d_k$
\begin{equation}\label{eq:C-divided-in-blocks}
C=\left[\begin{array}{ccc}
C_{11} & \ldots & C_{1p}\\
 & \ddots & \\
C_{p1} & \ldots & C_{pp}\end{array}\right],
\end{equation}
the blocks being given by
\begin{equation}\label{eq:C-typical-block}
C_{sr}=
\left[\begin{array}{ccccccc}
c_{sr}^{0} & v^{d_k}c_{sr}^{d_k-1} & v^{d_k}c_{sr}^{d_k-2} &   & v^{d_k}c_{sr}^3 & v^{d_k}c_{sr}^2 & v^{d_k}c_{sr}^1 \\
c_{sr}^1 & c_{sr}^{0} & v^{d_k}c_{sr}^{d_k-1}  &\ldots&    v^{d_k}c_{sr}^4 & v^{d_k}c_{sr}^3 & v^{d_k}c_{sr}^2\\
c_{sr}^2 & c_{sr}^1 & c_{sr}^{0} &&  v^{d_k}c_{sr}^5 & v^{d_k}c_{sr}^4 & v^{d_k}c_{sr}^3\\
& \vdots            &          & \ddots & & \vdots & \\
c_{sr}^{d_k-3} & c_{sr}^{d_k-4} & c_{sr}^{d_k-5} && c_{sr}^{0} & v^{d_k}c_{sr}^{d_k-1} & v^{d_k}c_{sr}^{d_k-2}\\
c_{sr}^{d_k-2} & c_{sr}^{d_k-3} & c_{sr}^{d_k-4} & \ldots & c_{sr}^1 & c_{sr}^{0} & v^{d_k}c_{sr}^{d_k-1}\\
c_{sr}^{d_k-1} & c_{sr}^{d_k-2} & c_{sr}^{d_k-3} && c_{sr}^2 & c_{sr}^1 & c_{sr}^{0}
\end{array}\right].
\end{equation}


Notice that using the identification $F_i=\bigoplus_{j\in Q_0}\delta_{i,j}F_j$ for each vertex $i$, we can interpret $C_0$ as a matrix with entries in $R=\bigoplus_{j\in Q_0}F_j$. The sum $C=C_0 + C_1$ can then be seen as a matrix with entries in $\completeQd$. As such, it is invertible, and its inverse is of the same form: indeed, we have
$$(C_0 + C_1)^{-1} = (I + C_0^{-1} C_1)^{-1} C_0^{-1} =
(C_0^{-1} + \sum_{n=1}^\infty (-1)^n (C_0^{-1} C_1)^nC_0^{-1}) .$$

For each $s=1,\ldots,p$, let $a_s^\star\B_k^{-1}$ be the column matrix which is transpose to the row matrix
$$
\left[\begin{array}{cccccc}a_s^\star & a_s^\star v_k^{-1} & a_s^\star v_k^{-2} & \ldots & a_s^\star v_k^{-(d_k-2)} & a_s^\star v_k^{-(d_k-1)}\end{array}\right].
$$
Set $\boldsymbol{a^\star}\B_k^{-1}$ to be the column matrix
$$
\boldsymbol{a^\star}\B_k^{-1}=\left[\begin{array}{c}
a_1^\star\B_k^{-1}\\
a_2^\star\B_k^{-1}\\
\vdots\\
a_{p-1}^\star\B_k^{-1}\\
a_p^\star\B_k^{-1}\end{array}
\right]
$$

To define the elements ${\widehat\varphi}(a_s^\star)$ it is enough to define what the entries of the matrix ${\widehat\varphi}(\boldsymbol{a^\star})\B_k^{-1}$ are. That is, it suffices to define the matrix ${\widehat\varphi}(\boldsymbol{a^\star})\B_k^{-1}$. We do so through the following matrix product:
\begin{equation}\label{eq:phi(a*)}
{\widehat\varphi}(\boldsymbol{a^\star})\B_k^{-1}=(C_0+C_1)^{-1}(\boldsymbol{a^\star}\B_k^{-1})
\end{equation}
It is not hard to see that $C^{-1}=(C_0+C_1)^{-1}$ has a decomposition into blocks similar to that of $C$, and this readily implies that $\varphi(a_s^\star v_k^{-\ell})=\varphi(a_s^\star)v_k^{-\ell}$ for all $\ell=0,\ldots,d_k-1$ and $s=1,\ldots,p$. That is, there is no ambiguity in the definition \eqref{eq:phi(a*)} of $\varphi(a_s^\star v_k^{-\ell})$.
Now,
\begin{eqnarray}
{\widehat \varphi}\left(\sum_{a \in Q_1 \cap  e_k A}\sum_{l=0}^{d_k-1}v_k^{l} a a^\star v_k^{-l}\right) & = &
(\B_k\varphi(\boldsymbol{a}))({\widehat\varphi}(\boldsymbol{a^\star})\B_k^{-1})\\ \nonumber & = &
(\B_k\boldsymbol{a})(C_0+C_1)(C_0+C_1)^{-1}(\boldsymbol{a^\star}\B_k^{-1})\\ \nonumber &= &
\sum_{a \in Q_1 \cap  e_k A}\sum_{l=0}^{d_k-1}v_k^l a a^\star v_k^{-l}
\end{eqnarray}

For $b \in Q_1 \cap A e_k$, we define $\widehat \varphi(b^\star)$ in a
similar way.
Namely, let
$Q_1 \cap A e_k = \{b_1, \dots, b_q\}$ and assume that these arrows have been ordered in such a way that for each $j\in Q_0$, all the arrows from $k$ to $j$ appear consecutively. For each $r=1,\ldots,q$, let $b_r\B_k$ be the column matrix which is transpose to the matrix
$$
\left[\begin{array}{cccccc}b_r & b_rv_k & b_rv_k^2 & \ldots & b_rv_k^{d_k-2} & b_rv_k^{d_k-1}\end{array}\right].
$$
We set $\boldsymbol{b}\B_k$ to be the column matrix
$$
\boldsymbol{b}\B_k=\left[\begin{array}{c}b_1\B_k\\
b_2\B_k\\
\vdots\\
b_{q-1}\B_k\\
b_q\B_k\end{array}\right].
$$
The entries of $\boldsymbol{b}\B_k$ are elements of $\RA{A}$, whence we can apply $\varphi$ to each of them. Let $\varphi(\boldsymbol{b})\B_k$ be the row matrix whose entries are obtained by applying $\varphi$ componentwise to $\boldsymbol{b}\B_k$. The entries of $\varphi(\boldsymbol{b})\B_k$ are elements of $\RA{A}$ as well.
In view of Proposition~\ref{prop:automorphisms},
\begin{equation}
\label{eq:phi-on-b}
\varphi(\boldsymbol{b})\B_k=(D_0 + D_1)(\boldsymbol{b}\B_k),
\end{equation}
where:
\begin{enumerate}
\item $D_0$ is a $d_kq \times d_kq$ matrix with the following properties:
 \begin{itemize}
 \item it is block-diagonal with square blocks, the blocks given by grouping together the arrows $b_1,\ldots,b_q$ according to their heads;
 \item a block of $D_0$ corresponding to (the arrows whose heads equal) a given vertex $j$ has entries in $F_j$ and, moreover, such a block is an invertible matrix.
 \end{itemize}
\item $D_1$ is a $d_kq \times d_kq$ matrix with the following properties:
\begin{itemize}
\item it can be divided into $q^{2}$ blocks of size $d_k\times d_k$, each block being determined by an ordered pair $(s,r)$ with $s,r\in\{1,\ldots,q\}$
\item for $s,r\in\{1,\ldots,q\}$, all entries of the block of $D_1$ determined by the ordered pair $(s,r)$ belong to
to ${\mathfrak m}(A)_{h(b_s),h(b_r)}$.
\end{itemize}
\end{enumerate}
Since $\varphi(b_rv^{\ell})=\varphi(b_r)v^{\ell}$ for all $\ell=0,\ldots,d_k-1$ and all $r=1,\ldots,q$, we can divide $D=D_0+D_1$ into $q^2$ blocks of size $d_k\times d_k$
\begin{equation}\label{eq:D-divided-in-blocks}
D=\left[\begin{array}{ccc}
D_{11} & \ldots & D_{1q}\\
 & \ddots & \\
D_{q1} & \ldots & D_{qq}\end{array}\right],
\end{equation}
the blocks being given by
\begin{equation}\label{eq:D-typical-block}
D_{sr}=
\left[\begin{array}{ccccccc}
d_{sr}^0 & d_{ij}^1 & d_{sr}^2 && d_{sr}^{d_k-3} & d_{sr}^{d_k-2} & d_{sr}^{d_k-1}\\
v^{d_k}d_{sr}^{d_k-1} & d_{sr}^0 & d_{ij}^1 && d_{sr}^{d_k-4} & d_{ij}^{d_k-3} & d_{sr}^{d_k-2}\\
v^{d_k}d_{sr}^{d_k-2} & v^{d_k}d_{sr}^{d_k-1} & d_{sr}^0 && d_{sr}^{d_k-5} &d_{sr}^{d_k-4} & d_{sr}^{d_k-3}\\
&&\ddots&&&&\\
v^{d_k}d_{sr}^{3} & v^{d_k}d_{sr}^4 & v^{d_k}d_{sr}^5 & \ldots & d_{sr}^{0} & d_{sr}^{1} & d_{sr}^{2}\\
v^{d_k}d_{sr}^2 & v^{d_k}d_{sr}^3 & v^{d_k}d_{sr}^4 & \ldots & v^{d_k}d_{sr}^{d_k-1} & d_{sr}^0 & d_{sr}^1\\
v^{d_k}d_{sr}^1 & v^{d_k}d_{sr}^2 & v^{d_k}d_{sr}^3 & \ldots & v^{d_k}d_{sr}^{d_k-2} & v^{d_k}d_{sr}^{d_k-1} & d_{sr}^0
 \end{array}\right].
\end{equation}

As we saw with $C_0+C_1$, the matrix $D_0 + D_1$ is invertible, and its inverse is of the
same form.

For each $r=1,\ldots,q$, let $\B_k^{-1}b_r^\star$ denote the row matrix
$$
\left[\begin{array}{cccccc}b_r^\star & v_k^{-1}b_r^\star & v_k^{-2}b_r^\star & \ldots & v_k^{-(d_k-2)}b_r^\star & v_k^{-(d_k-1)}b_r^\star\end{array}\right].
$$
Set $\B_k^{-1}\boldsymbol{b^\star}$ to be the row matrix
$$
\B_k^{-1}\boldsymbol{b^\star}=\left[\begin{array}{ccccc}\B_k^{-1}b_1^\star & \B_k^{-1}b_2^\star & \ldots & \B_k^{-1}b_{q-1}^\star & \B_k^{-1}b_q^\star\end{array}\right].
$$
To define the elements ${\widehat\varphi}(b_r^\star)$ it is enough to define what the entries of the matrix $\B_k^{-1}{\widehat\varphi}(\boldsymbol{b^\star})$ are. That is, it suffices to define the matrix $\B_k^{-1}{\widehat\varphi}(\boldsymbol{b^\star})$. We do so through the following matrix product:
\begin{equation}\label{eq:phi(b*)}
\B_k^{-1}{\widehat\varphi}(\boldsymbol{b^\star})= (\B_k^{-1}\boldsymbol{b^\star})(D_0+D_1)^{-1}
\end{equation}

It is easy to check that $D^{-1}=(D_0+D_1)^{-1}$ has a decomposition into blocks similar to that of $D$, and this readily implies that $\varphi(v_k^{-\ell}b_r^\star )=v_k^{-\ell}\varphi(b_r^\star)$ for all $\ell=0,\ldots,d_k-1$ and $r=1,\ldots,q$. That is, there is no ambiguity in the definition \eqref{eq:phi(b*)} of $\varphi(v_k^{-\ell}b_r^\star)$.
Now,
\begin{eqnarray}
{\widehat\varphi}\left(\sum_{b \in Q_1 \cap A e_k}\sum_{l=0}^{d_k-1} v_k^{-l}b_q^\star b_qv_k^l\right) & = &
(\B_k^{-1}{\widehat\varphi}(\boldsymbol{b^\star}))(\varphi(\boldsymbol{b})\B_k) \\ \nonumber   & = & (\B_k^{-1}\boldsymbol{b^\star})(D_0+D_1)^{-1}(D_0 + D_1)(\boldsymbol{b}\B_k) \\ \nonumber & = &
\sum_{b \in Q_1 \cap A e_k}\sum_{l=0}^{d_k-1} v_k^{-l}b_q^\star b_qv_k^l.
\end{eqnarray}

Conditions \eqref{eq:preserving-quadratic-terms-a} and \eqref{eq:preserving-quadratic-terms-b} are then clearly
satisfied; the construction also makes clear that the automorphism $\widehat \varphi$
of $\RA{\widehat A}$ preserves
the subalgebra $\RA{\widetilde{\mu}_k (A)}$.
As a consequence of Proposition~\ref{prop:automorphisms},
$\widehat \varphi$  restricts to an automorphism of
$\RA{\widetilde{\mu}_k (A)}$, verifying
\eqref{eq:preserving-tildeA} and completing the proofs of
Lemma~\ref{lemma:extension-of-varphi} and Theorem~\ref{thm:mu-tilde-preserves-isomorphisms}.
\end{proof}

Note that even if an SP $(A,S)$ is assumed to be reduced,
the SP $\widetilde{\mu}_k(A,S) = (\widetilde{\mu}_k (A), \widetilde{\mu}_k (S))$ is
not necessarily reduced because the component
$[S]^{(2)} \in \widetilde A^2$ may be non-zero.
Combining Theorems~\ref{thm:trivial-reduced-splitting} and
\ref{thm:mu-tilde-preserves-isomorphisms}, we obtain the following
corollary.

\begin{coro}
\label{coro:mutations-respect-isom}
Suppose an SP $(A,S)$ satisfies \eqref{eq:no-2-cycles-thru-k}
and \eqref{eq:no-start-in-k}, and let
$\widetilde \mu_k(A,S) = (\widetilde{\mu}_k (A), \widetilde{\mu}_k (S))$.
Let $(\overline A, \overline S)$ be a reduced SP such that
\begin{equation}
\label{eq:mutilde-mu}
(\widetilde{\mu}_k (A), \widetilde{\mu}_k (S)) \cong (\widetilde{\mu}_k (A)_{\rm triv}, \widetilde{\mu}_k (S)^{(2)}) \oplus
(\overline A, \overline S)
\end{equation}
(see \eqref{thm:trivial-reduced-splitting}).
Then the right-equivalence class of $(\overline A, \overline S)$ is determined by the
right-equivalence class of $(A,S)$.
\end{coro}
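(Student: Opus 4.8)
The plan is to obtain the statement by combining Theorem~\ref{thm:mu-tilde-preserves-isomorphisms} with the Splitting Theorem~\ref{thm:trivial-reduced-splitting}, i.e. the two results whose ``combination'' is announced right before the corollary. The key observation is that the decomposition \eqref{eq:mutilde-mu} is nothing but the trivial--reduced splitting of the SP $\widetilde{\mu}_k(A,S)$ furnished by Theorem~\ref{thm:trivial-reduced-splitting}, so that $(\overline A,\overline S)$ is a reduced part of $\widetilde{\mu}_k(A,S)$.

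To make this precise I would first check that $(\widetilde{\mu}_k(A)_{\rm triv},\widetilde{\mu}_k(S)^{(2)})$ is a trivial SP in the sense of Definition~\ref{def:trivial-&-reduced}. By Definition~\ref{def:triv-&-red-spans} we have $\widetilde{\mu}_k(A)_{\rm triv}=\partial(\widetilde{\mu}_k(S)^{(2)})$; since every arrow occurring in a degree-$2$ cyclic potential occurs in one of its cyclic derivatives, $\widetilde{\mu}_k(S)^{(2)}$ belongs to $(\widetilde{\mu}_k(A)_{\rm triv})^2$, and then $\partial(\widetilde{\mu}_k(S)^{(2)})=\widetilde{\mu}_k(A)_{\rm triv}$ holds by the very definition of $\widetilde{\mu}_k(A)_{\rm triv}$, so the pair is trivial. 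Hence \eqref{eq:mutilde-mu} exhibits $\widetilde{\mu}_k(A,S)$ as a direct sum of a trivial SP and the reduced SP $(\overline A,\overline S)$, and by the uniqueness clause of Theorem~\ref{thm:trivial-reduced-splitting} the right-equivalence class of $(\overline A,\overline S)$ is determined by the right-equivalence class of $\widetilde{\mu}_k(A,S)$; in particular any two reduced SPs satisfying \eqref{eq:mutilde-mu} for the same $(A,S)$ are right-equivalent.

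It remains to recall that, by Theorem~\ref{thm:mu-tilde-preserves-isomorphisms}, the right-equivalence class of $\widetilde{\mu}_k(A,S)$ depends only on that of $(A,S)$: given a right-equivalence $(A,S)\to(A',S')$ between SPs satisfying \eqref{eq:no-2-cycles-thru-k}, one first replaces $S$ and $S'$ by cyclically equivalent potentials satisfying the normalization \eqref{eq:no-start-in-k} (this is possible because no $2$-cycle of $Q$ is incident to $k$, so every cyclic path through $k$ meets another vertex), and then applies Theorem~\ref{thm:mu-tilde-preserves-isomorphisms}. Chaining this with the conclusion of the previous paragraph yields the corollary. There is no serious obstacle in this argument, whose content is essentially bookkeeping; the only steps that need a moment's care are the verification that the displayed trivial summand of \eqref{eq:mutilde-mu} is genuinely trivial in the sense of Definition~\ref{def:trivial-&-reduced}, and the observation that \eqref{eq:mutilde-mu} is an instance of the canonical splitting of Theorem~\ref{thm:trivial-reduced-splitting} rather than some unrelated decomposition --- both of which mirror the corresponding passage in \cite{DWZ1}.
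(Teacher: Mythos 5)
Your proof is correct and mirrors the paper's intended argument: the text merely says the corollary is obtained by ``combining'' Theorems~\ref{thm:trivial-reduced-splitting} and \ref{thm:mu-tilde-preserves-isomorphisms}, and you have made that combination explicit, including the verification that $(\widetilde{\mu}_k(A)_{\rm triv},\widetilde{\mu}_k(S)^{(2)})$ is trivial and the appeal to the uniqueness clause. One small slip in your parenthetical: the normalization \eqref{eq:no-start-in-k} is achievable by cyclic shifting because $Q$ has no loops (condition \eqref{eq:no-loops} in Definition~\ref{def:SP}), not because of \eqref{eq:no-2-cycles-thru-k} --- but this does not affect the argument.
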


\begin{defi}
\label{def:reduced-mutation}
In the situation of Corollary~\ref{coro:mutations-respect-isom},
we use the notation $\mu_k(A,S) = (\overline A, \overline S)$
and call the correspondence $(A,S) \mapsto \mu_k(A,S)$ the \emph{mutation at vertex}~$k$.
\end{defi}

Note that if an SP $(A,S)$ satisfies \eqref{eq:no-2-cycles-thru-k}
then the same is true for $\widetilde \mu_k(A,S)$ and for
$\mu_k(A,S)$.
Thus, the mutation $\mu_k$ is a well-defined transformation on the
set of right-equivalence classes of reduced SPs satisfying
\eqref{eq:no-2-cycles-thru-k}.
(With some abuse of notation, we sometimes denote
a right-equivalence class by the same symbol as any of its
representatives). However, it is not necessarily true that if we start with a 2-acyclic SP $(A,S)$, then $\mu_k(A,S)=(\overline A, \overline S)$ is 2-acyclic as well, and so, we cannot conclude that $\overline A$ is the arrow span of $\mu_k(Q,\dtuple)$.

\begin{ex}\label{ex:1213-cycle-potential-mutation-4}
Let $(Q,\dtuple)$ and $A=A_{E/F}$ be as in Examples \ref{ex:1213-cycle} and \ref{ex:1213-cycle-species}.
Let $S=\alpha\beta\gamma\delta+\alpha v^2\beta\gamma v^3\delta$, then $\widetilde{\mu}_4(A,S)=(\widetilde{\mu}_4(A),\widetilde{\mu}_4(S))$, where the arrow span $\widetilde{\mu}_4(A)$ can be visualized as
$$
\xymatrix{F \ar[rr]_\delta^{F_2\otimes_FF} \ar[dd]^{\alpha^*}_{F_4\otimes_FF} & & F_2 \ar[dd]_\gamma^{F\otimes_FF_2}\\
 & & \\
F_4 \ar[rr]^{\beta^*}_{F\otimes_FF_4}& & F \ar[uull] \ar@<-1.0ex>[uull] \ar@<1.0ex>[uull]}
$$
the diagonal arrows being $[\alpha \beta],\comp{\alpha}{v^2}{\beta},\comp{\alpha}{v^4}{\beta}:3\rightarrow 1$ (note that $\widetilde{\mu}_4(A)_{13}=(F\otimes_FF)^3$), and
$$
\widetilde{\mu}_4(S)=[\alpha \beta]\gamma\delta+\comp{\alpha}{ v^2}{\beta}\gamma v^3\delta+[\alpha \beta]\beta^*\alpha^*+\frac{1}{v^6}\comp{\alpha}{v^2}{\beta}\beta^*v^4\alpha^*+\frac{1}{v^6}\comp{\alpha}{v^4}{\beta}\beta^*v^2\alpha^*.
$$
Since $\widetilde{\mu}_4(A)$ is 2-acyclic, $(\widetilde{\mu}_4(A),\widetilde{\mu}_4(S))$ is already reduced, and thus $\mu_4(A,S)=(\widetilde{\mu}_4(A),\widetilde{\mu}_4(S))=(\mu_4(A),\mu_4(S))$.
\end{ex}

\begin{ex}\label{ex:1213-cycle-potential-mutation-4then2}
Let $(A,S)$ be as in the previous example. Let us apply the mutation $\mu_2$ to the SP $\mu_4(A,S)=(\mu_4(A),\mu_4(S))$. We first compute $\widetilde{\mu}_2(\mu_4(A),\mu_4(S))$. The arrow span $\widetilde{\mu}_2(\mu_4(A))$ is given by
$$
\xymatrix{F \ar[dd]_{\alpha^{*}}  \ar@<2.5ex>[ddrr] \ar@<1.5ex>[ddrr]  & & F_2 \ar[ll]_{\delta^*}\\
 & & \\
 F_4 \ar[rr]_{\beta^{*}} & & F  \ar@<.5ex>[uull] \ar@<1.5ex>[uull] \ar@<2.5ex>[uull] \ar[uu]_{\gamma^*}}
$$
the diagonal arrows being $[\alpha \beta],\comp{\alpha}{v^2}{\beta},\comp{\alpha}{v^4}{\beta}:3\rightarrow 1$, and
$[\gamma\delta],\comp{\gamma}{ v^3}{\delta}:1\rightarrow 3$. Furthermore,
\begin{eqnarray}\nonumber
\widetilde{\mu}_2(\mu_4(S))&=&[\alpha \beta][\gamma\delta]
+\comp{\alpha}{ v^2}{\beta}\comp{\gamma }{v^3}{\delta}
+[\alpha \beta]\beta^*\alpha^*\\
\nonumber
&&+\frac{1}{v^6}\comp{\alpha}{v^2}{\beta}\beta^*v^4\alpha^*
+\frac{1}{v^6}\comp{\alpha}{v^4}{\beta}\beta^*v^2\alpha^*
+\delta^*\gamma^*[\gamma\delta]
+\frac{1}{v^{6}}\delta^*v^3\gamma^*\comp{\gamma}{ v^3}{\delta}.
\end{eqnarray}
The image of $\widetilde{\mu}_2(\mu_4(S))$ under the $R$-algebra automorphism $\varphi$ of $\RA{\widetilde{\mu}_2(\mu_4(A))}$ whose action on the arrows is given by
$$
[\alpha \beta]\mapsto[\alpha \beta]-\delta^*\gamma^*, \
[\gamma\delta]\mapsto[\gamma\delta]-\beta^*\alpha^*, \
\comp{\alpha}{ v^2}{\beta}\mapsto\comp{\alpha}{ v^2}{\beta}-\frac{1}{v^{6}}\delta^*v^3\gamma^*, \
\comp{\gamma}{ v^3}{\delta}\mapsto\comp{\gamma}{ v^3}{\delta}-\frac{1}{v^6}\beta^*v^4\alpha^*
$$
and the identity on the rest of the arrows, is
$$
\varphi(\widetilde{\mu}_2(\mu_4(S)))=
[\alpha \beta][\gamma\delta]
+\comp{\alpha}{ v^2}{\beta}\comp{\gamma}{ v^3}{\delta}
-\delta^*\gamma^*\beta^*\alpha^*
-\frac{1}{v^{12}}\delta^*v^3\gamma^*\beta^*v^4\alpha^*
+\frac{1}{v^6}\comp{\alpha}{v^4}{\beta}\beta^*v^2\alpha^*.
$$
Therefore, $\mu_2\mu_4(A,S)=(\mu_2\mu_4(A),\mu_2\mu_4(S))$, with $\mu_2\mu_4(A)$ given as the arrow span
$$
\xymatrix{F \ar[dd]_{\alpha^{*}} & & F_2 \ar[ll]_{\delta^*}\\
 & & \\
 F_4 \ar[rr]_{\beta^{*}} & & F  \ar[uull]^{\ \ \ \ \ \comp{\alpha}{v^4}{\beta}} \ar[uu]_{\gamma^*}}
$$
and $\mu_2\mu_4(S)=-\delta^*\gamma^*\beta^*\alpha^*
-\frac{1}{v^{12}}\delta^*v^3\gamma^*\beta^*v^4\alpha^*
+\frac{1}{v^6}\comp{\alpha}{v^4}{\beta}\beta^*v^2\alpha^*$.
\end{ex}

\begin{ex}\label{ex:1213-cycle-potential-mutation-4then4}
Let $(A,S)$ be as in Example \ref{ex:1213-cycle-potential-mutation-4}.
Let us apply the mutation $\mu_4$ to the SP $\mu_4(A,S)=(\mu_4(A),\mu_4(S))$. We first compute $\widetilde{\mu}_4(\mu_4(A),\mu_4(S))$. The arrow span $\widetilde{\mu}_4(\mu_4(A))$ is given by
$$
\xymatrix{F \ar[rr]^\delta \ar@<2.5ex>[ddrr] \ar@<1.5ex>[ddrr] \ar@<.5ex>[ddrr] & & F_2 \ar[dd]^\gamma\\
 & & \\
 F_4 \ar[uu]^{\alpha^{**}} & & F \ar[ll]^{\beta^{**}} \ar@<.5ex>[uull] \ar@<1.5ex>[uull] \ar@<2.5ex>[uull]}
$$
the diagonal arrows being $[\alpha \beta],\comp{\alpha}{v^2}{\beta},\comp{\alpha}{v^4}{\beta}:3\to 1$, and $[\beta^*\alpha^*],\comp{\beta^*}{v^2}{\alpha^*},\comp{\beta^*}{v^4}{\alpha^*}:1\to 3$, and
\begin{eqnarray}\nonumber
\widetilde{\mu}_4(\mu_4(S))&=&[\alpha \beta]\gamma\delta+\comp{\alpha}{v^2}{\beta}\gamma v^3\delta+[\alpha \beta][\beta^*\alpha^*]+\frac{1}{v^6}\comp{\alpha}{v^2}{\beta}\comp{\beta^*}{v^4}{\alpha^*}+
\frac{1}{v^6}\comp{\alpha}{v^4}{\beta}\comp{\beta^*}{v^2}{\alpha^*}\\
\nonumber
&&+\alpha^{**} \beta^{**}[\beta^*\alpha^*]
+\frac{1}{v^6}\alpha^{**} v^2\beta^{**}\comp{\beta^*}{v^4}{\alpha^*}
+\frac{1}{v^6}\alpha^{**} v^4\beta^{**}\comp{\beta^*}{v^2}\alpha^*.
\end{eqnarray}
The image of $\widetilde{\mu}_4(\mu_4(S)))$ under the $R$-algebra automorphism $\varphi$ of $\RA{\widetilde{\mu}_3(\mu_3(A))}$ whose action on the arrows is given by
$$
[\alpha \beta]\mapsto[\alpha \beta]-\alpha^{**}\beta^{**}, \ [\beta^*\alpha^*]\mapsto[\beta^*\alpha^*]-\gamma\delta,
$$
$$
\comp{\alpha}{v^2}{\beta}\mapsto\comp{\alpha}{v^2}{\beta}-\alpha^{**}v^2\beta^{**}, \ \comp{\beta^*}{v^4}{\alpha^*}\mapsto\comp{\beta^*}{v^4}{\alpha^*}-v^6\gamma v^3\delta, \ \comp{\alpha}{v^4}{\beta}\mapsto \comp{\alpha}{v^4}{\beta}-\alpha^{**}v^4\beta^{**}
$$
and the identity on the rest of the arrows, is
$$
\varphi(\widetilde{\mu}_4(\mu_4(S))))=
[\alpha \beta][\beta^*\alpha^*]+\frac{1}{v^6}\comp{\alpha}{v^2}{\beta}\comp{\beta^*}{v^4}{\alpha^*}
+\frac{1}{v^6}\comp{\alpha}{v^4}{\beta}\comp{\beta^*}{v^2}{\alpha^*}
-\alpha \beta \gamma\delta-\alpha v^2\beta \gamma v^3\delta.
$$
Hence
$\mu_4(\mu_4(A,S))$ is right-equivalent to $(A,S)$.
\end{ex}

As suggested by Example \ref{ex:1213-cycle-potential-mutation-4then4}, our next result is that every mutation is an involution up to right-equivalence.

\begin{thm}
\label{thm:mutation-involutive}
The correspondence $\mu_k: (A,S) \to (\overline A, \overline S)$
acts as an involution on the set of right-equivalence classes of reduced SPs
satisfying \eqref{eq:no-2-cycles-thru-k}, that is,
$\mu_k^2(A,S)$ is right-equivalent to $(A,S)$.
\end{thm}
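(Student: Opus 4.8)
The plan is to follow the proof of \cite[Theorem 5.7]{DWZ1}, the extra effort being of the same nature as in the proof of Theorem~\ref{thm:mu-tilde-preserves-isomorphisms}: book-keeping that reduces the whole statement to one explicit computation with the ``double premutation'' $\widetilde{\mu}_k\circ\widetilde{\mu}_k$. Throughout, I replace potentials by cyclically equivalent ones so that \eqref{eq:no-start-in-k} holds wherever $\widetilde{\mu}_k$ is applied, and I use repeatedly that $\widetilde{\mu}_k(A)$, its reduced part and $\mu_k(A,S)$ all satisfy \eqref{eq:no-2-cycles-thru-k} (the remark following Definition~\ref{def:reduced-mutation}), and that adjoining an SP supported away from $k$ cannot create a $2$-cycle through $k$. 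Concretely, I would show that $\mu_k^{2}(A,S)$ is right-equivalent to the reduced part of $\widetilde{\mu}_k(\widetilde{\mu}_k(A,S))$ (this is pure book-keeping) and that the latter is right-equivalent to $(A,S)$ (this is the analogue of the core computation of \cite[Section 5]{DWZ1}); since the reduced part of an SP is well defined up to right-equivalence by Theorem~\ref{thm:trivial-reduced-splitting}, combining the two steps finishes the proof.

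For the book-keeping, note that $\widetilde{\mu}_k(S)^{(2)}=[S]^{(2)}$ and that, since no cyclic path occurring in $S$ starts or ends at $k$, the potential $[S]$ — hence $[S]^{(2)}$ — involves only composite arrows $\compbwa$ and arrows of $Q$ not incident to $k$. Therefore the trivial arrow span $\widetilde{\mu}_k(A)_{\triv}=\partial([S]^{(2)})$ is annihilated by $e_k$ on both sides, so Proposition~\ref{prop:mu-tilde-trivial-summand} applies to the Splitting-Theorem decomposition $\widetilde{\mu}_k(A,S)\cong\mu_k(A,S)\oplus(\widetilde{\mu}_k(A)_{\triv},\widetilde{\mu}_k(S)^{(2)})$, and, using Theorem~\ref{thm:mu-tilde-preserves-isomorphisms} to apply $\widetilde{\mu}_k$ to this right-equivalence, it yields
\begin{equation*}
\widetilde{\mu}_k(\widetilde{\mu}_k(A,S))\ \cong\ \widetilde{\mu}_k(\mu_k(A,S))\ \oplus\ (\widetilde{\mu}_k(A)_{\triv},\widetilde{\mu}_k(S)^{(2)}).
\end{equation*}
A direct sum of trivial SPs is again trivial, and by the uniqueness clause of Theorem~\ref{thm:trivial-reduced-splitting} direct-summing a trivial SP does not alter the reduced part; hence the reduced part of the left-hand side is right-equivalent to the reduced part of $\widetilde{\mu}_k(\mu_k(A,S))$, which by Definition~\ref{def:reduced-mutation} and Corollary~\ref{coro:mutations-respect-isom} is exactly $\mu_k(\mu_k(A,S))=\mu_k^{2}(A,S)$.

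The crucial step is therefore the species analogue of the core computation of \cite[Section 5]{DWZ1}: \emph{$\widetilde{\mu}_k(\widetilde{\mu}_k(A,S))$ is right-equivalent to $(A,S)\oplus(C,T)$ for a trivial SP $(C,T)$ with $e_kC=Ce_k=\{0\}$}; granting this, $(A,S)$ being reduced forces $(A,S)$ to be the reduced part of $(A,S)\oplus(C,T)$, and the proof is complete. Here $(C,T)$ is supported on the $2$-cycles that pair each ``first-generation'' composite arrow $\compbwa$ (produced by the first $\widetilde{\mu}_k$) with the ``second-generation'' composite arrow $\comp{a^{*}}{\omega^{-1}}{b^{*}}$ obtained by re-composing $\triangle_k(A)$ through $k$, while the doubly starred arrows $a^{**},b^{**}$ take over the roles of $\pm a,\pm b$. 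To realize this I would exhibit, mimicking \cite{DWZ1}, an explicit $R$-algebra automorphism of $\RA{\widetilde{\mu}_k(\widetilde{\mu}_k(A))}$ that is the identity on the arrows of $Q$ avoiding $k$, sends $a^{**}\mapsto\pm a$ and $b^{**}\mapsto\pm b$ (signs chosen so that the double image of $[S]$ returns to $S$), and is prescribed on the second-generation composites by the cubic term $\triangle_k(\widetilde{\mu}_k(A))$ so as to peel off the $2$-cycles together with their trivial potential $T$. Checking that this map is well defined and carries $\widetilde{\mu}_k(\widetilde{\mu}_k(S))$ to a potential cyclically equivalent to $S+T$ is where the species setting asks for genuine care rather than transcription: one must track the eigenbasis $\B_k$ (so that each pass through $k$ has the form $b\omega a$ with $\omega\in F_k$, and Remark~\ref{rem:[bua]-always-defined} is invoked to make sense of $\comp{a^{*}}{\omega^{-1}}{b^{*}}$), the structure constants $f,m$ of \eqref{eq:mult-property-of-eigenbases} that appear each time two paths are concatenated, and the factor $d_k$ entering through the identity $\sum_{u\in\B_k}\sum_{\omega\in\B_k}u^{-1}\omega^{-1}b^{*}bu\omega=d_k\sum_{\omega\in\B_k}\omega^{-1}b^{*}b\omega$ already used in the proof of Theorem~\ref{thm:mu-tilde-preserves-isomorphisms}. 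I expect this explicit right-equivalence, and in particular getting the signs and normalizations in it exactly right, to be the main obstacle; the rest is formal.
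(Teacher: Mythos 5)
Your plan coincides with the paper's proof, and the book-keeping paragraph is carried out carefully and correctly: you reduce to showing that $\widetilde{\mu}_k\widetilde{\mu}_k(A,S)$ is right-equivalent to $(A,S)\oplus(C,T)$ with $(C,T)$ trivial, by splitting off the trivial part of $\widetilde{\mu}_k(A,S)$ (which is concentrated away from $k$ since $(A,S)$ satisfies \eqref{eq:no-2-cycles-thru-k}), invoking Theorem~\ref{thm:mu-tilde-preserves-isomorphisms} and Proposition~\ref{prop:mu-tilde-trivial-summand}, and using the uniqueness clause of the Splitting Theorem. That is exactly the reduction the paper makes, only you spell it out where the paper merely cites the two results. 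Your description of $(C,T)$ is also right: it is supported on the $2$-cycles pairing each first-generation composite $\compbwa$ with the second-generation composite $\comp{a^\star}{v_k^{d_k-\ell}}{b^\star}$ (your $\comp{a^{*}}{\omega^{-1}}{b^{*}}$ is an $F$-multiple of this via Remark~\ref{rem:[bua]-always-defined}, as you note).

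The part you leave as a sketch is precisely where the paper does its work. After identifying $\widetilde{\widetilde A}=A\oplus Ae_kA\oplus A^\star e_kA^\star$ (so that $a^{\star\star}$ and $b^{\star\star}$ are \emph{already} $a$ and $b$ — there is no sign to choose there), and after replacing $\widetilde{\widetilde S}$ by the cyclically equivalent $S_1$ of \eqref{eq:S1}, the paper builds the desired right-equivalence as the composition $\varphi_3\varphi_2\varphi_1$ of: (1) the change of arrows $\varphi_1$ multiplying each $b\in Q_1\cap Ae_k$ by $-1$ — this, rather than a sign on $a^{\star\star},b^{\star\star}$, is what produces the $([ba]-ba)$ and $(\comp{b}{v_k^\ell}{a}-bv_k^\ell a)$ factors; (2) the unitriangular $\varphi_2$ sending $\comp{b}{v_k^\ell}{a}\mapsto\comp{b}{v_k^\ell}{a}+bv_k^\ell a$, which converts $[S]$ back into $S$ plus a remainder in $\maxid^2$; and (3) the unitriangular $\varphi_3$ that absorbs that remainder into the arrows $\comp{a^\star}{v_k^\ell}{b^\star}$, leaving $S+T$. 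Getting the $\frac{1}{v_k^{d_k}}$ normalization in $T$ right is a genuine species-specific point (it is the scalar $f$ from \eqref{eq:mult-property-of-eigenbases} for the product $v_k^\ell\cdot v_k^{d_k-\ell}$), but it is not the $d_k$-factor identity you cite from the proof of Theorem~\ref{thm:mu-tilde-preserves-isomorphisms}, which plays no role here. So the approach is the same as the paper's; what remains is to carry out these three explicit automorphism steps.
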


\begin{proof} We follow the proof of \cite[Theorem 5.7]{DWZ1}, adapting it to our setup.
Let $(A, S)$ be a reduced SP satisfying \eqref{eq:no-2-cycles-thru-k}
and \eqref{eq:no-start-in-k}.
Let ${\widetilde \mu}_k (A,S)= ({\widetilde A}, {\widetilde S})$ and
${\widetilde \mu}_k^2 (A, S)= {\widetilde \mu}_k({\widetilde A}, {\widetilde S}) =
(\widetilde {\widetilde A}, \widetilde {\widetilde S})$.
In view of Theorem~\ref{thm:trivial-reduced-splitting} and
Proposition~\ref{prop:mu-tilde-trivial-summand}, it is enough to show that
\begin{equation}
\label{eq:tilde-twice}
\text{$(\widetilde {\widetilde A}, \widetilde {\widetilde S})$ is
right-equivalent to $(A,S) \oplus (C,T)$, where $(C,T)$ is a trivial SP.}
\end{equation}
Identifying $(e_k A)^\star$ with
$A^\star e_k$, and $(A e_k )^\star$ with
$e_k A^\star$, where $A^\star$ is the dual $R$-$R$-bimodule of $A$,
we conclude that
\begin{equation}
\label{eq:tilde-tilde-A}
\widetilde {\widetilde A} =
A \oplus  A e_k A \oplus
A^\star e_k A^\star.
\end{equation}
Furthermore, the basis of arrows in
$\widetilde {\widetilde A}$ consists of the original set of arrows $Q_1$ in $A$
together with the arrows $\compbwa \in  A e_k A$ and
$\comp{a^\star}{\omega}{ b^\star} \in A^\star e_k A^\star$
for $a \in Q_1 \cap  e_k A$, $b \in Q_1 \cap A e_k$, and $\omega\in\B_k$.
We see then that the
potential $\widetilde{\mu}_k\widetilde{\mu}_{k}(S)=\widetilde {\widetilde S}$ is
cyclically equivalent to
\begin{equation}
\label{eq:S1}
S_1= [S] + \sum_{a, b \in Q_1: \ h(a) = t(b) = k}\left(([ba]+ba)[a^\star b^\star]+\sum_{\ell=1}^{d_k-1} \frac{1}{v_k^{d_k}}(\comp{b}{v_k^\ell} {a}
+ bv_k^{\ell}a)\comp{a^\star}{v_k^{d_k-\ell}} {b^\star} \right).
\end{equation}
Let us abbreviate
$$
(C,T) = (A e_k A \oplus A^\star e_k A^\star,
\sum_{a, b \in Q_1: \ h(a) = t(b) = k}\left([ba][a^\star b^\star]+\sum_{\ell=1}^{d_k-1} \frac{1}{v^{d_k}}\comp{b}{v^\ell}{a}\comp{a^\star}{v^{d_k-\ell}}{ b^\star} \right).
$$
This is a trivial SP (cf. Proposition~\ref{prop:trivial-potential});
therefore to prove Theorem~\ref{thm:mutation-involutive}
it suffices to show that
the SP $(\widetilde {\widetilde A}, S_1)$ given by
\eqref{eq:tilde-tilde-A} and \eqref{eq:S1} is right-equivalent to
$(A,S) \oplus (C,T)$.
We proceed in several steps.

{\bf Step 1:} Let $\varphi_1$ be the change of arrows
automorphism of
$\RA{\widetilde {\widetilde A}}$
(see Definition~\ref{def:automorphisms}) multiplying
each arrow $b \in Q_1 \cap A e_k$ by $-1$, and fixing the rest of
the arrows in $\widetilde {\widetilde A}$.
Then the potential $S_2 = \varphi_1(S_1)$ is given by
$$
S_2= [S] + \sum_{a, b \in Q_1: \ h(a) = t(b) = k}\left(([ba]-ba)[a^\star b^\star]+\sum_{\ell=1}^{d_k-1} \frac{1}{v^{d_k}}(\comp{b}{v^\ell}{a}
- bv^{\ell}a)\comp{a^\star}{v^{d_k-\ell}}{b^\star} \right).
$$

{\bf Step 2:}
Let $\varphi_2$ be the unitriangular automorphism of
$\RA{\widetilde {\widetilde A}}$
(see Definition~\ref{def:automorphisms})
sending each arrow $\comp{b}{v^\ell}{a} \in  A e_k A$ to $\comp{b}{v^{\ell}}{a} + bv^{\ell}a$,
and fixing the rest of
the arrows in $\widetilde {\widetilde A}$.
Remembering the definition of $[S]$, it is easy to see that
the potential $\varphi_2(S_2)$ is cyclically equivalent
to a potential of the form
$$
S_3 = S+\sum_{a, b \in Q_1: \ h(a) = t(b) = k}\left([ba]([a^\star b^\star]+f(a,1,v))+\sum_{\ell=1}^{d_k-1} \frac{1}{v^{d_k}}\comp{b}{v^\ell}{a}
(\comp{a^\star}{v^{d_k-\ell}}{b^\star}+ f(a,v^\ell,b))\right)
$$
for some elements $f(a,v^\ell,b) \in {\mathfrak m}(A \oplus  A e_k A)^2$, $\ell=0,\ldots,d_k-1$.

{\bf Step 3:}
Let $\varphi_3$ be the unitriangular automorphism of
$\RA{ \widetilde {\widetilde A}}$
sending each arrow $\comp{a^\star}{v^\ell}{b^\star} \in A^\star e_k A^\star$ ($\ell=0,\ldots,d_k-1$) to
$\comp{a^\star}{v^\ell}{b^\star} - f(a,v^{d_k-\ell},b)$ (we take $f(a,v^{d^k},b)=f(a,1,b)$),
and fixing the rest of
the arrows in $\widetilde {\widetilde A}$.
Then we have $\varphi_3(S_3) = S+T$.

Combining these three steps, we conclude that
the SP $(\widetilde {\widetilde A}, S_1)$
is right-equivalent to
$(\widetilde {\widetilde A}, S+T) = (A,S) \oplus (C,T)$, finishing
the proof of Theorem~\ref{thm:mutation-involutive}.
\end{proof}

Let $(A_{E/F},S)$ be an SP. Notice that the potential \eqref{eq:mu-k-S} remains the same if we replace $A_{E/F}$ with $A_{KE/K}$ for any finite-degree extension $KE/K$ linearly disjoint from $E/F$. 
In view of Proposition \ref{prop:red-commutes-with-ext-of-scalars}, this implies the following.

\begin{prop}\label{prop:SPmut-commutes-with-ext-of-scalars} Let $(A_{E/F},S)$ be an SP, and let $(\widetilde{Q},\dtuple)$ be the weighted quiver underlying the arrow span of $\mu_k(A_{E/F},S)$. For any finite-degree extension $K/F$ linearly disjoint from $E/F$, the SP $\mu_k(A_{KE/K},S)$ can be obtained from $\mu_k(A_{E/F},S)$ by replacing the arrow span underlying the latter SP with the arrow span of $(\widetilde{Q},\dtuple)$ over $KE/K$.
\end{prop}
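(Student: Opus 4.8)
The plan is to factor $\mu_k$ through the two operations out of which it is built --- the pre-mutation $\widetilde{\mu}_k$ of \eqref{eq:mu-k-S} and the passage to the reduced part (Definition~\ref{def:reduced-mutation}) --- and to check that each of them commutes with the base change $E/F \rightsquigarrow KE/K$ of Section~\ref{sec:change-of-base-field}. The second commutation is exactly Proposition~\ref{prop:red-commutes-with-ext-of-scalars}, so the real content is the first one, which amounts to spelling out the ``Notice that\ldots'' remark preceding the statement.

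First, the normalization: by \eqref{eq:no-start-in-k} we replace $S$ by a cyclically equivalent potential $S'$ no cyclic path of which starts or ends at $k$; the defining condition of $S'$ is combinatorial and unaffected by the ground field, and $S'$ is still cyclically equivalent to $S$ over $KE/K$ (one of the obvious facts recorded in the proof of Proposition~\ref{prop:red-commutes-with-ext-of-scalars}), so the same $S'$ serves for the computation of $\mu_k(A_{KE/K},S)$. Next, the pre-mutation: the two-step procedure producing $(\widetilde{\mu}_k(Q),\dtuple)$ depends only on $(Q,\dtuple)$ and $k$, so $\widetilde{\mu}_k(A_{KE/K})$ is tautologically the arrow span of $(\widetilde{\mu}_k(Q),\dtuple)$ over $KE/K$, i.e.\ the base change of $\widetilde{\mu}_k(A_{E/F})$. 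For the potential $\widetilde{\mu}_k(S)=[S']+\triangle_k(A)$ of \eqref{eq:mu-k-S}: the only field-dependent ingredient of $\triangle_k(A)$ is the eigenbasis $\B_k=\B_E\cap F_k$, which by Example~\ref{ex:linealy-disjoint} (see also Remark~\ref{rem:path-is-welldef-notion}(1)) is also an eigenbasis of $KF_k/K$, so $\triangle_k(A)$ is literally the same element of $\RA{\widetilde{\mu}_k(A_{E/F})}$, viewed inside $\RA{\widetilde{\mu}_k(A_{KE/K})}$ via the canonical embedding; and the bimodule isomorphisms $[\bullet]$ defining $S'\mapsto[S']$ are obtained from their $E/F$-versions by applying $K\otimes_F(-)$ (compare Lemma~\ref{lemma:extension-of-scalars}), so $[S']$ is likewise the same element over $E/F$ and over $KE/K$. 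Hence $\widetilde{\mu}_k(A_{KE/K},S)$ is obtained from $\widetilde{\mu}_k(A_{E/F},S)$ simply by extending its arrow span to $KE/K$.

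Finally, combine the two commutations. By Corollary~\ref{coro:mutations-respect-isom} and Definition~\ref{def:reduced-mutation}, $\mu_k(A_{E/F},S)=(\overline{A}_{E/F},\overline{S})$ is the reduced SP in a splitting $\widetilde{\mu}_k(A_{E/F},S)\cong(\widetilde{\mu}_k(A_{E/F})_{\triv},\widetilde{\mu}_k(S)^{(2)})\oplus(\overline{A}_{E/F},\overline{S})$ furnished by the Splitting Theorem~\ref{thm:trivial-reduced-splitting}. Applying Proposition~\ref{prop:red-commutes-with-ext-of-scalars} to this splitting for the extension $KE/K$ produces a splitting of $\widetilde{\mu}_k(A_{KE/K},S)$ --- which, by the previous paragraph, is the base change of $\widetilde{\mu}_k(A_{E/F},S)$ --- whose reduced part is $(\overline{A}_{KE/K},\overline{S})$. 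By the uniqueness clause of the Splitting Theorem over $KE/K$, the reduced part of $\widetilde{\mu}_k(A_{KE/K},S)$, that is $\mu_k(A_{KE/K},S)$, is right-equivalent to $(\overline{A}_{KE/K},\overline{S})$, which is precisely $\mu_k(A_{E/F},S)$ with its arrow span replaced by the arrow span of $(\widetilde{Q},\dtuple)$ over $KE/K$. The one place that demands care is the compatibility bookkeeping in the pre-mutation paragraph: one must verify that each identification invoked in the construction of $\widetilde{\mu}_k(A,S)$ --- the eigenbases $\B_k$, the isomorphism $[\bullet]$, and the canonical embeddings of Section~\ref{sec:change-of-base-field} --- genuinely intertwines with $K\otimes_F(-)$; granting this, everything else is formal.
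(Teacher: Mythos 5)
Your proposal is correct and follows the same route the paper takes: the paper's proof is the two-sentence remark immediately preceding the proposition (observe that $\widetilde{\mu}_k(S)$ is unchanged under the base change $E/F \rightsquigarrow KE/K$, then invoke Proposition~\ref{prop:red-commutes-with-ext-of-scalars}), and you have simply unpacked the first observation into its constituent verifications about $\B_k$, $\triangle_k(A)$, and the isomorphism $[\bullet]$.
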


\section{Nondegeneracy}\label{sec:nondegeneracy}

Following \cite{DWZ1} we make the following definition.

\begin{defi}\label{def:locally-non-degenerate} Let $(k_1,\ldots,k_\ell)$ be a finite sequence of vertices of $Q$ such that $k_t\neq k_{t+1}$ for $t=1,\ldots,\ell-1$. We say that an SP $(A,S)$ is \emph{$(k_\ell,\ldots,k_1)$-nondegenerate} if all the SPs $(A,S)$, $\mu_{k_1}(A,S)$, $\mu_{k_2}\mu_{k_1}(A,S)$, $\ldots$, $\mu_{k_\ell}\ldots\mu_{k_1}(A,S)$ are 2-acyclic (hence well-defined). An SP $(A,S)$ will be called \emph{nondegenerate} if it is \emph{$(k_\ell,\ldots,k_1)$-nondegenerate} for every sequence of vertices as above.
\end{defi}

The following Proposition can be seen as a (non-trivial) refinement of \cite[Proposition 7.3]{DWZ1}.

\begin{prop}\label{prop:exist-pols-measuring-local-nondeg} Let $(Q,\dtuple)$ be a 2-acyclic weighted quiver and $(k_1,\ldots,k_\ell)$ be a finite sequence of vertices as in Definition \ref{def:locally-non-degenerate}.
There exists a non-empty finite subset $\mathcal{F}=\mathcal{F}_{(k_1,\ldots,k_\ell)}\subseteq E[X_c\suchthat c\in\mathcal{C}]$, consisting only of non-zero polynomials, such that for every finite-degree extension $K/F$ linearly disjoint from $E/F$ and every $S\in K^{\mathcal{C}}$, the species with potential $(A_{KE/K},S)$ is $(k_\ell,\ldots,k_1)$-nondegenerate if and only if $f(S)\neq 0$ for some element $f\in \mathcal{F}$.
\end{prop}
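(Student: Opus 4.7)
The plan is to proceed by induction on $\ell$, carrying along slightly more data than the statement itself asks for. The inductive hypothesis at level $\ell$ will be: there exists a finite, non-empty set $\mathcal{F}_{(k_1,\ldots,k_\ell)} \subseteq E[X_c\suchthat c\in\mathcal{C}]$ of non-zero polynomials and, for each $f \in \mathcal{F}_{(k_1,\ldots,k_\ell)}$, a 2-acyclic weighted quiver $(Q^{(\ell)}_f,\dtuple)$ together with a regular map $\Phi_f: U_K(f) \to K^{\mathcal{C}(Q^{(\ell)}_f,\dtuple)}$ (defined uniformly in every finite-degree extension $K/F$ linearly disjoint from $E/F$) such that $(A_{KE/K},S)$ is $(k_\ell,\ldots,k_1)$-nondegenerate if and only if $f(S)\neq 0$ for some $f \in \mathcal{F}_{(k_1,\ldots,k_\ell)}$, and in that case the SP $\mu_{k_\ell}\cdots\mu_{k_1}(A_{KE/K},S)$ is right-equivalent to the SP with arrow span $A^{(\ell)}_f$ over $KE/K$ and potential $\Phi_f(S)$. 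The base case $\ell=0$ is trivial: take $\mathcal{F}_{\emptyset} = \{1\}$, $(Q^{(0)},\dtuple) = (Q,\dtuple)$, and $\Phi = \mathrm{id}$.

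For the inductive step I would fix $f \in \mathcal{F}_{(k_1,\ldots,k_{\ell-1})}$. Since $(Q^{(\ell-1)}_f,\dtuple)$ is 2-acyclic, the premutation $\widetilde{\mu}_{k_\ell}(A^{(\ell-1)}_f, \Phi_f(S))$ is well-defined and its underlying weighted quiver is $\widetilde{\mu}_{k_\ell}(Q^{(\ell-1)}_f,\dtuple)$, which may carry 2-cycles. Applying Theorem~\ref{thm:reduction-is-regular-function} to this latter weighted quiver would produce, for each maximal collection $\{a_1,b_1,\ldots,a_N,b_N\}$ of disjoint 2-cycles, a non-zero polynomial $f_{a_1,\ldots,b_N}$ and a regular map $H_{a_1,\ldots,b_N}$ controlling when the reduced part of $\widetilde{\mu}_{k_\ell}(A^{(\ell-1)}_f, T)$ is 2-acyclic and, when it is, describing it as an SP on the weighted quiver $(Q'_{f,a_1,\ldots,b_N},\dtuple)$ obtained from $\widetilde{\mu}_{k_\ell}(Q^{(\ell-1)}_f,\dtuple)$ by deleting $\{a_1,b_1,\ldots,a_N,b_N\}$. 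By Definition~\ref{def:reduced-mutation} this reduced part is precisely $\mu_{k_\ell}(A^{(\ell-1)}_f, T)$, so Theorem~\ref{thm:reduction-is-regular-function} controls exactly 2-acyclicity of the $\ell$-th mutation.

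The next step would be to pull these data back from $\widetilde{\mu}_{k_\ell}(\Phi_f(S))$ to $S$ itself. Since $T\mapsto\widetilde{\mu}_{k_\ell}(T)$ is $E$-affine on potentials by \eqref{eq:mu-k-S} and $\Phi_f$ is regular on $U_K(f)$, the composite $S\mapsto f_{a_1,\ldots,b_N}(\widetilde{\mu}_{k_\ell}(\Phi_f(S)))$ is a rational function on $U_K(f)$; multiplying by a sufficiently high power of the common denominator of $\Phi_f$ converts the non-vanishing of this composite into the non-vanishing of a polynomial $g_{f,a_1,\ldots,b_N} \in E[X_c\suchthat c\in\mathcal{C}]$, so that $f(S)g_{f,a_1,\ldots,b_N}(S)\neq 0$ is equivalent to the joint condition $f(S)\neq 0$ and $f_{a_1,\ldots,b_N}(\widetilde{\mu}_{k_\ell}(\Phi_f(S)))\neq 0$. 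I would then set
\[
\mathcal{F}_{(k_1,\ldots,k_\ell)} = \bigl\{f\cdot g_{f,a_1,\ldots,b_N}\bigr\},
\]
indexed over $f \in \mathcal{F}_{(k_1,\ldots,k_{\ell-1})}$ and over maximal disjoint 2-cycle collections of $\widetilde{\mu}_{k_\ell}(Q^{(\ell-1)}_f,\dtuple)$, and associate to each such product the quiver $(Q'_{f,a_1,\ldots,b_N},\dtuple)$ together with the composite regular map $H_{a_1,\ldots,b_N}\circ\widetilde{\mu}_{k_\ell}\circ\Phi_f$ as the new $\Phi$. The ``if'' direction of the inductive characterization would invoke part (a)(1) of Theorem~\ref{thm:reduction-is-regular-function} together with Corollary~\ref{coro:mutations-respect-isom} to transport 2-acyclicity of $\mu_{k_\ell}(A^{(\ell-1)}_f,\Phi_f(S))$ through the right-equivalence given by $\Phi_f$, and the ``only if'' direction would invoke part (b) of that theorem to produce the necessary maximal 2-cycle collection.

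The principal obstacle I expect is the bookkeeping: the quiver $(Q^{(\ell-1)}_f,\dtuple)$ depends essentially on $f$ because distinct maximal 2-cycle collections may be deleted at earlier stages for different $S$, and so the inductive data must be genuinely indexed by $f$ rather than by a single universal target. A second point, which is what makes the uniform-in-$K$ statement available, is the observation made at the very end of the proof of Theorem~\ref{thm:reduction-is-regular-function}: the polynomial $f_{a_1,\ldots,b_N}$ and the rational components of $H_{a_1,\ldots,b_N}$ are unaffected by replacing $F$ with a linearly disjoint extension $K$; combined with the inductive uniformity of $\Phi_f$, this will ensure that the resulting $\mathcal{F}_{(k_1,\ldots,k_\ell)}$ lies uniformly in $E[X_c\suchthat c\in\mathcal{C}]$, as required.
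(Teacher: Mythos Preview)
Your inductive scheme is essentially the paper's: both arguments use Theorem~\ref{thm:reduction-is-regular-function} to convert $2$-acyclicity of the reduced part into a determinantal non-vanishing condition, and both compose with the regular maps $H$ supplied by that theorem to pull the condition back through the chain of mutations. The only structural difference is the direction of the induction: you assume the result for $(k_1,\ldots,k_{\ell-1})$ and mutate once more at $k_\ell$, whereas the paper first mutates at $k_1$ and then applies the inductive hypothesis for $(k_2,\ldots,k_\ell)$ on the mutated quiver $\mu_{k_1}(Q,\dtuple)$. Both work, and your strengthened hypothesis (carrying the maps $\Phi_f$ and the target quivers $Q^{(\ell)}_f$) is exactly the ``preparation'' that the paper sets up before its inductive step.

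There is, however, a genuine gap: you never verify that $\mathcal{F}_{(k_1,\ldots,k_\ell)}$ is \emph{non-empty}. The products $f\cdot g_{f,a_1,\ldots,b_N}$ you form may well be zero as polynomials, because the composite $f_{a_1,\ldots,b_N}\circ\widetilde{\mu}_{k_\ell}\circ\Phi_f$ can vanish identically on $U_K(f)$ even when $f_{a_1,\ldots,b_N}$ itself is non-zero. After discarding the zero products (as the statement requires), you have no guarantee that anything remains. The ``only if'' half of your characterization shows that non-emptiness would follow from the existence of a single $(k_\ell,\ldots,k_1)$-nondegenerate potential over \emph{some} extension $K$, but you have not produced one.

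The paper supplies exactly this missing existence argument, and it is not a triviality. One works over an auxiliary infinite field $L$ (either $F$ itself, or, when $F$ is finite, the set of $\alpha\in\overline{F}$ with $F(\alpha)\cap E=F$). On the mutated quiver one can then choose a potential $W$ avoiding the finitely many hypersurfaces coming from the inductive $\mathcal{F}$ together with the $2$-acyclicity condition for one further mutation; involutivity of $\mu_k$ (Theorem~\ref{thm:mutation-involutive}) then converts $W$ into the desired nondegenerate $S$ on the original quiver. You should add this step.
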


\begin{proof} By induction on $\ell\geq 1$. For every extension $K/F$ as in the statement of the proposition, let $\widetilde{\mu}_{k_1}(A_{KE/K})$ be the arrow span of the weighted quiver $\widetilde{\mu}_{k_1}(Q,\dtuple)=(\widetilde{\mu}_{k_1}(Q),\dtuple)$ over $KE/K$. By Theorem \ref{thm:reduction-is-regular-function}, there exists a non-empty finite subset $\mathcal{G}\subseteq E[X_\gamma\suchthat \gamma\in\mathcal{C}(\widetilde{\mu}_{k_1}(Q),\dtuple)]$, consisting only of non-zero polynomials, such that for every extension $K/F$ as in the statement of the proposition, and every $W\in K^{\mathcal{C}(\widetilde{\mu}_{k_1}(Q),\dtuple)}$, the reduced part of $(\widetilde{\mu}_{k_1}(A_{KE/K}),W)$ is 2-acyclic if and only if $g(W)\neq 0$ for some $g\in\mathcal{G}$.

On the other hand, there obviously exist polynomials $p_\gamma\in F[X_c\suchthat c\in\mathcal{C}(Q,\dtuple)]$, $\gamma\in\mathcal{C}(\widetilde{\mu}_{k_1}(Q),\dtuple)$, such that for every extension $K/F$ as in the statement of the proposition, and every $S\in K^{\mathcal{C}(Q,\dtuple)}$, we have  $\widetilde{\mu}_{k_1}(S)=\sum_{\gamma\in\mathcal{C}(\widetilde{\mu}_{k_1}(Q),\dtuple)}p_\gamma(S)\in K^{\mathcal{C}(\widetilde{\mu}_{k_1}(Q),\dtuple)}$. Let
$$
\mathcal{F}_{(k_1)}=\left\{g\left(\sum_{\gamma\in\mathcal{C}(\widetilde{\mu}_{k_1}(Q),\dtuple)}p_\gamma\right) \ \Big| \ g\in\mathcal{G} \ \text{and} \ g\left(\sum_{\gamma\in\mathcal{C}(\widetilde{\mu}_{k_1}(Q),\dtuple)}p_\gamma\right) \ \text{is not the zero polynomial}\right\}.
$$
The set $\mathcal{F}_{(k_1)}$ certainly has the property that for every extension $K/F$ as in the statement of the proposition, and every $S\in K^{\mathcal{C}(Q,\dtuple)}$, the species with potential $(A_{KE/K},S)$ is $(k_1)$-nondegenerate if and only if $f(S)\neq 0$ for some $f\in\mathcal{F}_{(k_1)}$. Also, $\mathcal{F}_{(k_1)}$ consists entirely of non-zero polynomials with coefficients in $E$. The fact that $\mathcal{F}_{(k_1)}$ is not empty follows from the fact that all 2-cycles of $(\widetilde{\mu}_{k_1}(Q),\dtuple)$ are of the form $\omega_0\comp{b}{\omega_1}{a}\omega_2c\omega_3$ with $\omega_0b\omega_1a\omega_2c\omega_3$ a 3-cycle of $(Q,\dtuple)$ such that $h(a)=k_1=t(b)$. This establishes the inductive basis of our proof.

Let $(Q',\dtuple)=\mu_{k_1}(Q,\dtuple)$. Before entering the inductive step of our proof we need some preparation. Recall from the proof of Theorem \ref{thm:reduction-is-regular-function}, that for each $g\in\mathcal{G}$ there exist rational functions $H'_{g,\delta}\in F(X_c\suchthat c\in\mathcal{C}(\widetilde{\mu}_{k_1}(Q,\dtuple)))$, $\delta\in\mathcal{C}(Q',\dtuple)$, such that for every extension $K/F$ as in the statement of the proposition, and every $W\in U_K(g)\subseteq K^{\mathcal{C}(\widetilde{\mu}_{k_1}(Q,\dtuple))}$, the reduced part of $(\widetilde{\mu}_{k_1}(A_{KE/K}),W)$ is right-equivalent to $(A'_{KE/K},H'_g(W))$, where $H'_g(W)=\sum_{\gamma\in\mathcal{C}(Q',\dtuple)}H'_{g,\delta}(W)\in K^{\mathcal{C}(Q',\dtuple)}$, and $A'_{KE/K}$ is the arrow span of $(Q',\dtuple)=\mu_{k_1}(Q,\dtuple)$ over the field extension $KE/K$. For each $f=g\left(\sum_{\gamma\in\mathcal{C}(\widetilde{\mu}_{k_1}(Q),\dtuple)}p_\gamma\right)\in\mathcal{F}_{(k_1)}$, and each $\delta\in \mathcal{C}(Q',\dtuple)$ define
$H_{f,\delta}=H'_{g,\delta}\left(\sum_{\gamma\in\mathcal{C}(\widetilde{\mu}_{k_1}(Q),\dtuple)}p_\gamma\right)$. Then each $H_{f,\delta}$ is a rational function $H_{f,\delta}\in F(X_c\suchthat c\in\mathcal{C}(Q,\dtuple))$, such that for every $K/F$ the evaluation of $H_{f,\delta}$ on $U_K(f)\subseteq K^{\mathcal{C}(Q,\dtuple)}$ is well-defined, and furthermore, for every $S\in U_{K}(f)$, the mutation $\mu_k(A_{KE/K},S)$ is right-equivalent to $(A'_{KE/K},H_f(S))$, where $H_f(S)=\sum_{\delta\in\mathcal{C}(Q',\dtuple)}H_{f,\delta}(S)\in K^{\mathcal{C}(Q,\dtuple)}$.

For the inductive step, let $(Q',\dtuple)=\mu_{k_1}(Q,\dtuple)$, and suppose that there exists $\mathcal{F}_{(k_2,\ldots,k_\ell)}\subseteq E[X_c\suchthat c\in\mathcal{C}(Q',\dtuple)]$ with the desired properties for the sequence $(k_2,\ldots,k_\ell)$. For each $f_1\in\mathcal{F}_{(k_1)}\subseteq F[X_c\suchthat c\in\mathcal{C}(Q,\dtuple)]$ and each $f_2\in\mathcal{F}_{(k_2,\ldots,k_\ell)}$, we have a rational function $f_2(H_{f_1})\in F(X_c\suchthat c\in\mathcal{C}(Q,\dtuple))$. Let $h_{f_1,f_2}$ be the numerator of this rational function $f_2(H_{f_1})$. Set
$$
\mathcal{F}_{(k_1,\ldots,k_\ell)}=\left\{f_1h_{f_1,f_2}\suchthat f_1\in\mathcal{F}_{(k_1)}, \ f_2\in\mathcal{F}_{(k_2,\ldots,k_\ell)}, \ \text{and} \ h_{f_1,f_2} \ \text{is not the zero polynomial}\right\}.
$$
This subset of $F[X_c\suchthat c\in\mathcal{C}(Q,\dtuple)]$ is obviously finite and consists of non-zero polynomials. Let $K/F$ be any finite-degree extension linearly disjoint from $E/F$. For $S\in K^{\mathcal{C}(Q,\dtuple)}$, if $h_{f_1,f_2}(S)\neq 0$, then $f_1(S)\neq 0$ and $(f_2(H_{f_1}))(S)\neq 0$, and this implies that $S$ is $(k_1,\ldots,k_\ell)$-nondegenerate. Conversely, if $S\in K^{\mathcal{C}(Q,\dtuple)}$ is $(k_1,\ldots,k_\ell)$-nondegenerate, then there exists $f_1\in\mathcal{F}_{(k_1)}$ such that $f_1(S)\neq 0$, and furthermore, $(A'_{KE/K},H_{f_1}(S))$, being right-equivalent to $\mu_k(A_{KE/K},S)$, is $(k_2,\ldots,k_\ell)$-non-degenerate. Thus there exists $f_2\in \mathcal{F}_{(k_2,\ldots,k_\ell)}$ such that $f_2(H_{f_1}(S))\neq 0$, and hence $(f_1h_{f_1,f_2})(S)\neq 0$.

It only remains to show that $\mathcal{F}_{(k_1,\ldots,k_\ell)}$ is not the empty set. Fix an algebraic closure $\overline{F}$ of $F$.
The set $L$ defined as
$$
L=\begin{cases}\{\alpha\in\overline{F}\suchthat F(\alpha)\cap E=F\} & \text{if $F$ is finite;}\\
F & \text{if $F$ is infinite}.
\end{cases}
$$
is an infinite field. Therefore, since the polynomials belonging to $\mathcal{F}_{(k_2,\ldots,k_\ell)}$ are non-zero and depend on only finitely many indeterminates and since $\mathcal{F}_{(k_2,\ldots,k_\ell)}$ is finite, there exist $\alpha\in L$ and $W\in F(\alpha)^{\mathcal{C}(Q',\dtuple)}$ such that none of the polynomials belonging to $\mathcal{F}_{(k_2,\ldots,k_\ell)}$ vanishes when evaluated at $W$, and such that $\mu_{k_1}(A'_{E(\alpha)/F(\alpha)},W)$ is 2-acyclic. Let $S\in F(\alpha)^{\mathcal{C}(Q,\dtuple)}$ be such that $(A_{E(\alpha)/F(\alpha)},S)$ is right-equivalent to $\mu_{k_1}(A'_{E(\alpha)/F(\alpha)},W)$, then $(A_{E(\alpha)/F(\alpha)},S)$ is $(k_1,\ldots,k_\ell)$-nondegenerate, and this implies that $\mathcal{F}_{(k_1,\ldots,k_\ell)}\neq\varnothing$. This finishes the proof of the Proposition \ref{prop:exist-pols-measuring-local-nondeg}.
\end{proof}

\begin{remark} In the proof of Proposition \ref{prop:exist-pols-measuring-local-nondeg} we have corrected an inaccuracy in the proof of   \cite[Proposition 7.3]{DWZ1}.
\end{remark}

\begin{coro}\label{coro:locally-nondeg-exist} Let $(Q,\dtuple)$ be a 2-acyclic weighted quiver and $(k_1,\ldots,k_\ell)$ be a finite sequence of vertices as in Definition \ref{def:locally-non-degenerate}. Suppose that the ground field extension $E/F$ is one of the extensions described in Examples \ref{ex:finite-fields-satisfy} and \ref{ex:p-adic-fields-satisfy}. There exists a finite-degree extension $K/F$, linearly disjoint from $E/F$, such that the arrow span $A_{KE/K}$ admits a $(k_\ell,\ldots,k_1)$-nondegenerate potential. If $E/F$ is as in Example \ref{ex:p-adic-fields-satisfy}, $K$ can be taken to be $F$ itself.
\end{coro}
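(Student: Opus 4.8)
The plan is to deduce the corollary almost immediately from Proposition~\ref{prop:exist-pols-measuring-local-nondeg}, combined with the elementary fact that a non-zero polynomial with coefficients in a field $N$ cannot vanish identically on $M^{m}$ when $M\subseteq N$ is an infinite subfield (an easy induction on the number of variables, the one-variable case being that a non-zero univariate polynomial has finitely many roots). Proposition~\ref{prop:exist-pols-measuring-local-nondeg} supplies a non-empty finite set $\mathcal{F}=\mathcal{F}_{(k_1,\ldots,k_\ell)}\subseteq E[X_c\suchthat c\in\mathcal{C}]$ of non-zero polynomials such that, for every finite-degree extension $K/F$ linearly disjoint from $E/F$ and every $S\in K^{\mathcal{C}}$, the SP $(A_{KE/K},S)$ is $(k_\ell,\ldots,k_1)$-nondegenerate if and only if $f(S)\neq 0$ for some $f\in\mathcal{F}$. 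Thus the task reduces to producing \emph{one} such $K$ and \emph{one} $S\in K^{\mathcal{C}}$ on which some fixed $f\in\mathcal{F}$ does not vanish; I would fix a non-zero $f\in\mathcal{F}$, note that it involves only finitely many indeterminates, say $X_{c_1},\dots,X_{c_m}$, and regard it as an element of $\overline{F}[X_c\suchthat c\in\mathcal{C}]$.

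First I would handle the case where $E/F$ is as in Example~\ref{ex:p-adic-fields-satisfy}. Here $F$ itself is infinite (indeed uncountable) and is a subfield of $E$, so by the non-vanishing fact above applied with $M=F\subseteq N=E$ there is $S\in F^{\mathcal{C}}$ (supported on $c_1,\dots,c_m$) with $f(S)\neq 0$. Taking $K=F$ then finishes this case by Proposition~\ref{prop:exist-pols-measuring-local-nondeg}; in particular this is exactly the claim that in the $p$-adic situation $K$ may be taken to be $F$.

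For the case of Example~\ref{ex:finite-fields-satisfy}, so that $F$ is finite, I would reuse the field $L=\{\alpha\in\overline{F}\suchthat F(\alpha)\cap E=F\}$ introduced in the proof of Proposition~\ref{prop:exist-pols-measuring-local-nondeg}, which is an infinite field and satisfies $L\cap E=F$ directly from its definition. Applying the non-vanishing fact with $M=L\subseteq N=\overline{F}$ gives a point $S\in L^{\mathcal{C}}$, supported on $c_1,\dots,c_m$, with $f(S)\neq 0$. Letting $s_1,\dots,s_m$ be the corresponding coordinates and $K=F(s_1,\dots,s_m)$, one gets a finite-degree extension of $F$ contained in $L$, hence $K\cap E\subseteq L\cap E=F$, so $K$ is linearly disjoint from $E/F$, and $S\in K^{\mathcal{C}}$. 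Proposition~\ref{prop:exist-pols-measuring-local-nondeg} again completes the proof.

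I do not expect a real obstacle here: essentially all the substance — including the delicate inductive construction of the polynomials measuring local nondegeneracy after a sequence of mutations — has already been absorbed into Proposition~\ref{prop:exist-pols-measuring-local-nondeg}. The only point needing a bit of care, and the only place the precise hypothesis on $E/F$ is used, is securing a ``large enough'' field in which to find $S$: over a $p$-adic base $F$ is itself infinite, whereas over a finite base one must first pass to the infinite field $L$ and then descend to a finite subextension $K\subseteq L$, which automatically stays linearly disjoint from $E/F$. Beyond that, the argument uses only the routine observations that $L$ is a field (already invoked in the proof of Proposition~\ref{prop:exist-pols-measuring-local-nondeg}) and that a finitely supported element of $L^{\mathcal{C}}$ lies in $K^{\mathcal{C}}$ for the finite $K$ just constructed.
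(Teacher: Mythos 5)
Your proposal is correct and is essentially the argument the paper leaves implicit (the paper states the corollary without proof, but the ingredients — the set $\mathcal{F}_{(k_1,\ldots,k_\ell)}$ of non-zero polynomials and, in the finite-field case, the infinite field $L=\{\alpha\in\overline{F}\suchthat F(\alpha)\cap E=F\}$ — are exactly the ones introduced in the proof of Proposition~\ref{prop:exist-pols-measuring-local-nondeg}). Your deduction is what the authors intend: pick one non-zero $f\in\mathcal{F}$, observe it involves finitely many variables, use the elementary non-vanishing lemma over the infinite field $F$ (resp.\ $L$), and in the finite-field case descend to the finite subextension $K=F(s_1,\dots,s_m)\subseteq L$, which stays linearly disjoint from $E/F$ because $K\cap E\subseteq L\cap E=F$ and $E/F$ is Galois.
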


\begin{coro}\label{coro:nondeg-exist-over-padic} Let $(Q,\dtuple)$ be a 2-acyclic weighted quiver. If the ground field extension $E/F$ is as in Example \ref{ex:p-adic-fields-satisfy}, then the arrow span $A_{E/F}$ admits a nondegenerate potential.
\end{coro}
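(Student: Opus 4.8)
The plan is to derive Corollary~\ref{coro:nondeg-exist-over-padic} from Proposition~\ref{prop:exist-pols-measuring-local-nondeg} by a countable non-vanishing argument that exploits the uncountability of $F$ (recall that $F$ is uncountable when $E/F$ is as in Example~\ref{ex:p-adic-fields-satisfy}).

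First I would record two countability facts: the set of cycles $\mathcal{C}=\mathcal{C}(Q,\dtuple)$ is countable, since $Q$ has finitely many arrows and hence finitely many paths of each length; and there are only countably many finite sequences $(k_1,\ldots,k_\ell)$ of vertices with $k_t\neq k_{t+1}$. For each such sequence $\sigma$, Proposition~\ref{prop:exist-pols-measuring-local-nondeg}, applied with $K=F$ (linear disjointness being vacuous here, so that in fact no base-field extension is needed), supplies a non-empty finite set $\mathcal{F}_\sigma\subseteq E[X_c\suchthat c\in\mathcal{C}]$ of non-zero polynomials, each involving only finitely many of the indeterminates $X_c$, such that $(A_{E/F},S)$ is $(k_\ell,\ldots,k_1)$-nondegenerate if and only if $f(S)\neq 0$ for some $f\in\mathcal{F}_\sigma$. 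Choose one $f_\sigma\in\mathcal{F}_\sigma$. Since any $S\in F^{\mathcal{C}}$ defines an SP $(A_{E/F},S)$ (conditions \eqref{eq:no-loops} and \eqref{eq:no-cycequiv} hold by \eqref{eq:only-0-is-cyc-equiv-to-0} and the fact that $A_{E/F}$ is $2$-acyclic), Definition~\ref{def:locally-non-degenerate} shows that it suffices to produce a single $S\in F^{\mathcal{C}}$, viewed as a potential inside $\RA{A}$ as in Section~\ref{sec:reduction}, with $f_\sigma(S)\neq 0$ for \emph{every} sequence $\sigma$.

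To build such an $S$ I would enumerate $\mathcal{C}=\{c_1,c_2,\ldots\}$ and the sequences as $\sigma_1,\sigma_2,\ldots$, and define $S=(s_{c_j})_{j\geq 1}$ by choosing the scalars $s_{c_j}\in F$ recursively while maintaining the invariant: for every $n$, the element obtained from $f_{\sigma_n}$ by substituting $s_{c_1},\ldots,s_{c_j}$ for $X_{c_1},\ldots,X_{c_j}$ is non-zero (once $j$ is at least the largest index of a variable occurring in $f_{\sigma_n}$, this element is precisely the scalar $f_{\sigma_n}(S)\in E$, which is then non-zero, as desired). At stage $j$: for each $n$ such that $X_{c_j}$ occurs in the polynomial produced so far, write that polynomial as $\sum_{r\geq 0} p_r X_{c_j}^r$, with the $p_r$ lying in the polynomial ring over $E$ in the not-yet-substituted variables; by the invariant it is non-zero, so $p_{r_0}\neq 0$ for some top index $r_0$, and choosing a monomial $M$ with non-zero coefficient in $p_{r_0}$, the coefficient of $M$ after substituting $X_{c_j}=s$ is a non-zero polynomial in $s$ over $E$ of degree $r_0$, hence has at most $r_0$ roots in $E\supseteq F$. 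Only countably many $n$ impose such a (finite) forbidden set, and $F$ is uncountable, so one can pick $s_{c_j}\in F$ avoiding all of them; this preserves the invariant. Running through all $j$ yields the desired $S$, which by the reduction of the previous paragraph is a nondegenerate potential.

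The main obstacle is exactly this recursive construction: the space $F^{\mathcal{C}}$ of potentials is in general infinite-dimensional, so one cannot directly invoke the elementary fact that a countable union of proper subvarieties of a finite-dimensional affine space over an uncountable field is a proper subset. The coordinate-by-coordinate choice above circumvents this, and it works precisely because each $f_\sigma$ involves only finitely many of the coordinates $X_c$ -- which is the essential output of Proposition~\ref{prop:exist-pols-measuring-local-nondeg}. (For a single fixed $\sigma$, the existence over $F$ of a $(k_\ell,\ldots,k_1)$-nondegenerate potential is already Corollary~\ref{coro:locally-nondeg-exist} with $K=F$; the new point here is only to achieve all sequences $\sigma$ simultaneously.)
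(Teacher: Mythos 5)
Your proposal is correct and follows essentially the same approach as the paper: pick one polynomial $f_\sigma$ per mutation sequence from Proposition~\ref{prop:exist-pols-measuring-local-nondeg}, then run a countable coordinate-by-coordinate diagonal argument over the uncountable field $F$ to find a simultaneous non-vanishing point in $F^{\mathcal{C}}$. The paper organizes the recursion by explicitly constructing the auxiliary coefficient-sets $\mathcal{G}_j$, whereas you track non-vanishing of each partially-substituted $f_{\sigma_n}$ directly via a witnessing monomial $M$; these are the same inductive invariant phrased differently.
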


\begin{proof} For each finite sequence $(k_1,\ldots,k_\ell)$ of vertices as in Definition \ref{def:locally-non-degenerate}, Proposition \ref{prop:exist-pols-measuring-local-nondeg} guarantees the existence of a non-zero polynomial $f_{(k_1,\ldots,k_\ell)}\in E[X_c\suchthat c\in\mathcal{C}]$ such that every $S\in U_{F}(f_{(k_1,\ldots,k_\ell)})\subseteq F^{\mathcal{C}}$ is $(k_\ell,\ldots,k_1)$-nondegenerate (note that $F/F$ is obviously linearly disjoint from $E/F$).

Denote
$$
\mathcal{F}=\{f_{(k_1,\ldots,k_\ell)}\suchthat (k_1,\ldots,k_\ell) \ \text{is a sequence as in Definition \ref{def:locally-non-degenerate}}\},
$$
which is a countable set of polynomials that belong to $E[X_c\suchthat c\in\mathcal{C}]$. Since $\mathcal{C}$ is countable, we can identify $E[X_c\suchthat c\in\mathcal{C}]$ with the commutative ring of polynomials with coefficients in $E$ on countably many variables $X_1,X_2,X_3,\ldots$. For each $n>0$ define
$$
\mathcal{F}_n=\mathcal{F}\cap E[X_1,\ldots,X_n].
$$
We clearly have $\mathcal{F}_1\subseteq\mathcal{F}_2\subseteq\mathcal{F}_3\subseteq\ldots\subseteq \mathcal{F}_n\subseteq\mathcal{F}_{n+1}\subseteq\ldots\subseteq \mathcal{F}=\bigcup_{n>0}\mathcal{F}_n$. For each $n>0$ define sets $\mathcal{F}_{n,n-1},\mathcal{F}_{n,n-2}\ldots,\mathcal{F}_{n,1}$, recursively as follows:
\begin{itemize}
\item $\mathcal{F}_{n,n-1}$ is the set of non-zero elements of $E[X_1,\ldots,X_{n-1}]$ that appear as coefficients of the elements of $\mathcal{F}_n$ when we write the latter as polynomials in $X_n$ with coefficients in $E[X_1,\ldots,X_{n-1}]$;
\item For $j=1,\ldots,n-2$, once $\mathcal{F}_{n,n-j}$ has been defined, we define $\mathcal{F}_{n,n-(j+1)}$ to be the set of non-zero elements of $E[X_1,\ldots,X_{n-(j+1)}]$ that appear as coefficients of the elements of $\mathcal{F}_{n,n-j}$ when we write the latter as polynomials in $X_{n-j}$ with coefficients in $E[X_1,\ldots,X_{n-(j+1)}]$.
\end{itemize}
Since $\mathcal{F}_n$ is countable, each one of the sets $\mathcal{F}_{n,j}$ is countable. Therefore, for each $j>0$, the set
$$
\mathcal{G}_j=\bigcup_{n>j}\mathcal{F}_{n,j}
$$
is countable. Note that $\mathcal{F}_n\subseteq \mathcal{G}_n\subseteq E[X_1,\ldots,X_n]$ for all $n$, and that the elements of $\mathcal{G}_n$ are precisely the non-zero elements of $E[X_1,\ldots,X_{n}]$ that appear as coefficients of the elements of $\mathcal{G}_{n+1}$ when we write the latter as polynomials in $X_{n+1}$ with coefficients in $E[X_1,\ldots,X_{n}]$.

Since $G_1$ is a countable subset of $E[X_1]$, while $F$ is uncountable (see Example \ref{ex:p-adic-fields-satisfy}), we can find $x_1\in F$ such that $f(x_1)\neq 0$ for all $f\in G_1$. By the last sentence of the previous paragraph, the polynomial $f(x_1,X_2)\in E[X_2]$ is non-zero for every $f\in\mathcal{G}_2$. We can therefore choose $x_2\in F$ such that $f(x_1,x_2)\neq 0$ for all $f\in\mathcal{G}_2$ (because $\mathcal{G}_2$ is countable, but $F$ is not). Continuing in this fashion, which we can do by the last line of the previous paragraph, we see that there exists $S=(x_1,x_2,x_3,\ldots)\in F^{\mathcal{C}}$ such that $f(S)\neq 0$ for all $f\in\mathcal{F}$.
\end{proof}

Over finite fields, we have the following natural question.

\begin{question}\label{question:nondeg-pots-exist?} Let $(Q,\dtuple)$ be a 2-acyclic weighted quiver, and let $F$ be as in Example \ref{ex:finite-fields-satisfy}. Does there exist a finite-degree extension $K$ of $F$, linearly disjoint from $E/F$, such that the arrow span $A_{KE/K}$ admits a nondegenerate potential?
\end{question}

\section{A mutation invariant}\label{sec:mutation-invariants}

The main aim of this section is to show that finite-dimensionality of Jacobian algebras is invariant under mutations. Following \cite{DWZ1}, we write $\overline{e_k}=1-e_k\in R$, and for an $R$-$R$-bimodule $B$ we write
\begin{equation}
\label{eq:k-excluded}
B_{\hat k, \hat k} = \overline e_k B \overline e_k =
\bigoplus_{i,j \neq k} B_{i,j}
\end{equation}

\begin{prop}
\label{pr:k-excluded-mutation-invariant}
Suppose an SP $(A,S)$ satisfies \eqref{eq:no-2-cycles-thru-k}
and \eqref{eq:no-start-in-k}.
Then the algebras ${\mathcal P}(A,S)_{\hat k, \hat k}$
and ${\mathcal P}(\widetilde{\mu}_k (A,S))_{\hat k, \hat k}$
are isomorphic to each other.
\end{prop}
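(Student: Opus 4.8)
The plan is to adapt the proof of the corresponding statement in \cite{DWZ1} (namely \cite[Proposition 6.1]{DWZ1}), since the algebraic structure of the argument carries over once one is careful about the eigenbasis bookkeeping in the composite arrows. First I would reduce to the situation where $(A,S)$ is already presented so that \eqref{eq:no-2-cycles-thru-k} and \eqref{eq:no-start-in-k} hold, which is given. Write $\widetilde A=\widetilde\mu_k(A)$ and $\widetilde S=\widetilde\mu_k(S)=[S]+\triangle_k(A)$. The key observation is that the cyclic derivatives of $\widetilde S$ with respect to the arrows \emph{incident to $k$} (that is, the arrows $a^*$ for $a\in Q_1\cap e_kA$ and $b^*$ for $b\in Q_1\cap Ae_k$) allow one to eliminate, in the quotient $\mathcal{P}(\widetilde A,\widetilde S)$, all paths that pass through $k$. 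Concretely, $\partial_{a^*}(\widetilde S)$ has the form $\sum_{\omega}\omega a\,(\text{composite-arrow words}) \cdots$ together with the term coming from $[S]$; solving these relations expresses each composite arrow $\compbwa$ modulo $J(\widetilde S)$ in terms of the original arrows of $A$, and symmetrically the relations $\partial_{\compbwa}(\widetilde S)$ recover the relations $\partial_a(S),\partial_b(S)$ of $J(S)$ on the nose once one performs the substitution $\compbwa\mapsto b\omega a$.

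The key steps, in order, are as follows. (1) Identify $\RA{A}$ and $\RA{\widetilde A}$ inside $\RA{\widehat A}$ with $\widehat A=A\oplus(e_kA)^\star\oplus(Ae_k)^\star$ exactly as in the proof of Theorem~\ref{thm:mu-tilde-preserves-isomorphisms}, sending $\compbwa\mapsto b\omega a$; under this identification $\widetilde S$ becomes cyclically equivalent to $S+\frac{1}{d_k}\big(\sum_{b,\omega}\omega^{-1}b^\star b\omega\big)\big(\sum_{a,\omega}\omega a a^\star\omega^{-1}\big)$, the expression already displayed in the excerpt. (2) Compute the cyclic derivatives $\partial_{a^\star}$ and $\partial_{b^\star}$ of this potential: they have an invertible ``leading'' linear part in the new arrows $a^\star,b^\star$ plus higher-order corrections, so by an invertibility argument (the geometric-series inversion used in Lemma~\ref{lemma:extension-of-varphi}) the ideal generated by the $\partial_{a^\star}(\widetilde S),\partial_{b^\star}(\widetilde S)$ in $\RA{\widehat A}$ equals the ideal generated by $\overline e_k x$ and $x\overline e_k$ up to a change of arrows, i.e., it allows one to ``kill'' the $k$-part. (3) Restrict attention to $\overline e_k\,\cdot\,\overline e_k$: quotienting $\RA{\widehat A}$ by the closure of the two-sided ideal generated by $e_k$ collapses $\RA{\widehat A}/(e_k)\cong\RA{A}_{\hat k,\hat k}\cong\RA{\widetilde A}_{\hat k,\hat k}$, and under this collapse the remaining relations coming from $\partial_{\compbwa}(\widetilde S)$ (for the composite arrows through $k$) become exactly the images of $\partial_a(S),\partial_b(S)$. (4) Conclude that $\mathcal{P}(A,S)_{\hat k,\hat k}=\overline e_k\,\RA{A}\,\overline e_k / \overline e_k J(S)\overline e_k$ and $\mathcal{P}(\widetilde A,\widetilde S)_{\hat k,\hat k}$ are canonically isomorphic, since after the identifications both are $\RA{A}_{\hat k,\hat k}$ modulo the same relations $\{\overline e_k\partial_c(S)\overline e_k : c\in Q_1\}$.

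I expect the main obstacle to be Step (2)–(3): one has to verify carefully that the cyclic derivatives with respect to the incident arrows $a^\star,b^\star$ really do generate, topologically, the full ideal needed to eliminate $k$, and that no ``extra'' relations among the surviving arrows are introduced in the process. In the quiver case (\cite{DWZ1}) this is a clean linear-algebra computation; in our setting the presence of the eigenbasis elements $\omega\in\B_k$ and the fact that the composite-arrow bimodule \eqref{eq:bimod-of-composite-arrows} is a direct sum over $\B_k$ (rather than a single tensor factor) means one must track the conjugations by $\omega^{-1}$ and use the multiplicative property \eqref{eq:mult-property-of-eigenbases} of the eigenbasis, together with the identity $\sum_{\omega\in\B_k}\omega^{-1}(\,\cdot\,)\omega$ behaving like $d_k$ times a ``trace-type'' projection, exactly as in the displayed equality $\sum_{u,\omega}u^{-1}\omega^{-1}b^\star bu\omega=d_k\sum_\omega\omega^{-1}b^\star b\omega$ used in the proof of Theorem~\ref{thm:mu-tilde-preserves-isomorphisms}. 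Modulo this bookkeeping, the argument is a \emph{mutatis mutandis} transcription of \cite[Proposition 6.1]{DWZ1}, and I would write the proof by indicating the identifications above and then citing that proof for the portion that is formally identical.
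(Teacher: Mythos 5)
Your overall strategy mirrors the paper's: identify $\RA{\widetilde{\mu}_k(A)_{\hat k,\hat k}}$ with $\RA{A}_{\hat k,\hat k}$ by sending each composite arrow $\compbwa$ to the product $b\omega a$, show that this identification carries the Jacobian relations appropriately, and conclude. But there are two issues, the second of which is a genuine gap.

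First, Step (3) conflates the corner algebra $\overline{e_k}\RA{\widehat A}\overline{e_k}$ with the quotient $\RA{\widehat A}/(e_k)$. These are very different: the quotient kills every path through $k$, while the corner keeps them, and the products $b\omega a$ that represent the composite arrows live precisely in the corner. Neither $\RA{A}_{\hat k,\hat k}$ nor $\RA{\widetilde{\mu}_k(A)}_{\hat k,\hat k}$ is isomorphic to $\RA{\widehat A}/(e_k)$, and the two corners are not isomorphic to each other either; what is isomorphic to $\RA{A}_{\hat k,\hat k}$ is the proper closed subalgebra $\RA{\widetilde{\mu}_k(A)_{\hat k,\hat k}}\subset\RA{\widetilde{\mu}_k(A)}_{\hat k,\hat k}$ generated by the arrows of $\widetilde{\mu}_k(A)$ \emph{not} incident to $k$ (including the composite arrows).

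Second, and more seriously, Step (4) asserts that both Jacobian corners are ``$\RA{A}_{\hat k,\hat k}$ modulo the same relations,'' but what Steps (1)--(3) establish (and what the paper establishes in its intermediate lemma via the inclusions $[J(S)_{\hat k,\hat k}]\subseteq J(\widetilde{\mu}_k(S))_{\hat k,\hat k}$ and $\RA{\widetilde{\mu}_k(A)}_{\hat k,\hat k}=\RA{\widetilde{\mu}_k(A)_{\hat k,\hat k}}+J(\widetilde{\mu}_k(S))_{\hat k,\hat k}$) is only a \emph{surjection} ${\mathcal P}(A,S)_{\hat k,\hat k}\to{\mathcal P}(\widetilde{\mu}_k(A,S))_{\hat k,\hat k}$, i.e., one inclusion of ideals plus the elimination of paths through $k$. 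You flag Steps (2)--(3) as the likely obstacle, but in fact the harder part is the reverse inclusion, i.e.\ injectivity, which your sketch does not address at all. The paper handles it by explicitly constructing a left inverse $\beta$: apply the analogous surjection once more to pass from ${\mathcal P}(\widetilde{\mu}_k(A,S))_{\hat k,\hat k}$ to ${\mathcal P}(\widetilde{\mu}_k\widetilde{\mu}_k(A,S))_{\hat k,\hat k}$, then compose with the isomorphism coming from the right-equivalence $\widetilde{\mu}_k\widetilde{\mu}_k(A,S)\cong(A,S)\oplus(C,T)$ built in the proof of Theorem~\ref{thm:mutation-involutive} (via $\varphi_3\varphi_2\varphi_1$), and finally strip off the trivial summand $(C,T)$ via Proposition~\ref{prop:jacobian-algebra-invariant}; one then checks $\beta\alpha=\operatorname{id}$ on generators. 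Without an argument of this type --- or some other device showing the ideal you obtain is not strictly larger than the image of $J(S)_{\hat k,\hat k}$ --- ``modulo the same relations'' is an assertion, not a conclusion, and the claimed isomorphism is not established.
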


\begin{proof}
Notice that, as an $R$-algebra, $R\langle \langle \widetilde{\mu}_k (A)_{\hat k, \hat k}\rangle \rangle$
is generated by the arrows of $Q$ that are not incident to $k$, and the composite arrows $\compbwa$ for
$a \in Q_1 \cap e_k A$, $b \in Q_1 \cap A e_k$ and $\omega\in\B_k$.
The following fact is immediate from the definitions.

\begin{lemma}
\label{lem:[]-isomorphism}
The correspondence sending each $c \in Q_1 \cap A_{\hat k, \hat k}$ to itself,
and each composite arrow $\compbwa$ to the product $b\omega a$, extends to an algebra isomorphism
$$
\RA{\widetilde{\mu}_k (A)_{\hat k, \hat k}}\to{\RA{A}}_{\hat k, \hat k}.
$$
\end{lemma}

Let $u \mapsto [u]$ denote the isomorphism
${\RA{A}}_{\hat k, \hat k}
\to \RA{\widetilde{\mu}_k (A)_{\hat k, \hat k}}$
inverse of that in Lemma~\ref{lem:[]-isomorphism}.
It acts in the same way as
the correspondence $S \mapsto [S]$ in \eqref{eq:mu-k-S}:
$[u]$ is obtained by substituting $\comp{a_p}{\omega_p}{ a_{p+1}}$ for each
factor $a_p\omega_p a_{p+1}$ with $t(a_p) = h(a_{p+1}) = k$ of any path
$\omega_0a_1\omega_1 \cdots \omega_{\ell-1}a_\ell\omega_\ell$ occurring in the path expansion of~$u$.

\begin{lemma}
\label{lem:[]-Jacobian-epimorphism}
The correspondence $u \mapsto [u]$ above induces a surjective algebra homomorphism
${\mathcal P}(A,S)_{\hat k, \hat k} \to
{\mathcal P}(\widetilde{\mu}_k(A,S))_{\hat k, \hat k}$.
\end{lemma}

\begin{proof} The algebra $\RA{ A}_{\hat k, \hat k}$ is canonically embedded as a (non-unital) subalgebra of $\RA{ A}$, and the inclusion $\RA{A}_{\hat k, \hat k}\hookrightarrow \RA{A}$ sends $J(S)_{\hat k, \hat k}$ into $J(S)$. Thus we have a well-defined (non-unital) algebra homomorphism $\RA{A}_{\hat k, \hat k}/J(S)_{\hat k, \hat k}\rightarrow \mathcal{P}(A,S)$. This map is easily seen to carry $\RA{A}_{\hat k, \hat k}/J(S)_{\hat k, \hat k}$ isomorphically onto $\mathcal{P}(A,S)_{\hat k, \hat k}$ (the latter being a non-unital subalgebra of $\mathcal{P}(A,S)$). The very same reasoning shows that there is also a canonical isomorphism between $\RA{\widetilde{\mu}_k(A)}_{\hat k, \hat k}/J(\widetilde{\mu}_k(S))_{\hat k, \hat k}$ and $\mathcal{P}(\widetilde{\mu}_k(A,S))_{\hat k, \hat k}$. Now, we have the diagram
$$
\xymatrix{
\RA{A}_{\hat k, \hat k} \ar[r]^{\cong} \ar[d] & \RA{\widetilde{\mu}_k(A)_{\hat k, \hat k}} \ar@{^{(}->}[r] \ar[d] & \RA{ \widetilde{\mu}_k(A)}_{\hat k, \hat k} \ar[d]\\
\frac{\RA{A}_{\hat k, \hat k}}{J(S)_{\hat k, \hat k}} \ar[r]_{\cong} &
\frac{\RA{\widetilde{\mu}_k(A)_{\hat k, \hat k}}}{[J(S)_{\hat k, \hat k}]} \ar@{.>}[r] & \frac{\RA{\widetilde{\mu}_k(A)}_{\hat k, \hat k}}{J(\widetilde{\mu}_k(S))_{\hat k, \hat k}}
}
$$
whose first square commutes, the indicated isomorphisms being induced by the correspondence $u\mapsto [u]$. We want to show that the dotted arrow is a well-defined epimorphism induced by the inclusion $\RA{\widetilde{\mu}_k(A)_{\hat k, \hat k}}\hookrightarrow \RA{\widetilde{\mu}_k(A)}_{\hat k, \hat k}$. To do so, it is enough to prove the following two facts:
\begin{equation}
\label{eq:J-k-excluded-intersection}
[J(S)_{\hat k, \hat k}] \subseteq
R\langle \langle {\widetilde{\mu}_k(A)}_{\hat k, \hat k} \rangle \rangle
\cap  J(\widetilde{\mu}_k(S))_{\hat k, \hat k}.
\end{equation}
\begin{equation}
\label{eq:J-complements-k-excluded}
R\langle \langle \widetilde{\mu}_k(A)\rangle \rangle_{\hat k, \hat k}
= R\langle \langle \widetilde{\mu}_k(A)_{\hat k, \hat k} \rangle \rangle
+ J(\widetilde{\mu}_k(S))_{\hat k, \hat k};
\end{equation}

To show \eqref{eq:J-k-excluded-intersection}, note first that
$J(S)_{\hat k, \hat k}$ is the closure of the ideal in
${R\langle \langle A \rangle \rangle}_{\hat k, \hat k}$ generated
by the elements $\partial_c S$ for all arrows $c \in Q_1$ not incident to $k$, together with the elements
$(\partial_{a} S)\omega a'$ and
$b'\omega(\partial_{b}S)$, for $a, a' \in Q_1 \cap e_k A$, $b, b' \in Q_1 \cap A e_k $, $\omega\in\B_k$.
Let us apply the map $u \mapsto [u]$ to these generators.
First, we have:
\begin{equation}
\label{eq:[]-partial-c}
[\partial_c S] = \partial_c (\widetilde{\mu}_k (S)) \ \ \ \text{for $c\in Q_1$ not incident to $k$.}
\end{equation}
Using the equality
\begin{equation}
\label{eq:partial-ba}
\partial_{\comp{b}{\nu}{a}}[S]=
\partial_{\comp{b}{\nu}{ a}}(\widetilde{\mu}_k(S)) -a^\star \nu^{-1} b^\star  ,
\end{equation}
which follows from the definition of $\widetilde{\mu}_k(S)$ (cf. \eqref{eq:mu-k-S}), we obtain
\begin{align}
\nonumber
[(\partial_{a} S)\omega a'] &= \sum_{t(b) = k}\sum_{\nu\in\B_k} (\partial_{\comp{b}{\nu}{a}}[S])[b\nu \omega a']\\
\label{eq:[]-partial-a}
&=  \sum_{t(b) = k}\sum_{\nu\in\B_k} (\partial_{\comp{b}{\nu}{a}}(\widetilde{\mu}_k(S)) -a^\star \nu^{-1} b^\star)[b\nu \omega a']\\
\nonumber
&= \left(\sum_{t(b) = k}\sum_{\nu\in\B_k} \partial_{\comp{b}{\nu }{a}}(\widetilde{\mu}_k(S))[b\nu \omega a']\right) - \left(\sum_{t(b) = k}\sum_{\nu\in\B_k} a^\star \omega \nu^{-1} b^\star\comp{b}{\nu} {a'}\right)\\
\nonumber
&= \left(\sum_{t(b) = k}\sum_{\nu\in\B_k} \partial_{\comp{b}{\nu }{a}}(\widetilde{\mu}_k(S))[b\nu \omega a']\right) - a^\star \omega\partial_{a'^\star}(\widetilde{\mu}_k(S)),
\end{align}
and
\begin{align}
\nonumber
[b'\omega(\partial_{b}S)] &= \sum_{h(a) = k}\sum_{\nu\in\B_k} [b'\omega \nu a] (\partial_{\comp{b}{\nu}{a}}[S])\\
\label{eq:[]-partial-b}
&=  \sum_{h(a) = k}\sum_{\nu\in\B_k} [b'\omega \nu a] (\partial_{\comp{b}{\nu} {a}}(\widetilde{\mu}_k(S)) -a^\star \nu^{-1} b^\star)\\
\nonumber
&=  \left(\sum_{h(a) = k}\sum_{\nu\in\B_k} [b'\omega \nu a] \partial_{\comp{b}{\nu}{ a}}\widetilde{\mu}_k(S)\right) -\left(\sum_{h(a) = k}\sum_{\nu\in\B_k} \comp{b'}{\nu}{a} a^\star \nu^{-1}\omega b^\star\right)\\
\nonumber
&=  \left(\sum_{h(a) = k}\sum_{\nu\in\B_k} [b'\omega \nu a] \partial_{\comp{b}{\nu}{ a}}\widetilde{\mu}_k(S)\right)
-\partial_{b'^\star}(\widetilde{\mu}_k(S))\omega b^\star.
\end{align}
This implies the desired inclusion in
\eqref{eq:J-k-excluded-intersection}.

To show \eqref{eq:J-complements-k-excluded}, we note that if a
path $\omega_0\widetilde a_1 \omega_1 \cdots \omega_{\ell-1}\widetilde a_\ell \omega_\ell \in
R\langle \langle \widetilde{\mu}_k(A)\rangle \rangle_{\hat k, \hat k}$
does not belong to
$R\langle \langle \widetilde{\mu}_k(A)_{\hat k, \hat k} \rangle \rangle$
then it must contain one or more factors of the form $a^\star\omega
b^\star$ with $h(a) = t(b) = k$. Remembering \eqref{eq:partial-ba} every time such a factor appears, we deduce that
$\omega_0\widetilde a_1 \omega_1 \cdots \omega_{\ell-1}\widetilde a_\ell \omega_\ell \in
R\langle \langle \widetilde{\mu}_k(A)_{\hat k, \hat k} \rangle \rangle
+ J(\widetilde{\mu}_k(S))_{\hat k, \hat k}$, as desired.
\end{proof}

To finish the proof of
Proposition~\ref{pr:k-excluded-mutation-invariant},
it is enough to show that the epimorphism in
Lemma~\ref{lem:[]-Jacobian-epimorphism} (let us denote it by $\alpha$)
is in fact an isomorphism.
To do this, we construct the left inverse algebra homomorphism
$\beta: {\mathcal P}(\widetilde{\mu}_k(A,S))_{\hat k, \hat k} \to
{\mathcal P}(A,S)_{\hat k, \hat k}$ (so that $\beta \alpha$ is the identity
map on ${\mathcal P}(A,S)_{\hat k, \hat k}$).
We define $\beta$ as the composition of three maps.
First, we apply the epimorphism ${\mathcal P}(\widetilde{\mu}_k(A,S))_{\hat k, \hat k}
\to {\mathcal P}(\widetilde{\mu}_k\widetilde{\mu}_k(A,S))_{\hat k, \hat k}$
defined in the same way as $\alpha$.
Remembering the proof of Theorem~\ref{thm:mutation-involutive} and
using the notation introduced there, we then apply the isomorphism
${\mathcal P}(\widetilde{\mu}_k\widetilde{\mu}_k(A,S))_{\hat k, \hat k}
\to {\mathcal P}(A \oplus C, S + T)_{\hat k, \hat k}$ induced by
the automorphism $\varphi_3 \varphi_2 \varphi_1$ of
$R\langle \langle A \oplus C \rangle \rangle$.
Finally, we apply the isomorphism
${\mathcal P}(A \oplus C, S + T)_{\hat k, \hat k}
\to {\mathcal P}(A, S)_{\hat k, \hat k}$
given in Proposition~\ref{prop:jacobian-algebra-invariant}.

Since all the maps involved are algebra homomorphisms, it is
enough to check that $\beta \alpha$ fixes the generators
$p(c)$ and $p(b\omega a)$ of ${\mathcal P}(A,S)_{\hat k, \hat k}$,
where  $p$ is the projection
$R\langle \langle A \rangle \rangle \to {\mathcal P}(A,S)$, and
$a, b, c$ and $\omega$ have the same meaning as above.
This is done by direct tracing of the definitions.
\end{proof}

\begin{prop}
\label{prop:fin-dim}
Suppose an SP $(A,S)$ satisfies \eqref{eq:no-2-cycles-thru-k}
and \eqref{eq:no-start-in-k}.
If the Jacobian algebra ${\mathcal P}(A,S)$ is finite-dimensional
then so is ${\mathcal P}(\widetilde A,\widetilde S)$.
\end{prop}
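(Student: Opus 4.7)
The plan is to exploit the Peirce decomposition of the Jacobian algebra $\widetilde{\mathcal P} := {\mathcal P}(\widetilde{\mu}_k(A),\widetilde{\mu}_k(S))$ with respect to the orthogonal idempotents $e_k$ and $\overline{e_k} = 1-e_k$:
\begin{equation*}
\widetilde{\mathcal P} \;=\; e_k\widetilde{\mathcal P}e_k \;\oplus\; e_k\widetilde{\mathcal P}\overline{e_k} \;\oplus\; \overline{e_k}\widetilde{\mathcal P}e_k \;\oplus\; \overline{e_k}\widetilde{\mathcal P}\overline{e_k}.
\end{equation*}
The goal is to show that each of the four summands is finite-dimensional over $F$.

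The $(\hat k,\hat k)$-summand $\overline{e_k}\widetilde{\mathcal P}\overline{e_k}=\widetilde{\mathcal P}_{\hat k,\hat k}$ is handled directly by Proposition~\ref{pr:k-excluded-mutation-invariant}, which identifies it with ${\mathcal P}(A,S)_{\hat k,\hat k}$; the latter is a subspace of the finite-dimensional algebra ${\mathcal P}(A,S)$ and hence finite-dimensional.

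For the three remaining summands I would use the following structural observation about paths in $\widetilde{\mu}_k(Q)$: the only arrows of $\widetilde{\mu}_k(Q)$ with head $k$ are the arrows $b^\star$ with $b\in Q_1\cap Ae_k$, and the only arrows with tail $k$ are the $a^\star$ with $a\in Q_1\cap e_kA$ (no composite arrow $\compbwa$ is incident to $k$, and $\widetilde{\mu}_k(Q)$ is loop-free). Therefore, in $\RA{\widetilde{\mu}_k(A)}$, any path $p$ with $h(p)=k$ and positive length factors as $p = b^\star p'$ for some $b\in Q_1\cap Ae_k$ and some path $p'$ from $t(p)$ to $h(b)$, while any path $p$ with $t(p)=k$ and positive length factors as $p = p'' a^\star$ for some $a\in Q_1\cap e_kA$. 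These factorizations give equalities of closed $R$-subbimodules
\begin{align*}
e_k\RA{\widetilde{\mu}_k(A)}\overline{e_k} &= \sum_{b\in Q_1\cap Ae_k} b^\star\cdot\bigl(e_{h(b)}\RA{\widetilde{\mu}_k(A)}\overline{e_k}\bigr),\\
\overline{e_k}\RA{\widetilde{\mu}_k(A)}e_k &= \sum_{a\in Q_1\cap e_kA} \bigl(\overline{e_k}\RA{\widetilde{\mu}_k(A)}e_{t(a)}\bigr)\cdot a^\star,\\
e_k\RA{\widetilde{\mu}_k(A)}e_k &= F_ke_k \;+\; \sum_{a,b} b^\star\cdot\bigl(e_{h(b)}\RA{\widetilde{\mu}_k(A)}e_{t(a)}\bigr)\cdot a^\star,
\end{align*}
which pass to the Jacobian quotient since $J(\widetilde{\mu}_k(S))$ is a two-sided ideal. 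Because $h(b),t(a)\neq k$ for all the $b,a$ appearing, each of the "middle" factors $e_{h(b)}\widetilde{\mathcal P}\overline{e_k}$, $\overline{e_k}\widetilde{\mathcal P}e_{t(a)}$, $e_{h(b)}\widetilde{\mathcal P}e_{t(a)}$ is a subspace of $\widetilde{\mathcal P}_{\hat k,\hat k}$, hence finite-dimensional by the previous paragraph. Together with the finite-dimensionality of $F_ke_k$ and the finiteness of the index sets $Q_1\cap Ae_k$ and $Q_1\cap e_kA$, this yields finite-dimensionality of the three remaining Peirce summands, completing the proof.

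There is no serious obstacle here; the argument is essentially a bookkeeping reduction to Proposition~\ref{pr:k-excluded-mutation-invariant}. The only point requiring care is verifying that the path-factorization really gives an \emph{equality} (not merely an inclusion) of closed $R$-subbimodules in the complete path algebra, so that it survives passage to the Jacobian quotient; this is immediate since every path with an endpoint at $k$ is, uniquely, either the length-zero path at $k$ or else admits a unique leftmost arrow with head $k$ and/or rightmost arrow with tail $k$, and these arrows are precisely the $b^\star$'s and $a^\star$'s enumerated above.
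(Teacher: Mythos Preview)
Your approach is essentially the paper's: both reduce to Proposition~\ref{pr:k-excluded-mutation-invariant} via the Peirce decomposition with respect to $e_k$ and $\overline{e_k}$, and both bound the remaining three summands by factoring paths through the (finitely many) arrows incident to $k$; the paper packages this as a general lemma (any quotient $\RA{A}/J$ is finite-dimensional once its $(\hat k,\hat k)$-part is) and then applies it to $(\widetilde A,\widetilde S)$. One small correction: your displayed factorizations are not equalities as written, since a path in $e_k\RA{\widetilde{\mu}_k(A)}\overline{e_k}$ has the form $\omega_0 b^\star p'$ with $\omega_0\in\B_k$ not necessarily $1$, and $\omega_0 b^\star$ is not of the form $b^\star u$ for any $u$; the paper's version sums over $\omega\in\B_k$ on the left (and $\varpi\in\B_k$ on the right for the $e_k(\cdot)e_k$ and $\overline{e_k}(\cdot)e_k$ pieces), which only inflates the dimension bound by a factor of $d_k$ or $d_k^2$ and leaves your argument intact.
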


\begin{proof}
We start by showing that finite dimensionality of
${\mathcal P}(A,S)$ follows from a seemingly weaker
condition.

\begin{lemma}
\label{lem:fin-dim-reduction}
Let $J \subseteq {\mathfrak m}(A)$ be a closed ideal
in $R\langle \langle A \rangle \rangle$.
Then the quotient algebra $R\langle \langle A \rangle \rangle/J$
is finite dimensional provided the subalgebra
$R\langle \langle A \rangle \rangle_{\hat k, \hat k}/J_{\hat k, \hat k}$
is finite dimensional.
In particular, the Jacobian algebra ${\mathcal P}(A,S)$ is finite-dimensional
if and only if so is the subalgebra ${\mathcal P}(A,S)_{\hat k, \hat k}$.
\end{lemma}

\begin{proof}
Similarly to \eqref{eq:k-excluded},
for an $R$-$R$-bimodule $B$, we denote
$$
B_{k, \hat k} = e_k B \overline e_k =
\bigoplus_{j \neq k} B_{k,j}, \quad
B_{\hat k, k} = \overline e_k B e_k =
\bigoplus_{i \neq k} B_{i,k}, \quad
\text{and} \quad B_{k, k} = e_k B e_k
$$
We need to show that if
$R\langle \langle A \rangle \rangle_{\hat k, \hat k}/J_{\hat k, \hat k}$
is finite dimensional then so is each of the spaces
$R\langle \langle A \rangle \rangle_{k, \hat k}/J_{k, \hat k}$,
$R\langle \langle A \rangle \rangle_{\hat k, k}/J_{\hat k, k}$ and
$R\langle \langle A \rangle \rangle_{k, k}/J_{k, k}$.
Let us treat $R\langle \langle A \rangle \rangle_{k, k}/J_{k, k}$;
the other two cases are done similarly (and a little simpler).

Let
$$
Q_1 \cap A_{k,\hat k} = \{a_1, \dots, a_s\}, \quad
Q_1 \cap A_{\hat k, k} = \{b_1, \dots, b_t\}.
$$
We have
$$
R\langle \langle A \rangle \rangle_{k, k} = \field_k e_k \oplus
\bigoplus_{\ell, m,\omega,\varpi} \omega a_\ell R\langle \langle A \rangle \rangle_{\hat k, \hat k} b_m\varpi,
$$
It follows that there is a surjective map
$\alpha: \field_k \times \Mat_{d_ks \times d_kt}(R\langle \langle A \rangle \rangle_{\hat k, \hat k})
\to R\langle \langle A \rangle \rangle_{k, k}/J_{k, k}$ given by
$$\alpha(u, C) = 
p\left(ue_k+(\B_k\boldsymbol{a})C(\boldsymbol{b}\B_k)\right)
$$
where $\Mat_{d_ks \times d_kt}(B)$ stands for the space of $d_ks \times d_kt$
matrices with entries in~$B$, $p$ is the projection
$R\langle \langle A \rangle \rangle \to R\langle \langle A \rangle \rangle/J$, and $\B_k\boldsymbol{a}$ and $\boldsymbol{b}\B_k$ are the matrices defined in the proof of Lemma \ref{lemma:extension-of-varphi}.
The kernel of $\alpha$ contains the space $\Mat_{d_ks \times d_kt}(J_{\hat k, \hat k})$,
hence $R\langle \langle A \rangle \rangle_{k, k}/J_{k, k}$ is
isomorphic to a quotient of the finite-dimensional space
$K \times \Mat_{s \times t}(R\langle \langle A \rangle \rangle_{\hat k, \hat k}/J_{\hat k, \hat k})$.
Thus, $R\langle \langle A \rangle \rangle_{k, k}/J_{k, k}$ is
finite dimensional, as desired.
\end{proof}

To finish the proof of Proposition~\ref{prop:fin-dim},
suppose that ${\mathcal P}(A,S)$ is finite dimensional.
Then ${\mathcal P}(\widetilde A,\widetilde S)_{\hat k, \hat k}$
is finite dimensional by Proposition~\ref{pr:k-excluded-mutation-invariant}.
Applying Lemma~\ref{lem:fin-dim-reduction} to the SP
$(\widetilde A,\widetilde S)$, we conclude that
${\mathcal P}(\widetilde A,\widetilde S)$ is finite dimensional, as desired.
\end{proof}

Remembering \eqref{eq:mutilde-mu} and using
Proposition~\ref{prop:jacobian-algebra-invariant}, we see that
Propositions~\ref{pr:k-excluded-mutation-invariant} and
\ref{prop:fin-dim} imply the following.

\begin{thm}
\label{thm:mutation-preserves-fin-dim}
Suppose $(A,S)$ is a reduced SP satisfying \eqref{eq:no-2-cycles-thru-k}.
Then the algebras ${\mathcal P}(A,S)_{\hat k, \hat k}$
and ${\mathcal P}(\mu_k(A,S))_{\hat k, \hat k}$
are isomorphic to each other, and
${\mathcal P}(A,S)$ is finite-dimensional
if and only if so is ${\mathcal P}(\mu_k(A,S))$.
\end{thm}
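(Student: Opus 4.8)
The plan is to deduce this statement formally from the technical results already established, so that no genuinely new argument is needed. First I would normalize: after replacing $S$ by a cyclically equivalent potential we may assume $(A,S)$ also satisfies \eqref{eq:no-start-in-k}, which is harmless since such a replacement changes neither the Jacobian algebra (Proposition \ref{prop:cyclic-equivalence}) nor the right-equivalence class of $\mu_k(A,S)$. With this in force, both $\widetilde{\mu}_k(A,S)$ and $\mu_k(A,S)$ are defined, and the quiver underlying $\mu_k(A,S)$ still has no oriented $2$-cycle incident to $k$, so all the propositions quoted below apply.

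Next I would chain together the relevant isomorphisms. By Proposition \ref{pr:k-excluded-mutation-invariant}, ${\mathcal P}(A,S)_{\hat k,\hat k}\cong{\mathcal P}(\widetilde{\mu}_k(A,S))_{\hat k,\hat k}$. By the Splitting Theorem \ref{thm:trivial-reduced-splitting} together with \eqref{eq:mutilde-mu}, the SP $\widetilde{\mu}_k(A,S)$ is right-equivalent to $(\widetilde{\mu}_k(A)_{\triv},\widetilde{\mu}_k(S)^{(2)})\oplus\mu_k(A,S)$ with the first summand trivial; hence, combining Proposition \ref{prop:jacobian-algebra-invariant} (adjoining a trivial summand does not change the Jacobian algebra) with the invariance of Jacobian algebras under right-equivalence (Propositions \ref{prop:cyclic-equivalence} and \ref{prop:automorphism-respects-jacobian}), one gets ${\mathcal P}(\widetilde{\mu}_k(A,S))\cong{\mathcal P}(\mu_k(A,S))$, and therefore ${\mathcal P}(A,S)_{\hat k,\hat k}\cong{\mathcal P}(\mu_k(A,S))_{\hat k,\hat k}$ after passing to $\hat k,\hat k$-components. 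This is the first assertion.

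For the finite-dimensionality equivalence I would apply Lemma \ref{lem:fin-dim-reduction} on both sides: ${\mathcal P}(A,S)$ is finite-dimensional if and only if ${\mathcal P}(A,S)_{\hat k,\hat k}$ is, and likewise ${\mathcal P}(\mu_k(A,S))$ is finite-dimensional if and only if ${\mathcal P}(\mu_k(A,S))_{\hat k,\hat k}$ is; together with the isomorphism of $\hat k,\hat k$-components just obtained, this yields the claim. (Alternatively, one direction follows from Proposition \ref{prop:fin-dim} applied to $(A,S)$ together with the identification ${\mathcal P}(\widetilde{\mu}_k(A,S))\cong{\mathcal P}(\mu_k(A,S))$, and the reverse from applying that same chain to $\mu_k(A,S)$ and using the involutivity of $\mu_k$ up to right-equivalence, Theorem \ref{thm:mutation-involutive}.) There is no real obstacle here: the theorem is essentially a corollary of Propositions \ref{pr:k-excluded-mutation-invariant} and \ref{prop:fin-dim}. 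The only points demanding any care are that the normalization \eqref{eq:no-start-in-k} affects neither ${\mathcal P}(A,S)_{\hat k,\hat k}$ nor the right-equivalence class of $\mu_k(A,S)$, and that the assignment $B\mapsto B_{\hat k,\hat k}$ is compatible with each isomorphism in the chain — both of which are immediate from the definitions.
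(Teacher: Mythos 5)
Your proposal is correct and follows essentially the same route as the paper, which derives the theorem as a one-line corollary of Propositions \ref{pr:k-excluded-mutation-invariant} and \ref{prop:fin-dim} together with \eqref{eq:mutilde-mu} and Proposition \ref{prop:jacobian-algebra-invariant}. The only cosmetic difference is that you invoke Lemma \ref{lem:fin-dim-reduction} symmetrically on both sides rather than calling Proposition \ref{prop:fin-dim} once and appealing to involutivity, but since Proposition \ref{prop:fin-dim} is itself proved from that lemma this is the same argument; your parenthetical alternative is precisely the paper's phrasing.
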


We see that the class of SPs with finite dimensional Jacobian
algebras is invariant under mutations.

\section{Restriction}\label{sec:restriction}

\begin{defi}\label{def:restriction} Let $(Q,\dtuple)$ be a strongly primitive weighted quiver, and $I\subseteq Q_0$ be a non-empty subset of the vertex set of $Q$. On the vertex set $Q_0$ we define a weighted quiver $(Q|_I,\dtuple)$ by deleting from $Q$ every arrow $c$ which is incident to at least one vertex outside $I$. If $S$ is a potential on the arrow span $A$ of $(Q,\dtuple)$ over $E/F$, we define the \emph{restriction of $(A,S)$ to $I$} to be the SP $(A,S)|_I=(A|_I,S|_I)$, where $A|_I$ is the arrow span of $(Q|_I,\dtuple)$ over $E/F$, and $S|_I$ is the image of $S$ under the $R$-algebra homomorphism $\rho:\RA{A}\rightarrow\RA{A|_I}$ defined by
$$
\rho(a)=\begin{cases}a & \text{if $a\in Q_1$ is an arrow connecting vertices in $I$;}\\
0 & \text{if $a\in Q_1$ is incident to some vertex in $Q_0\setminus I$}.
\end{cases}
$$
\end{defi}

Given any element $u\in\RA{A}$, we shall denote by $u|_I$ the image of $u$ under the homomorphism $\rho$ of Definition \ref{def:restriction}.

\begin{lemma}\label{lemma:restriction-preserves-right-equiv} Let $(A,S)$ and $(A',S')$ be SPs with the same underlying weighted quiver $(Q,\dtuple)$, and let $I$ be a non-empty finite subset of the vertex set $Q_0$. If $\varphi:\RA{A}\rightarrow\RA{A}$ is a right-equivalence $(A,S)\to (A,S')$, then the $R$-algebra homomorphism $\varphi|_I:\RA{A|_I}\rightarrow\RA{A|_I}$ defined by the rule $u\mapsto\varphi(u)|_I$ is a right-equivalence $(A|_I,S|_I)\to(A|_I,S'|_I)$. In other words, restriction preserves right-equivalences.
\end{lemma}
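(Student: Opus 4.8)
The plan is to verify directly that $\varphi|_I$ is a well-defined $R$-algebra automorphism of $\RA{A|_I}$ and that it carries $S|_I$ to something cyclically equivalent to $S'|_I$. First I would check well-definedness and the $R$-algebra property: the map $\rho\colon\RA{A}\to\RA{A|_I}$ of Definition~\ref{def:restriction} is a continuous $R$-algebra homomorphism, so $u\mapsto\varphi(u)|_I=\rho(\varphi(u))$ is a composition of continuous $R$-algebra homomorphisms $\RA{A|_I}\hookrightarrow\RA{A}\xrightarrow{\varphi}\RA{A}\xrightarrow{\rho}\RA{A|_I}$ (using that $\RA{A|_I}$ embeds as a closed $R$-subalgebra of $\RA{A}$, since $Q|_I$ is a subquiver of $Q$ on the same vertex set); hence $\varphi|_I$ is a continuous $R$-algebra endomorphism of $\RA{A|_I}$. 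The key point making this work is that an arrow $a$ of $Q|_I$ connects two vertices of $I$, so by \eqref{eq:unitriangular-d}-type considerations and the form of $\varphi$ from Proposition~\ref{prop:automorphisms}, $\varphi(a)\in A\oplus\maxid(A)^2$ is a linear combination of paths, and applying $\rho$ keeps exactly those summands all of whose arrows connect vertices of $I$; in particular the degree-one part $\varphi^{(1)}(a)$ lies in $A|_I$ already (it is an $R$-$R$-bimodule map $A\to A$, and its $(i,j)$-component with $i,j\in I$ lands in $A_{ij}$, which survives restriction).

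Next I would show $\varphi|_I$ is an automorphism. By Proposition~\ref{prop:automorphisms} it suffices to check that $(\varphi|_I)^{(1)}\colon A|_I\to A|_I$ is an $R$-$R$-bimodule isomorphism. But $(\varphi|_I)^{(1)}$ is just the restriction of $\varphi^{(1)}$ to the direct summand $A|_I=\bigoplus_{i,j\in I}A_{ij}$ of $A$, followed by projection back onto $A|_I$; since $\varphi^{(1)}$ is an $R$-$R$-bimodule automorphism of $A=\bigoplus_{i,j\in Q_0}A_{ij}$ and $A|_I$ is the span of those components $A_{ij}$ with $i,j\in I$, while $\varphi^{(1)}$ preserves each graded piece $e_i A e_j$, the map $\varphi^{(1)}$ restricts to an automorphism of $A|_I$. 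Hence $(\varphi|_I)^{(1)}$ is invertible and $\varphi|_I$ is an automorphism of $\RA{A|_I}$.

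It remains to compare potentials. The clean way is to observe that $\rho$ commutes with $\varphi$ ``up to restriction'' in the relevant sense: $\varphi|_I(S|_I)=\varphi(S|_I)|_I$, and since $S|_I=\rho(S)$ involves only cyclic paths through vertices of $I$, one checks $\varphi(S|_I)|_I=\varphi(S)|_I$ — the point being that any path occurring in $S$ but \emph{not} in $S|_I$ uses an arrow incident to $Q_0\setminus I$, and $\varphi$ of such a path, expanded into paths, still uses an arrow incident to $Q_0\setminus I$ in each term (because $\varphi$ fixes $R$ and only substitutes longer paths for arrows, so incidence to a vertex outside $I$ is never destroyed), hence is killed by $\rho$. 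Therefore $\varphi|_I(S|_I)=\varphi(S)|_I$. Now $\varphi(S)$ is cyclically equivalent to $S'$; since $\rho$ is a continuous $R$-algebra homomorphism it sends cyclically equivalent potentials to cyclically equivalent ones (this is the standard fact recorded just before Definition~\ref{def:AS-isomorphism}), so $\varphi(S)|_I$ is cyclically equivalent to $S'|_I$. Combining, $\varphi|_I(S|_I)$ is cyclically equivalent to $S'|_I$, which is exactly the assertion that $\varphi|_I$ is a right-equivalence $(A|_I,S|_I)\to(A|_I,S'|_I)$.

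The main obstacle is the bookkeeping in the third paragraph: making precise that ``$\varphi$ does not destroy incidence to $Q_0\setminus I$'', i.e. that for a path $w$ using some arrow incident to a vertex outside $I$, every path appearing in the expansion of $\varphi(w)$ still uses such an arrow, so that $\rho(\varphi(w))=0$. This follows because $\varphi$ is an $R$-algebra homomorphism fixing the idempotents $e_i$: writing $w=w_1\,a\,w_2$ with $a$ incident to $\ell\notin I$, we get $\varphi(w)=\varphi(w_1)\varphi(a)\varphi(w_2)$, and $\varphi(a)=e_{h(a)}\varphi(a)e_{t(a)}$ with $h(a)=\ell$ or $t(a)=\ell$; every path in $\varphi(a)$ therefore starts or ends at $\ell$, hence is incident to $\ell\notin I$, hence every path in the product $\varphi(w)$ contains it, so $\rho(\varphi(w))=0$. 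Once this is spelled out, the identity $\varphi(S|_I)|_I=\varphi(S)|_I$ and the whole argument go through. All other steps are formal consequences of Proposition~\ref{prop:automorphisms} and the continuity and $R$-linearity of $\rho$ and $\varphi$.
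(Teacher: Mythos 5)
Your proof is correct and follows essentially the same route as the paper: decompose $S = S|_I + W$ (where $W$ collects cycles through vertices outside $I$), observe that since $\varphi$ fixes each idempotent $e_i$, applying $\varphi$ to a path using an arrow incident to $\ell\notin I$ yields a sum of paths all passing through $\ell$, hence $\varphi(W)|_I = 0$ and $\varphi|_I(S|_I) = \varphi(S)|_I$, then invoke continuity of the restriction map $\rho$ to transport cyclic equivalence. The one place you go beyond the paper's write-up is the explicit verification (via Proposition~\ref{prop:automorphisms}) that $\varphi|_I$ is actually an $R$-algebra \emph{automorphism} of $\RA{A|_I}$ — a check the paper leaves tacit but which is indeed needed for $\varphi|_I$ to qualify as a right-equivalence — and your observation that $(\varphi|_I)^{(1)}$ is the restriction of $\varphi^{(1)}$ to the $R$-$R$-bimodule direct summand $A|_I$, which it preserves, is exactly the right justification.
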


\begin{proof} We claim that $\varphi|_I(S|_I)=\varphi(S)|_I$. To see this, write $S = S|_I +W$, where $W\in\RA{A}$ is a potential each of whose constituting cycles passes through a vertex not in $I$. Then each term of $\varphi(W)$ passes through a vertex not in $I$, which means that $\varphi(W)|_I=0$, and hence $\varphi(S)|_I=\varphi(S|_I +W)|_I=\varphi(S|_I)|_I$. Now, the $R$-algebra homomorphism
$\rho:\RA{A}\rightarrow\RA{A|_I}$, being continuous, sends cyclically equivalent potentials to cyclically equivalent
ones, from which it follows that $\varphi(S)|_I$ is cyclically equivalent to $S'|_I$. Therefore, $\varphi|_I$ is a right-equivalence $(A|_I,S|_I)\to (A'|_I,S'|_I)$.
\end{proof}

\begin{prop}\label{prop:mut-&-restr-commute} Let $(Q,\dtuple)$ be a strongly primitive weighted quiver, and $I\subseteq Q_0$ be a non-empty subset of the vertex set of $Q$. Let $(A,S)$ be an SP on the arrow span of $(Q,\dtuple)$, and suppose that $k\in I$ is a vertex such that $Q$ does not have 2-cycles incident to $k$. Then $\mu_k(A|_I,S|_I)$ is right-equivalent to the restriction of $\mu_k(A,S)$ to $I$. In other words, SP-mutation commutes with restriction.
\end{prop}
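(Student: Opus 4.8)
The statement asserts that SP-mutation commutes with restriction, i.e. $\mu_k(A|_I, S|_I)$ is right-equivalent to $\mu_k(A,S)|_I$. The natural approach is to prove this first at the level of the "pre-mutation" $\widetilde\mu_k$ (before reduction), and then deduce the statement for $\mu_k$ using the fact that reduced parts are well-defined up to right-equivalence (Theorem~\ref{thm:trivial-reduced-splitting}) together with Lemma~\ref{lemma:restriction-preserves-right-equiv} (restriction preserves right-equivalences). The first step is therefore to establish a clean commutation identity $\widetilde\mu_k(A|_I, S|_I) = \bigl(\widetilde\mu_k(A,S)\bigr)\big|_I$ on the nose (not merely up to right-equivalence), where on the right-hand side the restriction is taken with respect to the vertex set $I$ inside the mutated weighted quiver $(\widetilde\mu_k(Q),\dtuple)$. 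Before doing so one must, as in Section~\ref{sec:mutations}, replace $S$ by a cyclically equivalent potential satisfying \eqref{eq:no-start-in-k}; note that a cyclic path in $S$ starting at $k$ stays a cyclic path starting at $k$ after restriction (or becomes $0$), so this normalization is compatible with restriction.

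The key computation is the following. An arrow $c$ of $\widetilde\mu_k(Q)$ is incident to a vertex outside $I$ precisely when: (i) $c$ comes from an arrow $c^*$ with $c$ itself incident to a vertex outside $I$; or (ii) $c$ is a composite arrow $\compbwa$ with $a:j\to k$, $b:k\to i$, and at least one of $i,j$ lies outside $I$ (here we use $k\in I$). Under the defining homomorphism $\rho$ for restriction to $I$ inside $\widetilde\mu_k(Q)$, such arrows are sent to $0$. Comparing this with: first restricting $A$ to $I$ (which deletes every original arrow incident to a vertex outside $I$, in particular any $a:j\to k$ or $b:k\to i$ with $i$ or $j$ outside $I$) and then applying $\widetilde\mu_k$ (which only creates composite arrows $\compbwa$ for $a,b$ surviving in $Q|_I$, i.e. for $i,j\in I$), one sees the two arrow spans $\widetilde\mu_k(A|_I)$ and $\widetilde\mu_k(A)|_I$ coincide. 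For the potentials: $[S]$ restricted to $I$ equals $[S|_I]$, because the substitution $a_p\omega_p a_{p+1}\mapsto \comp{a_p}{\omega_p}{a_{p+1}}$ (for $t(a_p)=h(a_{p+1})=k$) commutes with killing paths through vertices outside $I$ — a path through a vertex outside $I$ is killed by either operation, and a path entirely within $I$ is unaffected by the passage $I$-restriction. Likewise $\triangle_k(A)|_I = \triangle_k(A|_I)$, since the outer sum over pairs $a,b$ with $h(a)=k=t(b)$ restricts exactly to those pairs with tails/heads in $I$, the remaining terms involving an arrow incident to a vertex outside $I$ and hence dying under $\rho$. Thus $\widetilde\mu_k(S)|_I = [S]|_I + \triangle_k(A)|_I = [S|_I] + \triangle_k(A|_I) = \widetilde\mu_k(S|_I)$, establishing the exact identity $\widetilde\mu_k(A,S)|_I = \widetilde\mu_k(A|_I, S|_I)$.

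To pass from $\widetilde\mu_k$ to $\mu_k$, recall that $\mu_k(A,S)$ is the reduced part of $\widetilde\mu_k(A,S)$, obtained via the Splitting Theorem, and similarly $\mu_k(A|_I,S|_I)$ is the reduced part of $\widetilde\mu_k(A|_I,S|_I) = \widetilde\mu_k(A,S)|_I$. By Theorem~\ref{thm:trivial-reduced-splitting} there is a right-equivalence $\psi: \RA{\widetilde\mu_k(A)} \to \RA{\widetilde\mu_k(A)_{\rm triv}\oplus\widetilde\mu_k(A)_{\rm red}}$ carrying $\widetilde\mu_k(S)$ to a potential cyclically equivalent to $\widetilde\mu_k(S)_{\rm triv}+\widetilde\mu_k(S)_{\rm red}$. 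The main subtlety — and the part I expect to be the real obstacle — is that restricting this right-equivalence to $I$ need not directly yield a splitting of $\widetilde\mu_k(S)|_I$ into \emph{its own} trivial and reduced parts, because deleting arrows can turn a $2$-cycle feeding into the reduction process into something trivial on $I$; one must check that the reduction procedure (the limit process of \cite[Lemmas 4.7, 4.8]{DWZ1}) is compatible with $\rho$. The cleanest way around this is: apply Lemma~\ref{lemma:restriction-preserves-right-equiv} to $\psi$ to obtain a right-equivalence $\psi|_I$ from $(\widetilde\mu_k(A)|_I, \widetilde\mu_k(S)|_I)$ to the restriction of the split SP; observe that a trivial SP restricts to a trivial SP (its degree-2 potential $\sum a_\ell b_\ell$ restricts to a sum over the $2$-cycles lying in $I$, which is again trivial on $A_{\rm triv}|_I$ by Proposition~\ref{prop:trivial-potential}) and a reduced SP restricts to a reduced SP (restriction only deletes arrows, never creates length-$2$ terms); hence $\psi|_I$ exhibits $(\widetilde\mu_k(A)|_I, \widetilde\mu_k(S)|_I)$ as right-equivalent to the direct sum of a trivial SP and the reduced SP $(\widetilde\mu_k(A)_{\rm red}|_I, \widetilde\mu_k(S)_{\rm red}|_I)$. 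By uniqueness of reduced parts, $\mu_k(A|_I,S|_I)$ is right-equivalent to $(\widetilde\mu_k(A)_{\rm red}|_I, \widetilde\mu_k(S)_{\rm red}|_I)$, which by definition is $\mu_k(A,S)|_I$. This completes the proof.
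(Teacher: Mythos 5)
Your proposal is correct and follows essentially the same route as the paper: both establish the exact identity $\widetilde\mu_k(A|_I,S|_I)=\widetilde\mu_k(A,S)|_I$, apply Lemma~\ref{lemma:restriction-preserves-right-equiv} to the splitting right-equivalence from Theorem~\ref{thm:trivial-reduced-splitting}, observe that trivial and reduced SPs restrict to trivial and reduced SPs respectively, and invoke uniqueness of reduced parts. You simply spell out the arrow-level and potential-level bookkeeping for the pre-mutation identity that the paper records as a final one-line observation.
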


\begin{proof} Let $\varphi:(\widetilde{\mu}_k(A),\widetilde{\mu}_k(S))\rightarrow
(\widetilde{\mu}_k(A)_{\red},\widetilde{\mu}_k(S)_{\red})\oplus(\widetilde{\mu}_k(A)_{\triv},\widetilde{\mu}_k(S)_{\triv})$ be a right-equivalence, with $(\widetilde{\mu}_k(A)_{\red},\widetilde{\mu}_k(S)_{\red})$ a reduced SP and $(\widetilde{\mu}_k(A)_{\triv},\widetilde{\mu}_k(S)_{\triv})$ a trivial SP. By Lemma \ref{lemma:restriction-preserves-right-equiv}, we have a right-equivalence $\varphi|_I:(\widetilde{\mu}_k(A)|_I,\widetilde{\mu}_k(S)|_I)\rightarrow
(\widetilde{\mu}_k(A)_{\red}|_I,\widetilde{\mu}_k(S)_{\red}|_I)\oplus(\widetilde{\mu}_k(A)_{\triv}|_I,\widetilde{\mu}_k(S)_{\triv}|_I)$. Since $(\widetilde{\mu}_k(A)_{\red}|_I,\widetilde{\mu}_k(S)_{\red}|_I)$ is clearly reduced and $(\widetilde{\mu}_k(A)_{\triv}|_I,\widetilde{\mu}_k(S)_{\triv}|_I)$ is clearly trivial, we deduce that $(\widetilde{\mu}_k(A)_{\red}|_I,\widetilde{\mu}_k(S)_{\red}|_I)$ is the reduced part of $(\widetilde{\mu}_k(A)|_I,\widetilde{\mu}_k(S)|_I)$.
The proposition then follows from the observation that $\widetilde{\mu}_k(A)|_I=\widetilde{\mu}_k(A|_I)$ and $[S]|_I+\triangle_k(A)|_I=[S|_I]+\triangle_k(A|_I)$.
\end{proof}

The next corollary as an obvious consequence of Proposition \ref{prop:mut-&-restr-commute}.

\begin{coro} If $(A,S)$ is a non-degenerate SP, then for any non-empty subset $I\subseteq Q_0$, the restriction $(A|_I,S|_I)$ is non-degenerate. In other words, non-degeneracy is stable under restriction.
\end{coro}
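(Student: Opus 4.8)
The plan is to reduce everything to Proposition~\ref{prop:mut-&-restr-commute} by unwinding the definition of non-degeneracy. Fix a non-degenerate SP $(A,S)$ and a non-empty $I\subseteq Q_0$. I will prove, by induction on $\ell\geq 0$ and simultaneously for \emph{all} non-degenerate SPs, the statement: for every sequence $(k_1,\ldots,k_\ell)$ of vertices with $k_t\neq k_{t+1}$, the restriction $(A|_I,S|_I)$ is $(k_\ell,\ldots,k_1)$-nondegenerate. The base case $\ell=0$ only requires $(A|_I,S|_I)$ to be $2$-acyclic; since $(A,S)$ is nondegenerate it is $2$-acyclic (this is built into the definition of $(k_\ell,\ldots,k_1)$-nondegeneracy), and passing to a restriction only deletes arrows, so in general any restriction of a $2$-acyclic SP is $2$-acyclic.

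For the inductive step, given a sequence $(k_1,\ldots,k_\ell)$ with $\ell\geq 1$, I split according to whether $k_1\in I$. If $k_1\notin I$, then $Q|_I$ has no arrow incident to $k_1$, so inspecting the two-step construction of $\widetilde\mu_{k_1}$ and formula~\eqref{eq:mu-k-S} shows $\widetilde\mu_{k_1}(A|_I,S|_I)=(A|_I,S|_I)$; being already $2$-acyclic this SP is its own reduced part, so $\mu_{k_1}(A|_I,S|_I)=(A|_I,S|_I)$. Hence $(k_\ell,\ldots,k_1)$-nondegeneracy of $(A|_I,S|_I)$ amounts to $(k_\ell,\ldots,k_2)$-nondegeneracy of $(A|_I,S|_I)$, which is the induction hypothesis for $(k_2,\ldots,k_\ell)$. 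If $k_1\in I$, then since $(A,S)$ is $2$-acyclic the quiver $Q$ has no $2$-cycle at $k_1$, and as $(A,S)$ is nondegenerate $\mu_{k_1}(A,S)$ is well-defined and $2$-acyclic; Proposition~\ref{prop:mut-&-restr-commute} then gives a right-equivalence $\mu_{k_1}(A|_I,S|_I)\cong(\mu_{k_1}(A,S))|_I$. Granting for a moment that $\mu_{k_1}(A,S)$ is again nondegenerate, the induction hypothesis applied to it, to $I$, and to $(k_2,\ldots,k_\ell)$ shows $(\mu_{k_1}(A,S))|_I$ is $(k_\ell,\ldots,k_2)$-nondegenerate, and since mutations carry right-equivalent SPs to right-equivalent SPs (Theorem~\ref{thm:mu-tilde-preserves-isomorphisms}, Corollary~\ref{coro:mutations-respect-isom}) so is $\mu_{k_1}(A|_I,S|_I)$. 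Together with $2$-acyclicity of $(A|_I,S|_I)$ and of $\mu_{k_1}(A|_I,S|_I)$, this yields the desired $(k_\ell,\ldots,k_1)$-nondegeneracy.

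The remaining ingredient is that $\mu_k$ preserves non-degeneracy, and this is where involutivity is used. If $(A,S)$ is nondegenerate, then $\mu_k(A,S)$ is well-defined and $2$-acyclic; given a sequence $(j_1,\ldots,j_m)$ with $j_t\neq j_{t+1}$, either $j_1\neq k$, in which case $(k,j_1,\ldots,j_m)$ is an admissible sequence and non-degeneracy of $(A,S)$ directly gives $(j_m,\ldots,j_1)$-nondegeneracy of $\mu_k(A,S)$; or $j_1=k$, in which case $\mu_{j_1}\mu_k(A,S)=\mu_k^2(A,S)$ is right-equivalent to $(A,S)$ by Theorem~\ref{thm:mutation-involutive}, so the mutation sequence coincides up to right-equivalence with the mutation of $(A,S)$ along $(j_2,\ldots,j_m)$, which is $2$-acyclic at every stage by non-degeneracy of $(A,S)$. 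I expect the only real care needed is the bookkeeping in the case split --- making sure repeated vertices and vertices outside $I$ are handled coherently and that the various right-equivalences are propagated correctly through the mutation sequences; the rest is a formal combination of Proposition~\ref{prop:mut-&-restr-commute}, the compatibility of mutation with right-equivalence, and involutivity.
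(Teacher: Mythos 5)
Your proof is correct and carries out in detail the argument the paper implicitly invokes when it calls the corollary ``an obvious consequence'' of Proposition~\ref{prop:mut-&-restr-commute}: an induction on the length of the mutation sequence, using that proposition for vertices in $I$, the observation that $\mu_{k}$ fixes $(A|_I,S|_I)$ for $k\notin I$, and the auxiliary fact (deduced from Theorem~\ref{thm:mutation-involutive} and Corollary~\ref{coro:mutations-respect-isom}) that a single mutation preserves non-degeneracy. The paper gives no written proof, and your version supplies exactly the bookkeeping needed to make the deduction airtight.
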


\section{Decorated representations and their mutations}\label{sec:reps-and-repmutations}

The definition and use of decorated representations goes back to \cite{MRZ}. Further use of them has been made in \cite{DWZ1} and \cite{DWZ2}. Among their features are the fact that they allow to keep track of vector spaces that would otherwise be lost, for example, when applying reflection functors, and the fact that they allow to keep track of initial seeds in cluster algebras.

\begin{defi}\label{def:decorated-representation} Let $(A,S)$ be an SP. A \emph{decorated representation}, or simply an \emph{SP-representation}, of
$(A,S)$ consists of
\begin{enumerate}\item A pair $M=((M_i)_{i\in Q_0},(M_{a})_{a\in Q_1})$ such
that:
\begin{itemize}
\item[(a)] For each $i\in Q_0$, $M_i$ is a finite-dimensional $F_i$-vector
space;
\item[(b)] for each arrow $a\in Q_1$, $M_{a}$ is an
$F$-linear map from $M_{t(a)}$ to $M_{h(a)}$.
\item[(c)] there exists $r\geq 0$ such that for every path $\omega_0a_1\omega_1a_2\ldots \omega_{\ell-1}a_\ell\omega_\ell$ of length $\ell$ greater than $r$ the corresponding linear transformation
$\omega_0M_{a_1}\omega_1M_{a_2}\ldots M_{a_\ell}\omega_\ell$ is the zero map;
\item[(d)] all relations of the form $\partial_a(S)$ with $a\in Q_1$ are
satisfied by $M$.
\end{itemize}
\item A tuple $V=(V_{i})_{i\in Q_0}$ such that $V_i$ is a
finite-dimensional $F_i$-vector space for every $i\in Q_0$.
\end{enumerate}
We will write $\mathcal{M}=(M,V)$ or $\mathcal{M}=((M_i)_{i\in Q_0},(M_{a})_{a\in Q_1},(V_i)_{i\in Q_0})$ to denote such an SP-representation.
\end{defi}

Equivalently, $M=\bigoplus_{i\in Q_0} M_i$ is a $\mathcal{P}(A,S)$-module and $V$ an $R$-module, both finite-dimensional over $F$.

\begin{remark} The linear maps $M_a$ in Definition \ref{def:decorated-representation} will often be denoted by $a_M$ as well.
\end{remark}

We define the notion of right-equivalence of decorated representations following \cite{DWZ1}.

\begin{defi} Let $(A,S)$ and $(A',S')$ be Ss on the same set of vertices, with the same vertex-weight function, and over the same ground field. Let $\mathcal{M}=(M,V)$ and $\mathcal{M}'=(M',V')$ be decorated representations of $(A,S)$ and $(A',S')$, respectively. A \emph{right-equivalence} between $\mathcal{M}$ and $\mathcal{M}'$ is a triple $(\varphi,\psi,\eta)$, where:
\begin{itemize}
\item $\varphi:(A,S)\rightarrow(A',S')$ is a right-equivalence;
\item $\psi:M\rightarrow M'$ is an $F$-vector space isomorphism such that $\psi\circ u_M=\varphi(u)_{M'}\circ\psi$ for all $u\in\completeQdcoprime$;
\item $\eta:V\rightarrow V'$ is an isomorphism of $R$-modules.
\end{itemize}
\end{defi}

Let $\mathcal{M}=(M,V)$ be a decorated representation of $(A,S)$, and let $\varphi:\RA{A_{\operatorname{red}}\oplus A_{\operatorname{triv}}}\rightarrow \completeQdcoprime$, be a right-equivalence of SPs $\varphi:(A_{\operatorname{red}},S_{\operatorname{red}})\oplus(A_{\operatorname{triv}},S_{\operatorname{triv}})\to(A,S)$, where $(A_{\operatorname{red}},S_{\operatorname{red}})$ and $(A_{\operatorname{triv}},S_{\operatorname{triv}})$ are a reduced and a trivial SP, respectively. We define an $\RA{A_{\operatorname{red}}\oplus A_{\operatorname{triv}}}$-module $M'$ by setting $M=M'$ as $F$-vector space, with the action of $\RA{A_{\operatorname{red}}}$ given by $u_{M'}=\varphi(u)_{M}$. Then $\mathcal{M}_{\operatorname{red}}=(M',V)$ is a decorated representation of $(A_{\operatorname{red}},S_{\operatorname{red}})$.

\begin{prop} The right-equivalence class of $\mathcal{M}_{\operatorname{red}}$ is determined by the right-equivalence class of $\mathcal{M}$.
\end{prop}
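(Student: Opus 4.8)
The plan is to mimic the proof of the corresponding well-definedness statement in \cite{DWZ1} (the one for reduced parts of decorated representations), which in turn parallels the uniqueness part of the Splitting Theorem \ref{thm:trivial-reduced-splitting}. The key point is that the reduced part $\mathcal{M}_{\operatorname{red}}$ was defined via a choice of right-equivalence $\varphi:(A_{\operatorname{red}},S_{\operatorname{red}})\oplus(A_{\operatorname{triv}},S_{\operatorname{triv}})\to(A,S)$, and we must show that two different such choices, applied possibly to two right-equivalent decorated representations $\mathcal{M}$ and $\mathcal{M}'$, produce right-equivalent reduced representations.

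First I would reduce to the case $\mathcal{M}=\mathcal{M}'$ with the \emph{same} trivial-reduced splitting of $(A,S)$: if $(\varphi_0,\psi_0,\eta_0):\mathcal{M}\to\mathcal{M}'$ is a right-equivalence, then $\varphi_0$ is a right-equivalence $(A,S)\to(A',S')$, and one can transport a chosen splitting of $(A,S)$ to one of $(A',S')$; the induced maps on the reduced-part modules then visibly give a right-equivalence, so it remains to compare two splittings of a single $(A,S)$ and a single $\mathcal{M}$. So suppose $\varphi,\varphi':\RA{A_{\operatorname{red}}\oplus A_{\operatorname{triv}}}\to\RA{A}$ are two right-equivalences as in the definition (we may arrange, using the uniqueness clause of Theorem \ref{thm:trivial-reduced-splitting}, that they have the \emph{same} source SP $(A_{\operatorname{red}},S_{\operatorname{red}})\oplus(A_{\operatorname{triv}},S_{\operatorname{triv}})$, since the right-equivalence classes of the reduced and trivial parts are already known to be determined). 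Then $\theta=\varphi^{-1}\circ\varphi'$ is an $R$-algebra automorphism of $\RA{A_{\operatorname{red}}\oplus A_{\operatorname{triv}}}$ sending $S_{\operatorname{red}}\oplus S_{\operatorname{triv}}$ to a cyclically equivalent potential, i.e.\ a right-equivalence of the split SP to itself; and the two reduced representations differ precisely by restricting the $\RA{A}$-action along $\varphi$ versus along $\varphi'$. Writing $\psi=\operatorname{id}_M$ (the identity on the underlying $F$-vector space) and $\eta=\operatorname{id}_V$, one checks that $(\theta|_{\RA{A_{\operatorname{red}}}},\psi,\eta)$ — or more precisely the restriction/corestriction of $\theta$ to the reduced summands, using that a right-equivalence of split SPs respects the decomposition up to right-equivalence as in the proof of Theorem \ref{thm:trivial-reduced-splitting} — is a right-equivalence between the two candidate reduced representations. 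The intertwining identity $\psi\circ u_{M'}=\theta(u)_{M''}\circ\psi$ follows immediately from $u_{M'}=\varphi(u)_M$ and $u_{M''}=\varphi'(u)_M=\varphi(\theta(u))_M$.

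The one genuine subtlety, and the step I expect to be the main obstacle, is matching up the \emph{trivial} summands and making the argument ``respect the decomposition'': a general right-equivalence $\theta$ of $(A_{\operatorname{red}},S_{\operatorname{red}})\oplus(A_{\operatorname{triv}},S_{\operatorname{triv}})$ to itself need not preserve the two summands on the nose, only up to a further right-equivalence; but since the trivial SP acts as zero on any decorated representation (the generators $\partial_a(S_{\operatorname{triv}})$ span $A_{\operatorname{triv}}$, hence kill $M$), one sees that the reduced representation only depends on $\theta$ modulo the part landing in the ideal generated by $A_{\operatorname{triv}}$, and one can invoke Proposition \ref{prop:jacobian-algebra-invariant} (the Jacobian algebra is insensitive to trivial summands) to make this precise. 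I would also note, as in \cite{DWZ1}, that this is exactly where finite-dimensionality / the limit-process machinery of the Splitting Theorem is used only implicitly — the statement here is purely formal once Theorem \ref{thm:trivial-reduced-splitting} is granted — so the proof can be presented as: ``the argument of \cite[Corollary 10.8]{DWZ1} (or its analogue) applies \emph{mutatis mutandis}, the only new input being that $\RA{A}$ is a $D$-algebra and that trivial SPs annihilate decorated representations.''
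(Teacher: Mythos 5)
Your proposal follows the same route the paper itself takes, which is simply to invoke the proof of Proposition 10.5 of \cite{DWZ1} (not Corollary 10.8, by the way) as carrying over verbatim; your reduction to comparing two splittings $\varphi,\varphi'$ of a single $(A,S)$, the formula $\theta=\varphi^{-1}\circ\varphi'$, and the intertwining check with $\psi=\mathrm{id}_M$, $\eta=\mathrm{id}_V$ all match the structure of that argument. The one place you explicitly hand-wave --- producing from $\theta$ an honest right-equivalence of $(A_{\red},S_{\red})$ to itself --- is indeed where the real content lies, and your instinct is right but underspecified: the annihilation of $M$ by the closed ideal generated by $A_{\triv}$ (which lies in $J(S_{\red}+S_{\triv})$) under both the $\varphi$- and $\varphi'$-actions justifies passing to the $R$-algebra projection $\pi:\RA{A_{\red}\oplus A_{\triv}}\to\RA{A_{\red}}$ killing the trivial arrows, but one still must verify that $\pi\circ\theta|_{\RA{A_{\red}}}$ is an automorphism of $\RA{A_{\red}}$ taking $S_{\red}$ to a cyclically equivalent potential, and for this the degree-one analysis from the Splitting Theorem machinery (in particular that $\theta^{(1)}$ preserves $A_{\triv}$, since $\theta^{(1)}(S_{\triv})$ is cyclically equivalent to $S_{\triv}$ and hence has the same bimodule of cyclic derivatives) cannot be skipped; invoking Proposition~\ref{prop:jacobian-algebra-invariant} alone does not quite deliver this.
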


\begin{proof} The proof of \cite[Proposition 10.5]{DWZ1} is valid here.
\end{proof}

The following elementary fact concerning tensor products will be needed in order to define the notion of \emph{mutations of decorated representations}.

\begin{lemma}\label{lemma:tensor-products} Let $i$ and $j$ be different vertices of $Q$. There exist $F$-vector space isomorphisms
$$
\Hom_{F_i}(F_i\otimes_F M, N)\cong\Hom_{F}(M,N)\cong\Hom_{F_j}(M,F_j\otimes_FN),
$$
natural in the $F_j$-vector space $M$ and the $F_i$-vector space $N$.
\end{lemma}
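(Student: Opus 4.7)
The plan is to recognize both isomorphisms as instances of standard adjunctions in commutative algebra, after exploiting the fact that the subextensions $F_i/F$ and $F_j/F$ are finite separable (indeed cyclic Galois by \eqref{eq:Fi/F=degree-di-extension}).

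First I would establish the left isomorphism via the hom-tensor adjunction for extension of scalars along $F\hookrightarrow F_i$. Viewing $M$ as an $F$-vector space by restriction of its $F_j$-action, the correspondence $\tilde g\mapsto \bigl(m\mapsto \tilde g(1\otimes m)\bigr)$ gives a natural $F$-linear bijection $\Hom_{F_i}(F_i\otimes_F M, N)\to \Hom_F(M,N)$, with inverse $g\mapsto \bigl(x\otimes m\mapsto x\cdot g(m)\bigr)$. Naturality in both arguments is immediate from the formulas.

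The right isomorphism will require slightly more input: since $F_j/F$ is finite separable, the trace form $F_j\times F_j\to F$, $(x,y)\mapsto \Tr_{F_j/F}(xy)$, is non-degenerate, so it yields an isomorphism of $F_j$-$F_j$-bimodules
$$
F_j\;\xrightarrow{\;\sim\;}\;\Hom_F(F_j,F),\qquad a\mapsto \bigl(x\mapsto \Tr_{F_j/F}(ax)\bigr).
$$
Tensoring with an arbitrary $F$-vector space $N$ and using that $F_j$ is finite-dimensional over $F$ gives a natural $F_j$-module isomorphism $F_j\otimes_F N\cong \Hom_F(F_j,F)\otimes_F N\cong \Hom_F(F_j,N)$. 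Composing with the tensor-hom adjunction
$$
\Hom_F(M,N)\;=\;\Hom_F(F_j\otimes_{F_j}M,N)\;\cong\;\Hom_{F_j}(M,\Hom_F(F_j,N))
$$
then produces the desired $F$-vector space isomorphism $\Hom_F(M,N)\cong\Hom_{F_j}(M,F_j\otimes_F N)$.

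I expect no serious obstacle: every step reduces to a standard fact, and the hypothesis $i\neq j$ plays no role in the argument since the two isomorphisms are independent (each involves only one of the fields $F_i$, $F_j$). The most delicate point will be simply keeping track of which module structures are in play on each side. If a fully explicit verification is preferred, choosing an $F$-basis $\{e_k\}$ of $F_j$ with trace-dual basis $\{\epsilon_k\}$ satisfying $\Tr_{F_j/F}(e_k\epsilon_l)=\delta_{k,l}$ writes the second isomorphism as $f\mapsto\bigl(m\mapsto\sum_k e_k\otimes f(\epsilon_k m)\bigr)$, with $F_j$-linearity following from the fact that the matrices of multiplication by $a\in F_j$ in the bases $\{e_k\}$ and $\{\epsilon_k\}$ are transposes of each other. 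Naturality in $M$ (as $F_j$-module) and in $N$ (as $F_i$-module, hence \emph{a fortiori} as $F$-module) is then transparent.
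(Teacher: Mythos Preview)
Your argument is correct. The first isomorphism is handled exactly as in the paper (extension-of-scalars adjunction, with the same explicit formulas). For the second isomorphism, however, you take a genuinely different route: you invoke the non-degeneracy of the trace form on the separable extension $F_j/F$ to identify $F_j\otimes_F N$ with $\Hom_F(F_j,N)$ and then apply tensor--hom adjunction, whereas the paper writes down a direct formula using the fixed eigenbasis $\B_j$, namely $\overrightarrow{b}(m)=\sum_{\omega\in\B_j}\omega^{-1}\otimes b(\omega m)$, with inverse $\overleftarrow{f}(m)=n_{f,m,1}$ read off from the expansion $f(m)=\sum_{\omega}\omega^{-1}\otimes n_{f,m,\omega^{-1}}$. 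Your dual-basis formula $f\mapsto\bigl(m\mapsto\sum_k e_k\otimes f(\epsilon_k m)\bigr)$ specializes, when one takes $e_k=v_j^{-k}$ and the trace-dual $\epsilon_k=v_j^{k}/d_j$, to the paper's map divided by $d_j$; so the two isomorphisms agree up to a harmless scalar. Your approach is cleaner conceptually and shows that only separability of $F_j/F$ is needed, while the paper's proof has the practical advantage that its explicit arrows $\overrightarrow{\bullet}$ and $\overleftarrow{\bullet}$ are the exact maps reused later in the construction of $\alpha$, $\beta$ and in the verification of Proposition~\ref{prop:alphagamma-gammabeta-are-zero}.
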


\begin{proof} Although the proof of this lemma is rather elementary and well known, in order to establish some notation we explicitly exhibit the stated isomorphisms.

\begin{itemize}
\item[(1)] $\Hom_{F_i}(F_i\otimes_F M, N)\cong\Hom_{F}(M,N)$.
\end{itemize}
Define maps $\overrightarrow{\bullet}:\Hom_{F_i}(F_i\otimes_F M, N)\rightarrow\Hom_{F}(M,N)$ and $\overleftarrow{\bullet}:\Hom_{F}(M,N)\rightarrow\Hom_K(K\otimes_FM,N)$ according to the rules
$$
\overrightarrow{f}(m)=f(1\otimes m) \ \ \text{for} \ \ f\in\Hom_{F_i}(F_i\otimes_F M, N), \ \ \ \ \text{and} \ \ \ \
\overleftarrow{a}(e\otimes m)=ea(m) \ \ \text{for} \ \ a\in\Hom_{F}(M,N).
$$
A straightforward computation shows that $\overrightarrow{\bullet}$ and $\overleftarrow{\bullet}$ are mutually inverse $F$-vector space isomorphisms.

\begin{itemize}
\item[(2)] $\Hom_{F}(M,N)\cong\Hom_{F_j}(M,F_j\otimes_FN)$.
\end{itemize}
First of all, note that given $f\in \Hom_{F_j}(M,F_j\otimes_FN)$ and $m\in M$, it is possible to write
$$
f(m)=\sum_{\omega\in\mathcal{B}_j}\omega^{-1}\otimes 1\otimes n_{f,m,\omega^{-1}}
$$
for some elements $n_{f,m,\omega^{-1}}\in N$ uniquely determined by $f$ and $m$.
With this in mind, define maps $\overrightarrow{\bullet}:\Hom_{F}(M,N)\rightarrow\Hom_{F_j}(M,F_j\otimes_FN)$ and $\overleftarrow{\bullet}:\Hom_{F_j}(M,F_j\otimes_FN)\rightarrow\Hom_{F}(M,N)$ according to the rules
$$
\overrightarrow{b}(m)=\sum_{\omega\in\mathcal{B}_j}\omega^{-1}\otimes b(\omega m) \ \ \text{for} \ \ b\in\Hom_{F}(M,N) \ \ \ \ \text{and} \ \ \ \ \overleftarrow{f}(m)=n_{f,m,1} \ \ \text{for} \ \ f\in\Hom_{F_j}(M,F_j\otimes_FN).
 $$
Straightforward calculations show that the maps $\overrightarrow{\bullet}$ and $\overleftarrow{\bullet}$ are well-defined, mutually inverse $F$-vector space isomorphisms, independent of the eigenbasis $\mathcal{B}_j$ of $F_j/F$ chosen.

\begin{itemize}
\item[(3)] $\Hom_{F_j}(M,F_j\otimes_FN)\cong \Hom_{F_i}(F_i\otimes_F M, N)$.
\end{itemize}
The maps $\overleftarrow{\overleftarrow{\bullet}}:\Hom_{F_j}(M,F_j\otimes_FN)\rightarrow\Hom_{F_i}(F_i\otimes_F M, N)$ and $\overrightarrow{\overrightarrow{\bullet}}:\Hom_{F_i}(F_i\otimes_F M, N)\rightarrow\Hom_{F_j}(M,F_j\otimes_FN)$, obtained by composing the isomorphisms from (1) and (2) above, are obviously inverse to each other. For the convenience of the reader we display the rules these maps obey:
$$
\overleftarrow{\overleftarrow{f}}(x\otimes m)=xn_{f,m,1}, \ \ \text{and} \ \ \overrightarrow{\overrightarrow{g}}(m)=\sum_{\omega\in\mathcal{B}_j}\omega^{-1}\otimes g(1\otimes \omega m).
$$
\end{proof}

\begin{defi} Let $(Q,\dtuple)$ be a strongly primitive weighted quiver and let $k\in Q_0$ be a vertex such that $Q$ does not have 2-cycles incident to $k$. Let $A$ be the arrow span of $(Q,\dtuple)$ over $E/F$.
For each pair of arrows $a,b\in Q_1$ such that $h(a)=t(b)$, and each element $\omega\in\B_k$, we define
\begin{equation}
\label{eq:2nd-order-cyclic-derivative}
\partial_{b\omega a}(c_1\omega_1 c_2\cdots c_\ell\omega_\ell) =
\sum_{t=1}^\ell \delta_{b\omega a,c_t\omega_t c_{t+1}} \omega_{t+1} c_{t+2} \cdots c_\ell\omega_\ell c_1 \cdots c_{t-1}\omega_{t-1}
\end{equation}
for every cycle on $(Q,\dtuple)$ having the form $c_1\omega_1 c_2\cdots c_\ell\omega_\ell$, where $\delta_{b\omega a,c_t\omega_t c_{t+1}}$ is the \emph{Kronecker delta} (the $\ell^{\operatorname{th}}$ summand is obtained by setting $c_{\ell+1}=c_1$). We extend $\partial_{b\omega a}$ by linearity and continuity to the space of all potentials on $A$ (this is possible by \eqref{eq:class-of-representative-cycles} and \eqref{eq:only-0-is-cyc-equiv-to-0}).
\end{defi}

We are now ready to turn to the definition of \emph{mutations of decorated representations}.
Let $(A,S)$ be an SP, $\mathcal{M}=(M,V)$ be a decorated representation of
$(A,S)$, and $k$ a vertex of $Q$. For the rest of this section we will assume \eqref{eq:no-2-cycles-thru-k} and \eqref{eq:no-start-in-k}.

Let $a_1,\ldots,a_p$ be the
arrows of $Q$ whose head is $k$, and $b_1,\ldots,b_q$ the arrows whose tail is
$k$. Define the $F_k$-vector spaces $M_{\operatorname{in}}$ and $M_{\operatorname{out}}$ by
\begin{equation}
M_{\operatorname{in}}=\underset{s=1}{\overset{p}{\bigoplus}}
{F_k}\otimes_FM_{t(a_s)}\ \ \
\text{and} \ \ \
M_{\operatorname{out}}=\underset{r=1}{\overset{q}{\bigoplus}}{F_k}\otimes_FM_{h(b_r)}.
\end{equation}

For each $(s,r)\in[1,p]\times[1,q]$, define an $F_k$-linear map
\begin{equation}
\gamma_{sr}:{F_k}\otimes_FM_{h(b_r)}
\longrightarrow {F_k}\otimes_FM_{t(a_s)}
\end{equation}
\begin{center}
according to the rule
\end{center}
\begin{equation}\label{eq:c-rule}
\gamma_{sr}:x\otimes m\mapsto
\underset{\omega\in\B_{k}}{\sum}
x\omega\otimes\partial_{b_r\omega a_s}(S)(m).
\end{equation}

Assembling all the maps $\gamma_{sr}$ we obtain an $F_k$-linear map
$$
\gamma=\left[\begin{array}{ccc}
\gamma_{11} & \ldots & \gamma_{1q}\\
\vdots & \ddots & \vdots\\
\gamma_{p1} & \ldots & \gamma_{pq}\end{array}\right]:
M_{\operatorname{out}}\longrightarrow M_{\operatorname{in}}.
$$

We thus have a triangle of $F_k$-linear maps
\begin{equation}\label{eq:triangle-alpha-beta-gamma}
\xymatrix{
 & M_k \ar[dr]^{\beta} & \\
M_{\operatorname{in}} \ar[ur]^{\alpha} & & M_{\operatorname{out}} \ar[ll]^{\gamma},
}
\end{equation}
where the components of the maps
\begin{equation}\label{eq:alpha-&-beta-assembled}
\alpha=\left(\begin{array}{cccc}\overleftarrow{a_1} & \overleftarrow{a_2} & \ldots & \overleftarrow{a_p}\end{array}\right):\underset{s=1}{\overset{p}{\bigoplus}}{F_k}\otimes_FM_{t(a_s)}\rightarrow M_k
\ \ \ \text{and} \ \ \ \beta=\left(\begin{array}{c}\overrightarrow{b_1} \\ \overrightarrow{b_2} \\ \vdots \\ \overrightarrow{b_q}\end{array}\right):M_k\rightarrow\underset{r=1}{\overset{q}{\bigoplus}}
{F_k}\otimes_FM_{h(b_r)}
\end{equation}
are defined by means of Lemma \ref{lemma:tensor-products}.

\begin{prop}\label{prop:alphagamma-gammabeta-are-zero} The function $\gamma$ is an $F_k$-linear such
that $\gamma\beta=0$ and $\alpha\gamma=0$.
\end{prop}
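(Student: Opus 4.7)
My plan is to first note that $\gamma$ is $F_k$-linear (immediate from \eqref{eq:c-rule}, since each $\gamma_{sr}$ is given by a formula that is $F_k$-linear in the first tensor factor), so both assertions $\alpha\gamma=0$ and $\gamma\beta=0$ are equalities of $F_k$-linear maps. I would then reduce each assertion to the corresponding Jacobian relation in the $\mathcal{P}(A,S)$-module $M$.

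The crucial preparatory step is to establish two identities in $\RA{A}$ relating first- and second-order cyclic derivatives:
\begin{equation*}
\partial_{b_r}(S) \;=\; \sum_{s=1}^{p}\sum_{\omega\in\B_k} \omega\, a_s\, \partial_{b_r\omega a_s}(S), \qquad
\partial_{a_s}(S) \;=\; \sum_{r=1}^{q}\sum_{\eta\in\B_k} \partial_{b_r\eta a_s}(S)\, b_r\eta.
\end{equation*}
These follow by matching terms in the definitions. Each occurrence $c_t=b_r$ in a cyclic path $c_1\omega_1\cdots c_\ell\omega_\ell$ of $S$ forces $c_{t+1}$ to be some $a_s$ (by concatenability, as $h(c_{t+1})=t(b_r)=k$) and $\omega_t\in\B_k$, whence the corresponding term of $\partial_{b_r}$ factors as $\omega_t c_{t+1}$ times a term of $\partial_{b_r\omega_t c_{t+1}}$; a symmetric argument applied to the predecessor $c_{t-1}$ whenever $c_t=a_s$ yields the second identity. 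Applying the Jacobian relations $\partial_{b_r}(S)_M=0$ and $\partial_{a_s}(S)_M=0$ then gives the module-theoretic forms of these identities.

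For $\alpha\gamma=0$: for $x\otimes m$ in the $r$-th summand of $M_{\operatorname{out}}$, using that $\overleftarrow{a_s}(x\omega\otimes n)=x\omega\cdot a_{s,M}(n)$ from Lemma~\ref{lemma:tensor-products}(1), a direct calculation gives
\begin{equation*}
\alpha(\gamma(x\otimes m)) \;=\; \sum_{s,\omega} x\omega\cdot a_{s,M}\bigl(\partial_{b_r\omega a_s}(S)_M(m)\bigr) \;=\; x\cdot \partial_{b_r}(S)_M(m) \;=\; 0.
\end{equation*}

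For $\gamma\beta=0$ the argument is more delicate, because the product $\omega^{-1}\eta$ (for $\omega,\eta\in\B_k$) does not in general lie in $\B_k$, so the computation of $\gamma\beta(m)$ involves reindexing in a non-trivial way. My plan is to exploit the $F_k$-linearity of $\gamma\beta$ together with the adjunction of Lemma~\ref{lemma:tensor-products}(2): an $F_k$-linear map $M_k\to F_k\otimes_F N$ is determined by the coefficient of $1$ in its image when expanded in the $F$-basis $\{\omega^{-1}:\omega\in\B_k\}$ of $F_k$. Using the Kummer relation $v_k^{d_k}=v^d\in F^\times$ (which ensures every positive power of $v_k$ is an $F^\times$-multiple of a nontrivial power of $v_k^{-1}$), one checks that in the expansion
\begin{equation*}
(\gamma\beta(m))_s \;=\; \sum_{r,\omega,\eta\in\B_k}(\omega^{-1}\eta)\otimes \partial_{b_r\eta a_s}(S)_M\bigl(b_{r,M}(\omega m)\bigr),
\end{equation*}
only the diagonal terms $\omega=\eta$ contribute to the coefficient of $1$; their sum is exactly $\sum_{r,\omega}\partial_{b_r\omega a_s}(S)_M(b_{r,M}(\omega m))=\partial_{a_s}(S)_M(m)=0$ by the second derivative identity and the Jacobian relation for $a_s$. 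The hard part is this last coefficient-extraction step, which requires careful bookkeeping of how $\omega^{-1}\eta$ decomposes in the chosen $F$-basis of $F_k$.
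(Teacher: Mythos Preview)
Your proof is correct, and for $\alpha\gamma=0$ it coincides with the paper's argument. For $\gamma\beta=0$ you reach the same underlying identity $\partial_{a_s}(S)=\sum_{r,\eta}\partial_{b_r\eta a_s}(S)\,b_r\eta$ that the paper uses, but you organize the computation differently. You invoke the adjunction of Lemma~\ref{lemma:tensor-products}(2) to reduce to the coefficient of $1$ in the basis $\{\omega^{-1}:\omega\in\B_k\}$, then argue via the Kummer relation that only the diagonal terms $\omega=\eta$ contribute. The paper instead performs a direct change of summation variables $(\nu,\omega)\leftrightarrow(u,\omega)$ with $\nu=\omega u^{-1}$ (interpreting $\omega u^{-1}$ as an element of $F_k$, using that the expression $b_r(\nu m)$ is $F$-linear in $\nu$), which rewrites the whole sum as
\[
\gamma\beta(m)\;=\;\sum_{s}\sum_{u\in\B_k} u\otimes \partial_{a_s}(S)_M(u^{-1}m),
\]
visibly zero term by term. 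The paper's route is somewhat shorter and avoids the coefficient-extraction bookkeeping; your route trades that reindexing for an appeal to the adjunction isomorphism, which is equally valid and arguably more conceptual. Either way, the ``hard part'' you flag is not actually hard: the reindexing in the paper's proof handles all basis coefficients simultaneously without ever needing to isolate the coefficient of~$1$.
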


\begin{proof}
Recall from the proof of Lemma \ref{lemma:tensor-products} that for each $r=1,\ldots,q$, we have
\begin{equation}\label{eq:explicitMk->FkMi}
\overrightarrow{b_r}(m)=\underset{v\in \mathcal{B}_k}{\sum}v^{-1}\otimes b_r(vm).
\end{equation}

Let us compute $\gamma\beta$:
\begin{eqnarray*}
\gamma\beta(m) & = &
\gamma(\sum_{r=1}^q\sum_{\nu\in\mathcal{B}_k}\nu^{-1}\otimes b_r(\nu m))=
\sum_{s=1}^p\sum_{r=1}^q\sum_{\nu\in\mathcal{B}_k}\gamma_{sr}(\nu^{-1}\otimes b_r(\nu m))= \\
 & = &
 \sum_{s=1}^p\sum_{r=1}^q\sum_{\nu\in\mathcal{B}_k}\underset{\omega\in\B_k}{\sum}
\nu^{-1}\omega\otimes\partial_{b_r\omega a_s}(S)(b_r(\nu m))= \\
 & = &
\sum_{s=1}^p\sum_{r=1}^q\sum_{u\in\mathcal{B}_k}\underset{\omega\in\B_k}{\sum}
u\otimes\partial_{b_r\omega a_s}(S)(b_r(\omega u^{-1}m))=  \\
 & = &
\sum_{s=1}^p\sum_{u\in\mathcal{B}_k}
u\otimes\sum_{r=1}^q\underset{\omega\in\B_k}{\sum}\partial_{b_r\omega a_s}(S)(b_r(\omega u^{-1}m))= \\
 & = &
\sum_{s=1}^p\sum_{u\in\mathcal{B}_k}
u\otimes\partial_{a_s}(S)(u^{-1}m))= 0.
\end{eqnarray*}

Showing that $\alpha\gamma=0$ is easier. For $r\in\{1,\ldots,q\}$ fixed and $x\otimes m\in {F_k}\otimes_FM_{h(b_r)}$ we have:
\begin{eqnarray*}
\alpha\gamma(x\otimes m) & = &
\alpha\left(\sum_{s=1}^p\underset{\omega\in\B_{k}}{\sum}
x\omega\otimes\partial_{b_r\omega a_s}(S)(m)\right)= \\
 & = &
\sum_{s=1}^p\underset{\omega\in\B_{k}}{\sum}
x\omega a_s\partial_{b_r\omega a_s}(S)(m)= \\
 & = &
x\partial_{b_r}(S)(m)=0.
\end{eqnarray*}
\end{proof}

We associate to $\mathcal{M}=(A,S,M,V)$ another SP-representation $\widetilde{\mu}_k(\mathcal{M})=(\widetilde{\mu}_k(A),\widetilde{\mu}_k(S),\widetilde{\mu}_k(M),\widetilde{\mu}_k(V))$ as follows.
First of all, we set
\begin{equation}
\widetilde{\mu}_k(M)_i=M_i \ \ \ \text{and} \ \ \ \widetilde{\mu}_k(V)_i=V_i \ \ \ \text{for all} \ i\neq k.
\end{equation}
We define $\widetilde{\mu}_k(M)_k$ and $\widetilde{\mu}_k(V)_k$ by
\begin{equation}
\widetilde{\mu}_k(M)_k=\frac{\ker\gamma}{\image\beta}\oplus\image\gamma\oplus\frac{\ker\alpha}{\image\gamma}\oplus V_k \ \ \ \text{and} \ \ \ \widetilde{\mu}_k(V)_k=\frac{\ker\beta}{\ker\beta\cap\image\alpha}.
\end{equation}

We now define the action on $\widetilde{\mu}_k(M)$ of all arrows in $\widetilde{\mu}_k(A)$. First, we set $c_{\widetilde{\mu}_k(M)}=c_M$ for every arrow of $Q$ not incident to $k$, and
$$
\comp{b_r}{\omega} {a_s}_{\widetilde{\mu}_k(M)}=(b_r\omega a_s)_M
$$
for all $r$, $s$ and $\omega\in\B_k$. We define the action of the remaining arrows $a_s^\star$ and $b_r^\star$ collectively through operators
$$
\overline{\alpha}=(\overleftarrow{b_1^\star} \ \ \overleftarrow{b_2^\star} \ \ \ldots \ \ \overleftarrow{b_q^\star}) \ \ \ \text{and} \ \ \
\overline{\beta}=\left(\begin{array}{c}\overrightarrow{a_1^\star}\\ \overrightarrow{a_2^\star}\\ \vdots\\ \overrightarrow{a_p^\star}\end{array}\right)
$$
Thus, we need to define linear maps
$$
\overline{\alpha}:M_{\operatorname{out}}=\widetilde{\mu}_k(M)_{\operatorname{in}}\rightarrow\widetilde{\mu}_k(M)_k \ \ \ \text{and} \ \ \ \overline{\beta}:\widetilde{\mu}_k(M)_k\rightarrow\widetilde{\mu}_k(M)_{\operatorname{out}}=M_{\operatorname{in}}.
$$
We will use the following notational convention: whenever we have
a pair $U_1 \subseteq U_2$ of vector spaces,
denote by $\iota:U_1 \to U_2$ the inclusion map, and by $\pi: U_2 \to U_2/U_1$
the natural projection.
We now introduce the following \emph{splitting data}:
\begin{align}
\label{eq:rho}
& \text{Choose an $F_k$-linear map $\rho: M_{\rm out} \to \ker \gamma$
such that $\rho \iota = {\rm id}_{\ker \gamma}$.}\\
\label{eq:sigma}
&\text{Choose an $F_k$linear map $\sigma: \ker \alpha / {\rm im}\ \gamma
\to \ker \alpha$ such that $\pi \sigma = {\rm id}_{\ker \alpha/ {\rm im}\ \gamma}$.}
\end{align}
Then we define:
\begin{equation}
\label{eq:alpha-beta-mutated}
\overline \alpha =
\begin{pmatrix}
- \pi \rho\\
- \gamma\\
0\\
0
\end{pmatrix}, \quad
\overline \beta  =
\begin{pmatrix}
0 & \iota & \iota \sigma & 0\end{pmatrix}.
\end{equation}

Having defined the action of all arrows in $\widetilde{\mu}_k (A)$ on $\widetilde{\mu}_k(M)$, we can view
$\widetilde{\mu}_k(M)$ as a module over the path
algebra $R \langle \widetilde{\mu}_k(A) \rangle$.
The fact that $M$ is annihilated by ${\mathfrak m}(A)^n$ for $n \gg
0$ implies that $\widetilde{\mu}_k(M)$ is annihilated by $\widetilde{\mu}_k(A)^n$ for $n \gg 0$.
This allows us to view $\widetilde{\mu}_k(M)$ as a module over the completed path
algebra $R \langle \langle \widetilde{\mu}_k(M) \rangle \rangle$.

\begin{prop}
\label{prop:mutation-well-defined}
The above definitions make of $\widetilde \mu_k ({\mathcal M})=(\widetilde{\mu}_k(M), \widetilde{\mu}_k(V))$
a decorated representation of $(\widetilde{\mu}_k(A), \widetilde{\mu}_k(S))$.
\end{prop}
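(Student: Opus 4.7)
The plan is to verify, for $\widetilde\mu_k(\mathcal{M})$, the four defining requirements of Definition \ref{def:decorated-representation}: finite-dimensionality of each $\widetilde\mu_k(M)_i$, the correct source and target for the map assigned to each arrow, nilpotency of the resulting action, and the Jacobian relations. The first three are straightforward: finite-dimensionality is immediate from the construction together with finite-dimensionality of the original $M_i$ and $V_k$; the source/target conditions follow from the matrix formulas \eqref{eq:alpha-beta-mutated} and Lemma \ref{lemma:tensor-products}; and nilpotency is inherited from $M$, since the arrows of $Q$ not incident to $k$ act on $\widetilde\mu_k(M)$ exactly as on $M$, composite arrows act via $[b\omega a]_{\widetilde\mu_k(M)}=(b\omega a)_M$, and any word containing an $a^*$ or a $b^*$ factors through $\widetilde\mu_k(M)_k$. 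The substance of the proof lies in the Jacobian relations $\partial_{\tilde a}(\widetilde\mu_k S)_{\widetilde\mu_k(M)}=0$ for every arrow $\tilde a$ of $\widetilde\mu_k(Q)$.

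First, the plan is to dispose of the two easier types of arrows. When $\tilde a = c$ is an arrow of $Q$ not incident to $k$, equation \eqref{eq:[]-partial-c} gives $\partial_c(\widetilde\mu_k S) = [\partial_c S]$; the correspondence $u\mapsto [u]$ for $u\in \RA{A}_{\hat k,\hat k}$ is compatible with the actions on $M$ and $\widetilde\mu_k(M)$, hence $\partial_c(\widetilde\mu_k S)_{\widetilde\mu_k(M)} = (\partial_c S)_M = 0$. When $\tilde a = [b_r\omega a_s]$ is a composite arrow, equation \eqref{eq:partial-ba} yields
$$\partial_{[b_r\omega a_s]}(\widetilde\mu_k S) \;=\; \partial_{[b_r\omega a_s]}([S]) \,+\, a_s^*\,\omega^{-1}\,b_r^*,$$
and the first summand acts on $\widetilde\mu_k(M)$ as $\partial_{b_r\omega a_s}(S)_M$ by the same reasoning. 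Bundling the family of relations over $\omega\in\B_k$ into a single $F_k$-linear identity $M_{\mathrm{out}}\to M_{\mathrm{in}}$, the Jacobian relations for all composite arrows become the single assertion $\overline\beta\,\overline\alpha = -\gamma$, which is confirmed by direct matrix multiplication from \eqref{eq:alpha-beta-mutated} together with the defining formula \eqref{eq:c-rule} of $\gamma_{sr}$.

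The substantial case is $\tilde a \in \{a_s^*,\,b_r^*\}$. Since $[S]$ contains no $*$-arrow, only $\triangle_k(A)$ contributes to the derivatives, and a direct calculation gives
$$\partial_{a_s^*}(\widetilde\mu_k S) = \sum_{t(b)=k}\sum_{\omega\in\B_k}\omega^{-1}\,b^*\,[b\omega a_s],\qquad \partial_{b_r^*}(\widetilde\mu_k S) = \sum_{h(a)=k}\sum_{\omega\in\B_k} [b_r\omega a]\,a^*\,\omega^{-1}.$$
For the first, evaluating on $m\in M_{t(a_s)}$ and using $(b_r^*)_{\widetilde\mu_k(M)}(n) = \overleftarrow{b_r^*}(1\otimes n)$ together with $F_k$-linearity, the total rewrites as $\overline\alpha(z)$, where $z\in M_{\mathrm{out}}$ has $r$-th component $\sum_{\omega}\omega^{-1}\otimes(b_r\omega a_s)_M(m)=\overrightarrow{b_r}((a_s)_M(m))$. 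Hence $z=\beta((a_s)_M(m))\in\mathrm{im}\,\beta\subseteq\ker\gamma$ by Proposition \ref{prop:alphagamma-gammabeta-are-zero}. Now, from \eqref{eq:alpha-beta-mutated}, the four components of $\overline\alpha(z)$ in the decomposition $\widetilde\mu_k(M)_k = \ker\gamma/\mathrm{im}\,\beta\oplus\mathrm{im}\,\gamma\oplus\ker\alpha/\mathrm{im}\,\gamma\oplus V_k$ are $-\pi\rho(z),\,-\gamma(z),\,0,\,0$. The second is zero since $z\in\ker\gamma$; the first vanishes because $\rho\iota=\mathrm{id}_{\ker\gamma}$ gives $\rho(z)=z$, which is then killed by $\pi$ since $z\in\mathrm{im}\,\beta$. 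The argument for $b_r^*$ is fully dual, relying on $\alpha\gamma=0$ and on the splitting property $\pi\sigma=\mathrm{id}_{\ker\alpha/\mathrm{im}\,\gamma}$.

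The main obstacle of the proof is precisely this last case: accurately book-keeping the four-part decomposition of $\widetilde\mu_k(M)_k$, matching each component of $\overline\alpha$ (respectively $\overline\beta$) to the correct piece of the Jacobian relation, and invoking both the identities from Proposition \ref{prop:alphagamma-gammabeta-are-zero} and the defining properties of the splitting data $\rho,\sigma$ of \eqref{eq:rho}--\eqref{eq:sigma} to kill each summand. Once this is done, the remainder of the verification is essentially a translation of the corresponding calculations in \cite[Section 10]{DWZ1} to the weighted-quiver setting.
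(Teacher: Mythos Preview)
Your proof is correct and follows essentially the same strategy as the paper: reduce to checking $\partial_{\tilde a}(\widetilde\mu_k S)_{\widetilde\mu_k(M)}=0$ arrow by arrow, using \eqref{eq:[]-partial-c} for arrows away from $k$, \eqref{eq:partial-ba} for composite arrows, and the explicit form of $\overline\alpha,\overline\beta$ together with Proposition~\ref{prop:alphagamma-gammabeta-are-zero} for the starred arrows. Your packaging of the composite-arrow relations as the single identity $\overline\beta\,\overline\alpha=-\gamma$ is a tidy way of saying exactly what the paper computes entrywise. One small imprecision: in the $b_r^*$ case the relevant fact about $\sigma$ is not $\pi\sigma=\mathrm{id}$ but simply that $\sigma$ takes values in $\ker\alpha$ (so that $\alpha\iota\sigma=0$); combined with $\alpha\gamma=0$ this gives $\alpha\overline\beta=0$, which is what the paper uses.
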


\begin{proof}
We need to show that
$(\partial_c \widetilde{\mu}_k(S))_{\widetilde{\mu}_k(M)} = 0$ for every arrow~$c$
in $\widetilde{\mu}_k(A)$.
If~$c$ is not incident to~$k$, the desired statement follows from the following equalities
$$
\partial_c(\widetilde{\mu}_k(S))=\partial_c([S])=[\partial_c(S)].
$$

Suppose $c$ is one of the arrows $\comp{b_r}{\omega}{ a_s}$, and take $m\in M_{h(b_r)}$. We are going to compute $(a_s^\star\omega^{-1}b_r^\star)_{\widetilde{\mu}_k(M)}(m)=
(a_s^\star)_{\widetilde{\mu}_k(M)}(\omega^{-1})_{\widetilde{\mu}_k(M)}(b_r^\star)_{\widetilde{\mu}_k(M)}(m)$. To do so, we first compute $\overrightarrow{a_s^\star}\omega^{-1}\overleftarrow{b_r^\star}(1\otimes m)$. According to \eqref{eq:alpha-beta-mutated},
$$
\omega^{-1}\overleftarrow{b_r^\star}(1\otimes m)=
\begin{pmatrix}
- \omega^{-1}\pi \rho(1\otimes m)\\
\hline
- \omega^{-1}\gamma_{1r}(1\otimes m)\\
\vdots\\
-\omega^{-1}\gamma_{pr}(1\otimes m)\\
\hline
0\\
\hline
0
\end{pmatrix}=
\begin{pmatrix}
- \omega^{-1}\pi \rho(1\otimes m)\\
\hline
- \sum_{\nu\in\B_k}\omega^{-1}\nu\otimes\partial_{b_r\nu a_1}(S)(m)\\
\vdots\\
-\sum_{\nu\in\B_k}\omega^{-1}\nu\otimes\partial_{b_r\nu a_p}(S)(m)\\
\hline
0\\
\hline
0
\end{pmatrix}
$$
and hence
$$
\overrightarrow{a_s^\star}\omega^{-1}\overleftarrow{b_r^\star}(1\otimes m)=-\sum_{\nu\in\B_k}\omega^{-1}\nu\otimes\partial_{b_r\nu a_s}(S)(m).
$$ 
This means that
$$
(a_s^\star)_{\widetilde{\mu}_k(M)}(\omega^{-1})_{\widetilde{\mu}_k(M)}(b_r^\star)_{\widetilde{\mu}_k(M)}(m)=
(a_s^\star)_{\widetilde{\mu}_k(M)}(\omega^{-1}\overleftarrow{b_r^\star}(1\otimes m))=
-\partial_{b_r\omega a_s}(S)(m).
$$
The desired equality $(\partial_{\comp{b_r}{\omega}{ a_s}}(\widetilde{\mu}_k(S)))_{\widetilde{\mu}_k(M)}=0$ follows then from the equalities
$$
\partial_{\comp{b_r}{\omega}{ a_s}}([S])_{\widetilde{\mu}_k(M)}=\partial_{b_r\omega a_s}(S)_{\widetilde{\mu}_k(M)}
\ \ \ \text{and} \ \ \
\partial_{\comp{b_r}{\omega}{ a_s}}(\widetilde{\mu}_k(S))=a_s^\star\omega^{-1}b_r^\star+\partial_{\comp{b_r}{\omega}{ a_s}}([S]).
$$

It remains to show that $(\partial_{a_s^\star} (\widetilde{\mu}_k(S)))_{\widetilde{\mu}_k(M)} = 0$
and $(\partial_{b_r^\star} (\widetilde{\mu}_k(S)))_{\widetilde{\mu}_k(M)} = 0$ for all $s$ and $r$.
We first deal with $(\partial_{a_s^\star} (\widetilde{\mu}_k(S)))_{\widetilde{\mu}_k(M)}$.
By definition of the potential $\widetilde{\mu}_k(S)$ (cf. \eqref{eq:mu-k-S}
) and the action of the composite arrows $\comp{b_r}{\omega}{ a_s}$, we have
$$
(\partial_{a_s^\star} (\widetilde{\mu}_k(S)))_{\widetilde{\mu}_k(M)} =
(\sum_r \sum_{\omega}\omega^{-1}b_r^\star \comp{b_r}{\omega}{ a_s})_{\widetilde{\mu}_k(M)} =
\left(\sum_r \sum_\omega (\omega^{-1}b_q^\star)_{\widetilde{\mu}_k(M)} (b_r\omega)_M\right) (a_s)_M.
$$
Thus it suffices to show that $\sum_r \sum_\omega (\omega^{-1}b_q^\star)_{\widetilde{\mu}_k(M)} (b_r\omega)_M = 0$.
For $m\in M_k$ we have
\begin{eqnarray}\nonumber
\left(\sum_r \sum_\omega (\omega^{-1}b_q^\star)_{\widetilde{\mu}_k(M)} (b_r\omega)_M\right)(m) & = &
\sum_r \sum_\omega (\omega^{-1}b_q^\star)_{\widetilde{\mu}_k(M)} ((b_r)_M(\omega m))\\
\nonumber
=
\sum_r \sum_\omega (\omega^{-1}\overleftarrow{b_q^\star}) (1\otimes(b_r)_M(\omega m)) & = &
\begin{pmatrix}
- \pi \rho\left(\sum_r \sum_\omega \omega^{-1}\otimes(b_r)_M(\omega m)\right)\\
\hline
- \sum_r \sum_\omega \sum_{\nu\in\B_k}\omega^{-1}\nu\otimes\partial_{b_r\nu a_1}(S)((b_r)_M(\omega m))\\
\vdots\\
-\sum_r \sum_\omega \sum_{\nu\in\B_k}\omega^{-1}\nu\otimes\partial_{b_r\nu a_p}(S)((b_r)_M(\omega m))\\
\hline
0\\
\hline
0
\end{pmatrix}\\
\nonumber
=
\begin{pmatrix}
- \pi \rho\beta (m)\\
\hline
-\gamma\beta(m)\\
\hline
0\\
\hline
0
\end{pmatrix} & = & 0.
\end{eqnarray}

Finally, we shall show that $(\partial_{b_r^\star} (\widetilde{\mu}_k(S)))_{\widetilde{\mu}_k(M)} = 0$. We have
$$
(\partial_{b_r^\star} (\widetilde{\mu}_k(S)))_{\widetilde{\mu}_k(M)}=
(\sum_s\sum_\omega[{b_r}_\omega a_s]a_s^\star\omega^{-1})_{\widetilde{\mu}_k(M)}=
(b_r)_M\sum_s\sum_\omega(\omega a_s)_M(a_s^\star\omega^{-1})_{\widetilde{\mu}_k(M)}.
$$
Thus it suffices to show that $\sum_s\sum_\omega(\omega a_s)_M(a_s^\star\omega^{-1})_{\widetilde{\mu}_k(M)}=0$. Since $\alpha\gamma=0$, we have
$$
\alpha \overline \beta = \begin{pmatrix}
0 & \alpha \iota & \alpha \iota \sigma & 0\end{pmatrix} = 0.
$$
Take $m=(m_1+\image\beta,m_2,m_3+\image\gamma,m_4)
\in\frac{\ker\gamma}{\image\beta}\oplus\image\gamma\oplus\frac{\ker\alpha}{\image\gamma}\oplus V_k=\widetilde{\mu}_k(M)_k$, assume without loss of generality that $m_3=\sigma(m_3+\image\gamma)$,
and write
$$
m_2=\begin{pmatrix}
\sum_{\nu\in\B_k}\nu\otimes n_{1\nu}\\
\vdots\\
\sum_{\nu\in\B_k}\nu\otimes n_{p\nu}
\end{pmatrix},
m_3=\begin{pmatrix}
\sum_{\nu\in\B_k}\nu\otimes n'_{1\nu}\\
\vdots\\
\sum_{\nu\in\B_k}v\otimes n'_{p\nu}
\end{pmatrix}\in \bigoplus_{s=1}^pF_k\otimes_FM_{t(a_s)}=M_{\operatorname{in}}.
$$
Then
$(a_s^\star\omega^{-1})_{\widetilde{\mu}_k(M)}(m_1+\image\beta,m_2,m_3+\image\gamma,m_4)=
(a_s^\star)_{\widetilde{\mu}_k(M)}(\omega^{-1}m_1+\image\beta,\omega^{-1}m_2,\omega^{-1}m_3+\image\gamma,\omega^{-1}m_4)=
n_{s\omega}+n'_{s\omega}$, and hence
\begin{eqnarray}\nonumber
&& \sum_s\sum_\omega(\omega a_s)_M(a_s^\star\omega^{-1})_{\widetilde{\mu}_k(M)}(m_1+\image\beta,m_2,m_3+\image\gamma,m_4)  = \sum_s\sum_\omega(\omega a_s)_M(n_{s\omega}+n'_{s\omega})\\
\nonumber & = &
\sum_s\sum_\omega\overleftarrow{a_s}(\omega \otimes n_{s\omega}+\omega \otimes n'_{s\omega})  =
\sum_s\overleftarrow{a_s}\left(\sum_\omega\omega \otimes n_{s\omega}\right)+\sum_s\overleftarrow{a_s}\left(\sum_\omega\omega \otimes n'_{s\omega}\right)\\
\nonumber  & = &
\alpha(m_2)+\alpha(m_3)=\alpha\iota(m_2)+\alpha\iota\sigma(m_3+\image\gamma)  =  0.
\end{eqnarray}

Proposition \ref{prop:mutation-well-defined} is proved.
\end{proof}

\begin{prop} The isomorphism class of the decorated representation $\widetilde{\mu}_k(\mathcal{M})$ does not depend on the choice of the splitting data \eqref{eq:rho} -- \eqref{eq:sigma}.
\end{prop}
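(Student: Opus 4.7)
The plan is to construct, for any two choices of splitting data $(\rho,\sigma)$ and $(\rho',\sigma')$, an explicit $F$-linear isomorphism between the two resulting decorated representations of $(\widetilde{\mu}_k(A), \widetilde{\mu}_k(S))$. First I would observe that the decoration $\widetilde{\mu}_k(V)_k = \ker\beta/(\ker\beta \cap \operatorname{im}\alpha)$ is manifestly independent of the splitting data, and that the underlying $F$-vector space $\widetilde{\mu}_k(M)_k = (\ker\gamma/\operatorname{im}\beta) \oplus \operatorname{im}\gamma \oplus (\ker\alpha/\operatorname{im}\gamma) \oplus V_k$ is independent of $(\rho,\sigma)$ as well. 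Moreover, the action on $\widetilde{\mu}_k(M)$ of every arrow not incident to $k$ and of every composite arrow $\comp{b_r}{\omega}{a_s}$ is defined without reference to the splitting data. Thus it suffices to produce an automorphism $\psi$ of $\widetilde{\mu}_k(M)$, equal to the identity on $\widetilde{\mu}_k(M)_i$ for $i \neq k$, which intertwines the two sets of actions of $a_s^\star$ and $b_r^\star$, i.e., satisfies $\psi \circ \overline{\alpha}_\rho = \overline{\alpha}_{\rho'}$ and $\overline{\beta}_{\sigma'} \circ \psi = \overline{\beta}_\sigma$; the triple $(\operatorname{id}, \psi, \operatorname{id})$ will then be the desired right-equivalence.

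The key observation is that the differences $\rho' - \rho$ and $\sigma - \sigma'$ are highly constrained. Since $\rho\iota = \rho'\iota = \operatorname{id}_{\ker\gamma}$, the map $\rho' - \rho$ vanishes on $\ker\gamma$ and hence factors through $M_{\rm out}/\ker\gamma$; via the canonical isomorphism $\bar\gamma: M_{\rm out}/\ker\gamma \xrightarrow{\sim} \operatorname{im}\gamma$ induced by $\gamma$, composition with the projection $\pi:\ker\gamma\to\ker\gamma/\operatorname{im}\beta$ produces a well-defined $F_k$-linear map $\phi_{12}: \operatorname{im}\gamma \to \ker\gamma/\operatorname{im}\beta$ characterized by the identity $\phi_{12} \gamma = \pi(\rho' - \rho)$. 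Analogously, $\pi\sigma = \pi\sigma' = \operatorname{id}_{\ker\alpha/\operatorname{im}\gamma}$ forces $\sigma - \sigma'$ to take values in $\ker\pi = \operatorname{im}\gamma \subseteq \ker\alpha$, so $\sigma - \sigma'$ itself can be regarded as an $F_k$-linear map $\phi_{23}: \ker\alpha/\operatorname{im}\gamma \to \operatorname{im}\gamma$ satisfying $\iota \phi_{23} = \iota(\sigma - \sigma')$ as maps into $M_{\rm in}$.

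With these in hand, I would define $\psi$ on $\widetilde{\mu}_k(M)_k$ by the unipotent upper-triangular block matrix
$$
\psi = \begin{pmatrix} I & \phi_{12} & 0 & 0 \\ 0 & I & \phi_{23} & 0 \\ 0 & 0 & I & 0 \\ 0 & 0 & 0 & I \end{pmatrix},
$$
which is automatically invertible. Expanding the block products $\psi\, \overline{\alpha}_\rho$ and $\overline{\beta}_{\sigma'}\, \psi$ using \eqref{eq:alpha-beta-mutated}, the two required intertwining identities reduce directly to the defining equations for $\phi_{12}$ and $\phi_{23}$ exhibited above; this verification is a routine matrix computation that I would not belabor.

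The only conceptual step in the argument is the first one: isolating what in $\widetilde{\mu}_k(\mathcal{M})$ really depends on the splitting data, and extracting from $(\rho - \rho', \sigma - \sigma')$ the precise algebraic constraints that make $\phi_{12}$ well-defined on $\operatorname{im}\gamma$ (rather than just on $M_{\rm out}$) and make $\phi_{23}$ land in $\operatorname{im}\gamma$ (rather than just $\ker\alpha$). Once these are in place I anticipate no obstacle of any substance; the remainder is a direct verification and an appeal to the observation that every other piece of data defining $\widetilde{\mu}_k(\mathcal{M})$ was already canonical.
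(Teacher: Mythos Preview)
Your proposal is correct and follows essentially the same approach as the paper, which simply defers to \cite[Proposition~10.9]{DWZ1}: there too one constructs an explicit unipotent automorphism of $\widetilde{\mu}_k(M)_k$ from the differences $\rho'-\rho$ and $\sigma-\sigma'$, exactly as you do. Your write-up in fact spells out the verification in more detail than either source, and the two defining identities $\phi_{12}\gamma=\pi(\rho'-\rho)$ and $\iota\phi_{23}=\iota(\sigma-\sigma')$ are precisely what is needed.
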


\begin{proof} The proof of \cite[Proposition 10.9]{DWZ1} is valid here.
\end{proof}

\begin{prop}\label{prop:rep-right-equiv-class-well-defined} The right-equivalence class of the decorated representation $\widetilde{\mu}_k(\mathcal{M})$ is determined by the right-equivalence class of $\mathcal{M}$
\end{prop}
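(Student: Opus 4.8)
The plan is to follow the structure of the proof of \cite[Proposition 10.7]{DWZ1}, adapting each step to our setup where eigenbasis elements $\omega\in\B_k$ appear as extra indices and where the ground ``field'' at a vertex is the finite extension $F_k$ rather than $F$. Suppose $(\varphi,\psi,\eta):\mathcal{M}\to\mathcal{M}'$ is a right-equivalence of decorated representations of $(A,S)$ and $(A',S')$. After replacing $(A',S')$ and $\mathcal{M}'$ by a right-equivalent copy we may assume, by Proposition~\ref{prop:automorphisms}, that $(Q,\dtuple)=(Q',\dtuple)$ and $A=A'$, so that $\varphi$ is an $R$-algebra automorphism of $\completeQdcoprime$ with $\varphi(S)$ cyclically equivalent to $S'$. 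The goal is to produce a right-equivalence $(\widehat\varphi,\widehat\psi,\eta):\widetilde\mu_k(\mathcal{M})\to\widetilde\mu_k(\mathcal{M}')$.

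First I would reduce to two special cases exactly as in \cite{DWZ1}: (i) $\varphi$ is a \emph{change of arrows} (i.e.\ $\varphi^{(2)}=0$), and (ii) $\varphi$ is \emph{unitriangular}. Since every $R$-algebra automorphism is a composition of one change of arrows and one unitriangular automorphism (Proposition~\ref{prop:automorphisms} and Definition~\ref{def:automorphisms}), and since right-equivalences compose, it suffices to handle these two cases, always keeping $\eta$ fixed and modifying $\psi$ appropriately.

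For the change-of-arrows case, $\varphi$ restricts to an $R$-$R$-bimodule automorphism $\varphi^{(1)}$ of $A$; because $k$ carries the field $F_k$ and the arrows incident to $k$ come in the coprime-extension form \eqref{eq:Aij}, $\varphi^{(1)}$ induces $F_k$-linear automorphisms of $M_{\Min}$ and $M_{\Mout}$ compatible with the maps $\alpha,\beta,\gamma$ of \eqref{eq:triangle-alpha-beta-gamma} (here one uses the naturality in Lemma~\ref{lemma:tensor-products} and the fact that $\partial_{b\omega a}$ transforms via the chain rule, Lemma~\ref{lem:chain-rule}, applied to the second-order cyclic derivatives \eqref{eq:2nd-order-cyclic-derivative}). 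One then transports the chosen splitting data \eqref{eq:rho}--\eqref{eq:sigma} across these isomorphisms, obtaining an $F_k$-linear isomorphism on the four summands of $\widetilde\mu_k(M)_k$ and on $\widetilde\mu_k(V)_k=\ker\beta/(\ker\beta\cap\image\alpha)$ that intertwines $\overline\alpha,\overline\beta$ and the unchanged action of the $\comp{b}{\omega}{a}$; together with $\psi$ on the other vertices this gives the desired right-equivalence. For the unitriangular case, $\varphi$ fixes $A$ pointwise, so $\widetilde\mu_k(A)$ is unchanged and one shows that $\varphi$ extends to a unitriangular automorphism $\widetilde\varphi$ of $\RA{\widetilde\mu_k(A)}$ with $\widetilde\varphi(\widetilde\mu_k(S))$ cyclically equivalent to $\widetilde\mu_k(\varphi(S))$ — this is where the extension-of-$\varphi$ technique from the proof of Lemma~\ref{lemma:extension-of-varphi} (Theorem~\ref{thm:mu-tilde-preserves-isomorphisms}) is reused — and then the identity on $M$ (composed with $\psi$) furnishes the $\psi$-component, since a unitriangular automorphism does not change the action of arrows modulo higher powers of $\maxid$ and $M$ is nilpotent.

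The main obstacle, as in \cite{DWZ1}, is bookkeeping: one must check that the triangle $(\alpha,\beta,\gamma)$ and the splitting data \eqref{eq:rho}--\eqref{eq:sigma} transform coherently, and in our setting this is genuinely more delicate because each arrow incident to $k$ spawns $d_k$ composite arrows indexed by $\B_k$, and the key identity $\sum_{\omega\in\B_k}\omega a_s\,\partial_{b_r\omega a_s}(S)=\partial_{a_s}(\partial_{b_r}(S))$-type relations (used in Proposition~\ref{prop:alphagamma-gammabeta-are-zero}) must be seen to be preserved. I would handle this by verifying the identities on the distinguished eigenbasis and invoking the multiplicative property \eqref{eq:mult-property-of-eigenbases} of $\B_E$; once this is in place the argument of \cite[Proposition 10.7]{DWZ1} goes through \emph{mutatis mutandis}.

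\begin{proof}[Proof of Proposition \ref{prop:rep-right-equiv-class-well-defined}]
The proof of \cite[Proposition 10.7]{DWZ1} applies here, with the modifications outlined above: one reduces to the cases where the right-equivalence $\varphi$ is a change of arrows or unitriangular, transporting the splitting data \eqref{eq:rho}--\eqref{eq:sigma} through the induced isomorphisms of the triangle \eqref{eq:triangle-alpha-beta-gamma} in the first case, and using the extension technique from the proof of Lemma~\ref{lemma:extension-of-varphi} in the second. The only point requiring care is that the second-order cyclic derivatives \eqref{eq:2nd-order-cyclic-derivative} and the maps $\gamma_{sr}$ of \eqref{eq:c-rule}, which now carry the extra index $\omega\in\B_k$, transform correctly; this follows from the cyclic chain rule (Lemma~\ref{lem:chain-rule}) together with the multiplicative property \eqref{eq:mult-property-of-eigenbases} of the eigenbasis.
\end{proof}
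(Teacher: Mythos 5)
Your proposal has a genuine gap, and it also does not reflect the approach actually used here (or, as far as I can tell, in \cite{DWZ1}, where the analogue is Proposition~10.10, not~10.7, and is proved directly rather than by a case split).

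The fatal step is the claim that in the unitriangular case ``the identity on $M$ (composed with $\psi$) furnishes the $\psi$-component, since a unitriangular automorphism does not change the action of arrows modulo higher powers of $\maxid$ and $M$ is nilpotent.'' This is not true. For a unitriangular $\varphi$, the twisted representation $M'$ has $a_{M'}=\varphi^{-1}(a)_M = a_M + (\text{higher-order terms})_M$, and the potential $\varphi(S)$ differs from $S$, so the map $\gamma'$ built from the second-order cyclic derivatives $\partial_{b\omega a}(\varphi(S))$ genuinely differs from $\gamma$. Consequently $\ker\gamma'$, $\image\gamma'$, $\ker\alpha'$, $\image\beta'$ need not coincide with the corresponding subspaces for $M$, and the vertex-$k$ component $\widetilde{\mu}_k(M')_k$ is not the same subquotient as $\widetilde{\mu}_k(M)_k$ — so you cannot take $\psi|_{\widetilde{\mu}_k(M)_k}$ to be the identity. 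The nilpotency of $M$ guarantees that the matrices involved (which the paper calls $\overline{C_{M'}}$ and $\overline{D^\intercal_{M'}}$) are \emph{unipotent}, not that they are trivial, and one still has to prove the crucial identity $\gamma'=\overline{C_{M'}}\gamma\overline{D^\intercal_{M'}}$ via the chain rule applied to the second-order derivatives. So your reduction does not actually isolate an easy case; the heart of the argument remains to be carried out in both of your cases.

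The paper's proof works directly with the arbitrary automorphism $\varphi$: it introduces the $d_kp\times d_kq$ matrix $\Gamma$ whose entries are the second-order cyclic derivatives $\partial_{b_jv_k^\ell a_i}(S)$, together with the matrices $C$ and $D$ from the proof of Lemma~\ref{lemma:extension-of-varphi}, interprets all of these as $F_k$-linear operators on $M_{\Min}$ and $M_{\Mout}$ via the $\B_k$- and $\B_k^{-1}$-type decompositions, and proves the identity \eqref{eq:mutreps-preserves-rightequiv-crucial-identity} using the cyclic chain rule. From this it deduces $\alpha=\alpha'\overline{C_{M'}}$, $\beta=\underline{D_{M'}}\beta'$, $\gamma'=\overline{C_{M'}}\gamma\overline{D^\intercal_{M'}}$, then chooses the splitting data compatibly and writes $\psi|_{\widetilde{\mu}_k(M)_k}$ as an explicit block-diagonal matrix with blocks $(\overline{D^\intercal_{M'}})^{-1}$, $\overline{C_{M'}}$, $\overline{C_{M'}}$, $\mathbf 1$. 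Your sketch omits exactly this: the construction of the nontrivial isomorphism identifying the two mutated vertex spaces, and the chain-rule identity that makes it work.
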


\begin{proof} Let $\varphi$ be an $R$-algebra automorphism of $\RA{A}$. Define a decorated representation $\mathcal{M}'=(M',V')$ as follows: $V'=V$ and $M'=M$ as left $R$-modules, while the actions of $\RA{A}$ on $M$ and $M'$ are related by
$$
u_{M'}=\varphi^{-1}(u)_M
$$
for $u\in \RA{A}$. By Proposition \ref{prop:automorphism-respects-jacobian}, $\mathcal{M}'$ is a decorated representation of $(A,\varphi(S))$. To prove Proposition \ref{prop:rep-right-equiv-class-well-defined} we need to show that there exist $R$-linear maps $\widehat{\varphi}:\RA{\widetilde{\mu}_k(A)}\rightarrow \RA{\widetilde{\mu}_k(A)}$, $\psi:\widetilde{\mu}_k(M)\rightarrow\widetilde{\mu}_k(M)$ and $\eta:\widetilde{\mu}_k(V)\rightarrow \widetilde{\mu}_k(V)$, such that the triple $(\widehat{\varphi},\psi,\eta)$ is a right-equivalence $\mathcal{M}\rightarrow\mathcal{M}$. To start the proof, define matrices
$$
\B_k\boldsymbol{a}, \ \boldsymbol{b}\B_k, \ C, \ \text{and} \ D,
$$
as in the proof of Lemma \ref{lemma:extension-of-varphi}.

In particular, $C$ can be divided into $p^2$ blocks of size $d_k\times d_k$ as in \eqref{eq:C-divided-in-blocks}, the blocks possessing the form \eqref{eq:C-typical-block}, and $D$ can be divided into $q^2$ blocks of size $d_k\times d_k$ as in \eqref{eq:D-divided-in-blocks}, with the blocks having the form \eqref{eq:D-typical-block}. We denote by $D^\intercal$ the matrix obtained from $D$ by transposing each block $D_{ij}$. Thus, $D^\intercal$ is a $d_kq\times d_kq$ matrix divided into $q\times q$ blocks, each of size $d_k\times d_k$, the $ij$-block being the transpose of $D_{ij}$.

Let $\Gamma$ be the $d_kp\times d_kq$ matrix organized in $p\times q$ blocks
$$
\Gamma=\left[\begin{array}{ccc}
\Gamma_{11} & \ldots & \Gamma_{1q}\\
\vdots & \ddots & \\
\Gamma_{p1} & \ldots & \Gamma_{pq}\end{array}\right],
$$
the blocks being given by
{\small$$
\Gamma_{ij}=\left[\begin{array}{rrrrrrrr}
\partial_{b_ja_i}(S) & v_k^{d_k}\partial_{b_jv_k^{d_k-1}a_i}(S) & v_k^{d_k}\partial_{b_jv_k^{d_k-2}a_i}(S) & & 
 & v_k^{d_k}\partial_{b_jv_k^2a_i}(S) & v_k^{d_k}\partial_{b_jv_ka_i}(S)\\
\partial_{b_jv_ka_i}(S) & \partial_{b_ja_i}(S) & v_k^{d_k}\partial_{b_jv_k^{d_k-1}a_i}(S) & \ldots & 
 & v_k^{d_k}\partial_{b_jv_k^{3}a_i}(S) & v_k^{d_k}\partial_{b_jv_k^{2}a_i}(S)\\
\partial_{b_jv_k^2a_i}(S) & \partial_{b_jv_ka_i}(S) & \partial_{b_ja_i}(S) & & 
& v^{d_k}\partial_{b_jv_k^{4}a_i}(S) &v_k^{d_k}\partial_{b_jv_k^{3}a_i}(S)\\
 & \vdots & & \ddots & & \vdots &\\
\partial_{b_jv_k^{d_k-3}a_i}(S) & \partial_{b_jv_k^{d_k-4}a_i}(S) & \partial_{b_jv_k^{d_k-5}a_i}(S) & & 
& v_k^{d_k}\partial_{b_jv_k^{d_k-1}a_i}(S) & v_k^{d_k}\partial_{b_jv_k^{d_k-2}a_i}(S)\\
\partial_{b_jv_k^{d_k-2}a_i}(S) & \partial_{b_jv^{d_k-3}a_i}(S) & \partial_{b_jv_k^{d_k-4}a_i}(S) & \ldots & 
&  \partial_{b_ja_i}(S) & v_k^{d_k}\partial_{b_jv_k^{d_k-1}a_i}(S)\\
\partial_{b_jv_k^{d_k-1}a_i}(S) & \partial_{b_jv_k^{d_k-2}a_i}(S) & \partial_{b_jv_k^{d_k-3}a_i}(S) & & 
& \partial_{b_jv_ka_i}(S) & \partial_{b_ja_i}(S)
\end{array}\right].
$$}
We define a matrix $\Gamma'$ in a similar way by taking cyclic derivatives of $\varphi(S)$ (rather than cyclic derivatives of $(S)$). By $\varphi(\Gamma)$ we denote the matrix obtained by applying $\varphi$ componentwise to $\Gamma$.

We use the matrices
\begin{equation}\label{eq:matrices-ABCDGamma}
\B_k\boldsymbol{a}, \ \boldsymbol{b}\B_k, \ C, \ D, \ \Gamma, \ \Gamma' \ \text{and} \ \varphi(\Gamma),
\end{equation}
to define $F_k$-linear maps between the spaces $M_k$, $M_{\operatorname{in}}$ and $M_{\operatorname{out}}$. This requires some preparation.

First of all, we note that he map
$$
\sum_{\ell=0}^{d_k-1}v_k^\ell\otimes m_\ell\mapsto(m_0,m_1,\ldots,m_{d_k-1})
$$ induces an $F$-vector space decomposition
\begin{equation}\label{eq:Min-dec-type-I}
M_{\operatorname{in}}=\bigoplus_{p=1}^sF_k\otimes_FM_{t(a_p)}
\cong\bigoplus_{p=1}^s\bigoplus_{\ell=0}^{d_k-1}M_{t(a_p)}
\end{equation}
 and an $F$-vector space decomposition
\begin{equation}\label{eq:Mout-dec-type-I}
M_{\operatorname{out}}=\bigoplus_{q=1}^tF_k\otimes_FM_{h(b_q)}
\cong\bigoplus_{q=1}^t\bigoplus_{\ell=0}^{d_k-1}M_{h(b_q)}.
\end{equation}
We refer to these as \emph{$\B_k$-type decompositions} (for lack of a better term).
Similarly, the map
$$
\sum_{\ell=0}^{d_k-1}v_k^{-\ell}\otimes m_\ell\mapsto(m_0,m_1,\ldots,m_{d_k-1})
$$
(notice the negative powers of $v_k$) induces an $F$-vector space decomposition
\begin{equation}\label{eq:Min-dec-type-II}
M_{\operatorname{in}}=\bigoplus_{p=1}^sF_k\otimes_FM_{t(a_p)}
\cong\bigoplus_{p=1}^s\bigoplus_{\ell=0}^{d_k-1}M_{t(a_p)}
\end{equation}
and an $F$-vector space decomposition
\begin{equation}\label{eq:Mout-dec-type-II}
M_{\operatorname{out}}=\bigoplus_{q=1}^tF_k\otimes_FM_{h(b_q)}
\cong\bigoplus_{q=1}^t\bigoplus_{\ell=0}^{d_k-1}M_{h(b_q)}.
\end{equation}
We refer to these as \emph{$B_k^{-1}$-type decompositions} (also for lack of a better term).

All of the matrices \eqref{eq:matrices-ABCDGamma} have entries in the complete path algebra $\RA{A}$. Every such entry $u$ belongs to $e_{k_1}\RA{A}e_{k_2}$ for some pair of vertices $k_1,k_2\in Q_0$, and it hence induces an $F$-linear map $u_{M}:M_{k_2}\to M_{k_1}$ (resp. $u_{M'}:M_{k_2}'\to M_{k_1}'$) given by $m\mapsto um$.
The matrices \eqref{eq:matrices-ABCDGamma} thus give rise to matrices $(\B_k\boldsymbol{a})_M$, $(\boldsymbol{b}\B_k)_M$, $C_M$, $D_M$, $\Gamma_M$, $\Gamma'_M$ and $\varphi(\Gamma)_M$ (resp. $(\B_k\boldsymbol{a})_{M'}$, $(\boldsymbol{b}\B_k)_{M'}$, $C_{M'}$, $D_{M'}$, $\Gamma_{M'}$, $\Gamma'_{M'}$ and $\varphi(\Gamma)_{M'}$), whose entries are $F$-linear maps between the spaces attached by $M$ (resp. $M'$) to the vertices of $Q$.
The sizes of these matrices of $F$-linear maps match the number of direct summands in the right-hand side of the decompositions \eqref{eq:Min-dec-type-I}, \eqref{eq:Min-dec-type-II}, \eqref{eq:Mout-dec-type-I}, \eqref{eq:Mout-dec-type-II} of $M_{\operatorname{in}}$ and
$M_{\operatorname{out}}$.

With respect to the $\B_k$-type decompositions of $M_{\operatorname{in}}$ and $M_{\operatorname{out}}$ we define $F$-linear maps
\begin{eqnarray}
\overline{(\B_k\boldsymbol{a})_{M}}, \ \ \ \overline{(\B_k\boldsymbol{a})_{M'}}&:&M_{\operatorname{in}}\to M_k,\\
\nonumber
\overline{C_{M'}}&:&M_{\operatorname{in}}\to M_{\operatorname{in}},\\
\nonumber
\overline{D^\intercal_{M'}}&:&M_{\operatorname{out}}\to M_{\operatorname{out}},\\
\nonumber
\overline{\Gamma_M}, \ \ \ \overline{\Gamma'_{M'}}, \ \ \ \overline{\varphi(\Gamma)_{M'}}&:&M_{\operatorname{out}}\to M_{\operatorname{in}},
\end{eqnarray}
via matrix multiplication, evaluating the entries of the matrices $(\B_k\boldsymbol{a})_{M}$, $(\B_k\boldsymbol{a})_{M'}$, $C_{M'}$,
$D^\intercal_{M'}$, $\Gamma_M$, $\Gamma'_{M'}$ and $\varphi(\Gamma)_{M'}$ at the corresponding elements of $M_{\operatorname{in}}=\bigoplus_{p=1}^s\bigoplus_{\ell=0}^{d_k-1}M_{t(a_p)}$ and $M_{\operatorname{out}}=\bigoplus_{q=1}^t\bigoplus_{\ell=0}^{d_k-1}M_{h(b_q)}$.

With respect to the $\B_k^{-1}$-type decompositions of $M_{\operatorname{in}}$ and $M_{\operatorname{out}}$ we define $F$-linear maps
\begin{eqnarray}
\underline{(\boldsymbol{b}\B_k)_{M}}, \ \ \ \underline{(\boldsymbol{b}\B_k)_{M'}}&:&M_k\to M_{\operatorname{out}},\\
\nonumber
\underline{D_{M'}}&:&M_{\operatorname{out}}\to M_{\operatorname{out}},
\end{eqnarray}
also via matrix multiplication and evaluation of the $F$-linear maps which are entries of the corresponding matrices.

An easy check shows that all of these maps are actually $F_k$-linear and that we have the following identities:
$$
\overline{(\B_k\boldsymbol{a})_M}=\alpha, \ \ \ \ \overline{(\B_k\boldsymbol{a})_{M'}}=\alpha', \ \ \ \ \alpha=
\alpha'\overline{C_{M'}},
$$
$$
\underline{(\boldsymbol{b}\B_k)_{M}}=\beta, \ \ \ \ \underline{(\boldsymbol{b}\B_k)_{M'}}=\beta', \ \ \ \ \beta=
\underline{D_{M'}}\beta',
$$
$$
\overline{D^\intercal_{M'}}=\underline{D_{M'}}, \ \ \ \ \overline{\Gamma_M}=\overline{\varphi(\Gamma)_{M'}}=\gamma, \ \ \ \ \overline{\Gamma'_{M'}}=\gamma'.
$$

Now, for each $i\in\{1,\ldots,p\}$, each $j\in\{1,\ldots,q\}$, and each $\ell\in\{0,\ldots,d_k-1\}$, we have the following crucial identity in the complete path algebra $\RA{A}$:
\begin{eqnarray}\label{eq:mutreps-preserves-rightequiv-crucial-identity}
\Gamma'_{d_k(i-1)+\ell,d_k(j-1)} & = & 
(C\varphi(\Gamma)(D^\intercal))_{d_k(i-1)+\ell,d_k(j-1)}\\
\nonumber &+ &\sum_{x=1}^q[\Delta_{[b_jv^\ell a_i]}(\varphi(b_x))\square\varphi(\partial_{b_x}(S))]\\
\nonumber &+& \sum_{y=1}^p[\Delta_{[b_jv^\ell a_i]}(\varphi(a_y))\square\varphi(\partial_{a_y}(S))].
\end{eqnarray}
Hence
\begin{eqnarray}\nonumber
&&(\Gamma'_{d_k(i-1)+\ell,d_k(j-1)})_{M'}=((C\varphi(\Gamma)(D^\intercal))_{d_k(i-1)+\ell,d_k(j-1)})_{M'}\\
\nonumber
&=&  (C_{M'}\varphi(\Gamma)_{M'}(D^\intercal_{M'}))_{d_k(i-1)+\ell,d_k(j-1)}=(C_{M'}\Gamma_M(D^\intercal_{M'}))_{d_k(i-1)+\ell,d_k(j-1)},
\end{eqnarray}
and therefore, $\gamma'=\overline{C_{M'}}\gamma\overline{D^{\intercal}_{M'}}$.

At this point the proof starts going in pretty much the same way as the proof of \cite[Proposition 10.10]{DWZ1}. Indeed, 
we can now deduce that
$$
\ker\alpha=(\overline{C_{M'}})^{-1}(\ker\alpha'), \ \ \ \image\alpha=\image\alpha',
$$
$$
\ker\beta=\ker\beta', \ \ \  \image\beta=\overline{D^{\intercal}_{M'}}(\image\beta')
$$
$$
\ker\gamma=\overline{D^{\intercal}_{M'}}(\ker\gamma') \ \ \  \image\gamma=(\overline{C_{M'}})^{-1}(\image\gamma'),
$$
and from this we see that the sections $\sigma:\ker\alpha/\image\gamma\to\ker\alpha$, $\sigma':\ker\alpha'/\image\gamma'\to\ker\alpha'$, and the retractions $\rho:M_{\operatorname{out}}\to\ker\gamma$, $\rho':M_{\operatorname{out}}\to\ker\gamma'$, can be chosen in such a way that the diagrams
$$
\xymatrix{\ker\alpha/\image\gamma \ar[r]^{\overline{C_{M'}}} \ar[d]_{\sigma} & \ker\alpha'/\image\gamma' \ar[d]^{\sigma'}\\
\ker\alpha \ar[r]_{\overline{C_{M'}}} & \ker\alpha'
} \ \ \ \ \
\xymatrix{M_{\operatorname{out}} \ar[r]^{(\overline{D^{\intercal}_{M'}})^{-1}} \ar[d]_{\rho} & M_{\operatorname{out}} \ar[d]^{\rho'}\\
\ker\gamma \ar[r]_{(\overline{D^{\intercal}_{M'}})^{-1}} & \ker\gamma'
}
$$
commute (here we are making a small notational abuse by denoting the map $\ker\alpha/\image\gamma\to\ker\alpha'/\image\gamma'$ with the symbol $\overline{C_{M'}}$). The action of the arrows $a_1^\star,\ldots,a_p^\star$ and $b_1^\star,\ldots,b_q^\star$ on $\widetilde{\mu}_k(M)$ and $\widetilde{\mu}_k(M')$ is then given by
$$
\overline{\alpha}=\left[\begin{array}{c}
-\pi\rho\\
-\gamma\\
0\\
0\end{array}\right], \ \ \ \ \
\overline{\alpha'}=\left[\begin{array}{c}
-\pi(\overline{D^{\intercal}_{M'}})^{-1}\rho\overline{D^{\intercal}_{M'}}\\
-\overline{C_{M'}}\gamma\overline{D^{\intercal}_{M'}}\\
0\\
0\end{array}\right]
$$
$$
\overline{\beta}=\left[\begin{array}{cccc}0 \\ \iota \\ \iota\sigma \\ 0\end{array}\right], \ \ \ \ \
\overline{\beta'}=\left[\begin{array}{cccc}0 \\ \iota \\ \iota\overline{C_{M'}}\sigma(\overline{C_{M'}})^{-1} \\ 0\end{array}\right].
$$

We can now define a right-equivalence $(\widehat{\varphi},\psi,\eta)$ between $\widetilde{\mu}_k(\mathcal{M})$ and
$\widetilde{\mu}_k(\mathcal{M}')$. We take $\widehat{\varphi}:\RA{\widetilde{\mu}_k(A)}\to
\RA{\widetilde{\mu}_k(A)}$  to be the right-equivalence
$(\widetilde{\mu}_k(A),\widetilde{\mu}_k(S))\to(\widetilde{\mu}_k(A),\widetilde{\mu}_k(\varphi(S)))$ constructed in the proof of Lemma
\ref{lemma:extension-of-varphi}. Next, we define $\psi:\widetilde{\mu}_k(M)\to\widetilde{\mu}_{k}(M')$ as the identity map on
$\bigoplus_{i\neq k}M_i=\bigoplus_{i\neq k}M_i'$, and the restriction $\psi|_{\widetilde{\mu}_k(M)_k}:\widetilde{\mu}_k(M)_k\to\widetilde{\mu}_k(M')_k$ given by the block-diagonal matrix
$$
\psi|_{\widetilde{\mu}_k(M)_k}=\left[
\begin{array}{cccc}
(\overline{D^{\intercal}_{M'}})^{-1} & 0 & 0 & 0\\
0 & \overline{C_{M'}} & 0 & 0\\
0 & 0 & \overline{C_{M'}} & 0\\
0 & 0 & 0 & \mathbf{1}
\end{array}\right]
$$
(here we are making a notational abuse when denoting by $(\overline{D^{\intercal}_{M'}})^{-1}$ the linear map $\ker\gamma/\image\beta\to\ker\gamma'/\image\beta'$ induced by $(\overline{D^{\intercal}_{M'}})^{-1}:\ker\gamma\to\ker\gamma'$). Finally, we set $\eta:\widetilde{\mu}_k(V)\to\widetilde{\mu}_k(V')$ to be the identity map.

We need to check the equality $\psi\circ (c_{\widetilde{\mu}_k(M)})=(\widehat{\varphi}(c)_{\widetilde{\mu}_k(M')})\circ\psi$ for every arrow $c$ of $\widehat{\mu}_k(Q)$. The only arrows for which the equality may not be clear are those incident to $k$. Consider the matrices $\boldsymbol{a^\star}\B_k^{-1}$ and $\B_k^{-1}\boldsymbol{b^\star}$, whose entries clearly lie in $\RA{\widetilde{\mu}_k(A)}$,
and notice that, with respect to the $\B_k$-type decomposition of $M_{\operatorname{in}}$, we have the equalities
$$
\overline{\beta}=\overline{(\boldsymbol{a^\star}\B_k^{-1})_{\widetilde{\mu}_k(M)}} \ \ \  \text{and} \ \ \ \overline{\beta'}=\overline{(\boldsymbol{a^\star}\B_k^{-1})_{\widetilde{\mu}_k(M')}},
$$
whereas with respect to the $\B_k^{-1}$-type decomposition of $M_{\operatorname{out}}$, we have the equalities
$$
\overline{\alpha}=\underline{(\B_k^{-1}\boldsymbol{b^\star})_{\widetilde{\mu}_k(M)}} \ \ \ \text{and} \ \ \ \overline{\alpha'}=\underline{(\B_k^{-1}\boldsymbol{b^\star})_{\widetilde{\mu}_k(M')}}.
$$
Note also that $\left(\overline{C_{M'}}\right)^{-1}=\overline{C_{M'}^{-1}}$ and $\left(\overline{D^{\intercal}_{M'}}\right)^{-1}=\overline{(D^{\intercal}_{M'})^{-1}}$. Therefore,
\begin{eqnarray}\nonumber
&&\overline{(\boldsymbol{a^\star}\B_k^{-1})_{\widetilde{\mu}_k(M)}}=
\overline{\beta}=\left(\overline{C_{M'}}\right)^{-1}\overline{\beta'}\circ\psi|_{\widetilde{\mu}_k(M)_k}=
\overline{C_{M'}^{-1}}\overline{(\boldsymbol{a^\star}\B_k^{-1})_{\widetilde{\mu}_k(M')}}\circ\psi|_{\widetilde{\mu}_k(M)_k}\\
\nonumber &=&
\overline{C_{M'}^{-1}(\boldsymbol{a^\star}\B_k^{-1})_{\widetilde{\mu}_k(M')}}\circ\psi|_{\widetilde{\mu}_k(M)_k}=
\overline{\widehat{\varphi}((\boldsymbol{a^\star}\B_k^{-1}))_{\widetilde{\mu}_k(M')}}\circ\psi|_{\widetilde{\mu}_k(M)_k}
\end{eqnarray}
and
\begin{eqnarray}
&& \psi|_{\widehat{\mu}_k(M)_k}\circ\underline{(\B_k^{-1}\boldsymbol{b^\star})_{\widetilde{\mu}_k(M)}}=
\psi|_{\widehat{\mu}_k(M)_k}\circ\overline{\alpha}=
\overline{\alpha'}\left(\overline{D^{\intercal}_{M'}}\right)^{-1}=
\underline{(\B_k^{-1}\boldsymbol{b^\star})_{\widetilde{\mu}_k(M')}}\left(\underline{D_{M'}}\right)^{-1}\\
\nonumber
&=&
\underline{(\B_k^{-1}\boldsymbol{b^\star})_{\widetilde{\mu}_k(M')}}\underline{D_{M'}^{-1}}=
\underline{(\B_k^{-1}\boldsymbol{b^\star})_{\widetilde{\mu}_k(M')}D_{M'}^{-1}}=
\underline{\widehat{\varphi}(\B_k^{-1}\boldsymbol{b^\star})_{\widetilde{\mu}_k(M')_k}},
\end{eqnarray}
and this proves that the equality $\psi\circ (c_{\widetilde{\mu}_k(M)})=(\widehat{\varphi}(c)_{\widetilde{\mu}_k(M')})\circ\psi$ holds for every arrow $c$ of $\widehat{\mu}_k(Q)$ which is incident to $k$.
\end{proof}

\begin{thm} The mutation $\mu_k$ of decorated representations is an involution; that is, for every decorated representation $\mathcal{M}$ of a reduced SP $(A,S)$ satisfying \eqref{eq:no-2-cycles-thru-k}, the decorated representation $\mu^2_k(\mathcal{M})$ of $\mu^2_k(A,S)$ is right-equivalent to $\mathcal{M}$.
\end{thm}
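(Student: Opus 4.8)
The plan is to mirror the proof of the involutivity of SP-mutations (Theorem~\ref{thm:mutation-involutive}) at the level of decorated representations, exactly as is done for quivers with potentials in \cite{DWZ1}. First I would reduce, via Theorem~\ref{thm:trivial-reduced-splitting} and Propositions~\ref{prop:rep-right-equiv-class-well-defined} and~\ref{prop:mu-tilde-trivial-summand}, to the corresponding statement for the unreduced mutation $\widetilde{\mu}_k$; that is, it suffices to show that for any decorated representation $\mathcal{M}=(M,V)$ of $(A,S)$ satisfying \eqref{eq:no-2-cycles-thru-k} and \eqref{eq:no-start-in-k}, the doubly-mutated decorated representation $\widetilde{\mu}_k^2(\mathcal{M})=(\widetilde{\widetilde{M}},\widetilde{\widetilde{V}})$ is right-equivalent, as a decorated representation of $\widetilde{\mu}_k^2(A,S)$, to the direct sum of $\mathcal{M}$ (viewed over $(A,S)$ via the three-step right-equivalence $\varphi_3\varphi_2\varphi_1$ of Theorem~\ref{thm:mutation-involutive} combined with Proposition~\ref{prop:jacobian-algebra-invariant}) and the trivial SP $(C,T)$ of \eqref{eq:S1}, the latter being equipped with the zero decorated representation. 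Since the potential-level right-equivalence is already established in the proof of Theorem~\ref{thm:mutation-involutive}, the work is entirely on the module side and on the decoration spaces.

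The heart of the matter is to track the triangle \eqref{eq:triangle-alpha-beta-gamma} through the double mutation. Write $\alpha,\beta,\gamma$ for the maps attached to $\mathcal{M}$ at $k$, with target/source spaces $M_{\operatorname{in}},M_k,M_{\operatorname{out}}$, and let $\bar\alpha,\bar\beta$ be the mutated in/out maps from \eqref{eq:alpha-beta-mutated}, so $\widetilde{\mu}_k(M)_k=(\ker\gamma/\image\beta)\oplus\image\gamma\oplus(\ker\alpha/\image\gamma)\oplus V_k$ and $\widetilde{\mu}_k(V)_k=\ker\beta/(\ker\beta\cap\image\alpha)$. Mutating again, the relevant maps $\widetilde{\gamma}\colon\widetilde{M}_{\operatorname{out}}\to\widetilde{M}_{\operatorname{in}}$ are obtained by applying $\partial_{a^\star\omega b^\star}$ to $\widetilde{\mu}_k(S)$; using \eqref{eq:partial-ba} and the definition \eqref{eq:mu-k-S}, $\widetilde{\gamma}$ decomposes into a block coming from the quadratic term $\triangle_k(A)$ (which is essentially the identity up to the scalars $\omega^{-1}$) plus a block coming from $[S]$, and one computes directly that $\widetilde{\mu}_k(M)_k$ together with the maps $\bar\alpha,\bar\beta$ realizes, after a change of basis, the space $M_k$ on the $\image\gamma$ summand plus a direct sum of ``square-zero'' pieces supported on $A e_k A\oplus A^\star e_k A^\star$. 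Concretely I would: (i) identify $\widetilde{\widetilde{M}}_k$ and its in/out maps explicitly in terms of $\ker,\image$ of $\bar\alpha,\bar\beta,\widetilde{\gamma}$; (ii) choose splitting data for the second mutation that is compatible with the splitting data $\rho,\sigma$ of the first (this is the analogue of choosing compatible sections in the proof of Proposition~\ref{prop:rep-right-equiv-class-well-defined}); (iii) exhibit an explicit $F$-linear isomorphism $\psi\colon\widetilde{\widetilde{M}}\to M\oplus(C\text{-module }0)$ intertwining the actions, with $\psi$ the identity away from $k$ and given by a block matrix at $k$ built from the maps $\iota,\pi,\rho,\sigma$; and (iv) check that $\psi$ commutes with the action of every arrow of $\widetilde{\widetilde A}$, which reduces to a finite list of block-matrix identities, precisely the module-theoretic shadow of Steps~1--3 of Theorem~\ref{thm:mutation-involutive}. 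For the decoration, I would verify $\widetilde{\widetilde{V}}_k\cong V_k$ using the standard homological bookkeeping $\ker\beta/(\ker\beta\cap\image\alpha)$ iterated twice, which collapses back to $V_k$ because $\gamma\beta=0=\alpha\gamma$ (Proposition~\ref{prop:alphagamma-gammabeta-are-zero}).

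The main obstacle, as in \cite{DWZ1}, is the bookkeeping of the eigenbasis scalars $\omega,\omega^{-1}\in\B_k$ and the matrices $C,D,\Gamma$ of Proposition~\ref{prop:rep-right-equiv-class-well-defined}: in the quiver case the corresponding maps are ``diagonal'' and the compatibility of splitting data is immediate, whereas here one must check that the $d_k\times d_k$ block structure of \eqref{eq:C-typical-block} and \eqref{eq:D-typical-block}, together with the factor $v_k^{d_k}$ appearing in the equality $\sum_{u,\omega}u^{-1}\omega^{-1}b^\star bu\omega=d_k\sum_\omega\omega^{-1}b^\star b\omega$ used in the proof of Theorem~\ref{thm:mutation-involutive}, does not spoil the $F_k$-linearity of the mutated maps nor the cancellation $\widetilde{\gamma}\bar\beta=0$, $\bar\alpha\widetilde{\gamma}=0$. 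Once one observes that all the relevant matrices are of ``block-circulant'' type and hence behave well under the $\B_k$- and $\B_k^{-1}$-type decompositions \eqref{eq:Min-dec-type-I}--\eqref{eq:Mout-dec-type-II} already introduced in the proof of Proposition~\ref{prop:rep-right-equiv-class-well-defined}, the argument goes through verbatim from \cite[Theorem~10.13 / Section~10]{DWZ1}, and the remaining verifications are routine diagram chases. I therefore expect the proof to occupy a paragraph setting up the reduction to $\widetilde{\mu}_k^2$, a paragraph doing the explicit identification of $\widetilde{\widetilde{M}}_k$ and choosing compatible splitting data, and a short paragraph checking the intertwining identities and the decoration, with the reader referred to the proof of Theorem~\ref{thm:mutation-involutive} and to \cite{DWZ1} for the parts that transcribe without change.
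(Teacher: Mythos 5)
Your overall strategy---reduce to $\widetilde{\mu}_k^2$ via Theorem~\ref{thm:trivial-reduced-splitting} and Propositions~\ref{prop:rep-right-equiv-class-well-defined}, \ref{prop:mu-tilde-trivial-summand}, and then argue as in \cite[Theorem~10.13]{DWZ1}---is exactly the one the paper takes, but the paper's actual argument is much more economical: it distills the only genuinely new verification into a single identity, namely that the map $\overline{\gamma}$ attached to the once-mutated representation $\widetilde{\mu}_k(\mathcal{M})$ equals the composition $\beta\alpha$ of the original maps, and proves this identity using the $\B_k$-type decompositions already set up in the proof of Proposition~\ref{prop:rep-right-equiv-class-well-defined}; once $\overline{\gamma}=\beta\alpha$ is known, DWZ1's proof is applied \emph{as is}.

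Your technical description of $\widetilde{\gamma}=\overline{\gamma}$ is, however, incorrect. This map is built from the second-order cyclic derivatives $\partial_{a^\star\omega b^\star}(\widetilde{\mu}_k(S))$, and since $[S]$ contains no starred arrows whatsoever, $\partial_{a^\star\omega b^\star}([S])=0$: there is no ``block coming from $[S]$.'' Only $\triangle_k(A)=\sum_{a,b}\sum_{\omega}\omega^{-1}b^\star\comp{b}{\omega}{a}a^\star$ contributes, and taking $\partial_{a^\star\omega b^\star}$ of it produces (up to the scalars created by rewriting $\omega^{-1}$ in the eigenbasis $\B_k$) the composite arrows $\comp{b}{\omega'}{a}$, which act on $\widetilde{\mu}_k(M)$ as $(b\omega' a)_M$ and hence factor through $M_k$; assembling these blocks yields precisely $\beta\alpha$, \emph{not} ``essentially the identity.'' That $\overline{\gamma}$ factors through $M_k$ rather than being invertible is exactly what makes the recovery of $M_k$ and $V_k$ from $\widetilde{\widetilde{M}}_k$ and $\widetilde{\widetilde{V}}_k$ a nontrivial homological computation, and it is the one point the paper singles out. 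Note also that the equation \eqref{eq:partial-ba} you cite governs the first-order derivative $\partial_{\comp{b}{\nu}{a}}$, not the second-order derivative $\partial_{a^\star\omega b^\star}$, so it is not directly relevant here. Once you replace your description with the correct statement $\overline{\gamma}=\beta\alpha$ (checked via the $\B_k$-type decompositions, which is where the block-circulant structure and the $v_k^{d_k}$ scalar you flagged come in), the remainder of your plan---compatible splitting data, explicit intertwiner at $k$, decoration bookkeeping---is literally the content of \cite[Theorem~10.13]{DWZ1} and goes through verbatim, as you anticipate.
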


\begin{proof} The crucial identity $\overline{\gamma}=\beta\alpha$ 
can be proved using the $\B_k$-type decompositions of the spaces $M_{\operatorname{in}}$ and $M_{\operatorname{out}}$. Once this identity is known to hold, the proof of \cite[Theorem 10.13]{DWZ1} can be applied here \emph{as is}.
\end{proof}

Just as \cite{DWZ1}, let us note here that direct sums of decorated representations of a given $(A,S)$ are defined in the obvious way, that the relation of right-equivalence respects direct sums and indecomposability, and that mutations send direct sums to direct sums. This, combined with the involutivity of $\mu_k$, give the following.

\begin{coro} Any mutation $\mu_k$ is an involution on the set of right-equivalence classes of indecomposable decorated representations of reduced SPs satisfying \ref{eq:no-2-cycles-thru-k}.
\end{coro}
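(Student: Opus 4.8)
The plan is to deduce the corollary from the involutivity of $\mu_k$ on (all) right-equivalence classes of decorated representations just established, together with the fact --- recorded in the paragraph preceding the corollary --- that $\mu_k$ commutes with the formation of direct sums. Since $\mu_k$ is already known to be well-defined on right-equivalence classes of decorated representations by Proposition~\ref{prop:rep-right-equiv-class-well-defined}, and since $\mu_k(A,S)$ is a reduced SP satisfying \eqref{eq:no-2-cycles-thru-k} whenever $(A,S)$ is (Definition~\ref{def:reduced-mutation} and the remark following it), the only genuinely new point is that $\mu_k$ preserves indecomposability of decorated representations. Granting that, the involutivity theorem applied to indecomposables finishes the proof.

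To prove that $\mu_k$ preserves indecomposability, I would argue by contradiction in the standard way. Let $\mathcal{M}$ be an indecomposable decorated representation of a reduced SP $(A,S)$ satisfying \eqref{eq:no-2-cycles-thru-k}, and suppose $\mu_k(\mathcal{M})$ decomposes as $\mathcal{N}_1\oplus\mathcal{N}_2$, a direct sum of decorated representations of the reduced SP $\mu_k(A,S)$. Applying $\mu_k$ once more, and using that $\mu_k$ commutes with direct sums together with the fact that $\mu_k^2(\mathcal{M})$ is right-equivalent to $\mathcal{M}$ (the involutivity theorem), we obtain $\mathcal{M}\cong\mu_k(\mathcal{N}_1)\oplus\mu_k(\mathcal{N}_2)$. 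By indecomposability of $\mathcal{M}$, one summand --- say $\mu_k(\mathcal{N}_1)$ --- must be the zero decorated representation.

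It then remains to conclude that $\mathcal{N}_1=0$. Here I would invoke the involutivity theorem again, now applied to $\mathcal{N}_1$; this is legitimate because $\mathcal{N}_1$ is a decorated representation of the reduced SP $\mu_k(A,S)$, which satisfies \eqref{eq:no-2-cycles-thru-k}. One gets $\mathcal{N}_1\cong\mu_k^2(\mathcal{N}_1)=\mu_k\bigl(\mu_k(\mathcal{N}_1)\bigr)=\mu_k(0)=0$, the last equality being immediate from the explicit constructions of $\widetilde{\mu}_k$ and of the reduction procedure, both of which send the zero representation to the zero representation. Hence $\mu_k(\mathcal{M})$ is indecomposable. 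Consequently $\mu_k$ is a well-defined self-map of the set of right-equivalence classes of indecomposable decorated representations of reduced SPs satisfying \eqref{eq:no-2-cycles-thru-k}, and by the involutivity theorem its square is the identity on this set.

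I do not anticipate any serious obstacle: the argument is purely formal, being a combination of results already in hand. The only points worth double-checking are the compatibility of the reduction step with direct sums --- which follows from the uniqueness clause of the Splitting Theorem~\ref{thm:trivial-reduced-splitting}, applied to the direct sum of the trivial and reduced parts of each summand --- and the fact that $\mu_k$ fixes the zero representation, which is transparent from the constructions of Section~\ref{sec:reps-and-repmutations}.
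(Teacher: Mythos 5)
Your proposal is correct and follows exactly the line of reasoning the paper sketches in the paragraph preceding the corollary: $\mu_k$ respects direct sums, so combined with involutivity it preserves indecomposability, and hence is an involution on classes of indecomposables. The only content the paper leaves tacit --- that $\mu_k$ sends the zero representation to zero, and that the Splitting Theorem's uniqueness clause makes reduction compatible with direct sums --- you supply explicitly, which is the right level of care.
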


Following \cite{DWZ1}, we say that a decorated representation $\mathcal{M}=(M,V)$ is \emph{positive} if $V=0$, and \emph{negative} if $M=0$. Thus, all indecomposable decorated representation are either positive or negative, and the indecomposable positive ones are just indecomposable $\mathcal{P}(A,S)$-modules. In particular, for every vertex $k$, the \emph{simple} representation $\mathcal{S}_k(A,S)$ is the indecomposable positive representation of $(A,S)$ such that $\dim M_i=\delta_{i,k}$, and the \emph{negative simple} representation $\mathcal{S}_k^-(A,S)$ is the indecomposable negative representation such that $\dim V_i=\delta_{i,k}$.

\begin{prop} Any indecomposable decorated representation is either positive, or negative simple. Furthermore,
$$
\mu_k(\mathcal{S}_k(A,S))=\mathcal{S}_k^-(\mu_k(A,S)), \ \ \ \mu_k(\mathcal{S}_k^-(A,S))=\mathcal{S}_k(\mu_k(A,S));
$$
and this is the only mutation that interchanges positive and negative indecomposable representations.
\end{prop}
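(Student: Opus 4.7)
The first assertion, that every indecomposable decorated representation is either positive or negative simple, follows from the definitions. If $\mathcal{M} = (M,V)$ is a decorated representation, then the direct sum decomposition $\mathcal{M} = (M,0) \oplus (0,V)$ forces $M=0$ or $V=0$ when $\mathcal{M}$ is indecomposable. If $V=0$, then $\mathcal{M}$ is positive by definition. If $M=0$, then $V$ must be indecomposable as an $R$-module; since $R = \bigoplus_{i \in Q_0} F_i$ is semisimple with simple modules $F_1,\dots,F_n$, this means $V \cong F_k$ for a unique $k$, i.e., $\mathcal{M} = \mathcal{S}_k^-(A,S)$.

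For the mutation formulas, I would plug $\mathcal{S}_k = (M,0)$ with $M_k = F_k$ and $M_i = 0$ for $i \neq k$ directly into the construction of Section~\ref{sec:reps-and-repmutations}. Since $M_i = 0$ for $i \neq k$, the spaces $M_{\operatorname{in}} = \bigoplus_s F_k \otimes_F M_{t(a_s)}$ and $M_{\operatorname{out}} = \bigoplus_r F_k \otimes_F M_{h(b_r)}$ both vanish (all the $t(a_s)$ and $h(b_r)$ differ from $k$), hence $\alpha, \beta, \gamma$ are all zero maps between zero spaces. The defining formulas then give $\widetilde{\mu}_k(M)_k = \ker \gamma/\operatorname{im}\beta \oplus \operatorname{im}\gamma \oplus \ker\alpha/\operatorname{im}\gamma \oplus V_k = 0$, while $\widetilde{\mu}_k(V)_k = \ker\beta/(\ker\beta \cap \operatorname{im}\alpha) = M_k/0 = F_k$. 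Since reduction (Theorem~\ref{thm:trivial-reduced-splitting}) acts on the arrow span and the module $M$ but not on $V$, and here $\widetilde{\mu}_k(M) = 0$, the reduced representation equals $\mathcal{S}_k^-(\mu_k(A,S))$, as claimed. The computation for $\mathcal{S}_k^-$ is entirely symmetric: one has $M=0$, $V_k = F_k$, so all of $M_{\operatorname{in}}, M_{\operatorname{out}}, M_k$ vanish, whence $\widetilde{\mu}_k(V)_k = 0$ but $\widetilde{\mu}_k(M)_k = V_k = F_k$; this is exactly $\mathcal{S}_k(\mu_k(A,S))$.

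For the uniqueness statement, I would show that for every vertex $j \neq k$ the mutation $\mu_j$ preserves positivity of $\mathcal{S}_k$ (and negativity of $\mathcal{S}_k^-$). Applied to $\mathcal{S}_k$ with $j \neq k$, the crucial observation is that $M_j = 0$, so both $\alpha : M_{\operatorname{in}} \to M_j$ and $\beta : M_j \to M_{\operatorname{out}}$ are zero, and $V_j = 0$. Further, by 2-acyclicity of $Q$ at most one of $M_{\operatorname{in}}, M_{\operatorname{out}}$ is nonzero, and $\gamma$ is necessarily zero because its image lies in the zero space or because $M_{\operatorname{out}} = 0$. A direct check of the defining formulas then shows $\widetilde{\mu}_j(V)_j = \ker\beta/(\ker\beta \cap \operatorname{im}\alpha) = 0$, so the mutated representation remains positive (its decorated part is zero at every vertex). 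The dual check for $\mathcal{S}_k^-$ is analogous. Combined with the computations above, this shows that $\mu_k$ is the only mutation that interchanges $\mathcal{S}_k$ and $\mathcal{S}_k^-$, and no other indecomposable representation gets its positive/negative status exchanged by any mutation.

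The plan therefore has no substantial obstacle: everything collapses to straightforward bookkeeping once the relevant maps are identified as zero. The only point that requires minimal care is the reduction step, where one must verify that the trivial summand of $\widetilde{\mu}_k(A,S)$ acts by zero on the single nonzero vector space of the mutated representation; this is automatic because that space sits either entirely in $V$ (which the reduction ignores) or in $M$ but in a component where no arrows of the trivial part can act nontrivially for dimension reasons.
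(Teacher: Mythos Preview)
The first two parts of your argument are correct and are exactly what the paper (implicitly following \cite{DWZ1}, which it cites without reproducing the proof) has in mind: the decomposition $(M,0)\oplus(0,V)$ forces one summand to vanish, and plugging $\mathcal{S}_k$ and $\mathcal{S}_k^-$ into the explicit formulas for $\widetilde{\mu}_k(M)_k$ and $\widetilde{\mu}_k(V)_k$ immediately yields the two displayed identities.

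Your treatment of the uniqueness clause, however, has a genuine gap. You verify that $\mu_j(\mathcal{S}_k)$ remains positive for $j\neq k$, but the assertion ``this is the only mutation that interchanges positive and negative indecomposable representations'' requires more: for the mutation $\mu_k$ itself, you must show that no positive indecomposable $\mathcal{M}$ \emph{other than} $\mathcal{S}_k$ is sent to a negative representation. Your direct computation does not address this case; for a general positive indecomposable $\mathcal{M}$ with $M_k\neq 0$, the space $\widetilde{\mu}_k(V)_k=\ker\beta/(\ker\beta\cap\operatorname{im}\alpha)$ need not vanish by inspection. The clean fix is to invoke the involutivity of $\mu_k$ together with the Corollary (preservation of indecomposability) immediately preceding the Proposition: if $\mu_k(\mathcal{M})$ were negative indecomposable it would be $\mathcal{S}_j^-(\mu_k(A,S))$ for some $j$, whence $\mathcal{M}\cong\mu_k(\mathcal{S}_j^-)$. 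Now a computation identical in spirit to the ones you already did gives $\mu_k(\mathcal{S}_j^-)=\mathcal{S}_j^-$ for $j\neq k$ (since then $M=0$ and $V_k=0$) and $\mu_k(\mathcal{S}_k^-)=\mathcal{S}_k$, forcing $\mathcal{M}\cong\mathcal{S}_k$. This closes the gap with essentially no extra work beyond what you have already set up.
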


To close the section, we show that mutations of decorated representations commute with  extension of scalars.

\begin{prop} Let $\mathcal{M}=(M,V)$ be a decorated representation of $(A_{E/F},S)$. For each finite-degree extension $K/F$ linearly disjoint from $E/F$, set $K\otimes_F\mathcal{M}=(K\otimes_FM, K\otimes_FV)$, where $K\otimes_FM=((K\otimes_FM_i)_{i\in Q_0},(a_{K\otimes_FM})_{a\in Q_1})$ and $K\otimes_FV=(K\otimes_FV_i)_{i\in Q_0}$. Then $K\otimes_F\mathcal{M}$ is a decorated representation of $(A_{KE/K},S)$. Furthermore, if $k\in Q_0$ is a vertex such that $Q$ does not have 2-cycles incident to $k$, then $\mu_k(K\otimes_F\mathcal{M})$ and $K\otimes_F\mu_k(\mathcal{M})$ are right-equivalent.
\end{prop}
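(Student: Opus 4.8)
The plan is to reduce the statement to two separate claims: first that $K\otimes_F\mathcal M$ is indeed a decorated representation of $(A_{KE/K},S)$, and second that mutation commutes with this scalar extension up to right-equivalence. For the first claim, I would use Corollary \ref{coro:Jacobian-ext-of-scalars}, which gives a canonical isomorphism $K\otimes_F\mathcal P(A_{E/F},S)\to\mathcal P(A_{KE/K},S)$ of $K$-algebras; since $M=\bigoplus_{i\in Q_0}M_i$ is a finite-dimensional $\mathcal P(A_{E/F},S)$-module, $K\otimes_FM$ becomes a finite-dimensional $\mathcal P(A_{KE/K},S)$-module in the obvious way, and the condition that all relations $\partial_a(S)$ are satisfied by $K\otimes_FM$ follows since the cyclic derivatives of $S$ are the same element of $\RAextension{A}{KE/K}$ (via the canonical inclusion $\iota$ discussed in Section \ref{sec:change-of-base-field}). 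The nilpotency condition (c) in Definition \ref{def:decorated-representation} is preserved because $\iota$ sends paths to paths and the same bound $r$ works. One checks $(a_{K\otimes_FM})=\mathrm{id}_K\otimes a_M$ concretely from $(A_{KE/K})_{ij}\cong K\otimes_F(A_{E/F})_{ij}$ (Lemma \ref{lemma:extension-of-scalars}), using the identification $KF_i\otimes_{KF_t}(\cdots)$ compatibly.

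For the second claim I would go through the explicit construction of $\widetilde\mu_k$ in Section \ref{sec:reps-and-repmutations}. The key observation is that \emph{every} ingredient in the definition of $\widetilde\mu_k(\mathcal M)$ commutes with $-\otimes_FK$: the spaces $M_{\Min}=\bigoplus_s F_k\otimes_F M_{t(a_s)}$ and $M_{\Mout}$ become, after $\otimes_FK$, the corresponding spaces $(KF_k)\otimes_K(K\otimes_FM_{t(a_s)})$ built from $K\otimes_F\mathcal M$ over $KE/K$; the maps $\alpha,\beta$ defined via Lemma \ref{lemma:tensor-products} commute with $\otimes_FK$ because the isomorphisms $\overrightarrow{\bullet},\overleftarrow{\bullet}$ there are defined by explicit formulas ($\overleftarrow{a}(e\otimes m)=ea(m)$, $\overrightarrow b(m)=\sum_{\omega\in\B_k}\omega^{-1}\otimes b(\omega m)$) that are $\otimes_FK$-functorial and, crucially, independent of the eigenbasis, so the same eigenbasis $\B_k$ (which is also an eigenbasis of $KF_k/K$ by Example \ref{ex:linealy-disjoint}) may be used throughout; and the map $\gamma$ is built from the operators $\partial_{b_r\omega a_s}(S)$ which are again literally the same element under $\iota$. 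Hence $(\ker\gamma)\otimes_FK=\ker(\gamma\otimes\mathrm{id}_K)$ and similarly for $\image\beta,\image\gamma,\ker\alpha$, since $\otimes_FK$ is exact over a field. Therefore $\widetilde\mu_k(M)_k\otimes_FK$ is canonically identified with $\widetilde\mu_k(K\otimes_FM)_k$, provided we choose the splitting data $\rho,\sigma$ for $K\otimes_F\mathcal M$ to be $\mathrm{id}_K\otimes\rho_0$ and $\mathrm{id}_K\otimes\sigma_0$ for splitting data $\rho_0,\sigma_0$ chosen for $\mathcal M$; this is legitimate because the isomorphism class of $\widetilde\mu_k$ does not depend on the splitting data (the proposition preceding Proposition \ref{prop:rep-right-equiv-class-well-defined}). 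With these choices the matrices $\overline\alpha,\overline\beta$ of \eqref{eq:alpha-beta-mutated} are $\mathrm{id}_K\otimes(-)$ of those for $\mathcal M$, so $\widetilde\mu_k(K\otimes_F\mathcal M)=K\otimes_F\widetilde\mu_k(\mathcal M)$ as decorated representations of $(\widetilde\mu_k(A_{KE/K}),\widetilde\mu_k(S))$ — here I use that $\widetilde\mu_k(S)$ is unchanged under scalar extension, as noted right before Proposition \ref{prop:SPmut-commutes-with-ext-of-scalars}.

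Finally, to pass from $\widetilde\mu_k$ to the reduced mutation $\mu_k$, I would invoke Proposition \ref{prop:red-commutes-with-ext-of-scalars}: if $\varphi:(\widetilde\mu_k(A_{E/F}),\widetilde\mu_k(S))\to(\widetilde\mu_k(A)_{\triv},\widetilde\mu_k(S)_{\triv})\oplus(\widetilde\mu_k(A)_{\red},\widetilde\mu_k(S)_{\red})$ is a right-equivalence used to define $\mu_k(\mathcal M)$ (by transporting the module structure along $\varphi$ and then restricting away the trivial summand, as in the paragraph before Proposition \ref{prop:rep-right-equiv-class-well-defined}), then its canonical extension $\widetilde\varphi$ over $KE/K$ is again such a right-equivalence, with the same trivial/reduced splitting. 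Transporting the module $\widetilde\mu_k(K\otimes_F\mathcal M)=K\otimes_F\widetilde\mu_k(\mathcal M)$ along $\widetilde\varphi$ and restricting gives exactly $K\otimes_F\mu_k(\mathcal M)$; since $\mu_k(K\otimes_F\mathcal M)$ is by definition obtained the same way (and is well-defined up to right-equivalence by Proposition \ref{prop:rep-right-equiv-class-well-defined} together with the uniqueness of reduced parts), the two are right-equivalent. \textbf{The main obstacle} I anticipate is purely bookkeeping rather than conceptual: carefully matching the $\B_k$-type and $\B_k^{-1}$-type decompositions of $M_{\Min},M_{\Mout}$ and their base extensions, and verifying that the identifications $(A_{KE/K})_{ij}\cong K\otimes_F(A_{E/F})_{ij}$ are compatible with the actions of $\alpha,\beta,\gamma$ and with the composite-arrow bimodule \eqref{eq:bimod-of-composite-arrows} — in other words, checking that no eigenbasis-dependence or reindexing sneaks in under $\otimes_FK$. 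All of this is routine given that $\B_k$ remains an eigenbasis of $KF_k/K$ and that $\iota$ sends paths to paths and respects multiplication (Remark \ref{rem:path-is-welldef-notion}(1) and Lemma \ref{lemma:extension-of-scalars}).
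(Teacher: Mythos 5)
Your proof is correct and follows essentially the same route as the paper's: establish the $KF_i$-module structure on $K\otimes_F M_i$ and check the conditions of Definition \ref{def:decorated-representation}, then observe that the triangle $\alpha,\beta,\gamma$, the splitting data $\rho,\sigma$, and hence the whole of $\widetilde\mu_k$ commute with the exact functor $K\otimes_F -$, and finally invoke Propositions \ref{prop:red-commutes-with-ext-of-scalars} and \ref{prop:SPmut-commutes-with-ext-of-scalars} to descend to the reduced mutation $\mu_k$. The paper states these three facts more tersely and leaves the verification to the reader, whereas you have spelled out the bookkeeping (eigenbasis stability under $KE/K$, the explicit formulas for $\overrightarrow\bullet,\overleftarrow\bullet$, the paths-to-paths behaviour of $\iota$), but the underlying argument is the same.
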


\begin{proof} Since $M_i$ is an $F_i$-vector space for every $i\in Q_0$, the left $K$-vector space $K\otimes_FM_i$ is actually a left $KF_i$-vector space if we define $y(x\otimes m)=x\otimes ym$ for $y\in F_i$ (that this gives a well-defined left action of $F_i$ on $K\otimes_FM_i$ follows from the fact that $K\cap F_i=F$). Furthermore, for any given arrow $a\in Q_1$, we have $a_{K\otimes_FM}=K\otimes_Fa_M$, that is, $a_{K\otimes_FM}(x\otimes m)=x\otimes a_{M}(m)\in K\otimes_FM_{h(a)}$ for $x\in K$ and $m\in M_{t(a)}$. Hence, the fact that $K\otimes_F\mathcal{M}$ is a decorated representation of $(A_{KE/K},S)$ follows by directly checking that the conditions in Definition \ref{def:decorated-representation} are satisfied.

We have the following facts:
\begin{itemize}
\item Given the triangle of $F_k$-linear maps \eqref{eq:triangle-alpha-beta-gamma} induced by $M$ as a representation of $(A,S)$,  the corresponding triangle of $KF_k$-linear maps induced by $K\otimes_FM$ as a representation of $(A_{KE/K},S)$ can be obtained from \eqref{eq:triangle-alpha-beta-gamma} by applying $K\otimes_F-$ to all spaces and maps involved.
\item $K\otimes_F-$ is an exact functor.
\item The splitting data \eqref{eq:rho} and \eqref{eq:sigma} corresponding to $M$ can be chosen in such a way that applying $K\otimes_F-$ to it produces a splitting data for $K\otimes_FM$.
\end{itemize}
These facts, together with Propositions \ref{prop:red-commutes-with-ext-of-scalars} and \ref{prop:SPmut-commutes-with-ext-of-scalars}, imply that $\mu_k(K\otimes_F\mathcal{M})$ and $K\otimes_F\mu_k(\mathcal{M})$ are right-equivalent.
\end{proof}

\section{Relation to Dlab-Ringel reflection functors}\label{sec:Dlab-Ringel}

In this section we show that in the special case where the mutating vertex $k\in Q_0$ is a sink or a source, our definition of mutation of representations specializes to the classical reflection functors of Dlab-Ringel (at the level of objects, for we have not defined mutations functorially). Our reference for the definition of Dlab-Ringel reflection functors is \cite{DR}.

Let us first recall the definition of Dlab-Ringel reflection functors at sinks. Let $A$ be the arrow span of $(Q,\dtuple)$ over $E/F$,
and suppose that $k$ is a sink of $Q$. 
Let
$M=((M_i)_{i\in Q_0},(M_{a})_{a\in Q_1})$ be a representation of $A$. That is, for each $i\in Q_0$, $M_i$ is a finite-dimensional (left) $F_i$-vector
space, and for each arrow $a\in Q_1$, $M_{a}$ is an $F$-linear map from $M_{t(a)}$ to $M_{h(a)}$. Let $\mu_k(A)$ be the arrow span of $\mu_k(Q,\dtuple)$ over $E/F$ (note that since $k$ is a sink of $Q$, the weighted quiver $\mu_k(Q,\dtuple)$ is obtained from $(Q,\dtuple)$ by simply reversing the arrows incident to $k$).
Dlab-Ringel define a representation $\DR^+_k(M)$ of $\mu_k(Q,\dtuple)$ over $E/F$ as follows (they denote $\DR^+_k(M)$ by $S^+_k(M)$ instead).
Let $a_1,\ldots,a_p$ be the arrows of $Q$ that point towards $k$. The vector spaces attached in $\DR^+_k(M)$ to the vertices of $(Q,\dtuple)$ are
$$
\DR^+_k(M)_i=\begin{cases}\ker\alpha & \text{if $i=k$;}\\
M_i & \text{if $i\neq k$;}
\end{cases}
$$
where $\alpha:M_{\Min}=\bigoplus_{s=1}^pF_k\otimes_FM_{t(a_s)}\rightarrow M_k$ is the $F_k$-linear map in \eqref{eq:alpha-&-beta-assembled}, whose components are determined by $M_{a_1},\ldots,M_{a_p}$, by means of Lemma \ref{lemma:tensor-products}. The linear maps attached by $\DR^+_k(M)$ to the arrows of $(Q,\dtuple)$ are given by
$$
\DR^+_{k}(M)_c=\begin{cases}\overleftarrow{\pi_s\iota}  & \text{if $c=a_s^\star$ for some $s=1,\ldots,p$;}\\
M_c & \text{if $c$ is not incident to $k$;}
\end{cases}
$$
where $\iota$ is the inclusion $\ker\alpha\hookrightarrow M_{\Min}$, $\pi_s$ is the canonical projection $M_{\Min}\rightarrow F_k\otimes_FM_{t(a_s)}$ (remember that $F_k\otimes_FM_{t(a_s)}$ is a direct summand of $M_{\Min}$), and $\overleftarrow{\pi_s\iota}$ is the $F$-linear map $\ker\alpha\to M_{t(a_s)}$ corresponding to the $F_k$-linear map $\pi_s\iota: \ker\alpha\rightarrow F_k\otimes_FM_{t(a_s)}$ in Lemma \ref{lemma:tensor-products}.

Now suppose that instead of a representation of $A$ we are given a decorated representation of $(A,S)$ for any potential $S$, say $\mathcal{M}=(M,V)$. The space $M_{\Mout}$ is $0$ since $k$ is a sink of $Q$. Thus the triangle of $F_k$-linear maps \eqref{eq:triangle-alpha-beta-gamma} becomes
\begin{equation}\nonumber
\xymatrix{
 & M_k \ar[dr]^{\beta=0} & \\
M_{\operatorname{in}} \ar[ur]^{\alpha} & & 0 \ar[ll]^{\gamma=0},
}
\end{equation}
where $\alpha:M_{\Min}=\bigoplus_{s=1}^pF_k\otimes_FM_{t(a_s)}\rightarrow M_k$ is given as in the previous paragraph. Since $\frac{\ker\gamma}{\image\beta}=0=\image\gamma$ and $\frac{\ker\beta}{\ker\beta\cap\image\alpha}=\coker\alpha$, the vector spaces attached by $\mu_k(\mathcal{M})=(\overline{M},\overline{V})$ to the vertices of $(Q,\dtuple)$ are
$$
\overline{M}_i=\begin{cases}\ker\alpha\oplus V_k & \text{if $i=k$;}\\
M_i & \text{if $i\neq k$;}
\end{cases}
\ \ \ \text{and} \ \ \
\overline{V}_i=\begin{cases}\coker\alpha & \text{if $i=k$;}\\
V_i & \text{if $i\neq k$.}
\end{cases}
$$
As for the maps $\overline{M}_c$, we first note that the choice of a section $\frac{\ker\alpha}{\image\gamma}\to\ker\alpha$ and a retraction $M_{\Mout}\to\ker\gamma$ become immaterial (both of them are forced to be the corresponding identity map). Therefore,
the linear maps attached by $\overline{M}$ to the arrows of $(Q,\dtuple)$ are given by
$$
\overline{M}_c=\begin{cases}[\overleftarrow{\pi_s\iota} \ 0]  & \text{if $c=a_s^\star$ for some $s=1,\ldots,p$;}\\
M_c & \text{if $c$ is not incident to $k$.}
\end{cases}
$$

These considerations imply the following.

\begin{lemma} Suppose $k$ is a sink of $Q$. Let $\mathcal{M}=(M,V)$ be an indecomposable decorated representation of $(A,S)$, so that either $\mathcal{M}=(M,0)$ or $\mathcal{M}=\mathcal{S}^-_j(A,S)$ for some $j\in Q_0$.
\begin{enumerate}
\item If $\mathcal{M}=(M,0)$, then $\mu_k(\mathcal{M})=(\DR^+_k(M),0)\oplus\left(\mathcal{S}^-_k(A,S)\right)^{\delta_{\mathcal{M},\mathcal{S}_k(A,S)}}$, where
$\delta_{\mathcal{M},\mathcal{S}_k(A,S)}$ is the \emph{Kronecker delta} between $\mathcal{M}$ and $\mathcal{S}_k(A,S)$.
\item If $\mathcal{M}=\mathcal{S}^-_j(A,S)$, then $\mu_k(\mathcal{M})=(\mathcal{S}_k(A,S))^{\delta_{j,k}}\oplus(\mathcal{S}^-_j(A,S))^{1-\delta_{j,k}}$.
\end{enumerate}
\end{lemma}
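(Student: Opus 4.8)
The plan is to verify the two cases by directly unwinding the explicit description of $\mu_k(\mathcal{M})$ that precedes the lemma, specializing the general mutation construction of Section~\ref{sec:reps-and-repmutations} to a sink. The key simplifications at a sink are that $M_{\Mout}=0$, so the maps $\beta$ and $\gamma$ in the triangle \eqref{eq:triangle-alpha-beta-gamma} are zero, whence $\ker\gamma/\image\beta = 0$, $\image\gamma = 0$, $\ker\alpha/\image\gamma = \ker\alpha$, and $\ker\beta/(\ker\beta\cap\image\alpha) = M_k/\image\alpha = \coker\alpha$. Consequently $\mu_k(M)_k = \ker\alpha\oplus V_k$, $\mu_k(V)_k = \coker\alpha$, the splitting data become canonical, and $\overline{M}_{a_s^\star} = [\,\overleftarrow{\pi_s\iota}\ \ 0\,]$ while all arrows not incident to $k$ act as in $M$. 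This is precisely the content already displayed in the paragraphs immediately above the lemma, so the proof is essentially a matter of comparing this description with the definitions of $\DR^+_k$, of $\mathcal{S}_k(A,S)$, and of $\mathcal{S}^-_k(A,S)$.

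For part (1), where $\mathcal{M} = (M,0)$, I would argue as follows. Since $V = 0$, we get $\overline{M}_k = \ker\alpha$ and $\overline{V}_k = \coker\alpha$, with $\overline{M}_i = M_i$, $\overline{V}_i = 0$ for $i\neq k$; the arrows act by $\overline{M}_{a_s^\star} = \overleftarrow{\pi_s\iota}$ (the second block being absent) and by $M_c$ otherwise. Comparing with the recalled definition of $\DR^+_k(M)$, the underlying non-decorated representation of $\mu_k(\mathcal{M})$ restricted to the $\overline M$-part is literally $\DR^+_k(M)$, so $\mu_k(\mathcal{M})$ equals $(\DR^+_k(M), 0)$ plus the purely negative piece supported at $k$ coming from $\coker\alpha$. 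It remains to identify $\coker\alpha$ as a multiple of the simple $R$-module at $k$: since $\coker\alpha$ is an $F_k$-vector space, the negative decorated representation $(0,\coker\alpha)$ is a direct sum of $\dim_{F_k}(\coker\alpha)$ copies of $\mathcal{S}^-_k(A,S)$. To pin down the exponent as the Kronecker delta $\delta_{\mathcal{M},\mathcal{S}_k(A,S)}$, one observes that $\mathcal{M}$ indecomposable and positive with $k$ a sink forces $\alpha$ to be surjective unless $\mathcal{M}\cong\mathcal{S}_k(A,S)$: if $\alpha$ were not surjective, $\coker\alpha\neq 0$ would split off a direct summand isomorphic to $\mathcal{S}_k(A,S)$ at the level of $M$ (a sink vertex with $M_k\not\subseteq\image\alpha$ yields an obvious decomposition $M = M'\oplus \mathcal{S}_k$), contradicting indecomposability of $M$ unless $M = \mathcal{S}_k(A,S)$ itself, in which case $\alpha = 0$ and $\coker\alpha = M_k$ is one-dimensional over $F_k$. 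This gives exactly one copy of $\mathcal{S}^-_k(A,S)$ when $\mathcal{M} = \mathcal{S}_k(A,S)$ and none otherwise.

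For part (2), where $\mathcal{M} = \mathcal{S}^-_j(A,S)$, we have $M = 0$, so all the maps $\alpha,\beta,\gamma$ are zero maps between zero or trivial spaces, $\ker\alpha = 0$, and the decoration is $V_i = \delta_{i,j}F_j$. Then $\overline{M}_k = \ker\alpha\oplus V_k = V_k$, which is $F_k$ (one-dimensional) if $j = k$ and $0$ if $j\neq k$; meanwhile $\overline{V}_k = \coker\alpha = 0$ since $M_k = 0$, and $\overline{V}_i = V_i$ for $i\neq k$. Thus if $j = k$ the output is the positive representation with a one-dimensional space at $k$ and nothing else, i.e.\ $\mathcal{S}_k(A,S)$; if $j\neq k$ the output has $V_j = F_j$ and everything else zero, i.e.\ $\mathcal{S}^-_j(A,S)$. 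Packaging this with Kronecker deltas gives the stated formula $\mu_k(\mathcal{M}) = (\mathcal{S}_k(A,S))^{\delta_{j,k}}\oplus(\mathcal{S}^-_j(A,S))^{1-\delta_{j,k}}$.

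The only genuinely substantive point — and the step I expect to be the main obstacle — is the identification of the exponent in part (1): namely showing that for $\mathcal{M}$ an indecomposable positive representation with $k$ a sink, $\alpha$ fails to be surjective precisely when $\mathcal{M}\cong\mathcal{S}_k(A,S)$. This is a standard fact about representations of species at a sink, and it can be proved by choosing an $F_k$-vector space complement $W$ to $\image\alpha$ in $M_k$, noting that $W$ carries no arrow action (all arrows into $k$ land in $\image\alpha$, and $k$ is a sink so there are no arrows out of $k$), and hence $W$ together with the zero spaces at all other vertices is a direct summand of $M$ isomorphic to $(\dim_{F_k} W)$ copies of $\mathcal{S}_k(A,S)$; indecomposability then forces either $W = 0$ or $M = \mathcal{S}_k(A,S)$. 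Everything else in the proof is bookkeeping against the explicit formulas already established in the text, so I would keep that part terse.
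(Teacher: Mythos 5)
Your proposal is correct and follows the same approach as the paper, which derives the lemma by specializing the general mutation construction to a sink (where $\beta=\gamma=0$) and reading off $\overline M_k = \ker\alpha\oplus V_k$, $\overline V_k = \coker\alpha$, with the $a_s^\star$-action given by $[\,\overleftarrow{\pi_s\iota}\ \ 0\,]$. The paper leaves the Kronecker-delta exponent in part (1) as an implicit ``consideration,'' and your observation that $\coker\alpha$ splits off as a direct summand of copies of $\mathcal{S}_k(A,S)$ at the sink (so that indecomposability forces either surjectivity of $\alpha$ or $M\cong\mathcal{S}_k(A,S)$) is exactly the bookkeeping that makes this rigorous.
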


Now suppose that $k$ is a source of $Q$. As before, let $\mu_k(A)$ be the arrow span of $\mu_k(Q,\dtuple)$ over $E/F$ (since $k$ is a source of $Q$, the weighted quiver $\mu_k(Q,\dtuple)$ is obtained from $(Q,\dtuple)$ by simply reversing the arrows incident to $k$).
Dlab-Ringel define a representation $\DR^-_k(M)$ of $\mu_k(Q,\dtuple)$ over $E/F$ as follows (they denote $\DR^-_k(M)$ by $S^-_k(M)$ instead).
Let $b_1,\ldots,b_q$ be the arrows of $Q$ that point towards $k$. The vector spaces attached in $\DR^-_k(M)$ to the vertices of $(Q,\dtuple)$ are
$$
\DR^-_k(M)_i=\begin{cases}\coker\beta & \text{if $i=k$;}\\
M_i & \text{if $i\neq k$;}
\end{cases}
$$
where $\beta:M_k\rightarrow \bigoplus_{s=1}^q F_k\otimes_FM_{h(b_s)}=M_{\Mout}$ is the $F_k$-linear map in \eqref{eq:alpha-&-beta-assembled}, whose components are determined by $M_{b_1},\ldots,M_{b_q}$, by means of Lemma \ref{lemma:tensor-products}. The linear maps attached by $\DR^-_k(M)$ to the arrows of $(Q,\dtuple)$ are given by
$$
\DR^-_{k}(M)_c=\begin{cases}\overrightarrow{\pi\iota_s}  & \text{if $c=b_s^\star$ for some $s=1,\ldots,q$;}\\
M_c & \text{if $c$ is not incident to $k$;}
\end{cases}
$$
where $\iota_s$ is the inclusion $F_{k}\otimes_FM_{h(b_s)}\hookrightarrow M_{\Mout}$, $\pi$ is the canonical projection $M_{\Mout}\rightarrow \coker\beta$, and $\overrightarrow{\pi\iota_s}$ is the $F$-linear map $M_{h(b_s)}\to\coker\beta$ corresponding to the $F_k$-linear map $\pi\iota_s: F_k\otimes_FM_{h(b_s)}\rightarrow \coker\beta$ in Lemma \ref{lemma:tensor-products}.

Now suppose that instead of a representation of $A$ we are given a decorated representation of $(A,S)$ for any potential $S$, say $\mathcal{M}=(M,V)$. The space $M_{\Min}$ is $0$ since $k$ is a source of $Q$. Thus the triangle of $F_k$-linear maps \eqref{eq:triangle-alpha-beta-gamma} becomes
\begin{equation}\nonumber
\xymatrix{
 & M_k \ar[dr]^{\beta} & \\
0 \ar[ur]^{\alpha=0} & & M_{\Mout} \ar[ll]^{\gamma=0},
}
\end{equation}
where $\beta:M_k\rightarrow \bigoplus_{s=1}^q F_k\otimes_FM_{h(b_s)}=M_{\Mout}$ is given as in the previous paragraph. Since $\ker\alpha=0=\image\gamma$ and $\frac{\ker\beta}{\ker\beta\cap\image\alpha}=\ker\beta$, the vector spaces attached by $\mu_k(\mathcal{M})=(\overline{M},\overline{V})$ to the vertices of $(Q,\dtuple)$ are
$$
\overline{M}_i=\begin{cases}\coker\beta\oplus V_k & \text{if $i=k$;}\\
M_i & \text{if $i\neq k$;}
\end{cases}
\ \ \ \text{and} \ \ \
\overline{V}_i=\begin{cases}\ker\beta & \text{if $i=k$;}\\
V_i & \text{if $i\neq k$.}
\end{cases}
$$
As for the maps $\overline{M}_c$, we first note that the choice of a section $\frac{\ker\alpha}{\image\gamma}\to\ker\alpha$ and a retraction $M_{\Mout}\to\ker\gamma$ become immaterial (both of them are forced to be the corresponding identity map). Therefore,
the linear maps attached by $\overline{M}$ to the arrows of $(Q,\dtuple)$ are given by
$$
\overline{M}_c=\begin{cases}\left[\begin{array}{c}\overrightarrow{\pi\iota_s}\\ 0\end{array}\right]  & \text{if $c=b_s^\star$ for some $s=1,\ldots,q$;}\\
M_c & \text{if $c$ is not incident to $k$.}
\end{cases}
$$

These considerations imply the following.

\begin{lemma} Suppose $k$ is a source of $Q$. Let $\mathcal{M}=(M,V)$ be an indecomposable decorated representation of $(A,S)$, so that either $\mathcal{M}=(M,0)$ or $\mathcal{M}=\mathcal{S}^-_j(A,S)$ for some $j\in Q_0$.
\begin{enumerate}
\item If $\mathcal{M}=(M,0)$, then $\mu_k(\mathcal{M})=(\DR^-_k(M),0)\oplus\left(\mathcal{S}^-_k(A,S)\right)^{\delta_{\mathcal{M},\mathcal{S}_k(A,S)}}$, where
$\delta_{\mathcal{M},\mathcal{S}_k(A,S)}$ is the \emph{Kronecker delta} between $\mathcal{M}$ and $\mathcal{S}_k(A,S)$.
\item If $\mathcal{M}=\mathcal{S}^-_j(A,S)$, then $\mu_k(\mathcal{M})=(\mathcal{S}_k(A,S))^{\delta_{j,k}}\oplus(\mathcal{S}^-_j(A,S))^{1-\delta_{j,k}}$.
\end{enumerate}
\end{lemma}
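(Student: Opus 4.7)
The plan is to exploit the fact that all the hard work has already been done in the explicit formulas immediately preceding the lemma, so the proof reduces to a direct comparison between those formulas and Dlab–Ringel's definition of $\DR^-_k(M)$, together with a short indecomposability argument to rule out spurious negative-simple summands.

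First I would dispose of part (2), which is essentially a computation with zero spaces. If $\mathcal{M}=\mathcal{S}^-_j(A,S)$ then $M=0$, so $M_{\Min}=M_{\Mout}=0$ and all three maps $\alpha$, $\beta$, $\gamma$ in \eqref{eq:triangle-alpha-beta-gamma} vanish identically. Plugging into the formulas for $\overline{M}_i$ and $\overline{V}_i$ gives $\overline{M}_i=V_k$ if $i=k$ and $0$ otherwise, while $\overline{V}_i=0$ for $i=k$ and $\overline{V}_i=V_i$ for $i\neq k$. Splitting on whether $j=k$ yields exactly $\mathcal{S}_k(\mu_k(A,S))$ or $\mathcal{S}^-_j(\mu_k(A,S))$ as claimed.

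For part (1), with $\mathcal{M}=(M,0)$, the formulas in the paragraph preceding the lemma give $\overline{M}_k=\coker\beta\oplus 0=\coker\beta$, $\overline{V}_k=\ker\beta$, and the unique nontrivial new maps are $\overline{M}_{b_s^\star}=\left[\begin{smallmatrix}\overrightarrow{\pi\iota_s}\\ 0\end{smallmatrix}\right]=\overrightarrow{\pi\iota_s}$ (since the bottom row acts on the vanishing $V_k$-summand). Comparing term by term with Dlab–Ringel's definition of $\DR^-_k(M)$ recalled just above the lemma, I see that the positive part $(\coker\beta, \overline{M}_c)_{i, c}$ of $\mu_k(\mathcal{M})$ coincides exactly with $(\DR^-_k(M),0)$, while the decorated part contributes $(\mathcal{S}^-_k(\mu_k(A,S)))^{\dim_{F_k}\ker\beta}$.

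The main (and only) substantive step will then be to show that $\ker\beta=0$ whenever $M$ is indecomposable and $M\neq \mathcal{S}_k(A,S)$, and that $\dim_{F_k}\ker\beta=1$ when $M=\mathcal{S}_k(A,S)$. The second assertion is immediate: for $M=\mathcal{S}_k$, $M_{\Mout}=0$ forces $\beta=0$ on $M_k=F_k$. For the first, since $k$ is a source, any $F_k$-subspace $W\subseteq\ker\beta$ is automatically a subrepresentation (all incoming maps to $k$ are zero by assumption, and all outgoing maps vanish on $W$ by definition of $\ker\beta$), isomorphic to $\mathcal{S}_k^{\dim_{F_k} W}$. Choosing an $F_k$-complement of $\ker\beta$ in $M_k$ and noting that all arrows outgoing from $k$ factor through this complement, one obtains a direct-sum decomposition $M\cong M'\oplus \mathcal{S}_k^{\dim_{F_k}\ker\beta}$ in the category of representations of $(A,S)$. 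Indecomposability of $M$ together with $M\neq \mathcal{S}_k$ then forces $\ker\beta=0$, as desired. With this in hand the claimed identity $\mu_k(\mathcal{M})=(\DR^-_k(M),0)\oplus (\mathcal{S}^-_k(A,S))^{\delta_{\mathcal{M},\mathcal{S}_k(A,S)}}$ follows at once. The only subtle point is verifying that the splitting $M\cong M'\oplus \mathcal{S}_k^{\dim_{F_k}\ker\beta}$ is compatible with the relations $\partial_a(S)$, but since the relevant potentials cannot contain paths starting at $k$ (as $k$ is a source and such cyclic paths would have to pass through $k$), this compatibility is automatic.
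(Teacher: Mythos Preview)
Your proof is correct and follows exactly the route the paper intends: the paper offers no proof beyond the sentence ``These considerations imply the following,'' and you have made explicit the comparison between the formulas for $\overline{M}$, $\overline{V}$ and Dlab--Ringel's definition of $\DR^-_k(M)$, together with the standard indecomposability argument that identifies $\dim_{F_k}\ker\beta$ with $\delta_{\mathcal{M},\mathcal{S}_k(A,S)}$. One small simplification: your closing worry about compatibility of the splitting $M\cong M'\oplus \mathcal{S}_k^{\dim_{F_k}\ker\beta}$ with the relations $\partial_a(S)$ is unnecessary, since any direct-sum decomposition of $M$ as an $\RA{A}$-module is automatically a decomposition as a $\mathcal{P}(A,S)$-module (the Jacobian ideal annihilates each summand because it annihilates $M$).
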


We see that, for species satisfying \eqref{eq:coprime-assumption}, \eqref{eq:d-root-of-unity} and \eqref{eq:E/F-Galois-simple-spectrum}, the mutations of species with potentials we have defined here constitute a generalization of Dlab-Ringel reflection functors which is completely analogous to the generalization of Bernstein-Gelfand-Ponomarev reflection functors provided by the mutations of quivers with potentials in the simply laced case. Furthermore, the way we have generalized the QP-mutation theory of \cite{DWZ1} is analogous to the way that Dlab-Ringel reflection functors generalize Bernstein-Gelfand-Ponomarev reflection functors.

\begin{remark} Dlab-Ringel define reflection functors for sinks and sources in a setup that is far more general than that of finite or $p$-adic fields, allowing division rings to be attached to the vertices of a weighted quiver. Moreover, the tuple $\dtuple$ is allowed to be arbitrary. In this more general context the present paper does not provide a generalization of Dlab-Ringel reflection functors.
\end{remark}

\section{Unfoldings}\label{sec:unfoldings}

This paper has followed \cite{DWZ1} extending its setup to \emph{strongly primitive} skew-symmetrizable integer matrices, that is, integer matrices $B$ for which there exist pairwise coprime positive integers $d_1, \dots, d_n$ with $d_i b_{ij} = - d_j b_{ji}$ for all $i$ and
$j$. 
We intend to eventually apply this framework to cluster algebras in the spirit of \cite{DWZ2}.
However, there is a general method for reducing statements about skew-symmetrizable cluster algebras to the skew-symmetric ones based on foldings/unfoldings, and we would like to
be sure that our restrictions still cover some matrices $B$ \emph{not} allowing any unfoldings from the cluster algebra point of view. That is, we would like to show that there exist strongly primitive skew-symmetrizable matrices whose unfoldings are not compatible with matrix mutations.

The general notion of unfolding for skew-symmetrizable matrices goes back at least to \cite[Step~1, Section~2.4]{FZ-Y-systems}, and was then used by
G.~Dupont \cite{Dupont} and L.~Demonet \cite{Demonet}.
We present it in the  form suggested by the second author several years ago (unpublished), and later reproduced in \cite{FeST}.

\begin{defi}
For a skew-symmetrizable $n \times n$ integer matrix $B$ (not necessarily strongly primitive), an \emph{unfolding} of
$B$ is a triple $(C,(e_i)_{1\leq i\leq n},(E_i)_{1\leq i\leq n})$ consisting of a choice of:
\begin{itemize}
\item positive integers $e_1, \dots, e_n$ such that $b_{ij} e_j = - b_{ji} e_i$;
\item disjoint index sets $E_1, \dots, E_n$ with $|E_i| = e_i$; and
\item a skew-symmetric  integer matrix $C$ with rows and columns
indexed by the union of all $E_i$,
\end{itemize}
satisfying the following conditions:
\begin{enumerate}
\item the sum of entries in each column of each $E_i \times E_j$ block of
$C$ is equal to $b_{ij}$;
\item if $b_{ij} \geq 0$ then the $E_i \times E_j$ block of $C$ has all
entries nonnegative.
\end{enumerate}
\end{defi}

Note that (2) implies a weaker condition:

\smallskip

$(2')$ each $E_i \times E_i$ diagonal block of $C$ is $0$.

\smallskip

This condition makes each \emph{composite mutation}
$\mu_k = \prod_{\bar k \in E_k} \mu_{\bar k}$ well-defined for $C$ since the factors in the product commute with each other.
However $(\mu_k(C),(e_i)_{1\leq i\leq n},(E_i)_{1\leq i\leq n})$ is not necessarily an unfolding of $\mu_k(B)$ since condition (2) may
become violated.

\begin{defi}
We say that $B$ \emph{admits a global unfolding} if an unfolding $(C,(e_i)_{1\leq i\leq n},(E_i)_{1\leq i\leq n})$ of $B$ exists with the property that for every finite sequence $(k_1,\ldots,k_\ell)$ of indices from $\{1,\ldots,n\}$, the data $(\mu_{k_\ell}\ldots\mu_{k_1}(C),(e_i)_{1\leq i\leq n},(E_i)_{1\leq i\leq n})$ is an unfolding of $\mu_{k_\ell}\ldots\mu_{k_1}(B)$, that is, if condition (2) is never violated.
\end{defi}

Here is an example of a family of global unfoldings.


\begin{ex}\label{ex:unfoldable-class} Suppose $B$ is an $n\times n$ skew-symmetrizable matrix satisfying the following condition: there exist positive integers $e_1, \dots, e_n$ such that
\begin{itemize}
\item $b_{ij} e_j = - b_{ji} e_i$ for all $i,j$;
\item $e_i$ divides $b_{ij}$ for all $i,j$.
\end{itemize}
We claim that $B$ admits a global unfolding. Let $E_1, \dots, E_n$ be disjoint sets with $|E_i| = e_i$.
Define the matrix $C$ with rows and columns
indexed by the union of all $E_i$ by setting
$$
c_{\bar i \ \bar j} = b_{ij}/e_i \quad (\bar i \in E_i, \ \bar j \in E_j) \ .
$$
It is easy to see that these choices define an unfolding $(C,(e_i)_{1\leq i\leq n},(E_i)_{1\leq i\leq n})$ of $B$.
For every $k = 1, \dots, n$, it is easy to check that $(\mu_k(C),(e_i)_{1\leq i\leq n},(E_i)_{1\leq i\leq n})$ is an unfolding of $\mu_k(B)$ that is obtained from $\mu_k(B)$ in the same way as $C$ from $B$.
\end{ex}

However it is even more essential for us to give a series of examples (including some primitve ones) that do \emph{not} admit a global unfolding.

\begin{ex}\label{ex:non-unfoldable}
Let $a$ and $b$ be two positive integers such that $a < b$, and $b$ is \emph{not} a multiple of $a$.
We associate with $a$ and $b$ a skew-symmetrizable integer $4 \times 4$ matrix
$$B = \begin{pmatrix} 0 & -a & 0 & b \\ 1 & 0 & -1 & 0 \\ 0 &  a & 0 &
-b \\ -1 & 0 & 1 & 0 \end{pmatrix} \ .$$
Note that we can choose the skew-symmetrizing tuple $(d_1,d_2,d_3,d_4)$ as $(1,a,1,b)$; in particular, it is primitive
if $a$ and $b$ are coprime.
We claim that even under a weaker condition that $a < b$, and $a$ does not divide $b$, the matrix $B$ does \emph{not} admit a global unfolding.

Note that $(e_1,e_2,e_3,e_4) = (N, N/a, N, N/b)$, where $N$ is a common multiple of $a$ and $b$.
We choose the corresponding disjoint sets $E_1, \dots, E_4$, and let $C$ be a skew-symmetric integer matrix that provides an unfolding of $B$.
Let $C_{ij}$ denote the $E_i \times E_j$ block of $C$.
Thus the blocks $C_{21}, C_{32}, C_{43}, C_{14}$ consist of nonnegative integer entries, the opposite blocks $C_{12}, \dots C_{41}$
consist of nonpositive integer entries, and the remaning blocks are $0$.
Furthermore, since $b_{21} = b_{43} = 1$, the blocks $C_{21}$ and $C_{43}$ consist of $0$'s and $1$'s; by the same token,
since $b_{23} = b_{41} = -1$, the blocks $C_{23}$ and $C_{41}$ consist of $0$'s and $-1$'s.
Using the fact that $C$ is skew-symmetric, we conclude that all entries of $C$ belong to $\{-1, 0, 1\}$.

Now let $B' = \mu_2 \mu_4 (B)$ (since $b_{24} = 0$, the order of factors does not matter).
By a direct calculation, we have
$$B' = \begin{pmatrix} 0 & a & b-a & -b \\ -1 & 0 & 1 & 0 \\ a-b &  -a & 0 &
b \\ 1 & 0 & -1 & 0 \end{pmatrix} \ .$$
Let $C' = \mu_2 \mu_4 (C)$.
We claim that $C'$ cannot satisfy condition (2) in the definition of an unfolding.
More precisely, we will show that the block $C'_{13}$ has to contain at least one positive entry and at least one negative entry.

By the definition, $C'_{13}$ is a $N \times N$ matrix given by $C'_{13} = C_{14} C_{43} - C_{12} C_{23}$.
We note that $C_{14} C_{43}$ is a $(0,1)$ matrix that can be described as follows: relabeling $E_1$ and $E_3$ if necessary,
$C_{14} C_{43}$ becomes a block diagonal matrix with $N/b$ blocks, each block being equal to a $b \times b$ matrix $\mathbf{1}_b$ filled with $1$'s.
Similarly, relabeling $E_1$ and $E_3$ if necessary,
$C_{12} C_{23}$ becomes a block diagonal matrix with $N/a$ blocks, each block being equal to $\mathbf{1}_a$.
Now $C_{14} C_{43}$ has $Nb$ entries equal to $1$, while  $C_{12} C_{23}$ has $Na$ entries equal to $1$.
Since $a < b$, it follows that $C'_{13}$ has at least one entry equal to $1$.

On the other hand, sice $b$ is not a multiple of $a$, there exists a block $\mathbf{1}_a$ of $C_{12} C_{23}$ not contained
in any block $\mathbf{1}_b$ of $C_{14} C_{43}$.
It follows that there is an entry in this block $\mathbf{1}_a$ not contained in the union of all blocks of $C_{14} C_{43}$.
Thus, the corresponding entry of $C'_{13}$ is equal to $-1$, finishing the proof that $B$ does not admit a global unfolding.
\end{ex}

\begin{remark}
The global non-unfoldability of the above matrix $B$ for $(a,b) = (2,3)$ was found 
by D.~Speyer and the second author several years ago (unpublished).
The idea to include it into the above family of examples was suggested by F.~Petrov.
\end{remark}

We conclude this section with a brief discussion of how global unfoldings can be used to reduce the proofs of certain properties
of skew-symmetrizable cluster algebras to he corresponding properties of skew-symmetric ones.
For simplicity we consider only the coefficient-free cluster algebras although the reduction procedure can be generalized to various coefficient systems.

Let $B$ be as above.
Recall that the coefficient-free cluster algebra $\mathcal{A}(B)$ is a subring of the \emph{ambient field} $\mathcal{F} = \Q(u_1, \dots, _n)$ generated by the union of all
\emph{clusters}.
We start with the \emph{initial cluster} $\{x_1, \dots, x_n\}$ which is a free generating set for $\mathcal{F}$.
Together with the initial exchange matrix $B$, this cluster forms an initial \emph{seed}.
For each $k = 1, \dots, n$, the mutation $\mu_k$ transforms this seed into a new one, with the exchange matrix $B' = \mu_k(B)$ and the new cluster obtained from $\{x_1, \dots, x_n\}$
by exchanging $x_k$ with $x'_k$ given by the \emph{exchange relation}
$$x_k x'_k = \prod_i x_i^{[b_{ik}]_+} +  \prod_i x_i^{[- b_{ik}]_+} \ .$$
Iterating this mutation procedure, we obtain all clusters of $\mathcal{A}(B)$.

Now suppose that a skew-symmetric matrix $C$ as above defines a global unfolding of $B$.
Consider all clusters of $\mathcal{A}(C)$ obtained from the initial one by a sequence of ``composite mutations" $\mu_k$
(recall that $\mu_k = \prod_{\bar k \in E_k} \mu_{\bar k}$), and let $\mathcal{A}(C)^\circ$ denote the subalgebra of
$\mathcal{A}(C)$ generated by the union of all these clusters.
The conditions in the definition of an unfolding imply that the map of initial clusters given by $x_{\bar i} \mapsto x_i$ for $\bar i \in E_i$, extends to a surjective ring
homomorphism $\mathcal{A}(C)^\circ \to \mathcal{A}(B)$.
Therefore $\mathcal{A}(B)$ can be identified with a subquotient of $\mathcal{A}(C)$.
It is this identification that allows to deduce various properties of $\mathcal{A}(B)$ from the corresponding properties of $\mathcal{A}(C)$.

\section{Other developments}\label{sec:other-developments}

Earlier approaches to non--skew-symmetric cluster algebras via representations of quivers or species include:
\begin{itemize}
\item L. Demonet's generalization \cite{Demonet} of Derksen-Weyman-Zelevinsky's mutation theory of quivers with potentials through a mutation theory of \emph{group species with potentials} that succesfully covers all skew-symmetrizable matrices in Example \ref{ex:unfoldable-class} above, and all skew-symmetrizable matrices that are mutation-equivalent to acyclic ones.
\item G. Dupont's approach \cite{Dupont} to cluster algebras whose exchange matrices admit global unfoldings.
\item B. Nguefack's mutations of species with potentials in a quite general setting \cite{Nguefack}.
\item D. Rupel's approach \cite{Rupel,Rupel2} to acyclic skew-symmetrizable cluster algebras and their quantum versions, using representations of species over finite fields. 
\item D. Speyer and H. Thomas' description \cite{ST} of $c$-vectors for acyclic seeds of
acyclic skew-symmetrizable cluster algebras, using derived categories of species over finite fields.
\end{itemize}
Let us say a few words about the works of Demonet \cite{Demonet} and Nguefack \cite{Nguefack}, for both of them involve species and potentials.

In \cite{Nguefack}, B. Nguefack develops a quite general approach to a mutation
theory of species with potentials. He considers a notion of species far
more general than the one we have considered in the present paper.
However, in that generality it is quite difficult to address the problem of existence of non-degenerate potentials or the definition of
mutations of representations.

Through a mutation theory of \emph{group species with potentials} \cite{Demonet}, a generalization of Derksen-Weyman-Zelevinsky's mutation theory of quivers with potentials is developed by L. Demonet that covers every matrix mutation-equivalent to an acyclic skew-symmetrizable matrix, as well as every skew-symmetrizable matrix $B$ satisfying the condition that
\begin{eqnarray}\label{eq:Demonet-condition}
&&\text{there exist positive integers $c_1,\ldots,c_n$, such that $C^{-1}B$ is skew-symmetric}\\
\nonumber
&&\text{and has integer entries, where $C=\diag(c_1,\ldots,c_n)$}.
\end{eqnarray}

As we saw in Section \ref{sec:unfoldings}, the approach we have taken in the present paper covers a large class of skew-symmetrizable matrices $B$ that do not admit global unfoldings, and hence do not satisfy \eqref{eq:Demonet-condition}.
This is the main difference between our approach and that of Demonet.
Another significant difference concerns the algebraic-combinatorial setups. We give a brief comparison of these.
Let $d=\lcm(d_j\suchthat 1\leq j\leq n)$, and for each $i\in Q_0=\{1,\ldots,n\}$, let $d_i'=\frac{d}{d_i}$. Then the matrix $B\overline{D}$ is skew-symmetric, where $\overline{D}=\diag(d_1',\ldots,d_n')$. To each $i\in Q_0$, Demonet attaches the group algebra $K[\Gamma_i]$, where $K$ is an algebraically closed field and $\Gamma_i=\Z/d_i'\Z$ is the cyclic group of order $d_i'$. To each pair $(i,j)\in Q_0\times Q_0$ such that $b_{ij}>0$ he furthermore attaches the $K[\Gamma_j]$-$K[\Gamma_i]$-bimodule $K[\Z/d_j'b_{ij}\Z]$ (since he works with right modules, arrows are implicitly interpreted to go from $j$ to $i$ whenever $b_{ij}>0$, although this is not explicitly stated in \cite{Demonet}).

So, we see that while in the present paper the rings attached to the vertices of $Q$ are fields, the rings attached by Demonet are not fields but group algebras (which are never fields unless the group is trivial). Moreover, while the dimension of the field $F_i$ a vector space over the ground field is $d_i$, the dimension of the group algebra $K[\Gamma_i]$ is $d_i'=\frac{d}{d_i}$.

Now, for every skew-symmetrizable matrix $B$ that either satisfies \eqref{eq:Demonet-condition} or is mutation-equivalent to an acyclic one, Demonet shows that the corresponding \emph{group species} over an algebraically closed field $K$ always admits a non-degenerate potential. On the other hand, for every strongly primitive skew-symmetrizable matrix $B$
we have proved here that the corresponding species over a $p$-adic field always admits a non-degenerate potential, but over a finite field we have not been able to prove such ``global" existence of non-degenerate potentials, but only a ``local" existence (along finite sequences of mutations).

It is worth pointing out that Demonet ultimately proves several conjectures from Fomin-Zelevinsky's \cite{FZ4} for all skew-symmetrizable matrices that either satisfy \eqref{eq:Demonet-condition} or are mutation-equivalent to acyclic ones.

The following table outlines a basic comparison of approaches between Demonet's paper \cite{Demonet} and the present one.
\begin{center}
{\renewcommand{\arraystretch}{1.5}
\renewcommand{\tabcolsep}{0.2cm}
{\small
\begin{tabular}{|r|c|c|}
\hline
 & Demonet (cf. \cite{Demonet}) & Here \\
\hline
Ground field & Algebraically closed field $K$ & Finite or $p$-adic field $F$ \\
\hline
Ring attached to vertex $i\in Q_0$ & Group algebra $K[\Gamma_i]=K[\Z/d_i'\Z]$ & Field $F_i$ \\ \cline{2-3}
Dim. of such ring over ground field & $d_i'=d/d_i$
 & $d_i$\\
\hline
Bimodule attached whenever $b_{ij}>0$ & $K[\Z/d_j'b_{ij}\Z]$, attached as  & $(F_i\otimes_FF_j)^{b_{ij}/d_j}$, attached as  \\
 & $K[\Gamma_j]$-$K[\Gamma_i]$-bimodule & $F_i$-$F_j$-bimodule \\ \cline{2-3}
Arrows going from & $j$ to $i$ (left to right, implicitly) & $j$ to $i$ (right to left, explicitly) \\ \cline{2-3}
Representations are & right modules & left modules \\
\hline
Non-deg. potentials shown to exist & globally & locally \\ \cline{2-3}
provided & $C^{-1}B$ is skew-symmetric and   & $B$ admits a skew-symmetrizer  \\
 & has integer entries for some & $D=\diag(d_1,\ldots,d_n)$ such that \\
& $C=\diag(c_1,\ldots,c_n)$  & $\gcd(d_i,d_j)=1$ for all $i\neq j$ \\
 & such that $c_1,\ldots,c_n\in \Z_{>0}$;  or  &   \\\cline{2-2}
 & $B$ is mutation-equivalent & \\
 & to an acyclic matrix & \\
\hline
Cluster mutation defined along & columns of $B$ (implicit) & columns of $B$ (explicit) \\
\hline
\end{tabular}
}}
\end{center}

\end{document}